\documentclass[11pt]{amsart}

\usepackage{amssymb,amsmath,amscd}
\usepackage{mathtools}
\usepackage{bm}
\usepackage{amsfonts}
\usepackage{graphicx}
\usepackage{xcolor}
\usepackage{mathrsfs}
\usepackage{stmaryrd}
\usepackage{epsfig,color}
\usepackage{blindtext}
\usepackage{enumerate}
\usepackage{enumitem}
\usepackage{geometry}
\usepackage{color}
\usepackage[lite, alphabetic]{amsrefs}

\usepackage[colorlinks,linktocpage]{hyperref}
\hypersetup{linkcolor=[rgb]{0,0,0.715}}
\hypersetup{citecolor=[rgb]{0,0.715,0}}

\newtheorem{thm}{Theorem}[section]
\newtheorem{cor}[thm]{Corollary}
\newtheorem{prop}[thm]{Proposition}
\newtheorem{lem}[thm]{Lemma}

\newtheorem{quest}[thm]{Question}

\newtheorem{mainthm}{Theorem}

\theoremstyle{definition}
\newtheorem{defn}[thm]{Definition}

\newtheorem{exmp}[thm]{Example}

\newtheorem{notn}[thm]{Notation}

\usepackage{comment}

\theoremstyle{remark}
\newtheorem{rem}[thm]{Remark}

\newcommand{\Ch}{{\sf Ch}}
\let\phi\varphi

\newcommand{\hess}{{\mathrm{Hess}}}
\DeclareMathOperator{\Reg}{\mathcal{R}}
\DeclareMathOperator{\lip}{\mathrm{lip}}
\DeclareMathOperator{\Lip}{\mathrm{Lip}}
\DeclareMathOperator{\Spt}{\mathrm{Spt}}

\newcommand{\C}{\mathbb{C}}
\newcommand{\N}{\mathbb{N}}

\newcommand{\R}{\mathbb{R}}
\newcommand{\Z}{\mathbb{Z}}
\renewcommand{\subset}{\subseteq}
\newcommand{\defeq}{\mathrel{\mathop:}=}

\newcommand{\vol}{\mathrm{vol}}
\newcommand{\dvol}{\mathrm{dvol}}

\newcommand{\Ric}{\mathrm{Ric}}
\newcommand{\dist}{d}

\newcommand{\meas}{\mathfrak{m}}
\newcommand{\m}{\mathfrak{m}}

\newcommand{\di}{\mathop{}\!\mathrm{d}}

\DeclareMathOperator{\RCD}{RCD}
\DeclareMathOperator{\CDe}{CD}
\DeclareMathOperator{\RV}{\mathrm{RV}}

\makeatletter
\let\c@equation\c@thm
\makeatother
\numberwithin{equation}{section}

\title[Ricci curvature and slow relative volume growth]{Complete manifolds with nonnegative Ricci curvature and slow relative volume growth}

\author{Dimitri Navarro}
\address[Dimitri Navarro]{Department of Mathematics, University of California, Santa Cruz, CA, USA.}
\email{dnavar17@ucsc.edu}

\author{Jiayin Pan}
\address[Jiayin Pan]{Department of Mathematics, University of California, Santa Cruz, CA, USA.}
\email{jpan53@ucsc.edu}

\author{Xingyu Zhu}
\address[Xingyu Zhu]{Michigan State University, East Lansing, MI, USA.}
\email{zhuxing3@msu.edu}

\begin{document}

\begin{abstract}
    For any complete and noncompact manifold $M$ with $\Ric\ge 0$, we define a function $\RV(s)$ that describes the growth of relative volume asymptotically
    $$\RV(s)=\limsup_{r\to\infty} \dfrac{\vol B_{rs}(p)}{\vol B_r(p)},\quad s\ge 1.$$
    Then we study the fundamental groups of such manifolds with slow relative volume growth and sublinear diameter growth. We show that if $\RV(s)\ll s^2$ as $s\to\infty$, then $\pi_1(M)$ is almost abelian; if $\RV(s)\ll s^{1+\delta}$ for some $\delta\in (0,1)$ and the Ricci curvature is positive at a point, then $\pi_1(M)$ is finite. These results generalize our previous work on complete manifolds with $\Ric\ge 0$ and linear (minimal) volume growth \cite{NPZ}.
\end{abstract}

\subjclass{53C20, 53C23; 53C24}

\maketitle

\tableofcontents

\parskip=4pt plus 0.5pt

\section{Introduction}\label{sec:intro}
This paper is concerned with the fundamental groups of complete Riemannian manifolds with nonnegative Ricci curvature. For any closed manifold with $\Ric\ge 0$, its fundamental group is virtually abelian by the work of Cheeger--Gromoll \cite{CG_split}. In contrast, the fundamental group of an open (i.e., complete and noncompact) manifold with $\Ric\ge 0$ may not have any abelian subgroups of finite index \cite{Wei88}, although it is always virtually nilpotent \cites{Milnor68,Gromov81,KW}. Geometrically, this difference is reflected in the behavior of covering spaces. For a closed manifold \(M\) with \(\Ric \ge 0\), any torsion-free element in $\pi_1(M)$ gives rise to a line in the universal cover $\widetilde{M}$ of $M$, in turn, $\widetilde{M}$ splits off a line isometrically by Cheeger--Gromoll splitting theorem \cite{CG_split}. This splitting structure on the covering space also implies a classical theorem by Bonnet--Myers: if a closed manifold $M$ has $\Ric>0$, then $\pi_1(M)$ is finite. For an open manifold $M$ with $\Ric\ge 0$, however, a torsion-free element in $\pi_1(M)$ may not produce a line in the universal cover $\widetilde{M}$ and thus $\widetilde{M}$ may not split isometrically. Therefore, identifying classes of open manifolds with $\Ric\ge 0$ whose fundamental groups remain virtually abelian is one of the main problems in understanding the interaction between Ricci curvature and fundamental group. 

In our previous work \cite{NPZ}, we found that the volume growth of $M$ plays a prominent role in this question, and in this paper we will further generalize these results. Recall that $M$ has linear (or minimal) volume growth if
$$\limsup_{r\to\infty} \dfrac{\vol(B_r(p))}{r}<\infty.$$
This class of manifolds has particular interest because open manifolds with $\Ric\ge 0$ have at least linear volume growth by the independent works of Calabi \cite{Calabi75} and Yau \cite{Yau76}. Manifolds with linear volume growth were studied extensively by Sormani in the late 90s \cites{Sor98,Sor00a,Sor00b}. 

Linear volume growth implies the following geometric properties:\\
$\bullet$ $M$ has stable linear volume growth (see \cite{ZZ}*{Remark 2.1} or \cite{NPZ}*{Appendix})
$$\lim_{r\to\infty} \dfrac{\vol(B_r(p))}{r}=C\in(0,\infty).$$
$\bullet$ Either $M$ splits isometrically as $\R\times K$, where $K$ is a closed manifold, or $M$ has sublinear diameter growth (see \cite{Sor00a})
$$\lim_{r\to\infty} \dfrac{\mathrm{diam}(\partial B_r(p))}{r}=0,$$
where the diameter is measured under the extrinsic distance; this property particularly implies that $\pi_1(M)$ is finitely generated \cite{Sor00b}, which is not true for general open manifolds with $\Ric\ge 0$ \cites{BNS_6,BNS_7}.\\
In \cite{NPZ}, we proved the following results on manifolds with linear volume growth.\\
(1) \cite{NPZ}*{Theorem A} If an open manifold $M^n$ has $\Ric\ge0$ and linear volume growth, then $\pi_1(M)$ contains a $\Z^k$ subgroup of finite index, where $0\le k\le n-1$.\\
(2) \cite{NPZ}*{Theorem B} If an open manifold $M^n$ has $\Ric>0$ and linear volume growth, then $\pi_1(M)$ is finite. Recently, Huang--Huang further improved this finiteness result by weakening the $\Ric>0$ condition to $\Ric\ge0$ and $\Ric_p>0$ at a point $p$; in fact, they proved a splitting result on any $\Z$-folding covering space of $M$ \cite{huang2025}.\\
They can be viewed as extensions of the virtual abelianness and finiteness results for closed manifolds mentioned above. In the proofs of these results on linear volume growth, both stable linear volume growth and sublinear diameter growth are essential.

The main goal of this paper is to generalize the main results in \cite{NPZ} by relaxing the linear volume growth condition. For this purpose, we introduce a relative volume growth function as below. Given an open manifold $(M,p)$ of $\Ric\ge 0$, we define its relative volume growth function $\RV:[1,\infty)\to [1,\infty)$ by
$$\RV(s):= \limsup_{r\to\infty} \dfrac{\vol B_{rs}(p)}{\vol B_r(p)}.$$
This function $\RV(s)$ does not depend on the choice of the base point $p$ (Proposition \ref{prop:RV_well_defined}). By Bishop--Gromov relative volume comparison, $\RV(s)\le s^n$ for all $s\ge 1$, where $n=\dim M$. When $M$ has linear volume growth, $\RV(s)=s$ because $M$ has stable linear volume growth. We also remark that for any open manifold $M$, the condition $\RV(s)\ll s^\beta$, where $\beta>0$, implies $\vol B_r(p)\ll r^{\beta}$ (Lemma \ref{lem:rel_vs_abs_vol}), but the converse is not true in general.

Now we state the main results of this paper.

\begin{mainthm}\label{mainthm:vir_abel}
   Let $M$ be an open manifold with $\Ric\ge 0$. If $M$ has sublinear diameter growth and
   $$\lim_{s\to\infty} \dfrac{\RV(s)}{s^2} =0,$$
   then $\pi_1(M)$ contains a $\Z^k$ subgroup of finite index, where $0\le k\le n-1$.
\end{mainthm}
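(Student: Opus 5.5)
The plan is to follow the strategy of \cite{NPZ}*{Theorem A}, with the stable linear volume growth used there replaced by the hypothesis $\RV(s)=o(s^2)$. I will not use the growth of $\vol B_r(p)$ itself; the argument should only need scale-invariant relative volume ratios.

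\emph{Step 1: finite generation and nilpotency.} Sublinear diameter growth gives that $\pi_1(M)$ is finitely generated, by Sormani's argument \cite{Sor00b}. By \cites{Milnor68,Gromov81,KW}, $\Gamma=\pi_1(M)$ is then virtually nilpotent. Pass to a torsion-free nilpotent subgroup $N$ of finite index. It remains to show that $N$ is abelian and has rank at most $n-1$.

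\emph{Step 2: asymptotic cones of the universal cover.} Take any sequence $r_i\to\infty$ and consider the equivariant Gromov--Hausdorff limit
\[
(r_i^{-1}\widetilde M,\tilde p,\Gamma)\to (Y,y,G).
\]
Sublinear diameter growth of $M$ implies that the rescaled quotient $(r_i^{-1}M,p)$ converges to a space in which every sphere about $p$ is a single point. So the quotient $Y/G$ is a ray (or a point), and $G$-orbits are "large". On the quotient side, the condition $\RV(s)\ll s^2$ should force the renormalized limit measure on $Y/G$ to have growth $\nu(B_s)\le \RV(s)\,\nu(B_1)$ with $\RV(s)=o(s^2)$.

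\emph{Step 3: compare with nilpotent growth.} Suppose $N$ is non-abelian. Then the orbit of $\tilde p$ under $N$ grows like a Carnot group of homogeneous dimension at least $\mathrm{rank}+2$, and the limit $G$ contains a nilpotent non-abelian Lie group acting with noncompact orbits. I would try to show that a nontrivial commutator direction produces, in the cone, orbit growth of order $s^2$ in one direction. Concretely, a fundamental-domain count gives
\[
\frac{\vol B_{rs}(p)}{\vol B_r(p)}\gtrsim \frac{\#\{\gamma: d(\gamma\tilde p,\tilde p)\le rs\}}{\#\{\gamma: d(\gamma\tilde p,\tilde p)\le r\}}\cdot\frac{\text{(domain volume at scale } rs)}{\text{(domain volume at scale } r)}.
\]
The second factor is controlled by Bishop--Gromov and sublinear diameter growth. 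A central (commutator) element $c$ has word length of order $\sqrt{\ell}$ for $c^\ell$, and its displacement grows quadratically relative to word length. This should yield $\RV(s)\gtrsim s^2$, a contradiction. The rank bound $k\le n-1$ then follows as in \cite{NPZ}: if $k\ge n$, one gets an $\R^n$ orbit in a cone of an $n$-manifold, forcing Euclidean volume growth. That is incompatible with $\RV(s)=o(s^2)$ for $n\ge2$, and the case $n=1$ is trivial.

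\emph{Main obstacle.} The hardest step is Step 3: converting non-abelianness into a quantitative $s^2$ lower bound for relative volume. The difficulty is that the volumes of fundamental domains vary with scale, so the group count alone is not enough. What I would try first is to work in the equivariant limit with the renormalized limit measure. The aim is to show that the fibers over $Y/G$ carry a $G$-invariant measure whose total growth includes an extra factor $s^{2}$ from the non-abelian layer, using the fact that orbits of the commutator subgroup expand quadratically under dilation. If this fails, I would adapt the argument of \cite{NPZ}: show that, at every scale, the asymptotic orbit is an abelian group $\R^k$ acting by translations. Non-abelianness would then produce a nontrivial nilpotent limit group, whose Carnot dilation structure would contradict $\RV(s)=o(s^2)$ through a volume-doubling estimate on tubular neighborhoods of orbits.
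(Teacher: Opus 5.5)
\textbf{There is a genuine gap in Step 3, and its proposed mechanism runs in the wrong direction.} Steps 1 and 2 are fine, but Step 3 is where the whole theorem lives.

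If $F$ is the Dirichlet domain at $\tilde p$, then $\vol B_R(p)=\vol(F\cap B_R(\tilde p))$. One gets
\[
\#\{\gamma:|\gamma|\le R\}\cdot\vol B_R(p)\le\vol B_{2R}(\tilde p),\qquad \vol B_R(\tilde p)\le \#\{\gamma:|\gamma|<2R\}\cdot\vol B_R(p),
\]
and hence, by Bishop--Gromov on $\widetilde M$,
\[
\frac{\vol B_{rs}(p)}{\vol B_r(p)}\le \frac{\vol B_{2rs}(\tilde p)}{\vol B_r(\tilde p)}\cdot\frac{\#\{\gamma:|\gamma|<2r\}}{\#\{\gamma:|\gamma|\le rs\}}\le (2s)^n\,\frac{\#\{\gamma:|\gamma|<2r\}}{\#\{\gamma:|\gamma|\le rs\}}.
\]
So the orbit count enters the base volume in the denominator, not the numerator as in your displayed inequality. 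Faster, Carnot-type orbit growth coming from commutators makes $\RV(s)$ smaller, so no lower bound $\RV(s)\gtrsim s^2$ can be extracted this way. (Also, $|c^\ell|\lesssim\sqrt{\ell}$ is a square-root law in the exponent; displacement is never more than linear in word length.)

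Your fallback, an extra factor $s^2$ in the fibres of $Y\to Y/G$, fails for the same reason. The quotient measure only sees regions of bounded width along the orbits. In the paper's inductive step, $\meas_Z(\Omega_s)=c\,\underline{\meas}([0,s])$, where $\Omega_s$ is a width-$2$ strip. Growth along orbits therefore never reaches $\RV$.

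\textbf{The missing idea} is how $\RV(s)=o(s^2)$ is actually used: it constrains the \emph{transverse} geometry of equivariant asymptotic cones, one $\Z$-extension at a time.

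\emph{The induction.} The paper passes to a torsion-free nilpotent $\Lambda$ with a series $\{e\}=\Lambda_0\triangleleft\cdots\triangleleft\Lambda_k=\Lambda$ with $\Z$ quotients. It proves inductively (Theorem~\ref{thm:induction_sub2}) that every asymptotic cone of the cover $\widehat M_{k-j}$ is $\R^{j+1}$ or $\R^j\times[0,\infty)$. The measure on the ray factor satisfies $\underline{\meas}([0,r])/r^2\to\infty$ as $r\to0^+$ and $\to0$ as $r\to\infty$. Both limits matter. The one at $0$ comes from $\meas(B_\lambda(x))\ge 1/\RV(\lambda^{-1})$ (Lemma~\ref{lem:limit_measure_ray}); your sketch never uses this small-scale consequence.

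\emph{The inductive step} is the plane/halfplane rigidity, Theorem~\ref{thm:rigid_p/h}. After quotienting by the isotropy and by a lattice $\Z$ in the orbit, one gets an $\RCD(0,N)$ space with regular part $\di r^2+f^2(r)\di\theta^2$ and measure $\rho\,\di r\,\di\theta$. These satisfy $-(\rho b)'\ge0$ and $-\rho''\rho+(\rho')^2\ge\rho^2b^2$, where $b=(\ln f)'$.
\begin{itemize}
\item Sub-quadratic growth at infinity forces $f$ to be nondecreasing. Then the orbit is a line and the cone splits as a halfplane.
\item Super-quadratic growth at $0$ forces the isotropy to be trivial or $\Z_2$.
\end{itemize}
Example~\ref{exmp:hp_2_sharp} shows the rigidity fails at exactly quadratic growth; this is where the threshold $s^2$ comes from, and nothing in your argument locates it.

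\emph{Conclusion and rank bound.} Once the cone of the $\Lambda$-cover is $\R^{k+1}$ or $\R^k\times[0,\infty)$, virtual abelianness follows as in \cite{NPZ}*{Section 3.7}. The bound $k\le n-1$ is just $k+1\le n$. Your rank argument instead presupposes that the limit orbit of a rank-$k$ group is $k$-dimensional. That is not automatic; it is exactly what the induction provides.
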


\begin{mainthm}\label{mainthm:finite}
   Let $M$ be an open manifold with $\Ric\ge 0$ and $\Ric_p>0$ at a point $p$. If $M$ has sublinear diameter growth and
   $$\lim_{s\to\infty} \dfrac{\RV(s)}{s^{1+\delta}} =0$$ 
   for some $\delta\in(0,1)$, then $\pi_1(M)$ is finite.
\end{mainthm}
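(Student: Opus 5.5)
We argue by contradiction, following the strategy of \cite{NPZ}*{Theorem B} and its refinement by Huang--Huang \cite{huang2025}. Since $\Ric\ge 0$, $\pi_1(M)$ is virtually nilpotent. Sublinear diameter growth gives finite generation \cite{Sor00b}. So if $\pi_1(M)$ is infinite, it contains an element $\gamma$ of infinite order. Hence some finite-index subgroup surjects onto $\Z$. Passing to the corresponding finite cover (which preserves $\Ric_p>0$ at a lift of $p$, sublinear diameter growth, and the bound on $\RV$ up to constants), we obtain a normal $\Z$-cover $\hat M\to M$ with deck group $\langle\gamma\rangle\cong\Z$. The goal is to show that $\hat M$ contains a line. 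Then Cheeger--Gromoll splitting gives $\hat M=\R\times N$, and $\Ric\equiv 0$ in the line direction. Because the splitting is invariant under deck transformations, it descends, which contradicts $\Ric_p>0$. In fact it suffices to show that $\hat M$ splits off a line through the lift of $p$.

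The key step is to relate the volume growth of $\hat M$ to $\RV$ of $M$. Take a fundamental domain $F$ for $\langle\gamma\rangle$ containing the lift $\hat p$. By sublinear diameter growth, the orbit displacement satisfies $d(\gamma^j\hat p,\hat p)\to\infty$ and $|\gamma^j|$ grows at least linearly in $j$. Indeed, if $\Z$ had sublinear orbit growth, the annular pieces of $\hat M$ would be too many. I would estimate $\vol B_R(\hat p)$ from below by $\sum_{|j|\le cR}\vol(B_{R/2}(\hat p)\cap \gamma^jF)$. Each piece is comparable to $\vol B_{\epsilon R}(p)$ in $M$, using sublinear diameter growth to cover $B_{R}(p)\subset M$ by translates. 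This gives $\vol B_R(\hat p)\gtrsim R\cdot \vol B_{R}(p)$ roughly, and Bishop--Gromov on $\hat M$ bounds it from above.

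Now use the hypothesis $\RV(s)\ll s^{1+\delta}$ with $\delta<1$. Blow down $\hat M$ along $r_i\to\infty$ with renormalized measures, and take the equivariant limit $(\hat M,r_i^{-1}d,\hat p,\Z)\to(Y,y,G)$. The relative growth of $\hat M$ is then about $s\cdot \RV(s)\ll s^{2+\delta}$. More importantly, $Y/G$ is a tangent cone at infinity of $M$, whose renormalized limit measure has relative growth $\ll s^{1+\delta}$ at all scales. Combined with sublinear diameter growth, $Y/G$ is a ray (a half-line), and its measure grows like $t^{\alpha}$ with $\alpha\le 1+\delta<2$ in the asymptotic sense. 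Since $G\supset\R$-orbits arise as limits of $\langle\gamma\rangle$ (a closed abelian group acting with noncompact orbits), the limit $Y$ is an $\RCD(0,n)$ space with a $G$-action whose quotient is a ray. The growth bound below $2$ forces the $G$-orbits to be one-dimensional lines: two-dimensional orbits would give relative growth at least $s^2$ on the quotient scale. The orbit of $y$ is then a line in $Y$, so $Y$ splits off $\R$. This is exactly where $\delta<1$ enters.

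The main obstacle is transferring the splitting of all asymptotic cones of $\hat M$ back to an actual line in $\hat M$. Following \cite{huang2025}, I would consider the sequence of minimal geodesics $\sigma_j$ from $\gamma^{-j}\hat p$ to $\gamma^{j}\hat p$. If they stay at bounded distance from $\hat p$, a subsequence converges to a line, and we are done. Otherwise, let $d_j=d(\hat p,\sigma_j)\to\infty$ and rescale by $d_j$. The cone splitting above, together with the excess estimate of Abresch--Gromoll, shows the excess $e(\hat p)$ relative to $\gamma^{\pm j}\hat p$ is $o(d_j)$ with a quantitative rate coming from $\RV(s)\ll s^{1+\delta}$. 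Sublinear diameter growth then places points of $\sigma_j$ within $o(d_j)$ of $\hat p$, a contradiction. I expect the quantitative excess bound, and making the contradiction uniform over the varying scales $d_j$, to be the delicate part.
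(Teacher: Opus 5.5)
Your outline has the right skeleton (finite normal cover, a $\Z$-cover $\widehat M$, flat equivariant asymptotic cones, an actual line via Huang--Huang, contradiction with $\Ric_p>0$). However, the step carrying all the weight rests on a false heuristic.

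\textbf{The cone step.} In the blow-down $(r_i^{-1}\widehat M,\hat p,\Z)\to(Y,y,G)$, one-dimensionality of $Gy$ is not a volume statement. Sublinear diameter growth makes $M$ polar at infinity, so the escape rate is non-maximal (Proposition \ref{prop:non_max_escape_rate}). Theorem \ref{thm:orbit_R} then gives $Gy\cong\R$ and $G\simeq\R\times K$ with $K$ fixing $y$. What you actually need is that $Gy$ is a \emph{geodesic line}, and a one-dimensional orbit need not be one. Example \ref{exmp:hp_2_sharp} is an $\RCD(0,N)$ space homeomorphic to a halfplane, with a free closed $\R$-action, ray quotient and $\meas(\Omega_r)=r^2$. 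It is not isometric to a halfplane, so by the argument of Proposition \ref{prop:monotone-to-splitting} its base orbit is not a line.

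Hence the bound $\meas(\Omega_r)=o(r^2)$ must be used quantitatively. This is the paper's main new input, Theorem \ref{thm:rigid_p/h}, and it runs as follows:
\begin{itemize}
\item On $Y/\Z$ one recovers a $C^0\cap W^{1,2}_{loc}$ warped product $\di r^2+f^2\di\theta^2$ with measure $\rho\,\di r\,\di\theta$.
\item Mondino--Ryborz gives the distributional bounds $-(\rho b)'\ge0$ and $-\rho''\rho+(\rho')^2\ge\rho^2b^2$, where $b=(\ln f)'$.
\item From $\rho(r)/r\to0$ one deduces $b\ge0$, and monotone $f$ then produces the line.
\end{itemize}
To identify the cone as exactly $\R^2$ or $\R\times[0,\infty)$, which is the form Huang--Huang's results require, you must also control $K$ through the ray rigidity. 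That rigidity uses the super-quadratic behaviour at small scales, $\meas(B_\lambda(x))\ge 1/\RV(\lambda^{-1})$ from Lemma \ref{lem:limit_measure_ray}, which your proposal never touches.

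Your comparison $\vol B_R(\hat p)\gtrsim R\,\vol B_R(p)$ and the claim that $|\gamma^j|$ grows at least linearly are unjustified, and they are not what is needed. The relevant computation, via \cite{NPZ}*{Section 3.5}, is $\meas_Y(\Omega_s)=c\,\underline{\meas}([0,s])$ for the limit measure on the ray.

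\textbf{The line-producing step.} This also fails as sketched. After rescaling by $d_j=d(\hat p,\sigma_j)$, the $\sigma_j$ subconverge to a horizontal line at height $1$ in the halfplane cone. That is fully compatible with the splitting of the cone. Sublinear diameter growth controls $\mathrm{diam}(\partial B_r(p))$ in $M$, not $d(\hat p,\sigma_j)$ in $\widehat M$, so no contradiction arises.

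The paper does not attempt this argument. It cites \cite{huang2025}*{Lemma 4.1} when the cone is $\R^2$, and \cite{huang2025}*{Proposition 1.8} when it is $\R\times[0,\infty)$. The latter case is where the absolute bound $\liminf_{r\to\infty}\vol B_r(p)/r^{1+\delta}=0$ in Theorem \ref{thm:Z_cover_split} enters, and that bound is derived from $\RV(s)=o(s^{1+\delta})$ by Lemma \ref{lem:rel_vs_abs_vol}. So the cone identification needs only $\RV(s)=o(s^2)$. The specific strength of $\RV(s)=o(s^{1+\delta})$ is spent on upgrading the flat cone to an actual line in $\widehat M$.
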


We suspect that Theorem \ref{mainthm:finite} may not hold if one considers the absolute volume growth function instead of the relative one (Cf. \cite{NPZ}*{Question 1.4}). 

\begin{quest}
   Construct an open manifold $M$ with $\Ric>0$, sublinear diameter growth,
   $$\lim_{r\to\infty} \dfrac{\vol B_r(p)}{r^{1+\delta}}=0,$$
   for some $\delta\in (0,1)$,
   but $\pi_1(M)=\Z$.
\end{quest}

\begin{rem}
   We point out that the smallest volume growth among the known examples with $\Ric>0$ and $\pi_1(M)=\Z$ is quadratic (see \cite{NPZ}*{Remark 1.5}); the example has $\vol B_r(p)\sim C r^2$ as $r\to\infty$, thus $\RV(s)=s^2$.
\end{rem}

\begin{quest}
  On an open manifold $M$ with $\Ric\ge 0$, does $\RV(s) \ll s^2$ as $s\to\infty$ imply sublinear diameter growth? 
\end{quest}

Compared to \cite{NPZ}, the new input mainly comes from a sharp plane/halfplane rigidity for $\RCD(0,N)$ spaces, which in turn implies a splitting result on the asymptotic cones of the covering spaces of $M$. 

\begin{thm}\label{thm:rigid_p/h}
  Let $(Y,y,\dist,\mathfrak{m})$ be a pointed $\RCD(0,N)$ space. Suppose that\\
(1) $Y$ has an isomorphic $G$-action, where $G\simeq \mathbb{R}\times K$ is a closed subgroup of $\mathrm{Isom}(Y)$, $K$ is abelian and fixes $y$, and the quotient metric space $(Y/G,\bar{y})$ is isometric to a ray $([0,\infty),0)$;\\
(2) the measure $\meas$ on regions $\Omega_r\subseteq Y$ (see Definition \ref{defn:intro_omega_r} below) satisfies
$$\lim_{r\to 0^+} \dfrac{\meas(\Omega_r)}{r^2}=\infty,\quad \lim_{r\to\infty} \dfrac{\meas(\Omega_r)}{r^2}=0.$$
Then the metric space $(Y,y,\dist)$ is isometric to a Euclidean plane $(\mathbb{R}^2,0)$ or a Euclidean halfplane $(\mathbb{R}\times [0,\infty),0)$.
\end{thm}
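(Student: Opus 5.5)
The plan is to reduce to the splitting theorem for $\RCD(0,N)$ spaces: first show that the orbit $G\cdot y$ is a line, then split $Y$ along it, then use the small-scale volume condition to force the cross-section to be one-dimensional. I take $\Omega_r$ to be, as I expect Definition~\ref{defn:intro_omega_r} says, the part of the tube $\{x:\dist(x,G\cdot y)\le r\}$ lying over a unit-length fundamental domain of the $\R$-factor. So $\meas(\Omega_r)$ plays the role of the cross-sectional volume of an $r$-neighbourhood of the orbit.

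\textbf{Step 1: the orbit and the quotient.} Since $K$ is abelian, commutes with the $\R$-factor and fixes $y$, it fixes the whole orbit $G\cdot y=\R\cdot y$. Hence $G\cdot y$ is a single $\R$-orbit, and it is exactly the fiber over $0\in[0,\infty)=Y/G$. Let $b(x)=\dist(x,G\cdot y)$, which is the quotient coordinate. Every other $G$-orbit is a level set $\{b=t\}$.

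\textbf{Step 2: the orbit $\R\cdot y$ is a line (main obstacle).} The $\R$-orbit is a priori only a homogeneous curve, possibly distorted, with $\dist(y,t\cdot y)$ growing sublinearly in $t$. I would attack this by a blow-down.
\begin{itemize}
\item Rescale $(Y,r^{-1}\dist,y)$ with $r\to\infty$ and normalize the measure so that $\Omega_1$ has unit mass. The limit is again $\RCD(0,N)$ and carries a limit $\R\times K$-type action with quotient a ray.
\item The hypothesis $\meas(\Omega_r)/r^2\to0$ as $r\to\infty$ gives the limit sub-quadratic tube growth.
\item If the orbit were distorted (sublinear displacement), the $\R$-orbit in the limit would collapse or become a point-stabilized orbit. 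The $r$-tubes would then contain roughly $r\cdot(\text{length of the }\R\text{-orbit at scale } r)$ worth of two-dimensional directions. Comparing with the Calabi--Yau-type linear lower bound for cross-sections, this should force at least quadratic growth, a contradiction.
\end{itemize}
Once the displacement $\dist(y,t\cdot y)$ is shown to be linear, homogeneity under $\R$ upgrades the orbit to a line.

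\textbf{Step 3: splitting.} With a line through $y$, the splitting theorem gives $Y\cong\R\times Z$, with $\R$ acting by translation up to the isometry commuting with the split. Here $Z$ is $\RCD(0,N-1)$ with a $K$-action fixing a point $z$ and $Z/K\cong[0,\infty)$. Then $\Omega_r$ is a unit slab times $B^Z_r(z)$, so $\meas(\Omega_r)\approx\meas_Z(B_r(z))$.

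\textbf{Step 4: dimension count at small scales.} The $K$-orbits in $Z$ are the distance spheres about $z$.
\begin{itemize}
\item Suppose these orbits had positive dimension. Then every tangent cone of $Z$ at $z$ (an $\RCD(0,N-1)$ metric-measure cone) would have essential dimension at least $2$.
\item Bishop--Gromov would then give $\meas_Z(B_r(z))\lesssim r^2$ as $r\to0$. This contradicts $\meas(\Omega_r)/r^2\to\infty$ as $r\to0^+$.
\item Hence the $K$-orbits are finite, and $Z$ is a one-dimensional space with $Z/K$ a ray and $Z$ unbounded.
\end{itemize}
Thus $Z$ is $\R$ (with $K$ acting by a reflection) or $[0,\infty)$. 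In either case $Y$ is isometric to $\R^2$ or $\R\times[0,\infty)$ with $y\mapsto0$.

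I expect Step 2 to be the main obstacle, since it is the only place the large-scale condition is used. If the blow-down argument is awkward, I would first try a needle decomposition for $b$ to get a concavity or monotonicity of the slice density $t\mapsto\meas(\{b\in[t,t+dt]\}\cap\Omega)$, and combine it with the two limits.
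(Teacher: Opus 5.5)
\textbf{Step 2 contains no working argument, and the proposed mechanism fails.}

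The dichotomy you describe is false. You say a distorted orbit must collapse or become point-stabilized in the blow-down. But the sharp Example~\ref{exmp:hp_2_sharp}, with $g=\di r^2+r^{-2\alpha}\di v^2$ and $\meas=r\,\di r\,\di v$, is self-similar under $(r,v)\mapsto(\lambda r,\lambda^{1+\alpha}v)$, since this map scales $g$ by $\lambda^2$. So it is its own blow-down, with a free, non-collapsed orbit satisfying $\dist(y,t\cdot y)\sim|t|^{1/(1+\alpha)}$.

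More fundamentally, a blow-down only retains the leading-order homogeneity of $\meas(\Omega_r)$. A growth such as $r^2/\log r$ is allowed by the hypothesis, yet it renormalizes exactly like the counterexample's $r^2$. So properties of the limit alone cannot separate your hypothesis from the borderline case. Moreover, even a halfplane blow-down would not make $Y$ itself a halfplane.

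The second half of Step 2 is also false as stated. Take $\di r^2+f(r)^2\di v^2$ with $f$ decreasing from $2$ to $1$. Then $\dist(y,t\cdot y)\sim|t|$, but the orbit $\{r=0\}$ is not a geodesic. So linear displacement plus homogeneity does not make the orbit a line; excluding such $f$ is exactly what has to be proved.

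Your needle-decomposition fallback only tells you that the transversal density $\rho$ is a $\CDe(0,N)$ density. It says nothing about how the orbits are warped.

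The missing idea is an inequality coupling the orbit geometry with $\rho$. The paper does this as follows:
\begin{enumerate}
\item It passes to $Y/K$ and then to the $S^1$-space $(Y/K)/\Z$.
\item On $Y_+$ it recovers a continuous $W^{1,2}_{loc}$ warped metric $\di r^2+f^2\di\theta^2$ with $\meas=\rho\,\di r\,\di\theta$.
\item Via Mondino--Ryborz it obtains $-(\rho b)'\ge0$ and $\rho^2b^2\le-\rho''\rho+(\rho')^2$, where $b=(\ln f)'$.
\item The condition $\rho(r)=o(r)$ forces $\rho b\to0$ along a sequence (Lemma~\ref{lem:vol<2-to-monotone}). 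Monotonicity of $\rho b$ carries this to all scales, giving $b\ge0$.
\item Only then is the boundary orbit shown to be a line (Proposition~\ref{prop:monotone-to-splitting}).
\end{enumerate}

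\textbf{Step 4 uses Bishop--Gromov in the wrong direction.}

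In $\RCD(0,N-1)$, monotonicity of $\meas_Z(B_r(z))/r^{N-1}$ gives the lower bound $\meas_Z(B_r(z))\gtrsim r^{N-1}$ for small $r$. It never gives $\meas_Z(B_r(z))\lesssim r^2$.

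Nor can the scale-invariant fact that tangent cones at $z$ are $2$-dimensional supply that bound. Renormalizing divides out precisely the factor in question. For example, $r^2\log(1/r)$ renormalizes to $s^2$, which is the Lebesgue growth of the sharp case in Remark~\ref{rem:ray_2_sharp}.

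The upper bound you need is the whole content of the sharp ray rigidity, Theorem~\ref{thm:ray_rigid}. The paper proves it as follows:
\begin{enumerate}
\item It reduces $K$ to $S^1$ and uses the same warped structure, which gives $b^2\le-(\ln\rho)''$.
\item Cauchy--Schwarz then yields $\ln f(1)-\ln f(r)\le\sqrt{-\ln r}\,\sqrt{C-\ln\rho(r)}$.
\item Hence $\rho(r)/r\to\infty$ forces $f(r)/r\to\infty$ (Lemma~\ref{lem:fSuperlinear}).
\item Minimal geodesics between antipodal points then pass through the tip. This contradicts H\"older continuity of tangent cones along geodesics, or the $\R^2$ blow-up if the tip is regular.
\end{enumerate}

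Your Steps 1 and 3 and the final one-dimensional classification match the paper's skeleton. However, both substantive steps lack their key idea.
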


\begin{defn}\label{defn:intro_omega_r}
Under the condition (1) of Theorem \ref{thm:rigid_p/h}, the region $\Omega_r\subseteq Y$ is defined by
$$\Omega_r:=\{ (w,h) \cdot \gamma(t)\ |\ w\in[-1,1], h\in K, t\in[0,r] \},$$
where $\gamma$ is a lift of the unit speed ray in $Y/G=[0,\infty)$ at $y$ and $(w,h)$ represents an element in $\mathbb{R}\times K\simeq G$.
\end{defn}

Theorem \ref{thm:rigid_p/h} substantially generalizes \cite{NPZ}*{Theorem 1.8}, where the measure $\meas$ is assumed to be linear along the regions $\Omega_r$. Here in Theorem \ref{thm:rigid_p/h}, we only assumed that $\meas(\Omega_r)$ is super-quadratic at $0$ and sub-quadratic at infinity.

Theorem \ref{thm:rigid_p/h} will follow from two rigidity results, concerning free $\R$-action and isotropic $S^1$-action separately.

\begin{thm}[Sharp halfplane rigidity]\label{thm:intro_halfplane}
   Let $(Y,y,d,\mathfrak{m})$ be an $\RCD(0,N)$. Suppose that\\
   (1) $Y$ has an isomorphic $G$-action, where $G\simeq \mathbb{R}$ is closed in $\mathrm{Isom}(Y)$, such that the quotient metric space $(Y/G,\bar{y})$ is isometric to a ray $([0,\infty),0)$;\\
   (2) the measure $\meas$ satisfies $$\lim_{r\to\infty}\dfrac{\meas (\Omega_r)}{r^2}=0.$$
   Then the metric space $(Y,d)$ is isometric to a Euclidean halfplane.
\end{thm}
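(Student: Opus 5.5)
We have $Y$ an $\RCD(0,N)$ space with a free, proper, closed $\R$-action by measure-preserving isometries. The quotient $Y/\R$ is a ray, and $\meas(\Omega_r)=o(r^2)$. The goal is to show $Y$ is a Euclidean halfplane. The plan is to reduce to a one-dimensional weighted picture, then use the curvature-dimension condition to force the weight to be monotone, and finally use the growth hypothesis to force rigidity.

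\textbf{Step 1: coordinates.} Fix the lift $\gamma$ of the ray at $y$. Since the quotient is a ray and $\gamma$ realizes the distance between orbits, every point of $Y$ can be written as $w\cdot\gamma(t)$ with $w\in\R$, $t\ge0$. Set $\rho(z)=d(z,\R\cdot y)$, so $\rho(w\cdot\gamma(t))=t$. The function $\rho$ is the distance to the orbit $\R\cdot y$; equivalently, it is the Busemann-type function pulled back from the quotient ray. In these coordinates $\Omega_r=[-1,1]\cdot\gamma([0,r])$. Because the action is measure preserving, we can disintegrate $\meas$ along $\rho$ and along orbits. This gives
$$\meas(\Omega_r)=\int_0^r \phi(t)\,dt$$
for a density $\phi$ on $(0,\infty)$: the measure of the unit-length orbit segment at height $t$. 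Concretely, $\phi(t)$ is the $\meas$-size of $[-1,1]\cdot\gamma(t)$ weighted by the transversal measure.

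\textbf{Step 2: $\phi$ is nondecreasing.} This is the heart of the argument and the main obstacle. We want the orbit geometry, not only the density, to behave like a cone.

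Let $\ell(t)$ be the displacement $d(\gamma(t),1\cdot\gamma(t))$, which controls how ``wide'' orbits are at height $t$. My first attempt is to show that $\rho$ is a distance function whose level sets are the orbits, with $|\nabla\rho|=1$. In a cone comparison, $\Delta\rho\ge0$ is not directly available. Instead I would use the following symmetry argument:
1. The lines $w\mapsto w\cdot\gamma(t)$ are not geodesics in general, but $\gamma$ and all its translates $w\cdot\gamma$ are rays.
2. Consider the Busemann function $b$ of $\gamma$. Its superlevel sets are invariant under the action restricted to the $\R$-direction only in the limit.

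So a better route is to use the ``horoball'' structure: the quotient is a ray and $\rho$ is $G$-invariant. I expect to verify that $\rho=-b$ up to a constant along the relevant region, so that $\Delta\rho\le0$ in the distributional sense by Laplacian comparison for Busemann functions in $\RCD(0,N)$. Then $\Delta\rho$ is an invariant measure. Integrating it over the $G$-invariant slab $\rho^{-1}[a,b]$, modded out by $G$, gives $\phi(b)-\phi(a)=\int\Delta\rho\ge0$ after fixing the sign appropriately. This would give monotonicity, and in fact a Bishop--Gromov-type statement: $\phi$ is nondecreasing while $\phi(t)/t^{N-1}$ is nonincreasing.

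\textbf{Step 3: from monotonicity to splitting.} Combining the growth hypothesis with Step 2 handles the far end: since $\phi$ is monotone, $\int_0^r\phi=o(r^2)$ forces $\phi(t)=o(t)$. I then want to exclude all growth and conclude $\Delta\rho=0$. The idea is to use the orbit width: by the triangle inequality along translates of $\gamma$, we have $\ell(t)\le 1$ for the translation by 1. If the width is sublinear, then blow-downs of $(Y,y)$ have orbits collapsing, so the asymptotic cone is a ray times something of measure $o(r^2)$.

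I would then rescale. Take the limit $(Y,r_i^{-1}d,y)$, with measure normalized by $\meas(\Omega_{r_i})$. In the limit, the $\R$-action becomes an action by a closed abelian group; the limit orbit of $y$ is a line, because each orbit $\R\cdot\gamma(t)$ contains points at distance of order $|w|$ when $t$ is fixed and $w$ is large. Applying the splitting theorem along that line, the limit splits a line, the quotient by the line is the ray, and so the limit is a halfplane. Its reference measure is $c\,t^{\alpha}\,dw\,dt$, and the $o(r^2)$ growth forces $\alpha=0$.

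\textbf{Step 4: rigidity on $Y$ itself.} This requires a monotone quantity that is constant on $Y$. Having $\Delta\rho\ge0$ (or the corresponding sign) and $\phi$ nondecreasing with a constant limiting ratio, the equality case gives $\Delta\rho=0$ and $|\nabla\rho|=1$. Gigli's splitting theorem then yields $Y\cong\R_\rho\times X$ locally, i.e.\ a product of $[0,\infty)$ with the level set. The level set carries a free transitive $\R$-action, so it is a line. Hence $Y$ is $\R\times[0,\infty)$, and the metric is Euclidean because the line factor has a transitive isometric $\R$-action.

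I expect Step 2, the sign and equality analysis of $\Delta\rho$ near the boundary orbit $\rho=0$, to be the main difficulty. This is where the sharp exponent $2$ has to be used, rather than the linear hypothesis of \cite{NPZ}*{Theorem 1.8}.
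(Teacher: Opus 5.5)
There is a genuine gap: nothing in your argument turns hypothesis (2) into information about the \emph{metric} along the orbits. You use (2) only on the orbit density $\phi$, to get $\phi(t)=o(t)$. The monotonicity of $\phi$, which you call the heart of the argument, is automatic: after passing to $Y/\Z$, $\phi$ is a multiple of the density of the quotient measure on the ray, which is a $\CDe(0,N)$ density. A density cannot see the orbit width $\ell(t)$.

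The geometric claim carrying Step 3 is that the blow-down orbit of $y$ is a line because $d(\gamma(t),w\cdot\gamma(t))$ is of order $|w|$. You assert this without proof and without using (2), and it fails for Example~\ref{exmp:hp_2_sharp}. There $g=\di r^2+r^{-2\alpha}\di v^2$ and $\meas=r\,\di r\,\di v$. The dilations $(r,v)\mapsto(\lambda r,\lambda^{1+\alpha}v)$ scale $d$ by $\lambda$, so $d(y,w\cdot y)=c\,|w|^{1/(1+\alpha)}$ is sublinear. The space is its own asymptotic cone, and the orbit $\{r=0\}$ is not a line. That example satisfies (1) and your Steps 1--2, and violates (2) only at the borderline $\meas(\Omega_r)=r^2$. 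So the step where (2) forces the orbit to straighten is exactly what is missing.

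Steps 3--4 also aim at false statements. The halfplanes $\R\times[0,\infty)$ with $\meas=\di w\otimes t^{\alpha}\di t$, $0<\alpha<1$, $N\ge 3$, satisfy all the hypotheses. So (2) does not force $\alpha=0$. In those examples $\Delta_\meas\rho=\alpha/t\neq 0$, so there is no equality case $\Delta\rho=0$ to exploit. Moreover, Gigli's theorem needs a line, and $\rho$ only ranges over $[0,\infty)$; the line has to be the orbit $Gy$ itself.

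The missing ingredient is an inequality coupling the orbit width to the density. The paper passes to $Y/\Z$, an $\RCD(0,N)$ space with $S^1$-action and ray quotient. It shows that on $\{r>0\}$ this is a warped product $\di r^2+f(r)^2\di\theta^2$ with $\meas=\rho(r)\,\di r\,\di\theta$ and $f\in C^0\cap W^{1,2}_{loc}$. It then computes the distributional Bakry--\'Emery Ricci tensor (Mondino--Ryborz) to obtain, with $b=(\ln f)'$,
\[
-(\rho b)'\ge 0,\qquad \rho^2 b^2\le -\rho''\rho+(\rho')^2
\]
as measures. The argument then runs as follows:
\begin{enumerate}
\item Hypothesis (2) gives $\rho(r)/r\to 0$, hence $\rho'\to 0$.
\item A summation argument then gives intervals $[r_i-2,r_i+2]$, $r_i\to\infty$, on which the measure $-\rho''\rho+(\rho')^2$ tends to $0$, so $\rho b(r_i)\to 0$.
\item Since $\rho b$ is non-increasing, $b\ge 0$, i.e.\ $f$ is non-decreasing. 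This is exactly the monotonicity of your $\ell(t)$.
\item Monotone $f$ is what makes the orbit a line. A minimal geodesic from $(r_0,0)$ to $(r_0,k)$ stays in $\{r\le r_0\}$; otherwise one shortcuts along the level circle in $Y/\Z$.
\item Letting $r_0\to 0$ and recentering by $G$ produces a line equal to $Gy$, after which Gigli's splitting theorem gives the halfplane.
\end{enumerate}
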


\begin{thm}[Sharp ray rigidity]\label{thm:intro_ray}
   Let $(Y,y,d,\mathfrak{m})$ be an $\RCD(0,N)$ space. Suppose that\\
   (1) $Y$ has an isomorphic $S^1$-action, where $S^1$ fixes $y$, such that the quotient metric space $(Y/S^1,\bar{y})$ is isometric to a ray $([0,\infty),0)$;\\
   (2) the measure $\meas$ satisfies 
   $$\lim_{r\to 0^+}\dfrac{\meas (B_r(y))}{r^2}=\infty.$$
   Then the metric space $(Y,d)$ is isometric to a ray and the $S^1$-action on $Y$ is trivial.
\end{thm}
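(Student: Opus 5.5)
The plan is to prove the contrapositive: assume the $S^1$-action is nontrivial, and show that $\meas(B_r(y))\le Cr^2$ for small $r$. Since $Y/S^1$ is a ray $[0,\infty)$ and $S^1$ fixes $y$, every point of $Y$ is $g\cdot\gamma(t)$ for some $g\in S^1$ and $t\ge0$, where $\gamma$ is a lift of the ray starting at $y$. The distance function $d_y$ is $S^1$-invariant and descends to the coordinate $t$ on the ray. The orbits $S^1\cdot\gamma(t)$ are exactly the level sets $\partial B_t(y)$.

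First I treat the trivial case. If the action is trivial on $Y$, then $Y$ is isometric to $Y/S^1$, which is a ray, and we are done. So suppose some orbit is nontrivial. Since the action is isometric, the set of $t$ with $S^1\cdot\gamma(t)$ a point is closed. I claim it is either all of $[0,\infty)$ or just $\{0\}$; this uses an RCD non-branching argument. Geodesics from $y$ are $\gamma$ composed with rotations, and if $\gamma(t_0)$ were fixed but $\gamma(t_1)$ were not for some $t_1>t_0$, then the geodesics $g\gamma$ would branch at $\gamma(t_0)$. This contradicts essential non-branching, since the branching set would have positive measure in the needle decomposition of $d_y$. I would phrase this via the disintegration of $\meas$ along the rays of $d_y$ (the localization/needle decomposition). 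So we may assume the orbit of $\gamma(t)$ is a nontrivial circle for every $t>0$.

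Next comes the measure structure. Using the $S^1$-invariance, I would take the measure to be invariant; if not, I would average over $S^1$, since the action is by measure-preserving isomorphisms in the paper's sense. The localization along $d_y$ then gives $\meas=\int_{S^1}h(t)\,dt\,d\nu(g)$, where the needles are the curves $g\gamma$ and the conditional densities $h$ are $\mathrm{CD}(0,N)$ densities on $[0,\infty)$. Each $h^{1/(N-1)}$ is concave on $[0,\infty)$ and is the same for all needles by invariance. A concave nonnegative function on $[0,\infty)$ is nondecreasing, hence $h(t)\le h(1)$ for $t\le1$. This gives $\meas(B_r(y))=\nu(S^1)\int_0^r h\le C r$. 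That bound is only linear, not quadratic, so it is too weak; the extra factor of $r$ must come from the geometry of the circle.

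The main obstacle is upgrading $Cr$ to $Cr^2$, and this has to use the $S^1$ structure. My first attempt is to show that the orbit length $L(t)$ of $\gamma(t)$ is comparable to $t$, i.e.\ $L(t)\le ct$, by the triangle inequality through $y$. The transverse measure $\nu$ on the set of needles should then scale: the needles through $\partial B_t(y)$ have total "angular" weight proportional to $L(t)/t$ times a fixed constant. Concretely, I would show that $h$ vanishes at $0$ to first order, so that $h(t)\le Ct$ near $0$. The reason is that $y$ is a fixed point with a nontrivial rotation, so the tangent cone at $y$ carries an $S^1$-action with quotient a ray. That tangent cone is a metric cone over the circle orbit, which forces cone dimension at least $2$. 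Combining this with Bishop–Gromov monotonicity of $\meas(B_r(y))/r^{2}$, which I would derive from the concavity of $h^{1/(N-1)}$ together with $h(0)=0$, yields $\meas(B_r(y))\le C r^2$. This contradicts hypothesis (2), so the action must be trivial and $Y$ is a ray. The delicate point, where I would spend the most effort, is making rigorous that a nontrivial isotropy circle at $y$ forces $h(0)=0$ with linear vanishing. The route I would try first is a blow-up at $y$: the rescaled tangent cones would be cones with a circle action whose measure is at least 2-dimensional in $r$, and then the contrapositive closes.
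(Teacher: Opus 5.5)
Your preliminary reductions are fine and match the paper: the trivial case, non-branching to get a free action on $\{t>0\}$, invariance of $\meas$, and the product form $\meas=h(t)\,\di t\,\di\nu$ with $h$ a $\CDe(0,N)$ density. The gap is the central step. You need that a nontrivial circle action at $y$ forces $h(t)\le Ct$, and this is asserted rather than proved. Each justification you offer fails.

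\begin{itemize}
\item The triangle inequality through $y$ gives $d(\gamma(t),g\gamma(t))\le 2t$. That bounds the \emph{extrinsic diameter} of the orbit, not its length. In the paper's language the orbit length is $2\pi f(t)$, where $f$ is the warping function of $\di r^2+f^2(r)\di\theta^2$. Excluding $f(t)/t\to\infty$ is precisely the last, genuinely geometric step of the paper's proof. There, radial curves through the tip become minimizing between antipodal points. This contradicts H\"older continuity of tangent cones along geodesics if $y$ is singular, or the $(\R^2,S^1)$ blow-up if $y$ is regular.
\item The transverse measure $\nu$ is Haar measure on $S^1$ and does not depend on $t$. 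So no factor $L(t)/t$ can enter.
\item Concavity of $h^{1/(N-1)}$ with $h(0)=0$ only says $h(t)/t^{N-1}$ is nonincreasing. That is the \emph{lower} bound $h(t)\ge h(1)t^{N-1}$. Bishop--Gromov controls $\meas(B_r(y))/r^N$, not $\meas(B_r(y))/r^2$. For instance, $h(t)=t^{1/2}$ is an admissible $\CDe(0,N)$ density with $h(0)=0$ and $h(t)/t\to\infty$.
\end{itemize}

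The blow-up route cannot close the gap either. In a collapsed $\RCD$ space a tangent cone need not be a metric cone, and the limit of the $S^1$-action may a priori be trivial. More decisively, a tangent cone only carries the renormalized limit of $\meas/\meas(B_{r_i}(y))$, so it records only the ratios $\meas(B_{r_is}(y))/\meas(B_{r_i}(y))$. Consider a density with $\meas(B_r(y))\sim r^2\ln(1/r)$. It is admissible as a $\CDe(0,N)$ density (for $N>2$), it satisfies hypothesis (2), and it violates your target bound. Yet every tangent cone at $y$ sees exactly quadratic measure growth, indistinguishable from $(\R^2,\mathcal{L}^2)$.

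So you need a quantitative, multi-scale coupling between the orbit geometry and the radial density, and your proposal has none. The paper supplies it in three steps:
\begin{itemize}
\item It shows that $Y_+$ is a $C^0\cap W^{1,2}_{loc}$ warped product with measure $\rho(r)\di r\di\theta$.
\item It uses Mondino--Ryborz to obtain the distributional inequality $-(\ln\rho)''\ge b^2$, where $b=(\ln f)'$.
\item A Cauchy--Schwarz integration over $[r,1]$ then gives $\ln f(1)-\ln f(r)\le\sqrt{-\ln r}\,\sqrt{C-\ln\rho(r)}$. This turns $\rho(r)/r\to\infty$ into $f(r)/r\to\infty$, which is ruled out geometrically as described above.
\end{itemize}
Note also that your contrapositive target, $\meas(B_r(y))\le Cr^2$ for all small $r$, is stronger than what the paper proves. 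The paper uses the full limit in (2) to get $f(r)/r\to\infty$ on whole intervals near $0$, which its final geometric step needs.
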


\begin{rem}
In both theorems above, the degree $2$ in the condition (2) is sharp. 
See Example \ref{exmp:hp_2_sharp} and Remark \ref{rem:ray_2_sharp}.
\end{rem}

We point out that the proofs of these new rigidity results are distinct from the linear case. The method in \cite{NPZ}*{Proposition 4.2} relies on the local functional splitting for $\RCD$ spaces (see \cites{KKL23,Ketterer_23} after \cite{Gigli13}), and this method no longer applies here because $\meas(\Omega_r)$ is not linear. To prove these improved rigidity results, we recover a continuous and $W^{1,2}_{loc}$ Riemannian metric on the an open and dense subset of $Y$ and apply a distributional Bakry--Em\'ery Ricci curvature computation. The usage of distributional Ricci curvature is inspired by the work of Mondino--Ryborz \cite{MondinoRyborz}. The readers may refer to Section \ref{subsec:halfplane_statements} for an outline of this approach. 

\begin{rem}
  While preparing this manuscript, we became aware of a recent preprint by Cucinotta, Magnabosco, and Semola \cite{CMS26}, in which a rigidity result for equivariant asymptotic cones is established under a different setting; see \cite{CMS26}*{Theorem 3.5}. Although their rigidity result overlaps with ours, and both proofs rely on distributional lower Ricci curvature bounds from \cite{MondinoRyborz}, our work is carried out independently and does not depend on the results in \cite{CMS26}.
\end{rem}

\noindent\textbf{Organization of the paper.} In Section \ref{sec:pre}, we recall the concepts and key results about $\RCD$ spaces and asymptotic cones of complete manifolds with $\Ric\ge 0$. Section \ref{sec:circle ray} is the technical part of the paper, where we study an $\RCD(0,N)$ space $(Y,d,\meas)$ with isomorphic $S^1$-action whose quotient metric space is isometric to a ray. Section \ref{sec:circle ray} concludes with nonnegative distributional Bakry--Em\'ery Ricci curvature on an open and dense subset of $Y$. The reader may refer to Section \ref{subsec:halfplane_statements} for an outline of this section. In the following Section \ref{sec:rigidity}, we use the distributional Ricci computation to derive the rigidity results Theorems \ref{thm:rigid_p/h}, \ref{thm:intro_halfplane}, and \ref{thm:intro_ray}. Then we use Theorem \ref{thm:rigid_p/h} to prove Theorems \ref{mainthm:vir_abel} and \ref{mainthm:finite} in Section \ref{sec:proof}.

\noindent\emph{Acknowledgments.} J. Pan is partially supported by the National Science Foundation DMS-2304698 and Simons Foundation Travel Support for Mathematicians. D. Navarro and X. Zhu are partially supported by the AMS-Simons Travel Grant.

\section{Preliminaries}\label{sec:pre}
\subsection{RCD spaces}

The purpose of this section is to introduce metric measure spaces satisfying the $\RCD(0,N)$ condition, which will play an essential role in the sequel. Indeed, our approach involves blow-downs (also called asymptotic cones) of complete Riemannian manifolds with nonnegative Ricci curvature. Such blow-downs, equipped with a limit renormalized measure, satisfy the $\RCD(0,N)$ condition and, as a result, support a splitting theorem and a well-established second-order differential calculus.

In what follows, a \emph{metric measure space} (m.m.s. for short) is a triple $(Y,d,\meas)$ such that $(Y,d)$ is a proper, complete, separable, and geodesic metric space, and $\meas$ is a full-support nonnegative Radon measure on $Y$.

First, let us define $\mathrm{CD}(0,N)$ spaces, which were introduced independently by Lott and Villani in \cite{Lott-Villani_09} and Sturm in \cites{Sturm_I_06,Sturm_II_06}.

\begin{defn}[$\mathrm{CD}(0,N)$ spaces]
    Let $N\in (1,\infty)$. A m.m.s. $(Y,d,\meas)$ satisfies the \emph{$\mathrm{CD}(0,N)$ condition} if, given any pair of probability measure $\mu_0,\mu_1$ that are absolutely continuous w.r.t. $\meas$, there exists a $W_2$-geodesic $\{\mu_t\}_{0\le t\le1}$ from $\mu_0$ to $\mu_1$ such that, for every $N'\ge N$, we have the following property:
        \begin{equation*}
         \mathcal{S}_{N'}(\mu_t\mid\meas)\le t\mathcal{S}_{N'}(\mu_1\mid\meas)+(1-t)\mathcal{S}_{N'}(\mu_0\mid\meas),\quad \forall 0\le t\le 1,
    \end{equation*}
    where $\mathcal{S}_{N'}(\cdot\mid\meas)$ denotes the R\'{e}nyi entropy with parameter $N'$ associated with $\meas$.
\end{defn}

\begin{rem}
    In \cite{Lott-Villani_09}, Lott and Villani also introduced the $\mathrm{CD}(K,\infty)$ condition ($K\in\R$) for $\sigma$-finite space, while Sturm introduced the $\mathrm{CD}(K,N)$ condition ($K\in\R$, $N\in(1,\infty)$) for probability spaces. Metric measure spaces satisfying the $\mathrm{CD}(K,N)$ can be thought of as possibly singular spaces with Ricci curvature at least $K$ and dimension at most $N$ in a synthetic sense. We only introduce the simpler $\mathrm{CD}(0,N)$ condition since it will be sufficient for our purpose.
\end{rem}
Let us introduce Sobolev spaces via test plans and minimal upper gradient, following \cite{gigli_nonsmooth}. We first recall the notion of metric derivative for curves in a metric space $(Y,d)$:
\begin{defn}[Metric derivative, \cite{AmbrosioTilli_book}*{Section 4.1}]
    Given a curve $\gamma:[0,1]\to (Y,d)$ and $t\in[0,1]$, the metric derivative with respect to reference metric $d$ of $\gamma$ at $t$ is defined as
$$|\dot\gamma(t)|_d=\lim\limits_{s\to 0} \dfrac{d(\gamma(t+s),\gamma(s))}{|s|}.$$
The limit exists $\mathcal{L}^1$-a.e. on $[0,1]$.  
\end{defn}

\begin{defn}[Test plan]
    Let $e_t:C([0,1], Y)\to Y$, $e_t(\gamma)=\gamma(t)$ be the evaluation map, $t\in [0,1]$, $\bm{\pi}\in \mathcal{P}(C([0,1],Y))$ a probability measure on $C([0,1], Y)$. We say that $\bm{\pi}$ is a test plan if there exists a constant $C(\bm{\pi})>0$ such that
    \[
    (e_t)_{\sharp}\bm{\pi}\le C(\bm{\pi}) \meas, \quad \forall t\in [0,1],
    \]
    and 
    \[
    \int\int_0^1|\dot\gamma(t)|_d^2\, \di t\di\bm{\pi}(\gamma)<\infty.
    \]
    If $\gamma$ is not absolutely continuous we use the convention that $\int_0^1|\dot\gamma(t)|_d^2\, \di t=\infty$.
\end{defn}

\begin{defn}[Minimal weak upper gradient]
    Given a $\meas$-measurable function $f:Y\to \R$, a $\meas$-measurable function $G:Y\to [0,\infty]$ is called a weak upper gradient of $f$ if
    \[
    \int \left|f(\gamma(1))-f(\gamma(0))\right|\, \di\bm{\pi}(\gamma)\le \int \int_0^1G(\gamma(t))|\dot\gamma(t)|\,\di t\di\bm{\pi}(\gamma),\text{ for all test plans $\bm{\pi}$ }
    \]
    Furthermore, we say that $f$ is in the Sobolev class $S^2(Y,d,\meas)$ if it admits a weak upper gradient $G\in L^2(Y,\meas)$. We say that $G$ is a minimal weak upper gradient if $|G|\le |\tilde G|$ holds $\meas$-a.e. for any weak upper gradient $\tilde G$. The existence of a minimal weak upper gradient of $f$ is established in \cite{ambrosio_calculus_2014}*{Definition 5.11}. We denote the minimal weak upper gradient of $f$ by $|Df|_w$.
\end{defn}

\begin{defn}[Sobolev space $W^{1,2}$]
    We define the Sobolev space $W^{1,2}(Y,d,\meas)$ as the space $ L^2(Y,\meas)\cap S^2(Y,d,\meas)$ equipped with the norm 
    \[
    \|f\|_{W^{1,2}}^2\defeq \|f\|_{L^2}^2+\||Df|_w\|_{L^2}^2,
    \]
    which turn $W^{1,2}(Y,d,\meas)$ into a Banach space. To simplify notations, we write the Sobolev space as $W^{1,2}(Y)$ when the reference metric and measure are clear from the context.
\end{defn}

\begin{defn}[Locally Sobolev space $W^{1,2}_{loc}$]
   Let $\Omega\subset Y$ be an open subset. We say that $f\in L^2_{{loc}}(\Omega,\meas)$ belongs to $W^{1,2}_{loc}(\Omega,d,\meas)$ if
$\phi f \in W^{1, 2}(Y, d, \meas)$ for any $\phi \in\Lip_c(\Omega, d)$.
\end{defn}

While Ricci limit spaces satisfy the $\mathrm{CD}$ condition, it is also the case of certain non-Riemannian spaces such as non-Euclidean normed vector spaces and, more generally, certain non-Riemannian Finsler spaces \cite{Ohta_09}. To stay as close as possible to the Riemannian situation, Ambrosio--Gigli--Savar\'{e} introduced $\RCD(K,\infty)$ spaces in \cite{Ambrosio-Gigli-Savare_14} as $\mathrm{CD}(K,\infty)$ spaces whose Cheeger energy is a quadratic form (see also \cite{erbar_equivalence_2015} for the finite dimensional case).

\begin{defn}[Cheeger energy]
    The \emph{Cheeger energy} $\Ch\colon L^2(Y,\meas)\to [0,\infty]$ is defined by
    \begin{equation}\label{eq:defchee}
        \Ch(f)\defeq\begin{cases}
            \frac12\int_Y |Df|_w^2\,\di\meas &\text{if $f$ admits a weak upper gradient in $L^2(Y,\meas)$}\\
            \infty &\text{otherwise}
        \end{cases}.
    \end{equation}
\end{defn}

\begin{defn}[Infinitesimal Hilbertianity \cites{Ambrosio-Gigli-Savare_14,gigli_diff_2015} and $\RCD(0,N)$ spaces]
    A m.m.s. $(Y,d,\meas)$ is called infinitesimally Hilbertian if $\Ch$ is a quadratic form or equivalently $W^{1,2}(Y)$ is a Hilbert space. An $\RCD(0,N)$ space ($N\in(1,\infty)$) is an infinitesimally Hilbertian $\mathrm{CD}(0,N)$ space.
\end{defn}

The $\RCD(0,N)$ condition is stable under (pointed) measured Gromov--Hausdorff convergence (see \cite{GMS15}). In particular, for an open manifold $M$ with $\Ric\ge 0$, any asymptotic cone of $M$ with any limit renormalized measure is $\RCD(0,N)$. Furthermore, $\RCD(0,N)$ spaces satisfy the splitting theorem established by Gigli in \cites{Gigli13, Gigli14}.

\begin{thm}\label{thm:meas_split}\cites{Gigli13,Gigli14}
If $(Y,\dist,\meas)$ is an $\RCD(0,N)$ space (where $N\in(1,\infty)$) that contains a line, then there exists a m.m.s. $(Y',\dist',\meas')$ such that $(Y,\dist,\meas)$ is isomorphic to $(Y',\dist',\meas')\otimes (\R,d_E,\mathcal{L}^1)$, where:\\
$\bullet$ $(Y',\dist',\meas')$ is an $\RCD(0,N-1)$ space when $N\ge 2$,\\
$\bullet$ $(Y',\dist',\meas')$ is a point when $N<2$,\\
and $\R$ is equipped with Euclidean distance $d_E$ and Lebesgue measure $\mathcal{L}^1$.
\end{thm}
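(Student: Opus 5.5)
The plan follows Gigli's strategy: build a harmonic function with unit gradient from the line, show its gradient flow acts by measure-preserving isometries, and read off the product structure.

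\emph{Busemann functions.} Let $\gamma:\R\to Y$ be the line, and set
$$b^\pm(x)=\lim_{t\to\infty}\bigl(t-\dist(x,\gamma(\pm t))\bigr).$$
These are $1$-Lipschitz, and the triangle inequality gives $b^++b^-\le 0$ everywhere, with equality along $\gamma$. The Laplacian comparison for distance functions in $\RCD(0,N)$ spaces gives
$$\Delta \dist(\cdot,\gamma(\pm t))\le \frac{N-1}{\dist(\cdot,\gamma(\pm t))}$$
in the sense of measures. Letting $t\to\infty$ yields $\Delta b^\pm\ge 0$, so $b^++b^-$ is subharmonic. It is $\le 0$ and attains its maximum $0$ on $\gamma$. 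The strong maximum principle, available via the doubling and Poincar\'e properties of $\RCD(0,N)$ spaces, then forces $b^++b^-\equiv 0$. Hence $b\defeq b^+$ is harmonic with $|Db|_w=1$ $\meas$-a.e.

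\emph{Vanishing Hessian and isometric flow.} Apply the improved Bochner inequality, which comes from infinitesimal Hilbertianity together with $\CDe(0,N)$, to $b$. Since $|Db|_w^2\equiv1$ and $\Delta b=0$, the left side of
$$\tfrac12\Delta|Db|_w^2-\langle \nabla b,\nabla\Delta b\rangle\ge |\hess b|_{HS}^2$$
vanishes, so $\hess b=0$. The gradient flow $F_t$ of $\nabla b$ (a regular Lagrangian flow) is therefore measure preserving, since $\Delta b=0$. It also preserves $|Df|_w$ under pullback, since $\hess b=0$. By the Sobolev-to-Lipschitz property, each $F_t$ is an isometry of $Y$, and $b\circ F_t=b+t$.

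\emph{Product structure.} Set $Y'\defeq b^{-1}(0)$ with the restricted distance $\dist'$, and define $\Phi:Y'\times\R\to Y$ by $\Phi(x,t)=F_t(x)$. Then:
\begin{itemize}
\item $\Phi$ is a bijection, because the flow lines are lines on which $b$ increases at unit speed.
\item $\Phi$ is an isometry for the product metric, i.e.\ $\dist^2=\dist'^2+|t-s|^2$. I would show this by proving that the Cheeger energy splits: functions of $b$ and functions invariant under the flow are orthogonal in the Hilbert structure. One then recovers the distance from the energy using the Sobolev-to-Lipschitz property again.
\item $\Phi^{-1}_\sharp\meas=\meas'\otimes\mathcal L^1$ for some measure $\meas'$ on $Y'$. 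This follows by disintegrating $\meas$ along $b$ and using that the $F_t$ preserve $\meas$.
\end{itemize}

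\emph{Dimension reduction.} Finally, $(Y',\dist',\meas')$ is $\RCD(0,N-1)$ when $N\ge2$. The reason is that the $\CDe(0,N)$ condition on a product with an $\R$ factor reduces the dimension parameter by one; this is obtained by testing the entropy convexity on measures of product form $\mu\otimes\nu$, with $\nu$ on the $\R$ factor. When $N<2$, the same argument together with the Bishop--Gromov volume bound $\meas(B_r)\lesssim r^N$ with $N<2$ forces $Y'$ to be a point.

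I expect the main obstacle to be the metric splitting step: upgrading the analytic information $\hess b=0$ to the Pythagorean identity for $\dist$. This is where infinitesimal Hilbertianity is used essentially, and it requires careful nonsmooth calculus.
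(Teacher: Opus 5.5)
The paper gives no proof of this statement; it only quotes Gigli. Your outline follows Gigli's strategy, updated to use the Hessian form of Bochner's inequality and regular Lagrangian flows. The analytic part, up to ``$F_t$ is a measure-preserving isometry with $b\circ F_t=b+t$'', is sound. For $|Db|_w=1$ you should add the reason: through every point there is a ray along which $b$ grows at unit speed, so $\lip b\equiv1$, and $|Db|_w=\lip b$ $\meas$-a.e.

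\textbf{The gap is the metric splitting.} Sobolev-to-Lipschitz on $Y$ turns bounds on $|Df|_w$ into Lipschitz bounds, so it can only bound $\dist$ from \emph{below}. Concretely, let $\psi$ be the distance to a flow line $\{F_sw\}_{s\in\R}$. It is $1$-Lipschitz and flow-invariant, hence $\langle\nabla\psi,\nabla b\rangle=0$. For $\alpha^2+\beta^2=1$ this gives $|D(\alpha b+\beta\psi)|_w\le1$, and therefore
$$\dist(z,w)^2\ge|b(z)-b(w)|^2+\min_{s\in\R}\dist(z,F_sw)^2 .$$
That is only half of Pythagoras. The other half, $\dist(z,F_sw)^2\le\dist(z,w)^2+s^2$ when $b(z)=b(w)$, is an \emph{upper} bound on $\dist$. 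It needs competitors (curves or couplings), and no energy identity combined with Sobolev-to-Lipschitz on $Y$ supplies them. You could instead apply Sobolev-to-Lipschitz on the model $Y'\times\R$. But that presupposes Sobolev-to-Lipschitz and tensorization of the energy for $(Y',\dist',\meas')$, which you only know once $Y'$ is shown to be $\RCD$, i.e.\ after the splitting.

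\textbf{What is missing} is a first-variation argument showing that $s\mapsto\dist^2(z,F_sw)$ is exactly quadratic. This is where you need $b$ to be affine along geodesics, which is the metric content of $\hess b=0$. The cleanest version works on the transport side:
\begin{enumerate}
\item Take $\mu,\nu$ with bounded densities and bounded supports, and let $(\eta_\tau)$ be the $W_2$-geodesic from $(F_s)_\sharp\nu$ to $\mu$.
\item By the first-order differentiation formula, for a.e.\ $s$ the derivative $\frac{\di}{\di s}\frac12W_2^2\bigl(\mu,(F_s)_\sharp\nu\bigr)$ equals minus the initial derivative of $\tau\mapsto\int b\,\di\eta_\tau$.
\item Since $\hess b=0$, that map is affine by the second-order differentiation formula. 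So the derivative equals $s+\int b\,\di\nu-\int b\,\di\mu$.
\item Integrating gives
$$W_2^2\bigl(\mu,(F_s)_\sharp\nu\bigr)=W_2^2(\mu,\nu)+2s\Bigl(\int b\,\di\nu-\int b\,\di\mu\Bigr)+s^2 .$$
Approximating Dirac masses then yields both inequalities at once.
\end{enumerate}
With this in place, your bijection $\Phi$ and the measure splitting go through as written.

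\textbf{A smaller point: the dimension drop.} The product-measure test works with the Boltzmann-entropy formulation $\mathrm{CD}^e(0,N)$ of Erbar--Kuwada--Sturm, which is equivalent to the R\'enyi one for $\RCD$ spaces. There, rescaling the length of the $\R$-marginal turns concavity of $e^{-\mathrm{Ent}/N}$ into concavity of $e^{-\mathrm{Ent}/(N-1)}$. With the R\'enyi functionals $\mathcal S_{N'}$ used in the paper, the exponent inside the integral stays $1-1/N'\ge1-1/N$. So the same test does not directly give $\CDe(0,N-1)$.
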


In addition, $\RCD(0,N)$ spaces support a second-order differential calculus. Let us recall the definition of the Laplacian and refer the reader to \cites{gigli_diff_2015,gigli_nonsmooth} for a comprehensive study.

\begin{defn}[Domain of the Laplacian $D(\Delta)$]
    Let $(Y,d,\meas)$ be an $\RCD(0,N)$ space ($N\in(1,\infty)$). A Sobolev function $\psi\in W^{1,2}(Y)$ is in the domain of the Laplacian $D(\Delta)$ if there exists $h\in L^2(Y)$ such that, for every $\phi\in \Lip_c(Y)$, we have
    $$
    \int_Y\langle \nabla \phi,\nabla \psi\rangle\di\m=-\int_Y\phi h\di\mathfrak{m},
    $$
    where $\langle \nabla \phi,\nabla \psi\rangle\defeq\frac{1}{4}(|D(\phi+\psi)|_w^2-|D(\phi-\psi)|_w^2)$. 
\end{defn}

Finally, let us define the rectifiable dimension of an $\RCD(0,N)$ spaces established in \cite{BrueSemola20}.

\begin{defn}[Rectifiable dimension]
    Let $(Y,d,\meas)$ be an $\RCD(0,N)$ space ($N\in(1,\infty)$). Given $k \in \mathbb{N}$, we define the $k$-dimensional regular set $\mathcal{R}_k(Y)$ as the set of points with the unique tangent cone $\R^k$. There exists some integer $k \le N$ such that $\meas(Y \setminus \mathcal{R}_k(Y))=0$ by Bru\'e--Semola \cite{BrueSemola20}. We call this $k$ the rectifiable dimension of $(Y,d,\meas)$.
\end{defn}

\subsection{Equivariant asymptotic cones and escape rate}

Given an $n$-manifold $M$ of $\Ric\ge 0$ and a sequence $r_i\to\infty$, after passing to a subsequence, we have pointed Gromov-Hausdorff convergence
$$(r_i^{-1}M,p)\overset{GH}\to (X,x).$$
We call $(X,x)$ an \textit{asymptotic cone} of $M$. In general, asymptotic cones of $M$ are not unique. The space $X$ naturally carries a limit renormalized measure. More precisely, for every $i\in\N$, we can define the renormalized measure $\meas_i$ on $r_i^{-1}M$ by:
\begin{equation*}
        \meas_i= \frac{\dvol}{\vol(B_{r_i}(p))}.
\end{equation*}
A \emph{limit renormalized measure} on $X$ is any measure $\meas$ on $X$ such that (passing to a subsequence if necessary) $\{(r_i^{-1}M,p,\meas_i)\}$ converges to $(X,x,\meas)$ in the (pointed) measure Gromov--Hausdorff topology. With any limit renormalized measure $\meas$, the metric measure space $(X,d,\meas)$ is an $\RCD(0,n)$ space.

Fukaya--Yamaguchi \cites{Fukaya_87,FY92} introduced the notion of equivariant Gromov--Hausdorff convergence to study the structure of fundamental groups of manifolds with uniform curvature constraints. In our context, we can use equivariant Gromov--Hausdorff convergence to study the fundamental group or covering group action on an open manifold with $\Ric\ge 0$ by blowing down the metric. Let $\widehat{M}\to M$ be a Riemannian (normal) covering map with covering group $\Gamma$, where $M$ has $\Ric\ge 0$. For any sequence $r_i\to\infty$, after passing to a subsequence, we can obtain equivariant Gromov--Hausdorff convergence
\begin{equation*}
\begin{CD}
(r_i^{-1} \widehat{M},\hat{p},\Gamma) @>GH>> (Y,y,G) \\
	@VV\pi V @VV \pi V\\
	(r_i^{-1} M,p) @>GH>> (X,x)=(Y/G,\bar{y}).
\end{CD}
\end{equation*}
We call $(Y,y,G)$ an equivariant asymptotic cone of $(\widehat{M},\Gamma)$. The limit group $G$ is a closed subgroup of the isometry group of $Y$, which is a Lie group \cite{CoNa12}. Thus $G$ itself is a Lie group. If $Y$ carries a limit renormalized measure, then $G$-action is also measure-preserving.




The notion of escape rate was first introduced by the second-named author in \cite{Pan21}; it is a quantity that measures how fast the minimal representing loops in $\pi_1(M)$ escape from bounded balls. Later, this notion was extended to any metric space with an isometric action in \cite{Pan23}*{Definition 2.17}. For covering group actions, we have:

\begin{defn}
    Let $(\widehat{M},\hat{p})\to (M,p)$ be a Riemannian (normal) covering map with an infinite covering group $\Gamma$. For each $\gamma\in \Gamma$, let $\sigma_\gamma$ be a minimal geodesic from $\hat{p}$ to $\gamma\cdot \hat{p}$. We define the escape rate of $(\widehat{M},\hat{p},\Gamma)$ by $$E(\widehat{M},\hat{p},\Gamma)=\limsup_{|\gamma|\to\infty} \dfrac{\mathrm{size}(\sigma_\gamma)}{|\gamma|},$$
   where $|\gamma|=d(\hat{p},\gamma \hat{p})=\mathrm{length}(\sigma_\gamma)$ and $\mathrm{size}(\sigma_\gamma)=\inf\{R>0 \mid \sigma_\gamma \subseteq \overline{B_R}(\Gamma\cdot \hat{p}) \}$. When choosing the minimal geodesic $\sigma_\gamma$, if there are multiple minimal geodesics from $\hat{p}$ to $\gamma \cdot \hat{p}$, we choose the one with the smallest size.
\end{defn}

Because $\mathrm{size}(\sigma_\gamma)\le |\gamma|/2$ for all $\gamma\in \Gamma$, it always holds that $E(\widehat{M},\hat{p},\Gamma)\le 1/2$.

The equivariant asymptotic geometry of $(\widehat{M},\Gamma)$ when the escape rate is non-maximal (i.e., $\not=1/2$) is well-understood thanks to the works \cites{Pan23,NPZ}. Below, we collect some of the results from \cites{Pan23,NPZ}.

\begin{prop}\cite{NPZ}*{Proposition 3.2}\label{prop:non_max_escape_rate}
Let $(M,p)$ be a complete manifold with $\Ric\ge 0$ and let $(\widehat{M},\hat{p})$ be a covering space of $(M,p)$ with covering group $\Gamma$. Suppose that $M$ is polar at infinity, then $E(\widehat{M},\hat{p},\Gamma)<1/2$.
\end{prop}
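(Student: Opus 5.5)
We argue by contradiction, using the escape-rate and equivariant asymptotic cone machinery. Suppose $E(\widehat{M},\hat{p},\Gamma)=1/2$. Then there is a sequence $\gamma_i\in\Gamma$ with $r_i:=|\gamma_i|\to\infty$ such that every minimal geodesic $\sigma_{\gamma_i}$ from $\hat p$ to $\gamma_i\hat p$ satisfies $\mathrm{size}(\sigma_{\gamma_i})/r_i\to 1/2$. Rescale by $r_i^{-1}$ and pass to a subsequence. We obtain equivariant convergence $(r_i^{-1}\widehat{M},\hat p,\Gamma)\to(Y,y,G)$ with quotient $(X,x)=(Y/G,\bar y)$, an asymptotic cone of $M$. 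In addition, $\gamma_i\to g\in G$ with $d(y,gy)=1$, and $\sigma_{\gamma_i}$ converges to a minimal geodesic $\sigma$ from $y$ to $gy$. This $\sigma$ satisfies $d(\sigma(1/2),G\cdot y)=1/2$, so its midpoint stays at distance $1/2$ from the orbit.

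Next we project to $X$. The image $\bar\sigma=\pi\circ\sigma$ is a loop of length $1$ based at $x$. Its midpoint $\bar\sigma(1/2)$ satisfies $d(\bar\sigma(1/2),x)=d(\sigma(1/2),Gy)=1/2$. Hence $\bar\sigma$ consists of two segments of length $1/2$ from $x$ to the point $z=\bar\sigma(1/2)$ with $d(x,z)=1/2$, and both halves are minimal geodesics in $X$. Moreover, their lifts are distinct: the first half lifts to $\sigma|_{[0,1/2]}$, and the second lifts to $g\cdot\overline{\sigma|_{[1/2,1]}}$ reversed. These two lifts give minimal geodesics from $Gy$ to $\sigma(1/2)$ that realize the distance to the orbit and arrive from two different orbit points.

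The key step is to contradict polarity at infinity. I expect the definition, as in \cite{NPZ}, to say that every asymptotic cone $(X,x)$ of $M$ is polar at $x$: every point $z\ne x$ lies in the interior of a ray from $x$, or equivalently every minimal geodesic from $x$ extends beyond its endpoint as a minimal geodesic from $x$. Using this, extend each half of $\bar\sigma$ to a ray from $x$ through $z$. Then lift the extended geodesic through $z$ to $Y$, using the fact that a minimal geodesic from $x$ in $X$ lifts horizontally to a geodesic in $Y$ realizing distance to the orbit. This gives a geodesic in $Y$ from $y$ through $\sigma(1/2)$, extending $\sigma|_{[0,1/2]}$ minimally past $\sigma(1/2)$ while remaining distance-minimizing to $Gy$, and similarly for the other half. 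Now $\sigma$ is minimal from $y$ to $gy$, and $d(y,\sigma(1/2))=d(gy,\sigma(1/2))=1/2$. An extension of the first half beyond $\sigma(1/2)$ that is minimal to the orbit, combined with the second half, forces a branching at $\sigma(1/2)$. Geodesics in $\RCD$ spaces (or Ricci limits) do not branch, which gives the desired contradiction. Concretely, the point $\sigma(1/2+\epsilon)$ lies at distance $1/2-\epsilon$ from $gy$, so its distance to $Gy$ is at most $1/2-\epsilon$. This contradicts the fact that along the extended ray the distance to $Gy$ keeps increasing, unless the extension coincides with a different branch.

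The main obstacle is exactly this last step: making rigorous that the polarity extension in $X$ lifts to a geodesic in $Y$ that conflicts with $\sigma$. If non-branching is delicate here, I would instead use a distance-function argument. Polarity gives that $d_x$ has no local maximum at $z$ along the loop. Yet $z$ is the farthest point of $\bar\sigma$ from $x$, and both halves arrive at $z$ minimally, so $\bar\sigma$ would have to "turn back" at $z$. In the limit, this produces two minimal geodesics from $x$ to $z$ meeting at angle $\pi$ at $z$, and the ray extension contradicts this.
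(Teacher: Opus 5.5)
This paper does not prove the statement itself (it is quoted from \cite{NPZ}*{Proposition 3.2}), and your main line is correct and is the standard argument for it: blow down along $\gamma_i$ to get a minimal geodesic $\sigma$ from $y$ to $gy$ whose midpoint $m$ has $d(m,Gy)=1/2$, use polarity of $X$ at $x$ to continue past $\bar m=\pi(m)$, and contradict non-branching of $Y$ \cite{Deng20} at $m$. The lift you flag as the main obstacle is routine. If $\bar c$ is a ray from $x$ through $\bar m$ and $w\in\pi^{-1}(\bar c(1/2+\epsilon))$ is chosen with $d(m,w)=\epsilon$, then $1/2+\epsilon=d_X(x,\pi(w))\le d(y,w)\le d(y,m)+d(m,w)=1/2+\epsilon$. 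Hence $\sigma|_{[0,1/2]}$ followed by a segment from $m$ to $w$ is minimal, and it departs from $\sigma$ at $m$ because $d(w,Gy)=1/2+\epsilon>1/2-\epsilon\ge d(\sigma(1/2+\epsilon),Gy)$. You should, however, discard your fallback paragraph: no contradiction exists in $X$ alone, since the projected loop can simply retrace one segment (e.g.\ when $X$ is a ray).
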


In the statement above, $M$ being polar at infinity means, for any asymptotic cone $(X,x)$ of $M$ and any point $x'\in X-\{x\}$, there is a ray from $x$ passing through $x'$.

\begin{thm}\cite{Pan23}*{Proposition C(1)}\label{thm:orbit_R}
   Let $(\widehat{M},\hat{p})\to (M,p)$ be a Riemannian covering map with covering group $\Gamma\simeq\mathbb{Z}$ and $\Ric\ge 0$. If $E(\widehat{M},\hat{p},\Gamma)\not=1/2$, then for any equivariant asymptotic cone $(Y,y,H)$ of $(\widehat{M},\Gamma)$, the orbit $Hy$ is homeomorphic to $\R$.
\end{thm}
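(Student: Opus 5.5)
The plan is to analyze the limit group $H$ and its orbit $Hy$ through the structure of the discrete orbit $\Gamma\hat p=\{\gamma^k\hat p\}_{k\in\Z}$, where $\gamma$ generates $\Gamma\simeq\Z$. In order:

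1. Show that $Hy$ is connected, noncompact and closed.
2. Deduce that $Hy$ is a homogeneous space of a connected abelian Lie group, hence homeomorphic to $\R^a\times T^b$ with $a\ge 1$.
3. Use the hypothesis $E(\widehat M,\hat p,\Gamma)<1/2$ to rule out every case except $a=1$, $b=0$.

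\textbf{Step 1.} By the triangle inequality, consecutive orbit points satisfy $d(\gamma^k\hat p,\gamma^{k+1}\hat p)=|\gamma|$. After rescaling by $r_i^{-1}$, the orbit is therefore an $r_i^{-1}|\gamma|$-chain, with step tending to $0$. Since $\Gamma$ is infinite and acts properly, $|\gamma^k|\to\infty$, so this chain reaches every distance scale. Passing to the limit, $Hy$ is connected and unbounded; it is closed because $H$ is closed in $\mathrm{Isom}(Y)$.

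\textbf{Step 2.} $H$ is abelian, being a limit of abelian groups. Its identity component $H_0$ is open in $H$, so $H_0y$ is open in $Hy$; as $Hy$ is connected, $H_0y=Hy$. Hence $Hy\cong H_0/\mathrm{Iso}_y(H_0)$, a connected abelian Lie group, i.e. $\R^a\times T^b$. Noncompactness forces $a\ge1$.

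\textbf{Step 3.} Here the escape rate enters. Since $E<1/2$, there is $\epsilon>0$ such that for all large $k$ the minimal geodesic $\sigma_{\gamma^k}$ lies in the $(1/2-\epsilon)|\gamma^k|$-neighborhood of $\Gamma\hat p$. Passing to the limit, for every $h\in H$ some minimal geodesic from $y$ to $hy$ lies in the $(1/2-\epsilon)d(y,hy)$-neighborhood of $Hy$.

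To exclude $\dim Hy\ge2$, I would exploit that $\Gamma$ has rank one. The set $\{\gamma^k\hat p\}_{k\ge0}$ and the set $\{\gamma^k\hat p\}_{k\le 0}$ each converge to a connected piece of $Hy$. I would try to show that these two limit pieces meet only at $y$ and that each is itself homeomorphic to a ray; the orbit is then a union of two rays glued at $y$, i.e. homeomorphic to $\R$.

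The mechanism for this is the following. If the positive and negative pieces met away from $y$, or if the orbit contained a nontrivial torus direction, then one could reselect scales $r_i$ (using the rescaling freedom in choosing the asymptotic cone) so that some element $\gamma^{k_i}$ with $|\gamma^{k_i}|\sim r_i$ acts in the limit like a rotation moving $y$ far away. For such an element, the orbit points near the midpoint of $\sigma_{\gamma^{k_i}}$ would be at distance $\approx|\gamma^{k_i}|/2$, forcing the escape rate to be $1/2$. That contradicts the hypothesis.

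\textbf{Main obstacle.} I expect the hardest part to be this last step, namely ruling out $a\ge2$ or $b\ge1$. The difficulty is that the information about a single asymptotic cone is not enough. The argument has to combine different scales, for instance via a critical-rescaling argument that picks the first scale at which the orbit becomes non-line-like, together with the uniform escape-rate bound along all of $\Gamma$. I would first attempt to prove that for every $h\in H$ close to the identity the set $\{t: h^t y\}$ is monotone with respect to the distance to $y$. This monotonicity would give the required homeomorphism of $Hy$ with $\R$.
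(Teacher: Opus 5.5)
The paper does not reprove this statement; it quotes it from \cite{Pan23}. Judged on its own, your proposal has a genuine gap. Step~3 carries the entire content of the theorem, and it is not carried out; the mechanism you sketch for it cannot work.

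Steps~1--2 only reduce matters to $Hy\cong\R^a\times T^b$ with $a\ge 1$. The limiting escape-rate bound you then derive (a minimal geodesic from $y$ to $hy$ lying in the $(1/2-\epsilon)d(y,hy)$-neighborhood of $Hy$) is a property of a single cone, and it does not exclude $a\ge 2$ or $b\ge 1$:
\begin{itemize}
\item Suppose $Hy$ is an affine $2$-plane in $\R^3$ on which $H\simeq\R^2$ acts by translations. Then every segment between orbit points lies in $Hy$ and there is no rotation anywhere. Every one-parameter orbit $t\mapsto h^t y$ is monotone in distance from $y$, so your proposed monotonicity property would not give $Hy\cong\R$ either.
\item The claim that an element acting in the limit \emph{like a rotation} forces $E=1/2$ is false. $\mathrm{size}(\sigma_\gamma)$ is measured against the whole orbit $\Gamma\cdot\hat p$, and other orbit points may lie near the midpoint. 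In $C(S^1_\delta)\times\R$ with $\delta$ small, take the cylinder orbit $S^1\times\R$ of a point at distance $1$ from the singular line. Every minimal geodesic between its points stays within a distance of order $\delta$ times its length from the orbit.
\item The picture of \emph{two rays glued at $y$} presupposes exactly the one-dimensionality that has to be proved.
\end{itemize}

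What is missing is a genuinely multi-scale argument that uses that $\Gamma$ is cyclic. The paper runs an argument of this kind for $S^1$-actions in the proof of Proposition~\ref{prop:rect_dim_2}, adapted from Section~5.3 of \cite{Pan23}. One compares the limits of the symmetric sets $\{\gamma^k : |k|\le m\}$ at the critical scale where $|\gamma^k|$ first reaches $r_i$. An orbit direction independent of the one these sets produce forces a larger scale $d_i\gg r_i$. At that scale the sets $\{\gamma^k : |k|\le u_i\}$ generate a closed limit subgroup whose orbit through the base point is bounded but meets $\partial B_1$.

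In Proposition~\ref{prop:rect_dim_2} this is absurd because every limit group there acts freely by translations. For $\Z$-covers you must supply a substitute for that fact, valid in every equivariant asymptotic cone (already at the first scale, to get the unbounded direction). This is where the escape-rate hypothesis has to be used, and your proposal contains no argument that does so.

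A smaller point concerns Step~1. The chain $\hat p,\gamma\hat p,\dots,\gamma^{k}\hat p$ need not stay in a ball of radius $O(r_i)$, since $k\mapsto|\gamma^k|$ is not monotone, so it does not show that $Hy$ is connected. Connectivity does follow from $E<1/2$. Take a pair $y,hy$ at distance $d$ and replace the midpoint of the limit geodesic by an orbit point within $(1/2-\epsilon)d$ of it. Iterating this gives chains in $Hy\cap\overline{B}_{d/\epsilon}(y)$ with mesh tending to $0$.
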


\begin{rem}
  It follows from Theorem \ref{thm:orbit_R} that $H\simeq \R\times K$, where $K$-action fixes $y$.
\end{rem}

\section{$\RCD$ space with $S^1$-action and ray quotient}\label{sec:circle ray}
\subsection{Statements and an outline}\label{subsec:halfplane_statements}

In this entire section, we assume that $(Y,y,d,\mathfrak{m})$ satisfies the condition:
\begin{equation}\label{cond:S1_action_and_ray_quotient}
\tag{R}
\begin{gathered}
\textit{$(Y,y,d,\mathfrak{m})$ is an $\RCD(0,N)$ space with an effective isomorphic $S^1$-action such that} \\
\textit{ $(Y/S^1,\bar{y})$ is isometric to a ray $([0,\infty),0)$;} \\
\textit{we also assume that the $S^1$-action at $y$ is either free or a fixed point.}
\end{gathered}
\end{equation}

This condition enables us to approach the sharp halfplane rigidity (Theorem \ref{thm:intro_halfplane}) and the sharp ray rigidity (Theorem \ref{thm:intro_ray}) in a unified way.

\begin{lem}\label{lem:free_at_reg}
  Under the condition \eqref{cond:S1_action_and_ray_quotient}, $S^1$ acts freely on $\pi^{-1}(0,\infty)$, where $\pi\colon Y\to Y/S^1=[0,\infty)$ is the quotient map. As a consequence, $\pi^{-1}(0,\infty)$ is equivariantly homeomorphic to the open cylinder with the standard $S^1$-action $((0,\infty)\times S^1,S^1)$. Moreover:\\
  (i) if $S^1$ acts freely at $y$, then $Y$ is equivariantly homeomorphic to the half-cylinder with standard $S^1$-action $([0,\infty)\times S^1,S^1)$.\\
  (ii) if $S^1$ fixes $y$, then $Y$ is equivariantly homeomorphic to the cone with standard $S^1$-action $(C(S^1),S^1)$.
\end{lem}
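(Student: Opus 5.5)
We need to show that $S^1$ acts freely on $\pi^{-1}(0,\infty)$, and then derive the equivariant homeomorphism types; the plan is to prove freeness by a rigidity argument using the ray structure of $Y/S^1$, and then build the homeomorphisms from a section of the quotient map.

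\textbf{Step 1: the isotropy groups.} The nontrivial closed subgroups of $S^1$ are $S^1$ itself and the cyclic groups $\Z_m$. Since the action is effective, no nontrivial subgroup acts trivially on all of $Y$. The goal is to show that the isotropy group $S^1_z$ is trivial for every $z$ with $t:=\pi(z)>0$.

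\textbf{Step 2: a lift of the quotient ray.} Let $\gamma:[0,\infty)\to Y$ be a ray from $y$ lifting the unit speed ray of $Y/S^1=[0,\infty)$, so that $d(y,\gamma(t))=t=\pi(\gamma(t))$. Such a ray exists because each level set $\pi^{-1}(t)$ is a single orbit, and we can take minimal geodesics from $y$ to $\pi^{-1}(T)$ and pass to a limit as $T\to\infty$. Every point of $\pi^{-1}(t)$ equals $g\gamma(t)$ for some $g\in S^1$.

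\textbf{Step 3: the isotropy is monotone along the ray.} The key claim is that the isotropy group $H_t$ of $\gamma(t)$ is nondecreasing in $t$.
\begin{itemize}
\item Fix $0<s<t$ and $h\in H_t$. The curve $h\gamma|_{[0,t]}$ is a minimal geodesic from $hy$ to $\gamma(t)$.
\item Because $\pi$ is $1$-Lipschitz and $\pi\circ\gamma$ is an isometry, $h\gamma|_{[0,t]}$ is also a minimal geodesic from the orbit $\pi^{-1}(0)$ to $\gamma(t)$.
\item By essential non-branching of $\RCD$ spaces, or more directly by the RCD property that geodesics do not branch on a set of positive measure, one wants $h\gamma=\gamma$ on $[0,t]$. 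This is false in general; for example, in a cone two geodesics from the tip can differ.
\end{itemize}
So I would instead argue in the opposite direction: from $h\gamma(s)=\gamma(s)$ for some $s$ with $0<s<t$, conclude $h\gamma(t)=\gamma(t)$. This uses the fact that $\gamma|_{[0,t]}$ and $h\gamma|_{[0,t]}$ are geodesics agreeing on $[0,s]$. On $\RCD$ spaces geodesics emanating from an interior point with a common initial segment need not coincide in general, but here both extend past $s$ as rays. I would exploit the $S^1$-symmetry: if branching happened at $\gamma(s)$, the whole orbit of branchings would give a positive-measure set of branching geodesics, contradicting essential non-branching (Rajala--Sturm). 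This yields $H_s\subseteq H_t$ for $s\le t$.

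\textbf{Step 4: isotropy is trivial beyond the base point.} Suppose some $z$ with $\pi(z)>0$ has nontrivial isotropy $\Z_m$ or $S^1$. By Step 3 this isotropy persists for all larger $t$, so a nontrivial subgroup $F$ fixes $\gamma(t)$ for every $t\ge t_0$.
\begin{itemize}
\item Because $S^1$ is abelian, $F$ fixes the whole orbit $\pi^{-1}(t)$ for $t\ge t_0$, i.e.\ $F$ fixes the open set $\pi^{-1}(t_0,\infty)$ pointwise.
\item An isometry of an $\RCD(0,N)$ space that is the identity on a nonempty open set is the identity on $Y$. This follows from measure-preserving isometries plus essential non-branching: geodesics from a.e.\ point are determined by their initial segments in the open set.
\item Hence $F$ acts trivially on $Y$, contradicting effectiveness.
\end{itemize}
I expect the unique-continuation statement here to be the main obstacle. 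I would try the Deng--Santos--Zamora argument, or Sosa's result that isometries of essentially non-branching spaces are determined on open sets.

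\textbf{Step 5: the equivariant homeomorphisms.}
\begin{itemize}
\item Once the action is free on $\pi^{-1}(0,\infty)$, the map $(t,g)\mapsto g\gamma(t)$ from $(0,\infty)\times S^1$ is a continuous equivariant bijection.
\item It is proper, since $d(y,g\gamma(t))=t$, so it is a homeomorphism.
\item In case (i), the same map extends to $[0,\infty)\times S^1$.
\item In case (ii), the circle $\{0\}\times S^1$ collapses to $y$, which gives the cone $C(S^1)$.
\end{itemize}
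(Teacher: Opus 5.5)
\textbf{There is a genuine gap in Steps 3--4.} It comes from a missing ingredient: Deng's theorem that $\RCD(K,N)$ spaces are \emph{non-branching} (not merely essentially non-branching), used together with a cut-and-reglue property of horizontal lifts.

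\textbf{What fails as written.} In Step 3 you dismiss the backward implication ($h\in H_t \Rightarrow h\gamma=\gamma$ on $[0,t]$) as false in general. In this setting it actually holds (see below). The cone example is beside the point, because the two geodesics in question are not arbitrary minimal segments from $\pi^{-1}(0)$ to $\gamma(t)$: both extend by $\gamma|_{[t,\infty)}$ to rays.

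In the forward direction you assert that $\gamma|_{[0,t]}$ and $h\gamma|_{[0,t]}$ agree on $[0,s]$. But $h\in H_s$ only gives agreement at the single time $s$; a common segment has to be manufactured.

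Your substitute for non-branching is also not justified. You argue that an $S^1$-orbit of branching geodesics is a positive-measure set contradicting Rajala--Sturm. Essential non-branching only constrains optimal plans between absolutely continuous measures, and nothing in your sketch produces such a plan charging these particular geodesics. Ruling out branching of individual geodesics in $\RCD$ spaces is precisely the content of Deng's theorem.

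Step 4's unique-continuation claim is likewise left open; you yourself flag it as the main obstacle.

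\textbf{The missing idea.} Since $\pi$ is $1$-Lipschitz onto $[0,\infty)$ and $\pi\circ\gamma=\mathrm{id}$, suppose $h$ fixes $\gamma(t_0)$ with $t_0>0$. Then both
\[
\gamma|_{[0,t_0]}\ast h\gamma|_{[t_0,\infty)}
\quad\text{and}\quad
h\gamma|_{[0,t_0]}\ast\gamma|_{[t_0,\infty)}
\]
are again rays. Indeed, for $s\le t_0\le u$ the distance between the corresponding points is at least $u-s$ by the Lipschitz bound on $\pi$. It is at most $u-s$ by the triangle inequality through $\gamma(t_0)=h\gamma(t_0)$.

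The first concatenation shares $[0,t_0]$ with $\gamma$, and the second shares $[t_0,\infty)$ with $\gamma$. Deng's non-branching therefore gives $h\gamma=\gamma$ on all of $[0,\infty)$ in one step. Since $h$ commutes with $S^1$ and every point of $Y$ is of the form $g\gamma(t)$, $h$ acts trivially on $Y$, contradicting effectiveness.

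This is the paper's proof. It makes both your monotonicity of isotropy and your unique-continuation step unnecessary. Unique continuation would in any case follow from Deng by the same reasoning.

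\textbf{Step 5.} This step is fine, apart from one small point. The identity $d(y,g\gamma(t))=t$ holds only when $y$ is fixed. In case (i) you only have $t\le d(y,g\gamma(t))\le t+\mathrm{diam}(S^1y)$, which still suffices for properness.
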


\begin{proof}
     We argue by contradiction and suppose that some point $z\in Y$ with $\pi(z)>0$ has a nontrivial isotropy subgroup $H\leq S^1$. Then the $H$-action fixes every point in the orbit $S^1 z$ since $S^1$ is abelian. Let $\gamma:[0,\infty)\to Y$ be a horizontal lift of the unit speed ray in $[0,\infty)$. Then $\gamma(\pi(z))\in S^1 z$ and thus $H$ fixes $\gamma(\pi(z))$. If $h\in H$ moves a point on $\gamma$, then $\gamma$ and $h\circ \gamma$ would result in branching geodesics, which cannot occur in an $\RCD$ space \cite{Deng20}. Hence $H$ fixes every point on $\gamma$. Since $\gamma$ is any horizontal lift of the unit speed ray in $[0,\infty)$, it follows that $H$ fixes every point in $Y$; a contradiction to our assumption that the $S^1$-action is effective on $Y$. This proves that $S^1$ acts freely on $\pi^{-1}(0,\infty)$.

     Next, we consider the continuous map $$\phi\colon [0,\infty)\times S^1\to Y,\quad \phi(r,\theta)\coloneqq \theta\cdot\gamma(r).$$
     When restricted to $(0,\infty)\times S^1$, it is clear that this gives an equivariant homeomorphism between $(0,\infty)\times S^1$ and $\pi^{-1}(0,\infty)$.

     If the $S^1$-action is free at $y$, then it is also free on $\pi^{-1}(0)$. Hence the above $\phi$ becomes an equivariant homeomorphism between $[0,\infty)\times S^1$ and $Y$. If the $S^1$-action fixes $y$, then $\phi$ induces an equivariant homeomorphism between $C(S^1)$ and $Y$.
\end{proof}

\begin{notn}
   For convenience, we denote $Y_+=\pi^{-1}(0,\infty)\subseteq Y$. It is clear that $Y_+$ is an open and dense subset of $Y$. 
\end{notn}

\noindent\textbf{Goal:} The goal of this section is to show that $Y_+$, equipped with its extrinsic metric and restricted measure, is isomorphic to a warped product $\di r^2+f^2(r)\di \theta^2$ on $(0,\infty)\times S^1$ equipped with a measure $\rho(r)\di r\di v$, where $f$ and $\rho$ are a locally Sobolev function and a $\CDe(0,N)$ density on $(0,\infty)$, respectively. Thanks to the results of Mondino--Ryborz \cite{MondinoRyborz}, we will be able to translate the $\RCD(0,N)$ condition on $Y$ into distributional inequalities on $f$ and $\rho$. These distributional inequalities will be the key to obtaining the desired rigidity statements in Section \ref{sec:rigidity}.

Below, we provide a breakdown of each step and the section where it appears.

\noindent\textbf{Step 1: $Y$ has rectifiable dimension $2$ and its regular set $\mathcal{R}$ contains $Y_+$.} In Section \ref{sec:Proof_of_rectifiable_dimension_2}, we first study the equivariant tangent cone at a regular point. Under the condition \eqref{cond:S1_action_and_ray_quotient}, using ideas from the first named author \cite{Pan23}, we show that the only possibility for such a tangent cone is $\R^2$. We then show that the regular set contains $Y_+$.

\noindent\textbf{Step 2: The regular part is a $\mathcal{C}^0$-warped product.} We recover a continuous Riemannian metric on the subset $Y_+$ of $Y$.

\begin{thm}\label{thm:cont_Riem_metric}
   The metric $d$ on $Y$ comes from the metric completion of a continuous Riemannian metric 
   $$g=\di r^2 + f(r)^2 \di\theta^2$$
   on the subset $Y_+$.
\end{thm}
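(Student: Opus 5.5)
The plan is to build the metric from the orbit lengths. Using Lemma \ref{lem:free_at_reg}, I identify $Y_+$ with $(0,\infty)\times S^1$ through $\phi(r,\theta)=\theta\cdot\gamma(r)$, where $\gamma$ is a horizontal lift of the ray. I then define
\[
f(r)\coloneqq \frac{1}{2\pi}\,\mathrm{length}_d\big(S^1\cdot\gamma(r)\big),
\]
which is the intrinsic length of the orbit circle. The target statement is that, for all points of $Y_+$, $d$ coincides with the length metric of $\di r^2+f(r)^2\di\theta^2$. Equivalently, $d$ is the metric of the completion of $Y_+$ under this Riemannian metric, because $Y_+$ is dense and $Y\setminus Y_+$ is either a circle or a point.

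\textbf{Step 1: each orbit is a round circle of constant speed.} By Step 1 of the outline, $Y_+\subseteq\mathcal R$ and every point of $Y_+$ has unique tangent cone $\R^2$. I pass the $S^1$-action to the tangent cone at $z=\gamma(r)$. The blow-up gives a limit action of a closed subgroup of isometries of $\R^2$, and its orbit through $0$ is the blow-up of the orbit $S^1z$. The quotient of the tangent cone is the tangent cone of $[0,\infty)$ at $r>0$, namely $\R$. The limit group must therefore be a one-parameter translation group whose orbit is a line perpendicular to the $\R$-direction. It follows that the orbit $S^1 z$ is a $C^1$-type curve: it is rectifiable, and $d(\theta z,\theta' z)=f(r)|\theta-\theta'|+o(|\theta-\theta'|)$ as $\theta'\to\theta$. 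Constant speed comes from $S^1$-invariance. The radial curves $\theta\cdot\gamma$ are rays orthogonal to the orbits, in the sense that $d(\theta\gamma(r),S^1\gamma(s))=|r-s|$.

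\textbf{Step 2: continuity of $f$ and comparison of metrics.} I will prove that $f$ is continuous on $(0,\infty)$. This uses uniform infinitesimal control: Gromov--Hausdorff closeness of the rescaled balls $B_\epsilon(\gamma(r))$ to Euclidean discs, with closeness uniform for $r$ in compact subintervals. That uniformity follows from the uniqueness of the tangent cone and a compactness argument along the orbit, which is homogeneous. With continuity in hand, the metric $g$ is a continuous Riemannian metric. Let $d_g$ be its length metric. Two inequalities are needed.

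The inequality $d\le d_g$ follows from Step 1. Any piecewise-$C^1$ curve $(r(t),\theta(t))$ has $d$-length bounded by $\int\sqrt{\dot r^2+f^2\dot\theta^2}$, by the first-order expansion of $d$ in the orthogonal coordinates given by the tangent cone $\R^2$.

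For $d\ge d_g$, I take a $d$-geodesic and approximate it by polygonal curves. On small scales the local picture is $\epsilon$-close to Euclidean, with $r$ and $f(r)\theta$ as approximately orthonormal coordinates. Hence the $d$-length of each short segment is at least its $g$-length minus $o(\text{length})$. One point needs care: geodesics between points of $Y_+$ might pass through $\pi^{-1}(0)$. In that case I argue by density and by the fact that the completion already accounts for such paths.

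I expect the main obstacle to be this uniform infinitesimal Euclidean structure: upgrading "tangent cone $\R^2$ at every point of $Y_+$" to a uniform first-order expansion of $d$ in the coordinates $(r,\theta)$, locally uniformly in $r$. That uniformity is exactly what gives continuity of $f$ and the two length inequalities. My first attempt will be a contradiction argument: take a sequence of points and scales where the expansion fails, blow up equivariantly, and use the fact that every equivariant limit must be $(\R^2,\R)$ acting by translations to reach a contradiction.
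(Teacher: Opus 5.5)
Your overall architecture (define $f$ from orbit lengths, prove continuity, then compare $d$ with $d_g$ through first-order expansions) matches the paper's. There is, however, a genuine gap at its core.

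In Step 1 you infer from ``every equivariant blow-up at a point of $Y_+$ is $(\R^2,\R)$ with $\R$ acting by translations'' that $S^1z$ is rectifiable with $d(\theta z,\theta' z)=f(r)|\theta-\theta'|+o(|\theta-\theta'|)$. For the uniform version you plan a blow-up contradiction argument. Neither works. Blow-ups only control boundedly many scales at once; they give, e.g., $2\,d(z,\tfrac{\theta}{2}z)\le(1+o(1))\,d(z,\theta z)$. The orbit length, however, is $\lim_k 2^k d(z,2^{-k}\theta z)$, a product of infinitely many such factors, and their $o(1)$ errors can accumulate.

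For instance, take the rotation-invariant metric $\phi(|\theta-\theta'|)$ on a circle with $\phi(t)=t\log(1/t)$ near $0$. It blows up at every point to a line with the limit group acting by translations, since $\phi(at)/\phi(t)\to a$. Yet $2\phi(t/2)=\phi(t)\bigl(1+\log 2/\log(1/t)\bigr)$, so the dyadic factors behave like $1+1/k$ and the circle has infinite length. This even occurs as an orbit in a geodesic surface: take the level $\{r=r_0\}$ in the completion of $\di r^2+\log^2(1/|r-r_0|)\,\di\theta^2$, where all tangent cones are still $\R^2$ with translation action. So tangent-cone information does not justify defining $f$ as an orbit length, nor its continuity, nor the expansions you use for $d\le d_g$ and $d\ge d_g$.

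The missing ingredient is a quantitative estimate that is summable over scales; this is where the curvature bound enters beyond tangent cones. The paper uses the Abresch--Gromoll excess estimate. Let $\delta_k=d((r,0),(r,\theta/2^k))$, and let $l_k$ be the distance from $(r,\theta/2^{k+1})$ to a minimal geodesic from $(r,0)$ to $(r,\theta/2^k)$. Then $2\delta_{k+1}\le\delta_k+C(N)\,l_k^{N/(N-1)}$. Blow-ups are used only for the scale-invariant bound $l_k\le C(N)^{-1}\delta_k$ (Lemma \ref{lem:deviation_estimate}). Hence the relative error at step $k$ is at most $\delta_k^{1/(N-1)}$, which decays geometrically, and so $2^k\delta_k\le(1+\epsilon)\delta_0$ uniformly near $y_0$ (Lemma \ref{lem:division_estimate}). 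This gives:
\begin{itemize}
\item rectifiable orbits with length at most $(1+\epsilon)d$ (Corollary \ref{cor:v_curve_length});
\item continuity of $f$ (Lemma \ref{lem:cont_warp});
\item together with the Pythagorean estimate (Lemma \ref{lem:pyth_estimate}), the local bi-Lipschitz comparison and $|\dot\gamma|_d=\|\gamma'\|_g$, from which $d=d_g$ follows.
\end{itemize}

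Two smaller points. First, uniformity of blow-ups as $r$ varies does not follow from uniqueness of tangent cones plus homogeneity in $\theta$. The paper instead uses H\"older continuity of small balls along the horizontal ray (Lemma \ref{lem:uniform_regular}). Second, in the free case you cannot dismiss geodesics touching $\pi^{-1}(0)$ ``by density'', since nothing is known about $g$ near $r=0$. The paper uses that $Y_+$ is exactly the regular set (Proposition \ref{prop: reg_is_r>0}) and is therefore convex by Deng's H\"older continuity. In the fixed-point case a geodesic can only pass through the single point $y$.
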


\noindent We prove Theorem \ref{thm:cont_Riem_metric} in two parts. First, in Section \ref{subsec:local_bilip}, we show that $d_{\lvert Y_+}$ is locally biLipschitz equivalent to the standard product distance induced by $\di r^2 +\di\theta^2$. As a result, the $\theta$-curves ($S^1$-orbits) in $Y_+$ are rectifiable. In Section \ref{subsec:cont_Riem_metric}, we obtain a (rough) Riemannian metric $ g = \di r^2 + f^2(r)\di \theta^2$, where $f(r)$ is the metric speed of the vertical curve $\theta\to (r,\theta)$. Then we further prove that $f$ is continuous on $(0,\infty)$ and $g$ indeed recovers $d_{\lvert Y_+}$.

\noindent\textbf{Step 3: From $\mathcal{C}^0$ to $W^{1,2}_{loc}$ warping function.} In Section \ref{subsec:W12_Riem_metric}, we observe that the restricted measure $\m\llcorner Y_+$ takes the form $\rho(r)\di r\di \theta$, where $\rho$ is a $\CDe(0,N)$ density on $(0,\infty)$, thanks to the results of \cite{Galaz-Garcia_18} on $\RCD$ spaces equipped with compact actions and \cite{Cavalletti-Milman_21} on one dimensional $\RCD$ spaces. Then, using the second-order calculus on $\RCD(K,N)$ spaces developed by Ambrosio, Gigli, and Savaré \cites{Ambrosio-Gigli-Savare_14,ambrosio_calculus_2014}, we show that the warping function $f$ obtained during step two satisfies $f\in W^{1,2}_{loc}(0,\infty)$.
\medskip

\noindent\textbf{Step 4: Distributional Ricci curvature bounds.} Thanks to the first three steps, $Y_+$ takes the form $((0,\infty)\times S^1, d_g, \rho(r)\di r\di \theta)$, where $g=\di r^2 + f^2(r) \di \theta^2$, and $f$ and $\rho$ are both continuous and locally Sobolev. Consequently, we are in a position to apply the tools developed by Mondino and Ryborz in \cite{MondinoRyborz}. As a result, after computing the distributional Bakry--Emery Ricci curvature, we obtain the following inequalities in the distributional sense in Section \ref{subsec:RCD_to_BE}:
\begin{equation}\label{eq:distibutional_Ric_bounds}
    -(\rho b)'\ge 0, \quad  -\rho''\rho+(\rho')^2\ge \rho^2b^2,\quad \text{ where }b\defeq (\ln(f))'.
\end{equation}
The inequalities above will be the main tool in proving the rigidity statements in Section \ref{sec:rigidity}.


\subsection{Proof of rectifiable dimension $2$}\label{sec:Proof_of_rectifiable_dimension_2}

In this subsection, we study the regular set of the space $Y$ with condition \eqref{cond:S1_action_and_ray_quotient}. We first show that $Y$ has rectifiable dimension $2$. The proof draws its idea from \cite{Pan23}*{Section 5.3}.


\begin{prop}\label{prop:rect_dim_2}
  The m.m.s.\@ $(Y,d,y,\meas)$ has rectifiable dimension $2$.
\end{prop}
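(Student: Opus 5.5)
We will show that the rectifiable dimension $k$ given by Bru\'e--Semola equals $2$ by bounding it from both sides, using equivariant tangent cones at regular points in the spirit of \cite{Pan23}*{Section 5.3}.

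\textbf{Choosing a generic point.} Since $\meas$ has full support and $\meas(Y\setminus\mathcal{R}_k(Y))=0$, and since $Y_+$ is open and dense, hence of positive measure, there is a point $z\in Y_+\cap\mathcal{R}_k(Y)$. Let $\lambda_i\to\infty$. After passing to a subsequence, we obtain equivariant convergence
$$(\lambda_i Y,z,S^1)\to(\R^k,0,G),$$
where $G$ is a closed abelian subgroup of $\mathrm{Isom}(\R^k)$; it is abelian because it is a limit of abelian groups. The quotients converge as well:
$$(\lambda_i(Y/S^1),\pi(z))\to(\R^k/G,\bar 0).$$
Because $\pi(z)>0$ is an interior point of the ray, $\lambda_i([0,\infty),\pi(z))$ converges to $(\R,0)$. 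Hence $\R^k/G$ is isometric to $\R$.

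\textbf{Lower bound $k\ge 1$.} This is immediate: $k=0$ would make $Y$ a point, contradicting the fact that $Y/S^1$ is a ray.

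\textbf{Upper bound $k\le 2$.} The orbit $S^1 z$ has positive length scale, and by Lemma \ref{lem:free_at_reg} the action is free near $z$. In the blow-up, the orbit $Gz$ must therefore be a connected subset whose dimension is at most $1$. To see this, note that the orbit is the limit of rescaled circles, so $G\cdot 0$ is a connected, closed, abelian orbit. Since $G$ acts on $\R^k$ with quotient $\R$, every orbit of $G$ is the fiber of a submetry $\R^k\to\R$. The fibers of a submetry from Euclidean space onto $\R$ are parallel hyperplanes; I would argue this via the distance function to the fiber being affine, using the splitting theorem. Consequently $G\cdot 0\cong\R^{k-1}$.

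The key step is then to rule out $k-1\ge 2$. Here I follow \cite{Pan23}: a limit of a circle (or trivial) group action cannot produce an orbit of dimension greater than $1$. The reason is that $G$ is the limit of $\lambda_i$-rescaled $S^1$-actions. Its identity component is then a connected abelian group arising as a limit of one-dimensional groups, and it has dimension at most $1$. This uses the Fukaya--Yamaguchi/Pan argument based on the fact that the limit orbit is the Hausdorff limit of rescaled orbits $S^1 z$, which are curves. I would make this precise by finding, for each scale, a generator of small displacement and showing the limit orbit is the closure of a one-parameter family. This gives $k-1\le 1$, so $k\le 2$.

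\textbf{Excluding $k=1$, the main obstacle.} If $k=1$, then $G\cdot 0$ is a point, so the orbits $S^1 z$ shrink faster than the scale at almost every regular point. Then $Y$ would be a $1$-dimensional $\RCD$ space, which by \cite{Cavalletti-Milman_21}-type classification is a segment, ray, line, or circle, with the $S^1$-action necessarily trivial. This contradicts effectiveness. A more careful route is to note that a $1$-dimensional $\RCD$ space is a manifold of dimension $1$, while $Y_+$ is homeomorphic to $(0,\infty)\times S^1$ by Lemma \ref{lem:free_at_reg}; this topological contradiction settles it. I expect the upper bound on the limit orbit dimension to be the most delicate part, since it requires controlling limits of isometric actions and not just metric limits. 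Together these steps give $k=2$.
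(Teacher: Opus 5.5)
The gap is in your upper bound $k\le 2$, which is the whole content of the proposition. You justify it by the principle that a limit of rescaled circle actions has identity component of dimension at most one, because the limit orbit is a Hausdorff limit of curves. Neither part holds in general. Hausdorff limits of curves can be two-dimensional. Limits of one-dimensional groups can be higher-dimensional: the closed circle subgroups $\{(t,nt)\}$ of the flat torus $\R^2/\Z^2$ converge to the whole torus as $n\to\infty$, simply because they wind more and more. So the group being $S^1$ gives no dimension bound by itself. The bound has to come from the regularity of the point: at every small scale the blow-up is $\R^k$, with limit group $\cong\R^{k-1}$ acting freely by translations.

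Your plan to ``find a generator of small displacement and show the limit orbit is the closure of a one-parameter family'' is exactly what needs proof, and its naive version does not work. Write $q$ for your regular point and $r_i=\lambda_i^{-1}$. Let $\theta_i$ be the smallest angle with $d(\theta_i\cdot q,q)=r_i$, with $\theta_i\to g$. Divisibility in $S^1$ (the relation $b\cdot\tfrac ab\theta_i=a\theta_i$ forces every limit of $\tfrac ab\theta_i$ to be $\tfrac ab g$) shows that the arcs $[-\theta_i,\theta_i]$ converge to a set whose orbit of $0$ contains the segment from $-w$ to $w$, where $w=g\cdot 0$. Hence $G\cdot 0\supseteq\R w$. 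But bounded multiples of $\theta_i$ say nothing about elements $u_i\to h$ with $u_i/\theta_i\to\infty$, and these are exactly the ones that could produce $h\cdot 0\notin\R w$.

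The missing idea is a second blow-up at an intermediate scale, which is what the paper does. Assuming $k\ge 3$, pick $h$ with $d(h\cdot 0,\R w)\ge 2$ and $u_i\to h$. Then $u_i/\theta_i\to\infty$; otherwise $h=\lfloor C\rfloor g+\delta$ with $\delta\cdot 0\in\overline{B_1}(0)$, which puts $h\cdot 0$ within distance $1$ of $\R w$. Consequently $d_i:=\max\{d(v\cdot q,q): v\in[\theta_i,u_i]\}$ satisfies $d_i/r_i\to\infty$; otherwise the limit of the arcs $[-u_i,u_i]$ would have bounded orbit yet contain $\R w$. Now rescale by $d_i^{-1}$. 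Since $d(u_i\cdot q,q)=O(r_i)=o(d_i)$ and the new limit group $\cong\R^{k-1}$ acts freely, $u_i$ converges to the identity. This makes the limit of $[-u_i,u_i]$ closed under addition, i.e.\ a subgroup of a translation group. But its orbit of $0$ is bounded and reaches distance exactly $1$, a contradiction.

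Alternatively, you could close the gap topologically, as you did for $k=1$:
\begin{enumerate}
\item Every point of $Y_+$ is $k$-regular, by the separation argument.
\item H\"older continuity of tangent cones along horizontal rays, together with the $S^1$-symmetry, gives uniform Reifenberg flatness near a point.
\item By the Cheeger--Colding Reifenberg theorem, a neighborhood in $Y_+$ is then homeomorphic to a ball in $\R^k$.
\item Invariance of domain against $Y_+\cong(0,\infty)\times S^1$ forces $k=2$.
\end{enumerate}
Neither argument is in your proposal. Your lower bound and your exclusion of $k=1$ are fine.
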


\begin{proof}
   Let $k$ be the rectifiable dimension of $Y$. Let $q\in Y$ be a $k$-regular point in $Y_+$. $q$ projects to a $1$-regular point in $[0,\infty)$. For any $r_i\to 0$, we have convergence after passing to a subsequence
   \begin{equation*}
   \begin{CD}
    (r_i^{-1} Y,q,S^1) @>GH>> (\R^k,0,H) \\
	@VV\pi V @VV \pi V\\
	(r_i^{-1} [0,\infty),\pi(q)) @>GH>> (\R,0).
    \end{CD}
\end{equation*}
    Since $H$ is a closed abelian subgroup in $\mathrm{Isom}(\R^k)$ with $\R^{k}/H$ being isometric to $\R$, we see that $H\simeq \R^{k-1}$ acting by translations in $\R^k$. Together with the connectedness of the set of all equivariant tangent cones of $(Y,S^1)$ at $q$, this also implies that the equivariant tangent cone of $(Y,S^1)$ at $q$ is unique. 
    
    We need to verify $k=2$. We argue by contradiction and suppose that $k\ge 3$. We write the group elements in $S^1$ by $\theta\in [-\pi,\pi]$, with $-\pi$ and $\pi$ identified. For each $i$, let $$\theta_i=\min\{ \theta\in (0,\pi] \mid d(\theta\cdot q,q)=r_i \}.$$
    Since $r_i\to 0$, it is clear that $\theta_i\to 0$. Let $S_i\subseteq S^1$ be the symmetric subset
    $$S_i = \{ \theta\in S^1 \mid \theta\in [-\theta_i,\theta_i] \}.$$
    Now we consider the convergence
    \begin{equation}\label{eq:appx_R_orbit}
    (r_i^{-1} Y,q,\theta_i\in S_i\subseteq S^1)\overset{GH}\to (\R^k,0,g\in A\subseteq H),
    \end{equation}
    where $A\subseteq H\simeq \R^{k-1}$ is a closed symmetric subset of $H$. By construction, $A\cdot 0$ is connected with $A\cdot 0 \subseteq \overline{B_1}(0)$ and $d(g\cdot 0,0)=1$. We denote $z=g\cdot 0\in\R^k$.

    \textbf{Claim 1:} $A\cdot 0$ contains the set $\{tz\in \R^k \mid t\in[-1,1]\}$, where $tz$ is written using the linear structure of $\R^k$. Because $A$ is closed, it suffices to show that $A\cdot 0$ contains $\{ tz \mid t\in \mathbb{Q}\cap [-1,1] \}$. Let $b\in \Z_+$ and $a\in \{0,\pm 1,...,\pm (b-1)\}$. For the sequence $w_i=\frac{a}{b}v_i\in S_i$, because $bw_i=av_i$ in $S^1$, any limit $h\in H$ of $w_i$ associated with (\ref{eq:appx_R_orbit}) must satisfy $bh=ag$ in $H\simeq \R^{k-1}$. Hence $\frac{a}{b}g=h\in A$. Recall that $H$ acts on $\R^k$ by translations. Claim 1 follows.

    Thanks to Claim 1, we have $H\cdot 0\supseteq \R z$. We have supposed (by contradiction) that $H\simeq \R^{k-1}$ with $k\ge 3$. Hence $H$ contains an element $h$ such that $h\cdot 0 \not\in \R z$. Replacing $h$ by a high power $lh$ if necessary, we may assume that $d(h\cdot 0,\R z)\ge 2$. Let $u_i$ be a sequence in $S^1$ such that $u_i\overset{GH}\to h$ associated with (\ref{eq:appx_R_orbit}). It is clear that $u_i\to 0$ in $S^1$. After passing to a subsequence or replacing $h$ by $-h$ if necessary, we may assume $u_i>0$ without loss of generality.

    \textbf{Claim 2: $u_i \gg \theta_i$.} Due to $d(h\cdot 0,0)\ge 2$ and the choice of $\theta_i$, it is clear that $u_i\ge \theta_i$. Suppose that $u_i/\theta_i\to C\in [1,\infty)$ for a subsequence. We write
    $$u_i=\lfloor C \rfloor \theta_i +o_i,$$
    where $\lfloor \cdot \rfloor$ denotes the floor function and $o_i\in [0,\theta_i)\subseteq S_i$. Passing to a subsequence, we have 
    $$ \lfloor C \rfloor \theta_i \overset{GH}\to \lfloor C \rfloor g \in H,\quad o_i\overset{GH}\to \delta\in A \subseteq H$$ 
    associated with (\ref{eq:appx_R_orbit}). Hence $h=\lfloor C \rfloor g + \delta$ and
    $$d(h\cdot 0,\R z)=d((\lfloor C \rfloor g +\delta)\cdot 0,\R z)=d(\delta\cdot 0,\R z)\le d(\delta\cdot 0,0)\le 1;$$
    a contradiction to $d(h\cdot 0,\R z)\ge 2$. This proves Claim 2.

    For each $i$, we set
    $$d_i=\max\{ d(v\cdot q,q)\mid v\in [\theta_i,u_i] \}.$$
    We note that $d_i\to 0$ because $u_i\to 0$ in $S^1$.

    \textbf{Claim 3: $d_i\gg r_i$.} It is clear that $d_i\ge r_i$ because $d(v_i\cdot q,q)=r_i$. Suppose that $d_i/r_i\to C<\infty$ for a subsequence. We consider a sequence of symmetric subsets 
    $$T_i=\{ \theta \in S^1 \mid v\in[-u_i,u_i] \}$$
    and its convergence $T_i\overset{GH}\to B\subseteq H$ associated with (\ref{eq:appx_R_orbit}).
	By Claims 1 and 2, $B\cdot 0$ contains the line $\R z$. On the other hand,  $d_i/r_i\to C$ implies that $B\cdot 0$ is contained in a bounded ball $\overline{B_C}(0)$. This desired contradiction proves Claim 3.

    Next, we consider the blow-up under $d_i^{-1}\to\infty$:
    \begin{equation}\label{eq:appx_rescale}
	(d_i^{-1}Y,q,u_i\in T_i\subseteq S^1)\overset{GH}\longrightarrow (\R^k,0, h'\in B'\subseteq H'\simeq \R^{k-1}),
    \end{equation}
	where $B'$ is a closed symmetric subset of $H'$. We note that Claim 3 implies $h'y'=y'$. Because $H'\simeq \R^{k-1}$ acts freely, we conclude $h'=0$, the identity element in $H'$.

    \textbf{Claim 4.} $B'$ is a subgroup of $H'$. Let $\beta_1,\beta_2\in B'$. We choose $b_{1,i},b_{2,i}\in T_i$ such that
	$$b_{1,i}\overset{GH}\to \beta_1,\quad b_{2,i}\overset{GH}\to \beta_2$$
    associated with (\ref{eq:appx_rescale}).
	Then $\beta_1+\beta_2$ is the limit of $b_{1,i}+b_{2,i}$. If $b_{1,i}+b_{2,i}\in T_i$, then $\beta_1+\beta_2\in B'$ holds trivially. If not, we write
	$$b_{1,i}+b_{2,i}=\pm u_i + o_i,$$
	where $o_i\in [-u_i,u_i]$. Passing to a subsequence if necessary, we have
	$o_i\overset{GH}\to \delta\in B'$. Then associated with (\ref{eq:appx_rescale}), we have
	$$\beta_1+\beta_2=\lim (\pm u_i + o_i)=h'+\delta =\delta \in B'.$$
	This proves Claim 4.

    Lastly, thanks to the choice of $d_i$, $B'\cdot 0$ is contained in $\overline{B_1}(0)$ with $B'\cdot 0$ having a point on $\partial B_1(0)$. This clearly cannot happen for a subgroup $B'\subseteq H'\simeq \R^{k-1}$, which acts by translations in $\R^k$. This contradiction shows that $H\cdot 0=\mathbb{R}z$ and thus completes the proof of rectifiable dimension $2$.
\end{proof}

\begin{prop}\label{prop: reg_is_r>0}
    Let $\Reg$ be the $2$-regular set of the m.m.s.\@ $(Y,d,y,\meas)$. Then $\Reg$ contains $Y_+\coloneqq\pi^{-1}(0,\infty)$. If, in addition, $S^1$ acts freely on $\{r=0\}$, then $\Reg=Y_+$.
\end{prop}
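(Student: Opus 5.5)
Our plan is to show that every point $q\in Y_+$ has unique tangent cone $\R^2$ by combining the structure of $Y_+$ as an open cylinder (Lemma \ref{lem:free_at_reg}) with Proposition \ref{prop:rect_dim_2}. First we use the $S^1$-action to transfer regularity along orbits. The isometries in $S^1$ act transitively on each orbit $\pi^{-1}(t)$, so regularity is constant on orbits: $q$ is $2$-regular iff some point of $S^1 q$ is. By Proposition \ref{prop:rect_dim_2}, $\meas$-a.e. point is $2$-regular. Moreover, $\meas\llcorner Y_+$ projects to a measure on $(0,\infty)$ of full support, and it is absolutely continuous with respect to $\mathcal L^1$, since quotients of $\RCD$ spaces by compact groups are $\RCD(0,N)$ spaces, here one-dimensional with a $\CDe(0,N)$ density. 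Hence the set of radii $t$ whose orbit consists of $2$-regular points has full $\mathcal L^1$-measure in $(0,\infty)$.

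To reach every $t>0$, we take an arbitrary $q=\gamma(t)$ on a horizontal lift $\gamma$ of the ray and consider any tangent cone $(C,o)$ of $Y$ at $q$, together with an equivariant limit group $H$. Since the quotient is $[0,\infty)$ and $t>0$ is an interior point, $C/H$ is isometric to $\R$. Moreover $\gamma$ extends on both sides of $t$ as a geodesic that is minimizing through $q$, because $d(\gamma(a),\gamma(b))=|a-b|$ by horizontality. In the blow-up this gives a line through $o$, so $C$ splits as $\R\times C'$ by Theorem \ref{thm:meas_split}, and $H$ preserves the splitting with $C'/H=\{pt\}$ in the appropriate sense. Hence $C'$ is a homogeneous $\RCD(0,N-1)$ cone, i.e.\ a single orbit of an abelian group $H$. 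A homogeneous metric cone is Euclidean $\R^{m}$ (a cone that is a single orbit has no vertex singularity), so $C=\R^{1+m}$.

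It remains to force $m=1$, i.e.\ $k=2$ at every such point. Here we rerun the argument of Proposition \ref{prop:rect_dim_2} verbatim: the only input there was that the tangent cone at $q$ is some $\R^k$ with $H\simeq\R^{k-1}$ acting by translations and quotient $\R$. Claims 1--4 then rule out $k\ge 3$. The case $k=1$ is impossible since $S^1$ acts freely near $q$, so orbits are nontrivial continua and $H\cdot o\supseteq \R z$ by Claim 1. Uniqueness of the tangent cone follows as before from the connectedness of the set of tangent cones. This shows $Y_+\subseteq\Reg$.

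For the second statement, assume $S^1$ acts freely on $\{r=0\}$, so the orbit $\pi^{-1}(0)$ is a circle and $Y$ is a half-cylinder. At $q\in\pi^{-1}(0)$ any tangent cone $C$ carries an $H$-action with quotient the tangent cone of $[0,\infty)$ at $0$, namely $[0,\infty)$. If $C=\R^2$, then $H$ would be a closed group of isometries of $\R^2$ with quotient a half-line. By the argument of Claim 1, $H$ contains the translations along $\R z$, so $C/H$ would be a quotient of $\R$ by a subgroup of $\mathrm{Isom}(\R)$ that is isometric to $[0,\infty)$. This forces $H$ to contain a reflection, contradicting that $H$ is a limit of the connected abelian group $S^1$ acting by translations. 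Hence $q\notin\Reg$, and $\Reg=Y_+$.

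The main obstacle is the second step: producing the splitting line at every $q\in Y_+$, and checking that the lifted ray really extends as a line on both sides. A cleaner alternative there is to use $\gamma$ itself as a geodesic through $q$ with interior point $q$, and to use the non-branching of $\RCD$ spaces to get the line in the tangent cone.
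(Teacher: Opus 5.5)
Your proposal has a genuine gap in each half. In the first half, the sentence ``$C'$ is a homogeneous $\RCD(0,N-1)$ cone \dots\ a homogeneous metric cone is Euclidean'' presupposes that every tangent cone at $q$ is a metric cone. That is only available in the noncollapsed setting. Here $N$ is arbitrary, $\meas$ is not $\mathcal{H}^2$, and $q$ is exactly a point you do not yet know to be regular. What you actually derive is: a line through $o$ from $\gamma$, $C=\R\times C'$, $H$ abelian, trivial on the $\R$-factor and transitive on $C'$, and $H$ nontrivial via the Claim~1 choice of $\theta_i$. All of this is consistent with, e.g., $C=\R^2\times S^1$ and $H\cong\R\times S^1$. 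So none of the following is justified: that all tangent cones at $q$ are Euclidean, uniqueness via connectedness, or the verbatim rerun of Proposition~\ref{prop:rect_dim_2}. Claims~3--4 there need a Euclidean cone with $H'\simeq\R^{k-1}$ also at the second scale $d_i$, because a compact factor in $H'$ would allow a bounded subgroup orbit $B'\cdot 0$. The paper avoids tangent cones at arbitrary points altogether. If $z\in Y_+$ were not $2$-regular, the whole circle $S^1z$ would be non-regular and would separate $Y$ into two pieces of positive measure. This contradicts that the $2$-regular set has full measure (Bru\'e--Semola) and is path connected (Deng). Your own first paragraph (almost every radius carries a regular orbit), combined with Deng's H\"older continuity of tangent cones along the interior of the ray $\gamma$, would also close the gap, but you never use it.

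In the second half, the closing contradiction is not valid: you argue that $H$, as a limit of the connected abelian group $S^1$ ``acting by translations'', cannot contain a reflection, but that $H$ acts by translations is exactly what must be proved. Equivariant limits of $S^1$-actions can contain reflections. Take the flat M\"obius band $\R^2/\langle(x,y)\mapsto(x+1,-y)\rangle$ with the isometric action $(x,y)\mapsto(x+s,y)$ of $\R/2\Z$. The quotient is a ray, and the blow-up at a point of the core circle is $(\R^2,0,\R\times\Z_2)$. There the core has $\Z_2$-isotropy, so freeness on $\{r=0\}$ must be used substantively, not only to choose $\theta_i$. The paper does this by halving. Take $\theta_i\to h$, with $h$ the reflection fixing $0$, and put $u_i=\theta_i/2$ and $d_i=d(u_i\cdot y,y)>0$. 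If $d_i/r_i$ is bounded, then $u_i\to g$ with $2g=h$, which is impossible since squares in $\R\times\Z_2$ are translations. If $d_i/r_i\to\infty$, rescaling by $d_i^{-1}$ gives $2g'=h'$ with $d(g'\cdot 0,0)=1$ and $h'\cdot 0=0$. This forces $h'=\mathrm{id}$ and hence $g'\cdot 0=0$, again a contradiction.
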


\begin{proof}
   $Y$ has rectifiable dimension $2$ by Proposition \ref{prop:rect_dim_2}. We need to show that every point in $Y_+$ is $2$-regular. Suppose that a point $z\in Y_+$ is not $2$-regular. Because $S^1$ acts on $Y$ by isometries, any point on $S^1\cdot z$ is not $2$-regular. This curve separates $Y$ into two disconnected components, both of which have positive $\meas$-measure. Now we end in a contradiction because the $2$-regular set of $Y$ has full $\meas$-measure \cite{BrueSemola20} but should also be path connected \cite{Deng20}.

   Next, we assume that $S^1$-action is free on $\{r=0\}$ and show that every point in $\{r=0\}$ is singular. Using $S^1$-action, it suffices to show that the base point $y$ is singular. We argue by contradiction and suppose that $y$ is a regular point. We note that due to the lower semi-continuity of rectifiable dimension \cite{Kit19}, $y$ cannot be a $k$-regular point with $k\ge 3$. Blowing up at $y$ by a sequence $r_i^{-1}\to\infty$, we have
    \begin{equation}\label{eq:r=0_is_singular}
   \begin{CD}
    (r_i^{-1} {Y},y,S^1) @>\mathrm{GH}>> (\R^k,0,G) \\
	@VV\pi V @VV \pi V\\
	(r_i^{-1} [0,\infty),0) @>\mathrm{GH}>> ([0,\infty),0),
    \end{CD}
    \end{equation}
    where $k=1$ or $2$, and $\R^k/G$ is isometric to $[0,\infty)$. Given that $G$ is a non-compact subgroup of $\mathrm{Isom}(\R^k)$ by construction, the only possibility left is
    $$(\R^k,0,G)=(\R^2,0,\R\times \Z_2),$$
    where $\Z_2$ acts by a reflection with respect to the line $\R\cdot 0$. We shall rule out this case as well. We denote by $h$ a reflection element in $G$. Let $\theta_i$ be a sequence in $S^1=[-\pi,\pi]/\sim$ such that $\theta_i\overset{GH}\to h$ associated with the top row of (\ref{eq:r=0_is_singular}). We set
    $$u_i=\frac{1}{2}\theta_i\in S^1,\quad d_i=d(u_i\cdot y,y),$$
    where $d_i$ is positive because $S^1$ action is free. If the sequence $r_i^{-1}d_i$ is uniformly bounded, then $u_i\overset{GH}\to g$ for some $g\in G$ with $2g=h$; this is impossible because $h$ is a reflection. If $r_i^{-1}d_i\to \infty$ for a subsequence, we blow up at $y$ by $d_i^{-1}$ to obtain convergence
    $$(d_i^{-1} {Y},y,S^1,u_i,\theta_i)\overset{GH}\longrightarrow (\R^2,0,\R\times \Z_2,g',h')$$
    with
    $$2g'=h',\quad d(g'\cdot 0,0)=1,\quad d(h'\cdot 0,0)=0.$$
    This is also impossible in $(\R^2,0,\R\times \Z_2)$. In conclusion, any point in $\{r=0\}$ must be a singular point. This completes the proof.
\end{proof}

\subsection{Local biLipschitz estimate}\label{subsec:local_bilip}

We consider the extrinsic distance from $d$ on $Y_+$, which is still denoted by $d$ for convenience. In this subsection, we shall show that $(Y_+,d)$ is locally biLipschitz to the Euclidean distance on $(0,\infty)\times S^1$. This property plays an important role later in Section \ref{subsec:cont_Riem_metric} in recovering a continuous Riemannian metric on $Y_+$. A key input is the Abresch-Gromoll excess estimate for $\RCD$ spaces \cites{AG90,GM14}, which will be used to control the orbit locally (see Lemma \ref{lem:division_estimate}).

\begin{lem}\label{lem:uniform_regular}
   Let $K$ be a compact subset of $Y_+$. For any $z_i\in K$ and any $d_i\to 0$ such that the sequence $(d_i^{-1}Y,z_i)$ is convergent, we have
   $$(d_i^{-1}Y,z_i)\overset{GH}\longrightarrow (\R^2,0).$$
\end{lem}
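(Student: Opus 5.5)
We argue by contradiction together with a compactness argument. Suppose there are $z_i\in K$ and $d_i\to 0$ with $(d_i^{-1}Y,z_i)\to (W,w)$, where $W$ is not isometric to $(\R^2,0)$. Since $K$ is compact, after passing to a subsequence we may assume $z_i\to z\in K\subseteq Y_+$. Because $S^1$ acts by isometries, we may also replace each $z_i$ by $\theta_i z_i$ so that all $z_i$ lie on the fixed horizontal lift $\gamma$. Thus $z_i=\gamma(t_i)$ with $t_i\to t_0:=\pi(z)>0$. We then pass to the equivariant limit
$$(d_i^{-1}Y,z_i,S^1)\overset{GH}\longrightarrow (W,w,H),\qquad (d_i^{-1}[0,\infty),t_i)\overset{GH}\longrightarrow (\R,0),$$
where $W/H\cong\R$ because $t_i$ stays away from $0$. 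The rescaled limit measure makes $W$ an $\RCD(0,N)$ space. Moreover, $W$ contains a line, namely the limit of $\gamma$ through $z_i$, which is a line since $\gamma$ is a ray and $t_i\to t_0>0$. By Theorem \ref{thm:meas_split}, $W=\R\times W'$, and $H$ is a closed abelian group acting with quotient $\R$.

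The plan is then to rerun the argument of Proposition \ref{prop:rect_dim_2}, which needs only the equivariant limit structure and not regularity of the limit point. This argument uses the subsets $S_i=[-\theta_i,\theta_i]$ with $d(\theta_i z_i,z_i)=d_i$, Claims 1--4, and the rescaling by the orbit displacement scale. From it we want to conclude that the orbit $H\cdot w$ is exactly a line $\R z'$, and that every element of $H$ moving $w$ does so within that line. Since $W/H\cong\R$ and the $\R$-factor is the transverse direction along $\gamma$, the orbit line must be the $W'$-direction. Hence $W'$ is a single $H$-orbit, so $W'$ is homogeneous, and it is a line because the orbit $H\cdot w$ is homeomorphic to $\R$ by Claim 1. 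This gives $W=\R^2$. To know that $H$ acts freely and by translations, we use the fact that $H$ is abelian and connected as a limit of small symmetric sets, together with the structure of $W'$ being an $\RCD(0,N-1)$ space with a transitive abelian isometric action. Such a space is a point, a line, or a circle, and a circle is excluded by noncompactness of the rescaled orbit.

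An alternative route, possibly cleaner, avoids the group argument. Uniformity of regularity on $K$ should follow from the fact that every point of $Y_+$ is $2$-regular (Proposition \ref{prop: reg_is_r>0}), combined with the local homogeneity coming from the action: points of $K$ are parametrized by the compact interval $\pi(K)\subseteq(0,\infty)$, and the $S^1$-action is transitive on each level. One must still handle $t_i$ varying with $i$, since the scale $d_i$ and the base point both move. The standard fix is a diagonal argument. Volume cone rigidity is unavailable because the space is collapsed, so instead we would show $\mathcal{R}$ is open by the $\epsilon$-regularity of Cheeger--Colding type for $\RCD$ spaces in the splitting form: if $B_{s}(z)$ is $\epsilon s$-close to a ball in $\R^2$, then so are all smaller balls around nearby points. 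We expect this to hold in the noncollapsed sense only, so we would prefer the equivariant argument.

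The main obstacle is the first route's step of ensuring that the limit orbit through $w$ is nontrivial and noncompact. The rescaled orbit $S^1 z_i$ has length comparable to $d_i^{-1}$ times a length bounded below on $K$, but it is not yet known to be rectifiable. What we need is that the orbit diameter is bounded below uniformly on $K$, which holds by continuity of $r\mapsto\mathrm{diam}(S^1\gamma(r))$ and freeness on $Y_+$, so that after rescaling the orbit escapes to infinity. Then $H\cdot w$ is unbounded and connected, and the argument above identifies it with a line.
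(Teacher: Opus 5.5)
\textbf{There is a genuine gap.} Your first route breaks at its central step. The argument of Proposition~\ref{prop:rect_dim_2} does depend on regularity: it blows up at a \emph{fixed} regular point $q$, so the limits at both scales ($r_i$ and the intermediate scale $d_i$) are Euclidean. The limit groups are then vector groups acting by translations, and every claim uses this.
Claim~1 uses unique divisibility ($bh=ag\Rightarrow h=\frac{a}{b}g$). Claim~3 uses freeness ($h'\cdot 0=0\Rightarrow h'=0$). The final contradiction uses that a subgroup of translations cannot have a bounded orbit reaching $\partial B_1(0)$.

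With moving base points $z_i$, knowing that the limits at these scales are $\R^2$ with translation actions is exactly what the lemma asserts. So rerunning those claims is circular. For a general limit $W=\R\times W'$, the group $H$ may have nontrivial isotropy and compact subgroups with bounded orbits, and each of those steps fails.

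Your fallback classification is also false. $\RCD(0,N-1)$ spaces with a transitive abelian isometric action include flat $\R^m$, flat tori, and $\R^a\times T^b$ of any dimension at most $N-1$, so ``point, line, or circle'' does not follow. Moreover, Claim~1 only shows that the orbit contains a segment, not that it is homeomorphic to $\R$.

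What you do have correctly is the splitting $W=\R\times W'$ with $W'$ a single $H$-orbit, unbounded because orbit diameters are bounded below on $K$. The missing ingredient is a dimension bound. For instance, lower semicontinuity of essential dimension under pmGH convergence \cite{Kit19} gives that $W$ has essential dimension at most $2$. Hence $W'$ is a non-point of essential dimension $1$, and \cite{Kitabeppu-Lakzian_16} together with homogeneity and unboundedness forces $W'=\R$.

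Your second route, which you abandon, is one tool away from the paper's proof. You do not need $\epsilon$-regularity, which indeed is unavailable here since the space is collapsed. The paper instead uses H\"older continuity of small balls along the interior of a geodesic \cites{CoNa12,Deng20}, which holds in collapsed $\RCD(0,N)$ spaces.

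After your rotation, all $z_i=\gamma(t_i)$ and the limit $z=\gamma(t_0)$ are interior points of the minimizing segment $\gamma|_{[t_0/2,3t_0/2]}$. This yields $d_{GH}(d_i^{-1}B_{Rd_i}(z_i),d_i^{-1}B_{Rd_i}(z))\le C R\, d(z_i,z)^{\alpha}$, uniformly in the scale. Since $z$ is $2$-regular by Proposition~\ref{prop: reg_is_r>0}, $d_i^{-1}B_{Rd_i}(z)\to B_R^{\R^2}(0)$, and the triangle inequality concludes.
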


\begin{proof}
  Because the rotations in $\theta$-directions $(r,\theta)\mapsto (r,\theta+u)$ are isometries for all $w\in\R$, we may assume that each $z_i$ has coordinate $(r_i,0)$ without loss of generality. After passing to a subsequence if necessary, we have $z_i\to z_0=(r_0,0)$, where $r_0>0$. Let $c:t\mapsto (t,0)$ be the horizontal ray through all $z_i$ and $z_0$. By H\"older continuity of small balls along $c|_{[r_0/2,3r_0/2]}$, there are positive constants $\alpha(N), C(N)$ such that 
  $$d_{GH}((B_s(z_i),B_s(z_0)) \le \dfrac{C(N)}{\eta} s \cdot d(z_i,z_0)^{\alpha(N)}$$
  holds for all $s$ sufficiently small and $i$ sufficiently large, where $\eta=r_0/4$  \cites{CoNa12,Deng20}. Hence for each $R>0$, we obtain estimate
  \begin{align*}
    & d_{GH}(s_i^{-1}B_{Rs_i}(z_i),B^2_R(0)) \\
    \le\, & d_{GH}(s_i^{-1}B_{Rs_i}(z_i),s_i^{-1}B_{Rs_i}(z_0)) + d_{GH}(s_i^{-1}B_{Rs_i}(z_0),B^2_R(0)) \\
    \le\, & CR\cdot d(z_i,z_0)^{\alpha(n)} + d_{GH}(s_i^{-1}B_{Rs_i}(z_0),B^2_R(0)) \to 0.
  \end{align*}
  This completes the proof.
\end{proof}

Now, we start to study the local geometry of $S^1$-orbits in $Y_+$. As usual, we write elements in $S^1$ by $\theta\in [-\pi,\pi]$.

\begin{lem}\label{lem:deviation_estimate}
   Let $\epsilon>0$ and $y_0=(r_0,0)\in Y_+$, where $r_0>0$. There is a small $\delta>0$ such that for all $(r,\theta)\in B_\delta(y_0)$, it holds that
   $$d((r,\theta/2),\sigma)\le \epsilon\cdot d((r,0),(r,\theta)),$$
   where $\sigma$ is a minimal geodesic from $(r,0)$ to $(r,\theta)$. 
\end{lem}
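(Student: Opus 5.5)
The plan is to argue by contradiction and blow up, using Lemma \ref{lem:uniform_regular} to identify all rescaled limits near $y_0$ with $(\R^2,0)$.

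\emph{Setup.} Suppose the statement fails for some $\epsilon>0$. Then there are points $z_i=(r_i,\theta_i)\to y_0$, with $\theta_i\neq 0$, and minimal geodesics $\sigma_i$ from $p_i=(r_i,0)$ to $q_i=(r_i,\theta_i)$ such that
$$d\big((r_i,\theta_i/2),\sigma_i\big)>\epsilon\, d_i,\qquad d_i\coloneqq d(p_i,q_i).$$
Since $z_i\to y_0$, we have $p_i\to y_0$, $q_i\to y_0$, hence $d_i\to 0$ and $\theta_i\to 0$.

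\emph{Step 1: blow up.} Pass to a subsequence and consider the equivariant convergence
$$(d_i^{-1}Y,p_i,S^1,\theta_i,\theta_i/2)\overset{GH}\longrightarrow (\R^2,0,H,g,h).$$
The base points $p_i$ lie in a compact subset of $Y_+$, so Lemma \ref{lem:uniform_regular} gives $\R^2$ as the limit space. The quotient rescales to $(d_i^{-1}[0,\infty),r_i)\to(\R,r_\infty)$ with $r_i/d_i\to\infty$, so $\R^2/H$ is isometric to $\R$. As in the proof of Proposition \ref{prop:rect_dim_2}, $H$ is a closed abelian group acting with quotient $\R$, hence $H\simeq\R$ acting by translations along a line $L$ through $0$.

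Along the same subsequence, $\sigma_i$ converges to a minimal geodesic $\sigma_\infty$ from $0$ to $g\cdot 0$, with $|g\cdot 0|=1$. The point $(r_i,\theta_i/2)$ converges to $h\cdot 0$, and $2h=g$ because $2\cdot\frac{\theta_i}{2}=\theta_i$ in $S^1$. Since $H$ acts by translations, this forces $h\cdot 0=\tfrac12 g\cdot 0$.

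\emph{Step 2: contradiction.} The minimal geodesic in $\R^2$ from $0$ to $g\cdot 0$ is unique, namely the segment, and it contains the midpoint $\tfrac12 g\cdot 0=h\cdot 0$. Distances to the geodesics pass to the limit: $d_i^{-1}d((r_i,\theta_i/2),\sigma_i)\to d(h\cdot 0,\sigma_\infty)=0$. This contradicts the lower bound $>\epsilon$.

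\emph{Main obstacle.} The delicate step is making the limit of $\theta_i/2$ legitimate, i.e.\ showing that $\theta_i/2$ does not escape to infinity after rescaling. If instead $d((r_i,\theta_i/2),p_i)/d_i\to\infty$ along a subsequence, I would adapt the argument of Proposition \ref{prop:rect_dim_2}. Rescale by $d_i'=d(p_i,(r_i,\theta_i/2))$ and take limits $\theta_i/2\to h'$ and $\theta_i\to g'$. Then $|h'\cdot 0|=1$, $g'\cdot 0=0$, and $2h'=g'$. Because the limit group is $\R$ acting freely by translations, $g'\cdot 0=0$ forces $g'=0$, hence $h'=0$, contradicting $|h'\cdot 0|=1$.

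It should also be checked that the limit group is exactly $\R$, with no compact factor fixing $0$. Any element fixing $0$ and commuting with the translations along $L$ would, given the quotient $\R$, be a reflection across $L$. Such a reflection is ruled out by the same halving trick used in Proposition \ref{prop: reg_is_r>0}, or else it is harmless, since $h$ was obtained as a genuine limit satisfying $2h=g$ with $g$ a translation.
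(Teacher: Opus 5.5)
Your proof is correct and follows the same route as the paper. You argue by contradiction, blow up at scale $d_i$ using Lemma \ref{lem:uniform_regular} to get $\R^2$ with the limit group acting by translations along a line, and observe that $(r_i,\theta_i/2)$ converges to the midpoint of the unique limit segment. Your handling of that last convergence (via $2h=g$, plus the rescaling by $d(p_i,(r_i,\theta_i/2))$ to rule out escape) justifies a step the paper only asserts. Your final paragraph about a possible reflection is unnecessary, since a reflection across $L$ would make the quotient a ray rather than $\R$.
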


\begin{proof} The proof is by a standard contradiction argument. Suppose that there are $\epsilon_0>0$, $\delta_i\to 0$, and a sequence $z_i=(r_i,\theta_i)\in B_{\delta_i}(y_0)$, where $\theta_i\not=0$, such that
\begin{equation}\label{eq:deviation_estimate}
\dfrac{d((r_i,\theta_i/2),\sigma_i)}{d((r_i,0),(r_i,\theta_i))}\ge \epsilon_0
\end{equation}
where $\sigma_i$ is a minimal geodesic from $(r_i,0)$ to $(r_i,\theta_i)$. Let $d_i=d((r_i,0),(r_i,\theta_i))\to 0$. After passing to a subsequence if necessary, by Lemma \ref{lem:uniform_regular}, we consider
$$(d_i^{-1} Y,(r_i,0),(r_i,\theta_i),S^1)\overset{GH}\longrightarrow (\R^2,(0,0),(0,1),\R).$$
Since $(r_i,0)$ and $(r_i,\theta_i)$ belong to the same orbit, this property is passed to the limit. The limit $\R$-action is by translation in the second coordinate. Associated with this convergence, we have $(r_i,\theta_i/2)$ converging to $(0,1/2)\in \R^2$ and $\sigma_i$ converging to the unique segment from $(0,0)$ to $(0,1)$. In particular, the limit geodesic goes through $(0,1/2)$. This yields a contradiction to 
(\ref{eq:deviation_estimate}).
\end{proof}

\begin{lem}\label{lem:division_estimate}
  Let $\epsilon>0$ and $y_0=(r_0,0)\in Y_+$, where $r_0>0$. There is $\delta>0$ such that for all $(r,\theta)\in B_\delta(y_0)$ and all $k\in \mathbb{N}$, it holds that
  $$2^k \cdot d((r,0),(r,\theta/2^k))\le (1+\epsilon) \cdot d((r,0),(r,\theta)).$$
\end{lem}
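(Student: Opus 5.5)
The idea is to iterate a one-step halving estimate $k$ times and make the errors summable. The section introduction says the key input is the Abresch--Gromoll excess estimate. So I will combine Lemma \ref{lem:deviation_estimate} with an excess bound to get a sharper one-step inequality:
$$2\, d((r,0),(r,\theta/2))\le (1+\eta(d))\, d((r,0),(r,\theta)),$$
where $d=d((r,0),(r,\theta))$ and $\eta(d)$ decays like a power $d^{\alpha}$.

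First I derive the one-step estimate. Set $p=(r,0)$, $q=(r,\theta)$, $m=(r,\theta/2)$. Since $\theta\mapsto\theta/2$ is realized by the isometry rotating by $\theta/2$, which maps $p\mapsto m$ and $m\mapsto q$, we have $d(p,m)=d(m,q)$. Hence $2d(p,m)-d(p,q)=d(p,m)+d(m,q)-d(p,q)=e_{p,q}(m)$, the excess of $m$ with respect to $p,q$.

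By Lemma \ref{lem:deviation_estimate}, $m$ lies within $\epsilon\, d(p,q)$ of a minimal geodesic $\sigma$ from $p$ to $q$. It also sits near the midpoint of $\sigma$, by symmetry and the blow-up picture of Lemma \ref{lem:uniform_regular}. The Abresch--Gromoll excess estimate for $\RCD(0,N)$ spaces (valid here since the relevant balls are uniformly regular) then gives
$$e_{p,q}(m)\le C(N)\Big(\frac{h^{N}}{d(p,q)}\Big)^{1/(N-1)}\cdot(\dots),$$
where $h=d(m,\sigma)\le\epsilon\, d(p,q)$. Because the estimate is scale invariant, this yields $e_{p,q}(m)\le \Psi(\epsilon)\,d(p,q)$ with $\Psi(\epsilon)=C(N)\epsilon^{N/(N-1)}$, which is $o(\epsilon)$.

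Next I make the deviation improve under halving. Let $D_k=d((r,0),(r,\theta/2^k))$. The ratio $D_{k+1}/D_k$ is at least $1/2$ by the triangle inequality and at most $\frac12(1+\Psi)$. So the $D_k$ decay geometrically, and the $k$-th step has scale $D_k\le 2^{-k}(1+\Psi)^k D_0$.

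The product $\prod_k(1+\Psi_k)$ only converges if the errors $\Psi_k$ go to zero along the iteration. A fixed $\epsilon$ at every step is not enough, because $(1+\Psi(\epsilon))^k$ is unbounded. This is the main obstacle. I would try two ways around it.

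The first is to use that the deviation ratio itself tends to zero at small scales uniformly near $y_0$, by Lemma \ref{lem:uniform_regular} applied with $d_i\to0$. Then $\epsilon_k:=\sup\{\text{deviation ratio at scale}\le D_k\}\to0$. Uniform regularity alone gives no rate, though, so summability is unclear.

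The preferred second way is to exploit the power gain $\Psi(\epsilon)\lesssim\epsilon^{1+1/(N-1)}$. Feed the excess bound back into the deviation: a small excess forces the point closer to the geodesic, so $\epsilon_{k+1}\le C\epsilon_k^{\,\beta}$ with $\beta>1$. Once $\epsilon_0$ is small, iterating gives a super-geometric decay of $\epsilon_k$, hence $\sum\Psi(\epsilon_k)\le \epsilon$.

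Finally I choose $\delta$ so that $\epsilon_0$ is below the threshold on $B_\delta(y_0)$ (Lemma \ref{lem:deviation_estimate}), and so that all the points $(r,\theta/2^k)$ stay in a compact subset of $Y_+$ where the constants are uniform. Telescoping then gives
$$2^kD_k=D_0\prod_{j<k}\frac{2D_{j+1}}{D_j}\le D_0\prod_j(1+\Psi(\epsilon_j))\le(1+\epsilon)D_0.$$
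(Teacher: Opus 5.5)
Your one-step reduction is right. Rotation by $\theta/2^{k+1}$ is an isometry, so $2D_{k+1}-D_k$ is exactly the excess of $(r,\theta/2^{k+1})$ with respect to $(r,0)$ and $(r,\theta/2^{k})$. The scale-invariant Abresch--Gromoll bound for $\RCD(0,N)$, with $s=D_{k+1}\approx D_k/2$, then gives $2D_{k+1}\le D_k\bigl(1+C(N)\,\epsilon_k^{N/(N-1)}\bigr)$, where $\epsilon_k$ is the deviation ratio at step $k$. You also correctly see that this closes only if $\sum_k\epsilon_k^{N/(N-1)}$ is small uniformly near $y_0$. Lemma \ref{lem:deviation_estimate} cannot give this, because it is a compactness statement with no rate.

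The gap is your remedy $\epsilon_{k+1}\le C\epsilon_k^{\beta}$ with $\beta>1$: nothing supports it. Small excess of a triple only constrains the middle point of that same triple. Even in Euclidean space, where $h\approx\sqrt{e\,s}$, feeding your bound back returns a deviation of order $\epsilon_k^{N/(2(N-1))}$, which is no gain since $N\ge2$. It says nothing about the next triple, whose deviation is $\epsilon_{k+1}$, and different scales of one orbit are not related by any isometry.

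The inequality is in fact false in the model case. In a smooth warped product $\di r^2+f^2\di\theta^2$ with $f'(r)\neq0$, the orbit has geodesic curvature $\kappa=|f'|/f$. Hence $\epsilon_k\approx\kappa D_k/8$ and $\epsilon_{k+1}/\epsilon_k\to1/2$, which is incompatible with $\epsilon_{k+1}\le C\epsilon_k^{\beta}$ as $\epsilon_k\to0$. Geometric decay would already suffice; the real problem is that no rate at all is available a priori. So your final telescoping bound is not established.

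The paper never tries to improve the deviation ratio. It writes the excess bound in the absolute form $2\delta_{k+1}-\delta_k\le C(N)\,l_k^{N/(N-1)}$, so the error at step $k$ is $C(N)\frac{l_k}{\delta_k}\,l_k^{1/(N-1)}$. Lemma \ref{lem:deviation_estimate} is used only to get $C(N)l_k/\delta_k\le1$ and $l_k^{1/(N-1)}\le0.1$ for all $k$, which yields $\delta_k\le0.55^k\delta_0$. The extra factor $l_k^{1/(N-1)}\le\delta_k^{1/(N-1)}$ then decays geometrically, the errors sum to at most $c\,\delta_0^{1/(N-1)}$, and $\delta$ is chosen with $\exp(c\,\delta^{1/(N-1)})\le1+\epsilon$.

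Be aware, though, that this absolute form is exactly the non-scale-invariant version your computation avoids. For $\RCD(0,N)$ the Abresch--Gromoll bound carries the factor $s^{-1/(N-1)}$. Already in $\R^2$, take $d(p,q)=2s$ and $x$ at height $l=s/10$ above the midpoint. The excess is $\approx l^2/s=s/100$, which exceeds $C\,l^{N/(N-1)}$ for any fixed $C$ once $s$ is small. With that factor restored, the paper's step-$k$ error becomes $\approx C(N)(l_k/\delta_k)^{N/(N-1)}$, which is precisely your unresolved summability problem.

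A complete argument therefore needs an ingredient beyond the qualitative Lemma \ref{lem:deviation_estimate}. Examples would be a summable rate for the deviation ratios near $y_0$, or a different route to rectifiability of the orbits. Your proposal isolates this obstruction correctly but does not overcome it.
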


\begin{proof}
   Let $\delta\in (0,1/10)$ such that $\overline{B_{2\delta}}(y_0)$ is contained in the regular set. We may further shrink $\delta$ along the proof.

   Let $(r,\theta)\in B_\delta(y_0)$. For each $k$, we denote
   $$\delta_k= d((r,0),(r,\theta/2^k))$$
   and $l_k$ the distance from the point $(r,\theta/2^{k+1})$ to $\sigma_k$, a minimal geodesic from $(r,0)$ to $(r,\theta/2^k)$. From Lemma \ref{lem:deviation_estimate}, we have seen that $l_k \ll \delta_k$ as $k\to\infty$. By Abresch--Gromoll excess estimate \cites{AG90,GM14}, we obtain
   $$d((r,\theta/2^{k+1}),(r,\theta/2^{k}))+d((r,\theta/2^{k+1}),(r,0))-d((r,\theta/2^{k}),(r,0))\le C(N)\cdot l_k^{\frac{N}{N-1}}.$$
   That is,
   \begin{equation}\label{eq:half_estimate_by_AG}
   2\delta_{k+1}\le \delta_k + C(N) l_k^{\frac{N}{N-1}}= \delta_k \left( 1+ C(N) \dfrac{l_k}{\delta_k}\cdot l_k^{\frac{1}{N-1}} \right).
   \end{equation}
   By Lemma \ref{lem:deviation_estimate}, we can shrink $\delta>0$ such that
   $$\dfrac{d((r,\theta/2),\sigma)}{d((r,0),(r,\theta))}\le \min\{C(N)^{-1},0.1\},$$
   for all $(r,\theta) \in B_\delta(y_0)$, where $\sigma$ denotes a minimal geodesic from $(r,0)$ to $(r,\theta)$.
   Then  
   $$ \dfrac{l_k}{\delta_k}\le C(N)^{-1},\quad l_k^{\frac{1}{N-1}}\le 0.1$$
   hold for all $k\in \mathbb{N}$. This gives a rough estimate 
   $$\delta_{k+1} \le  \delta_k \cdot \dfrac{1.1}{2}.$$
   Hence
   $$\delta_k \le \left( \dfrac{1.1}{2} \right)^k \delta_0$$
   for all $k\in \mathbb{N}$.
   
   To improve this estimate, we iterate the inequality (\ref{eq:half_estimate_by_AG}) and derive
   $$2^k \delta_{k}\le \delta_0 \cdot \prod_{i=0}^{k-1} \left( 1+ C(N) \dfrac{l_i}{\delta_i}\cdot l_i^{\frac{1}{N-1}} \right).$$
   After applying $\ln$ function to the product on the right side, we estimate 
   $$\sum_{i=0}^{k-1} \ln\left( 1+C(N) \dfrac{l_i}{\delta_i}\cdot l_i^{\frac{1}{N-1}}\right)\le \sum_{i=0}^{k-1} C(N) \dfrac{l_i}{\delta_i}\cdot l_i^{\frac{1}{N-1}} \le \sum_{i=0}^{k-1} \delta_i^{\frac{1}{N-1}}=\delta_0\sum_{i=0}^{k-1} (0.55^\frac{1}{N-1})^i.$$
   Setting 
   $$c=\sum_{i=0}^\infty (0.55^\frac{1}{N-1})^i<\infty,$$
   we have 
   $$2^k \delta_k \le \delta_0 \cdot e^{\delta_0 c}.$$
   If we choose $\delta>0$ sufficiently small in the beginning such that
   $$e^{\delta c} \le 1+ \epsilon,$$
   then the desired estimate follows.
\end{proof}

 Lemma \ref{lem:division_estimate} directly implies that $\theta$-curves (the $S^1$-orbits) in $Y_+$ are rectifiable. Indeed, we obtain a control on their lengths as shown by the corollary below.

\begin{cor}\label{cor:v_curve_length}
   Let $\epsilon>0$ and let $y_0=(r_0,0)\in Y_+$, where $r_0>0$. There is $\delta>0$ such that
   $$\mathrm{length}_d(\sigma_r)\le (1+\epsilon)\cdot d((r,0),(r,\theta))$$
   for all $(r,\theta)\in B_\delta(y_0)$, where $\sigma_r$ is the vertical curve from $(r,0)$ to $(r,\theta)$, that is,
   $\sigma_r(t)=(r,t)$.
\end{cor}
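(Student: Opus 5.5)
The plan is to approximate the vertical curve $\sigma_r$ by dyadic polygonal chains and to bound the length of every such chain using Lemma \ref{lem:division_estimate}. We take the $\delta$ given by Lemma \ref{lem:division_estimate} for the same $\epsilon$ and $y_0$, and fix $(r,\theta)\in B_\delta(y_0)$. Since $S^1$ acts by isometries, each rotation $(r,t)\mapsto (r,t+u)$ is an isometry. Hence for every $k$ and every $j=0,\dots,2^k-1$,
$$d\big((r,j\theta/2^k),(r,(j+1)\theta/2^k)\big)=d\big((r,0),(r,\theta/2^k)\big).$$
Summing over $j$ and applying Lemma \ref{lem:division_estimate}, the length of the dyadic chain of level $k$ is
$$L_k\defeq\sum_{j=0}^{2^k-1} d\big((r,j\theta/2^k),(r,(j+1)\theta/2^k)\big)=2^k d\big((r,0),(r,\theta/2^k)\big)\le (1+\epsilon)\, d((r,0),(r,\theta)).$$

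Next we pass from dyadic partitions to arbitrary ones. The length of $\sigma_r$ is the supremum of chain sums over all partitions $0=t_0<t_1<\dots<t_m=\theta$ (taking $\theta>0$; the case $\theta<0$ is symmetric). Fix such a partition. Since $\sigma_r$ is continuous, we can replace each $t_l$ by a dyadic point $j_l\theta/2^k$ close to it, with $k$ large. Continuity of $\sigma_r$ makes the error $\sum_l |d(\sigma_r(t_l),\sigma_r(t_{l+1}))-d(\sigma_r(s_l),\sigma_r(s_{l+1}))|$ arbitrarily small, where $s_l$ are the dyadic replacements. By the triangle inequality, the chain sum for the dyadic points $s_l$ is at most $L_k$, because it is a coarsening of the level-$k$ dyadic chain. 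Letting $k\to\infty$ shows that every chain sum is at most $(1+\epsilon)d((r,0),(r,\theta))$, so the supremum, which is the length, obeys the same bound.

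The argument is essentially routine. The only points to check are, first, that Lemma \ref{lem:division_estimate} is applied at the single point $(r,\theta)\in B_\delta(y_0)$, with the equal-step identity coming purely from the isometric action and needing no smallness of the intermediate points; and second, that the parametrization direction matches, so that $\sigma_r(t)=(r,t)$ is continuous. This continuity follows from continuity of the $S^1$-action, as in Lemma \ref{lem:free_at_reg}.
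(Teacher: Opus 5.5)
Your proposal is correct and follows the paper's proof: apply Lemma \ref{lem:division_estimate} at $(r,\theta)$, use the isometric $S^1$-action to rewrite $2^k d((r,0),(r,\theta/2^k))$ as the level-$k$ dyadic chain sum along $\sigma_r$, and let $k\to\infty$. Your extra step, approximating an arbitrary partition by dyadic points and using the triangle inequality and continuity of $\sigma_r$, simply spells out the limiting argument that the paper leaves implicit.
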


\begin{proof}
   By Lemma \ref{lem:division_estimate}, we have
   $$2^k \cdot d((r,0),(r,\theta/2^k))\le (1+\epsilon) \cdot d((r,0),(r,\theta)).$$
   for all $k\in \mathbb{N}$. Using isometric $\R$-action, we interpret the left-hand side as 
   $$\sum_{j=0}^{2^k-1} d((r,j\theta/2^k),(r,(j+1)\theta/2^k)).$$
   Letting $k\to\infty$, we obtain the desired estimate.
\end{proof}

Next, we prove a local Pythagorean estimate around any regular point.

\begin{lem}\label{lem:pyth_estimate}
   Let $\epsilon>0$ and $y_0=(r_0,0)\in Y_+$, where $r_0>0$. There is $\delta>0$ such that for all $(r,\theta)\not=(r',\theta')\in B_\delta(y_0)$, it holds that
  $$1-\epsilon \le \dfrac{\left(d((r,\theta),(r,\theta'))^2+ d((r,\theta'),(r',\theta'))^2 \right)^{1/2}}{d((r,\theta),(r',\theta'))}\le 1+\epsilon.$$
\end{lem}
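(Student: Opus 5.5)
We argue by contradiction, blowing up at the pair of points and using Lemma \ref{lem:uniform_regular} together with the structure of the limit $\R$-action, in the same way as in Lemma \ref{lem:deviation_estimate}. Suppose the statement fails. Then there are $\epsilon_0>0$, $\delta_i\to 0$, and pairs $z_i=(r_i,\theta_i)\neq z_i'=(r_i',\theta_i')$ in $B_{\delta_i}(y_0)$ for which the ratio lies outside $[1-\epsilon_0,1+\epsilon_0]$. Set $d_i=d(z_i,z_i')\to 0$ and write $w_i=(r_i,\theta_i')$ for the ``corner'' point.

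\textbf{Step 1: the corner stays at bounded rescaled distance.} First we check that $d(z_i,w_i)/d_i$ and $d(w_i,z_i')/d_i$ are bounded. Since $\pi$ is $1$-Lipschitz, $d(w_i,z_i')=|r_i-r_i'|\le d_i$. The point $w_i$ lies on the same $S^1$-orbit as $z_i$, so $d(z_i,w_i)\le d(z_i,z_i')+d(z_i',w_i)\le 2d_i$.

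\textbf{Step 2: pass to the limit.} By Lemma \ref{lem:uniform_regular}, after passing to a subsequence we obtain equivariant convergence
$$(d_i^{-1}Y,z_i,S^1)\overset{GH}\longrightarrow (\R^2,0,H),$$
and $z_i',w_i$ converge to points $z',w\in\overline{B_2}(0)$. As in the proof of Proposition \ref{prop:rect_dim_2}, $H\simeq\R$ acts by translations along one direction, say $e_2$, and $\R^2/H=\R$. The quotient map converges to the rescaled projection $\pi$, i.e. to the orthogonal projection onto the $e_1$-axis.
\begin{itemize}
\item The point $w$ lies on the orbit $H\cdot 0=\R e_2$, since $w_i$ is in the orbit of $z_i$.
\item The points $w_i$ and $z_i'$ lie on the same horizontal lift $t\mapsto(t,\theta_i')$. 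This lift is a ray minimizing to the orbit through $w_i$, since $d(w_i,z_i')=|r_i-r_i'|$ is the distance between their projections. So its limit is a segment from $w$ to $z'$ of length $|\pi(w)-\pi(z')|$, which forces $z'-w\perp e_2$.
\end{itemize}
Hence $0$, $w$, $z'$ form a right angle at $w$, and the Euclidean Pythagorean theorem gives $|0-w|^2+|w-z'|^2=|z'|^2=1$. By GH convergence, each rescaled distance converges to the corresponding Euclidean one. So the ratio tends to $1$, which contradicts the choice of $\epsilon_0$.

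\textbf{Main obstacle.} The delicate step is justifying that the limit of the horizontal segments from $w_i$ to $z_i'$ is orthogonal to the $H$-orbit. My approach is to use the fact that $d(w_i,z_i')=|r_i-r_i'|=d(S^1 w_i,S^1 z_i')$, the distance between orbits, and pass this to the limit. This gives $|w-z'|=d(Hw,Hz')$, and in $\R^2$ with $H$ translating along $e_2$ this is exactly orthogonality.

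One also needs the limit group to be the full translation group $\R e_2$ rather than a proper closed subgroup. This follows as in Claim 1 of Proposition \ref{prop:rect_dim_2}, using that orbits are connected and the convergence $\pi$-equivariant, and is already implicit in the proof of Lemma \ref{lem:deviation_estimate}.
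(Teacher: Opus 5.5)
Your proposal is correct and follows essentially the same blow-up argument as the paper: bound the corner point $(r_i,\theta_i')$ within $2d_i$ of $(r_i,\theta_i)$, pass to the limit via Lemma \ref{lem:uniform_regular}, observe that the limit corner lies both on the $\R$-orbit of the base point and on the limit horizontal line through the limit of $(r_i',\theta_i')$, and conclude with the Euclidean Pythagorean theorem. Your orthogonality argument, which passes $d(w_i,z_i')=|r_i-r_i'|=d(S^1 w_i,S^1 z_i')$ to the limit, justifies a step the paper simply asserts when it calls $\gamma_\infty$ the horizontal line through $(a,b)$.
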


\begin{proof}
We argue by contradiction and suppose that there are $\epsilon_0>0$, contradicting sequences $\delta_i\to 0$ and 
$(r_i,\theta_i)\not=(r'_i,\theta'_i)$ in $B_{\delta_i}(y_0)$ but
$$\left| \dfrac{\left(d((r_i,\theta_i),(r_i,\theta'_i))^2+ d((r_i,\theta'_i),(r'_i,\theta'_i))^2 \right)^{1/2}}{d((r_i,\theta_i),(r'_i,\theta'_i))} -1 \right|\ge \epsilon_0.$$
Let $d_i=d((r_i,\theta_i),(r'_i,\theta'_i))\to 0$. We denote $\gamma_i$ the horizontal ray through $(r'_i,\theta'_i)$. After passing to a subsequence if necessary, we apply Lemma \ref{lem:uniform_regular} and consider the convergence
$$(d_i^{-1}Y,(r_i,\theta_i),(r'_i,\theta'_i),\R,\gamma_i)\overset{GH}\longrightarrow (\R^2,(0,0),z=(a,b),\R,\gamma_\infty)$$
with $d_{\R^2}((0,0),(a,b))=1$. We understand the limit $\R$-action as translations in the second coordinate of $\R^2$, and $\gamma_\infty$ as the horizontal line through $(a,b)$. Since
$(r_i,\theta'_i)$ and $(r'_i,\theta'_i)$ belong to the same horizontal ray $\gamma_i$, it holds that
$$d((r_i,\theta_i),(r_i,\theta'_i))\le d((r_i,\theta_i),(r'_i,\theta'_i))+d((r'_i,\theta'_i),(r_i,\theta'_i))\le 2d_i.$$
Therefore, $(r_i,\theta'_i)$ subconverges to a limit point $w$ in $\R^2$ associated to the above blow-up sequence. By construction, $w$ satisfies\\
(i) $w$ and $(0,0)$ belong to the same $\R$-orbit,\\
(ii) $w$ and $z=(a,b)$ belong to the same horizontal line $\gamma_\infty$.\\
These properties force $w=(0,b)$. Hence by Pythagorean theorem in $\R^2$, we have
$$d_{\R^2}((0,0),w)^2 + d_{\R^2}(w,z)^2 = d_{\R^2}((0,0),z)^2,$$
which leads to the desired contradiction. This completes the proof.
\end{proof}

We are ready to prove the local biLipschitz estimate around any point in $Y_+$. Because the statement is local, we will identify a neighborhood of $(r_0,0)$ in $Y_+=(0,\infty)\times S^1$, expressed in $(r,\theta)$-coordinate, with a neighborhood in $\R^2$.

\begin{prop}\label{thm:local_bilip}
   Given $y_0=(r_0,0)\in Y_+$, where $r_0>0$, there is a neighborhood $U$ of $y_0$ and positive constants $C_1$, $C_2$ such that
   $$ C_1\cdot d(z,w) \le \| z-w \|\le C_2\cdot d(z,w),$$
   for all $z,w\in U$, where $\| \cdot \|$ denotes the Euclidean norm on $\R^2$ with $(r,\theta)$-coordinate.
\end{prop}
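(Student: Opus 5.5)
The plan is to reduce the biLipschitz comparison to one-dimensional comparisons along the two coordinate directions, and then combine them with the local Pythagorean estimate (Lemma \ref{lem:pyth_estimate}). Fix $\epsilon=1/10$ and choose $\delta>0$ so that Lemma \ref{lem:pyth_estimate}, Corollary \ref{cor:v_curve_length} and Lemma \ref{lem:division_estimate} all hold on $B_{2\delta}(y_0)$. Take $U$ to be a small coordinate box $\{|r-r_0|<\eta,\ |\theta|<\eta\}$ contained in $B_\delta(y_0)$. This is possible because the map $\phi$ of Lemma \ref{lem:free_at_reg} is a homeomorphism.

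\textbf{Horizontal direction.} Each horizontal curve $t\mapsto (t,\theta)$ is a lift of the unit speed ray, so it is a ray. Hence $d((r,\theta'),(r',\theta'))=|r-r'|$ exactly. This is the usual meaning of $r$ as the distance to the orbit $\{r=0\}$ along horizontal lifts.

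\textbf{Vertical direction.} I will show that $d((r,0),(r,\theta))$ is comparable to $|\theta|$, uniformly for $r$ near $r_0$ and $|\theta|$ small; by the isometric $S^1$-action the same then holds for $d((r,\theta),(r,\theta'))$ versus $|\theta-\theta'|$. Let $F(r,\theta)=d((r,0),(r,\theta))$. It is continuous and positive for $\theta\neq 0$ by freeness.
\begin{itemize}
\item \emph{Upper bound.} Lemma \ref{lem:division_estimate} gives $2^kF(r,\theta/2^k)\le (1+\epsilon)F(r,\theta)$. Combined with the triangle inequality $F(r,m\alpha)\le mF(r,\alpha)$ along the orbit, this gives for dyadic $\theta=\eta\, m/2^k$:
$$F(r,\theta)\le m F(r,\eta/2^k)\le (1+\epsilon)\,\frac{m}{2^k}F(r,\eta)\le (1+\epsilon)\frac{\max_{r} F(r,\eta)}{\eta}\,|\theta|.$$
Continuity then extends this to all $\theta$, so $F(r,\theta)\le A|\theta|$.
\item \emph{Lower bound.} Subadditivity gives $F(r,\eta)\le \lceil \eta/\theta\rceil F(r,\theta)$ for $0<\theta\le\eta$. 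Since $\min_{|r-r_0|\le\eta}F(r,\eta)>0$ by compactness, this yields $F(r,\theta)\ge c|\theta|$.
\end{itemize}
Alternatively, Corollary \ref{cor:v_curve_length} shows that the vertical length is comparable to $F$, which gives the same two-sided bound via additivity of length.

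\textbf{Combining.} For $z=(r,\theta)$ and $w=(r',\theta')$ in $U$, Lemma \ref{lem:pyth_estimate} gives
$$d(z,w)\approx \bigl(d((r,\theta),(r,\theta'))^2+|r-r'|^2\bigr)^{1/2}$$
up to the factor $1\pm\epsilon$. The first term is comparable to $|\theta-\theta'|^2$ by the vertical step, so $d(z,w)$ is comparable to $\|z-w\|$, giving constants $C_1,C_2$.

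The main obstacle is making the vertical comparison uniform in $r$ near $r_0$. In particular, the lower bound needs $F(r,\eta)$ bounded away from $0$ on a compact range of $r$, and the upper bound needs a uniform scale; both follow from continuity and freeness on $Y_+$. A second point to check is that all intermediate points $(r,\theta')$ in the Pythagorean estimate stay inside $B_\delta(y_0)$; this is ensured by taking $U$ a small box relative to $\delta$.
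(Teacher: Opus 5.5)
Your argument is essentially the paper's: horizontal distances are exact along the horizontal rays, the vertical comparison comes from the division estimate in one direction and from subadditivity plus compactness in the other, and Lemma \ref{lem:pyth_estimate} combines the two (the paper rescales $\theta$ by $2^k$ into the compact range $\eta/2\le|\theta|\le\eta$ instead of using dyadic density and continuity). One small slip: $F(r,\eta)\le\lceil\eta/\theta\rceil F(r,\theta)$ does not follow from subadditivity unless $\eta/\theta$ is an integer. Compare instead with $F(r,n\theta)$ for $n=\lfloor\eta/\theta\rfloor$, so that $n\theta\in(\eta/2,\eta]$ and $F(r,n\theta)$ is bounded below by compactness; alternatively, your Corollary-based route avoids the issue.
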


\begin{proof}
   Let $\epsilon=0.1$. We choose $\eta>0$ such that
   $$U\coloneqq(r_0-\eta,r_0+\eta)\times (-\eta,\eta)\subset B_{\delta/2}(y_0)\subseteq  Y_+,$$
   where $\delta>0$ is chosen so that the conclusions of Lemmas \ref{lem:division_estimate} and \ref{lem:pyth_estimate} are satisfied.
   We select $C_1\in (0,1/2)$ and $C_2\in (2,\infty)$ such that
   $$C_1\le  \dfrac{| \theta |}{d((r,0),(r,\theta))}\le C_2$$
   for all $(r,\theta)\in U$ with $r\in [r_0-\eta,r_0+\eta]$ and $\theta\in [\eta/2,\eta]\cup [-\eta,-\eta/2]$.

   We first consider the special case when $z$ and $w$ belong to the same $S^1$-orbit; in other words, they share the same $r$-coordinate. Applying an isometry from $\R$-action, we can assume
   $$z=(r,0),\quad w=(r,\theta)$$
   without loss of generality.
   If $v \in [\eta/2,\eta]\cup [-\eta,-\eta/2]$, then clearly we have 
   $$\|z-w\|=|\theta| \le C_2\cdot d((r,0),(r,\theta))=C_2\cdot d(z,w);$$
   $$\|z-w\|=|\theta| \ge C_1\cdot d((r,0),(r,\theta))=C_1\cdot d(z,w).$$
   If $\theta\in (-\eta/2,\eta/2)$, we choose a positive integer $k$ such that $2^{k} \theta \in [\eta/2,\eta]\cup [-\eta,-\eta/2]$, then by triangle inequality
   $$\dfrac{\|z-w\|}{d(z,w)}=\dfrac{\|2^k \theta\|}{2^k d((r,0),(r,\theta))}\le \dfrac{\|2^k \theta\|}{d((r,0),(r,2^k \theta))}\le C_2.$$
   For the other direction, we apply Lemma \ref{lem:deviation_estimate} and estimate
   \begin{align*}
      \dfrac{\|z-w\|}{d(z,w)}=\dfrac{|2^k \theta|}{2^k d((r,0),(r,\theta))}\ge \dfrac{|2^k \theta|}{(1+\epsilon) d((r,0),(r,2^k \theta)) } \ge \dfrac{C_1}{1+\epsilon}.
   \end{align*}

   For a general pair of distinct points 
   $$z=(r,\theta),\quad w=(r',\theta'),$$
   we can apply Lemma \ref{lem:pyth_estimate}. Indeed,
   \begin{align*}
       \|z-w\|&= \left( |r-r'|^2 + |\theta-\theta'|^2 \right)^{1/2} \\
       &\le \left( d((r',\theta'),(r,\theta'))^2 + C_2^2 d((r,\theta'),(r,\theta))^2 \right)^{1/2} \\
       &\le C_2 \left( d((r',\theta'),(r,\theta')) + d((r,\theta'),(r,\theta))^2 \right)^{1/2} \\
       &\le C_2(1+\epsilon)\cdot d(z,w).
   \end{align*}
   The other direction can be proved similarly by using Lemma \ref{lem:pyth_estimate} to obtain
   $$\|z-w\| \ge \dfrac{1-\epsilon}{1+\epsilon}\cdot C_1 \cdot d(z,w).$$
   We complete the proof.
\end{proof}

\subsection{Recovering a \texorpdfstring{$C^0$}{C0} Riemannian metric on the $Y_+$}\label{subsec:cont_Riem_metric}

We prove Theorem \ref{thm:cont_Riem_metric} in this subsection.

For each $r>0$, we define an inner product $g$ at $(r,0)$ (and thus also at $(r,\theta)$ for all $\theta\in S^1$ due to $S^1$-isometries) by declaring an orthogonal basis $\{\partial_r,\partial_\theta\}$ with norm
$$\| \partial_\theta \|_g = \dfrac{\mathrm{length}(\sigma_r|_{[0,t]})}{t},\quad \|\partial_r\|_g=1,$$
where $\sigma_r$ is the vertical curve $\sigma_r(t)=(r,t)$. We note that
$\mathrm{length}(\sigma_r|_{[0,t]})$ is a linear function for $t\ge 0$, thus $\|\partial_\theta\|_g$ above is well-defined. This provides a positive function 
$$f(r)\coloneqq\| \partial_\theta \|_g \text{ at } (r,0).$$
Thanks to Lemma \ref{lem:cont_warp} below, we obtain a continuous Riemannian metric 
$$g=\di r^2 + f(r)^2 \di \theta^2 \quad \text{on } Y_+=\{r>0\}.$$

\begin{lem}\label{lem:cont_warp}
   The warping function $f$ is continuous on $(0,\infty)$.
\end{lem}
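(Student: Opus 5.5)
I would prove continuity of $f$ at a fixed $r_0>0$ by showing that $f(r)$ is the infinitesimal ratio of orbit distance to angle, and that this ratio is locally uniformly controlled. The first step is an intrinsic formula for $f$. Since the $S^1$-action is isometric, $t\mapsto \mathrm{length}(\sigma_r|_{[0,t]})$ is additive, hence linear, so $f(r)=\mathrm{length}(\sigma_r|_{[0,\theta]})/\theta$ for any $\theta>0$. Corollary \ref{cor:v_curve_length}, applied at base point $(r_0,0)$ with a given $\epsilon$, yields $\delta>0$ such that for all $r$ with $|r-r_0|<\delta/2$ and all small $\theta$ (so that $(r,\theta)\in B_\delta(y_0)$, which holds uniformly in $r$ by Proposition \ref{thm:local_bilip}),
$$ d((r,0),(r,\theta))\le \theta f(r)\le (1+\epsilon)\, d((r,0),(r,\theta)).$$
The lower bound is simply that length dominates distance.

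Next, I would fix one small $\theta_0>0$ that works for all $r$ near $r_0$ and compare the distances $D(r)\coloneqq d((r,0),(r,\theta_0))$ at $r$ and $r_0$. By the triangle inequality along the horizontal rays $t\mapsto (t,0)$ and $t\mapsto(t,\theta_0)$, the latter being the image of the former under the isometry $\theta_0$, whose points are at distance $|r-r_0|$ along each ray, we get
$$|D(r)-D(r_0)|\le 2|r-r_0|.$$
Hence $D$ is $2$-Lipschitz in $r$.

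Combining these bounds gives
$$f(r)\le \frac{(1+\epsilon)D(r)}{\theta_0}\le \frac{(1+\epsilon)(D(r_0)+2|r-r_0|)}{\theta_0}\le (1+\epsilon)\Big(f(r_0)+\frac{2|r-r_0|}{\theta_0}\Big),$$
and symmetrically
$$f(r)\ge \frac{D(r)}{\theta_0}\ge \frac{D(r_0)-2|r-r_0|}{\theta_0}\ge \frac{f(r_0)}{1+\epsilon}-\frac{2|r-r_0|}{\theta_0}.$$
Letting $r\to r_0$ with $\theta_0$ fixed, then $\epsilon\to0$ (which shrinks $\delta$, and hence $\theta_0$, but only before taking the limit in $r$), we obtain $\limsup_{r\to r_0} f(r)$ and $\liminf_{r\to r_0} f(r)$ both within $O(\epsilon)$ of $f(r_0)$. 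This proves continuity.

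The main obstacle is the uniformity hidden in Corollary \ref{cor:v_curve_length}. We need one $\delta$, and hence one $\theta_0$, valid simultaneously for all $r$ near $r_0$, and for both $r_0$ and nearby $r$. This is exactly how the corollary is stated: it holds for all $(r,\theta)\in B_\delta(y_0)$. The points $(r,\theta_0)$ with $|r-r_0|$ and $\theta_0$ small lie in $B_\delta(y_0)$ by the local biLipschitz Proposition \ref{thm:local_bilip}, so this step goes through, but it is where care is needed. Note also that $f(r_0)>0$ by the biLipschitz bound, though positivity is not needed for continuity.
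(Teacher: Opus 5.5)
Your proof is correct and follows essentially the same route as the paper. Both arguments fix one small angle $\theta_0$ (the paper's $t_0$) that works uniformly for $r$ near $r_0$. Both then use Corollary \ref{cor:v_curve_length} to sandwich $\theta_0 f(r)$ between $d((r,0),(r,\theta_0))$ and $(1+\epsilon)$ times that distance, and conclude from continuity of $r\mapsto d((r,0),(r,\theta_0))$. The only differences are cosmetic: you make that continuity explicit as a $2$-Lipschitz bound along the horizontal rays and use additive error terms. The paper simply invokes continuity of this positive function to get $\frac{1-\epsilon}{1+\epsilon}\le f(r)/f(r_0)\le(1+\epsilon)^2$.
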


\begin{proof}
   Let $\epsilon>0$ and $r_0>0$. We choose $\delta>0$ such that Corollary \ref{cor:v_curve_length} holds. For each $r\in(r_0-\delta/2,r_0+\delta/2)$, we denote $\sigma_r$ the vertical curve at $(r,0)$, that is,
   $$\sigma_r(t)=(r,t), \quad t\in \R.$$
   We fix a small $t_0>0$ such that the image of $\sigma_r|_{[0,t_0]}$ is contained in $B_\delta(y_0)$ for all $r\in(r_0-\delta/2,r_0+\delta/2)$. Next, by the continuity of the positive function $r\mapsto d((r,0),(r,t_0))$, we choose a small $\delta'\in (0,\delta/2)$ such that
   $$1-\epsilon \le \dfrac{d((r,0),(r,t_0))}{d((r_0,0),(r_0,t_0))}\le 1+\epsilon$$
   for all $r\in(r_0-\delta',r_0+\delta')$.
   Together with the estimate
   $$d((r,0),(r,t_0))\le \mathrm{length}(\sigma_r|_{[0,t_0]})\le (1+\epsilon)\cdot d((r,0),(r,t_0))$$
   from Corollary \ref{cor:v_curve_length}, we obtain
   $$\dfrac{1-\epsilon}{1+\epsilon}\le \dfrac{f(r)}{f(r_0)}\le (1+\epsilon)^2$$
   for all $r\in(r_0-\delta',r_0+\delta')$. This shows the continuity of $f$ at $r_0.$
\end{proof}

\begin{lem}\label{lem:norm=limit}
   Let $y_0=(r_0,0)\in Y_+$, where $r_0>0$, and $W\in T_{y_0}\R^2$, then
   $$\|W\|_g = \lim\limits_{t\to 0}\dfrac{d(y_0,y_0+tW)}{|t|}$$
\end{lem}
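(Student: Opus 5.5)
The plan is to reduce to the two coordinate directions and use the tangent cone at $y_0$ together with Lemma \ref{lem:pyth_estimate}. Write $W=a\partial_r+b\partial_\theta$, so $y_0+tW=(r_0+ta,tb)$ in the local $(r,\theta)$-chart. The target is
$$\lim_{t\to0}\frac{d(y_0,y_0+tW)}{|t|}=\left(a^2+f(r_0)^2b^2\right)^{1/2}=\|W\|_g.$$

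First I treat the pure directions. For $W=\partial_r$, the point $(r_0+ta,0)$ lies on the horizontal ray $\gamma$ through $y_0$, which is a lift of the unit-speed ray in $Y/S^1$. Hence $d((r_0,0),(r_0+ta,0))=|ta|$ exactly, and the limit is $|a|$.

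For $W=\partial_\theta$, I need $d((r,0),(r,tb))/|tb|\to f(r)$ as $t\to0$. By definition $\mathrm{length}(\sigma_r|_{[0,tb]})=f(r)|tb|$. The upper bound $d\le$ length is trivial. For the lower bound, Corollary \ref{cor:v_curve_length} gives, for any $\epsilon>0$ and $|tb|$ small,
$$f(r)|tb|\le(1+\epsilon)\,d((r,0),(r,tb)).$$
Letting $\epsilon\to0$ yields the limit $f(r)|b|$. I will note that this holds uniformly for $r$ near $r_0$, using the uniform $\delta$ in Corollary \ref{cor:v_curve_length} and the continuity of $f$ from Lemma \ref{lem:cont_warp}.

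For a general $W$ with $t\ne0$, I set $z=(r_0,0)$ and $w=(r_0+ta,tb)$ and pass through the intermediate point $(r_0,tb)$. Lemma \ref{lem:pyth_estimate} gives, for $|t|$ small enough that both points lie in $B_\delta(y_0)$,
$$(1-\epsilon)\,d(z,w)\le\left(d((r_0,0),(r_0,tb))^2+d((r_0,tb),(r_0+ta,tb))^2\right)^{1/2}\le(1+\epsilon)\,d(z,w).$$
The second term equals $|ta|$, since the two points lie on a common horizontal ray (the $S^1$-translate of $\gamma$). The first term is $f(r_0)|tb|(1+o(1))$ by the previous step. Dividing by $|t|$, sending $t\to0$ and then $\epsilon\to0$ gives the claimed limit. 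The degenerate cases $a=0$ or $b=0$ are covered by the pure-direction step.

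The main obstacle is the vertical step, namely that chord length over parameter length tends to $f(r_0)$. This is exactly where the Abresch--Gromoll-based division estimate of Lemma \ref{lem:division_estimate} and Corollary \ref{cor:v_curve_length} enter. Everything else is bookkeeping, taking care that the choice of $\delta$ in Lemma \ref{lem:pyth_estimate} is made before letting $t\to0$.
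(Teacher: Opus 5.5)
Your proposal is correct and follows the paper's proof essentially verbatim: apply Lemma \ref{lem:pyth_estimate} with the intermediate point $(r_0,tb)$, note that the horizontal leg has length exactly $|ta|$, and compare the vertical leg with $f(r_0)|tb|$ via Corollary \ref{cor:v_curve_length}. Your separate treatment of the pure directions and the remark on uniformity in $r$ are harmless but not needed.
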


\begin{proof}
   Let $\epsilon>0$. We write $W=(a,b)\in T_{y_0}\R^2-\{(0,0)\}$. Then
   $$\| W \|^2_g = a^2 + b^2 f(r_0)^2.$$
   Thanks to Lemma \ref{lem:pyth_estimate}, when $|t|$ is sufficiently small, we have
   $$\left|\dfrac{t^2 a^2 + d(y_0,(r_0,tb))^2}{d(y_0,y_0+tW)^2}-1\right| \le \epsilon.$$
   We denote $\sigma$ the vertical curve $\sigma(t)=(r_0,bt)$, where $t\in [0,1]$, and apply Lemma \ref{cor:v_curve_length}, then 
   $$ (1-\epsilon)\cdot \mathrm{length}(\sigma|_{[0,t]}) \le d(y_0,(r_0,tb)) \le \mathrm{length}(\sigma|_{[0,t]})$$
   when $|t|$ is sufficiently small. Recall that by construction of the Riemannian metric $g$, we have
   $$\mathrm{length}(\sigma|_{[0,t]})=tb\|\partial_v\|_g=tb\cdot f(r_0).$$
   Together, they yield
   $$\left|\dfrac{t^2 (a^2 + b^2f(r_0)^2)}{d(y_0,y_0+tW)^2}-1\right| \le \epsilon.$$
   Letting $t\to 0$, we prove the desired equality.
\end{proof}

To prove that the defined Riemannian metric $g$ coincides with the (extrinsic) distance $d$ on $Y_+$, 
we will use the following fact about metric derivatives later in our proof: for any Lipschitz curve $\gamma:[0,1] \to (Y,d)$, the metric derivative $|\dot\gamma(t)|_d$ exists almost everywhere and
$$\mathrm{length}_d(\gamma)=\int_0^1 |\dot\gamma(t)|_d dt.$$
See \cite{AmbrosioTilli_book}*{Theorem 4.1.6}. We also denote $\gamma'(t)$ the standard derivative of $\gamma(t)$, since we can view $\gamma$ as a curve in $\R^2$.

\begin{lem}\label{lem:metric_deriv=usual_deriv}
  Let $\gamma:[0,1]\to (Y_+,d)$ be a Lipschitz curve. Let $t\in [0,1]$ such that both $\gamma'(t)$ and $|\dot\gamma(t)|_d$ exist. Then
  $$|\dot\gamma(t)|_d=\| \gamma'(t) \|_g.$$
\end{lem}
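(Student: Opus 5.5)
The plan is to reduce the statement to Lemma \ref{lem:norm=limit}: replace the curve $\gamma$ near $t$ by its first-order Euclidean approximation, and control the error with the local biLipschitz estimate of Proposition \ref{thm:local_bilip}. Fix $t$ at which both $\gamma'(t)$ and $|\dot\gamma(t)|_d$ exist, and set $z_0=\gamma(t)=(r_0,\theta_0)$ and $W=\gamma'(t)\in T_{z_0}\R^2$. Using the isometric $S^1$-action we may assume $\theta_0=0$, so $z_0=(r_0,0)=y_0$. Everything below then takes place in a neighborhood $U$ of $y_0$ on which $(r,\theta)$ are honest Euclidean coordinates and Proposition \ref{thm:local_bilip} holds with constants $C_1,C_2$.

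For small $s$, the triangle inequality gives
$$\bigl| d(\gamma(t),\gamma(t+s)) - d(y_0, y_0+sW)\bigr| \le d(\gamma(t+s), y_0+sW).$$
This step needs both $\gamma(t+s)$ and $y_0+sW$ to lie in $U$ for $|s|$ small. That holds because $\gamma$ is continuous (it is Lipschitz for $d$, and by the biLipschitz equivalence it is continuous in coordinates). By Proposition \ref{thm:local_bilip},
$$d(\gamma(t+s),y_0+sW)\le C_1^{-1}\,\|\gamma(t+s)-\gamma(t)-sW\|.$$
The right-hand side is $o(|s|)$ by the definition of the ordinary derivative $\gamma'(t)=W$. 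Dividing by $|s|$ and letting $s\to 0$, the metric derivative $|\dot\gamma(t)|_d=\lim_{s\to0} d(\gamma(t),\gamma(t+s))/|s|$ equals $\lim_{s\to 0} d(y_0,y_0+sW)/|s|$. Lemma \ref{lem:norm=limit} identifies this limit with $\|W\|_g$.

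The case $W=0$ needs a separate remark, but the same estimate already gives $|\dot\gamma(t)|_d=0=\|W\|_g$, since then $d(y_0,y_0+sW)=0$.

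The main (mild) obstacle is the localization bookkeeping. One must make sure the comparison is done inside a single chart neighborhood where Proposition \ref{thm:local_bilip} applies, and that the rotation normalizing $\theta_0=0$ preserves everything involved. It preserves $d$ because it is an isometry, and it preserves $g$ because $g=\di r^2+f(r)^2\di\theta^2$ does not depend on $\theta$; in the coordinates it acts as a Euclidean translation, so it commutes with taking $\gamma'$. Beyond that, the argument is a direct first-order comparison.
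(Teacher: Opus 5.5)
Your proposal is correct and follows essentially the same route as the paper. The paper also compares $\gamma(t+s)$ with $y_0+sW$, bounds $d(\gamma(t+s),y_0+sW)\le C_1^{-1}\epsilon|s|$ via Proposition \ref{thm:local_bilip}, and concludes with Lemma \ref{lem:norm=limit}; it writes the two one-sided inequalities separately rather than your single reverse-triangle estimate. One small correction: that $\gamma(t+s)$ lies in $U$ for small $|s|$ follows from the homeomorphism in Lemma \ref{lem:free_at_reg}, not from the biLipschitz estimate, which only applies once you are already inside $U$.
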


\begin{proof}
   Let $\epsilon>0$. We fix a $t\in[0,1]$ such that both $\gamma'(t)$ and $|\dot\gamma(t)|_d$ exist. We write $y_0=\gamma(t)$ and $W=\gamma'(t)\in T_{y_0}\R^2$.
   
   We first prove $|\dot\gamma(t)|_d\le \|\gamma'(t)\|_g$. We pick $\delta>0$ such that
   $$\| \gamma(t+s)-(\gamma(t)+sW) \|_{\R^2}\le \epsilon |s|$$
   for all $s\in (-\delta,\delta)$. By Proposition \ref{thm:local_bilip}, after further shrinking $\delta$ if necessary, we have
   $$d(\gamma(t+s),y_0+sW)\le C_1^{-1}\epsilon |s|.$$
   Then
   \begin{align*}
   \dfrac{d(\gamma(t+s),y_0)}{|s|} &\le \dfrac{d(\gamma(t+s),y_0+sW)}{|s|}+\dfrac{d(y_0+sW,y_0)}{|s|}\\
   &\le C_1^{-1}\epsilon + \dfrac{d(y_0+sW,y_0)}{|s|}.
   \end{align*}
   Let $s\to 0$ and apply Lemma \ref{lem:norm=limit}, then we obtain
   $$|\dot\gamma(t)|_d\le C_1^{-1}\epsilon + \|W\|_g.$$
   This proves one direction $|\gamma'|_d(t)\le \|\gamma'(t)\|_g$.

   For the other direction, we estimate
   $$\dfrac{d(y_0,y_0+sW)}{|s|}\le \dfrac{d(y_0,\gamma(t+s))}{|s|}+\dfrac{d(\gamma(t+s),y_0+sW)}{|s|}\le \dfrac{d(y_0,\gamma(t+s))}{|s|} + C_1^{-1}\epsilon.$$
   Letting $s\to 0$, we have
   $$\|W\|_g\le |\dot\gamma(t)|_d+C_1^{-1}\epsilon.$$
   This completes the proof.
\end{proof}

According to \cite{Burtscher15}*{Theorem 4.1}, the continuous Riemannian metric $g$ on $Y_+$ induces a distance function $d_g$ on $Y_+$ that coincides with the manifold topology, where $d_g$ is defined by 
$$d_g(z,w)=\inf\{ \mathrm{length}_g(\gamma)\mid \gamma \text{ is a piecewise smooth curve from $z$ to $w$}\}.$$ 
With Lemma \ref{lem:metric_deriv=usual_deriv}, we easily conclude: 


\begin{prop}\label{prop:d=d_g_on_reg}
    $d=d_g$ on $Y_+$.
\end{prop}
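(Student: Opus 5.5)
We need to show $d=d_g$ on $Y_+$, where $d$ is the restriction of the distance of $Y$. The natural route is to compare lengths of curves under the two metrics, using Lemma \ref{lem:metric_deriv=usual_deriv}, and then compare geodesics.

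\textbf{Step 1: $d\le d_g$.} Let $\gamma$ be a piecewise smooth curve in $Y_+$ from $z$ to $w$. By Proposition \ref{thm:local_bilip} and compactness of the image of $\gamma$, $\gamma$ is Lipschitz with respect to $d$. Hence $|\dot\gamma(t)|_d$ exists for a.e.\ $t$, and $\gamma'(t)$ exists for all but finitely many $t$. By Lemma \ref{lem:metric_deriv=usual_deriv} the two speeds agree a.e., so
$$d(z,w)\le \mathrm{length}_d(\gamma)=\int_0^1|\dot\gamma(t)|_d\,dt=\int_0^1\|\gamma'(t)\|_g\,dt=\mathrm{length}_g(\gamma).$$
Taking the infimum over $\gamma$ gives $d\le d_g$.

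\textbf{Step 2: $d_g\le d$.} Fix $z,w\in Y_+$ and a minimal $d$-geodesic $\sigma$ from $z$ to $w$ in $Y$.

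First suppose $\sigma\subset Y_+$. Its image is compact in $Y_+$, so by Proposition \ref{thm:local_bilip}, $\sigma$ is Lipschitz in the Euclidean $(r,\theta)$-coordinates. By Rademacher, $\sigma'(t)$ exists a.e., and $|\dot\sigma|_d$ exists a.e. Lemma \ref{lem:metric_deriv=usual_deriv} then gives $\mathrm{length}_g(\sigma)=\mathrm{length}_d(\sigma)=d(z,w)$. Two points still need checking:
\begin{itemize}
\item \cite{Burtscher15} defines $d_g$ through piecewise smooth curves. Either the same paper shows the infimum over absolutely continuous (Lipschitz) curves is the same, or we approximate $\sigma$ by piecewise linear curves in coordinate charts. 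Since $g$ is continuous, the $g$-lengths converge, so $d_g(z,w)\le \mathrm{length}_g(\sigma)$.
\item Working on the cylinder rather than a single chart is handled by subdividing $\sigma$ into pieces each lying in a neighborhood $U$ given by Proposition \ref{thm:local_bilip}.
\end{itemize}

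\textbf{Main obstacle: geodesics through $\{r=0\}$.} We must rule out, or circumvent, minimal geodesics between points of $Y_+$ that touch $\pi^{-1}(0)$. I would argue that $\pi^{-1}(0,\infty)$ is convex, in two steps.
\begin{itemize}
\item $\pi$ is $1$-Lipschitz onto $[0,\infty)$. If $\sigma$ met $\{r=0\}$ at an interior time, then $r\circ\sigma$ would reach $0$.
\item Essential nonbranching, or simply the RCD interior-regularity of geodesics (Deng's Hölder continuity of tangent cones along the interior of geodesics \cite{Deng20}), says interior points of a geodesic joining two $2$-regular points have a uniform tangent structure. This should force those interior points to be regular. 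By Proposition \ref{prop: reg_is_r>0}, this excludes $r=0$ when the action is free there. In the fixed-point case, the set $\{r=0\}$ is just $y$.
\end{itemize}

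If this convexity argument is delicate, I would fall back on approximation. Deng's result gives that interior points of $\sigma$ lie in the regular part in the relevant sense. Alternatively, replace $\sigma$ by a curve staying in $\{r\ge\eta\}$ at a cost $O(\eta)$ in length: near the vertex, $f(r)\to$ bounded (or $f\to 0$ in the cone case) makes going around short. Letting $\eta\to 0$ would give $d_g\le d$.

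Combining Steps 1 and 2 yields $d=d_g$ on $Y_+$.
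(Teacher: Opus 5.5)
Your Step 1 and the free-action case of Step 2 follow the paper's argument. That argument is local biLipschitz control plus Lemma~\ref{lem:metric_deriv=usual_deriv}, together with convexity of $Y_+=\mathcal{R}$ from Deng's H\"older continuity of tangent cones and Proposition~\ref{prop: reg_is_r>0}. Your remark that Lipschitz curves must be admitted in the definition of $d_g$ is a point the paper passes over silently.

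The genuine gap is the case where $S^1$ fixes $y$, because there $Y_+$ need not be convex. Take $\R^2$ with Lebesgue measure and rotations about $y=0$: it satisfies \eqref{cond:S1_action_and_ray_quotient}, $y$ is $2$-regular, and the unique geodesic from $(1,0)$ to $(-1,0)$ passes through $y$. So Deng's argument excludes $y$ only when $y$ is singular, and observing that $\{r=0\}=\{y\}$ is a single point does not help. Your fallback then needs a short detour around the vertex inside $Y_+$, i.e.\ $d_g(\sigma(t_0-\eta),\sigma(t_0+\eta))\to 0$, which you take from $f(\eta)\to 0$. That is not available at this stage. The quantity $2\pi f(\eta)$ is the $d$-length of the orbit of radius $\eta$, and bounding it would need a division estimate uniform as $\eta\to 0$. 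The $\delta$ in Lemma~\ref{lem:division_estimate}, however, depends on the base point in $Y_+$. In the paper, $\lim_{r\to 0}f(r)=0$ is proved only later and \emph{uses} $d=d_g$, so quoting it here would be circular. The paper's own treatment of this case just splits the geodesic at $y$ and applies ``the same estimate'' to each piece, which leaves the same crossing step implicit.

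A clean way to close the gap is non-branching.
\begin{itemize}
\item In the fixed-point case $r=d(y,\cdot)$. So if a unit-speed geodesic $\sigma$ from $z$ to $w$ passes through $y$, it does so at $t_0=r(z)>0$, and $d(z,w)=r(z)+r(w)$.
\item Take $\theta\in S^1$ with $\theta\cdot w\neq w$ (any $\theta\neq 0$, since the action is free on $Y_+$). Suppose some geodesic $\tau$ from $z$ to $\theta\cdot w$ also passed through $y$. Then $d(z,\theta\cdot w)=r(z)+r(w)=d(z,w)$.
\item In that case, $\sigma|_{[0,t_0]}$ followed by $\tau|_{[t_0,\,d(z,w)]}$ is a geodesic. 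It agrees with $\sigma$ on $[0,t_0]$ but ends at $\theta\cdot w\neq w$, contradicting the non-branching of $\RCD$ spaces.
\item Hence every geodesic from $z$ to $\theta\cdot w$ has compact image in $Y_+$, and your first case gives $d_g(z,\theta\cdot w)\le d(z,\theta\cdot w)$.
\item Let $\theta\to 0$. Since both $d$ and $d_g$ induce the manifold topology on $Y_+$, this yields $d_g(z,w)\le d(z,w)$.
\end{itemize}
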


\begin{proof}
   We will handle the two cases in the condition \eqref{cond:S1_action_and_ray_quotient} separately.

   We first consider the case that $S^1$-action is free on $\{r=0\}$. In this case, $\{r=0\}$ coincides with $\mathcal{R}$, thus is strongly convex by the H\"older continuity of tangent cones in the interior of $\sigma$ \cite{Deng20}. Let $z,w\in Y_+$ and let $\gamma:[0,1]\to Y_+$ be a $d$-geodesic from $z$ to $w$ of arc-length parametrization. Then by \cite{AmbrosioTilli_book}*{Theorem 4.1.6} and Lemma \ref{lem:metric_deriv=usual_deriv}, 
   $$d(z,w)=\mathrm{length}_d(\gamma)=\int_0^1 |\dot\gamma(t)|_d \di t=\int_0^1 \|\gamma'(t)\|_g\, \di t \ge d_g(z,w).$$
   For the other direction, let $\epsilon>0$ and $\gamma:[0,1]\to Y_+$ be a piecewise smooth curve of constant speed such that 
   $$\mathrm{length}_g(\gamma)-\epsilon \le d_g(z,w)\le \mathrm{length}_g(\gamma).$$
   Thanks to Theorem \ref{thm:local_bilip}, $\gamma$ is locally Lipschitz with respect to $d$ and thus its metric derivative exists almost everywhere. Then again by \cite{AmbrosioTilli_book}*{Theorem 4.1.6} and Lemma \ref{lem:metric_deriv=usual_deriv}, 
   $$d_g(z,w) \ge \mathrm{length}_g(\gamma)-\epsilon = \int_0^1 \|\gamma'(t)\|_g \di t-\epsilon = \int_0^1 |\dot\gamma(t)|_d \di t-\epsilon\ge d(z,w)-\epsilon.$$

   Next, we deal with the second case, where $S^1$-action fixes $\{r=0\}=\{y\}$.  Let $z,w\in Y_+$ and let $\gamma:[0,1]\to Y$ be a $d$-geodesic from $z$ to $w$ of arc-length parametrization. If $\gamma$ is contained in $Y_+$, then by the same proof in the first case, we obtain $d(z,w)\ge d_g(z,w)$.
   If $\gamma$ passes through $\{r=0\}$ at $\gamma(t_0)=y$, then both pieces $\gamma|_{[0,t_0)}$ and $\gamma|_{(t_0,1]}$ are contained in $Y_+$. Hence the same estimate implies $d(z,w)\ge d_g(z,w)$. The other direction of the inequality follows from the same proof in the first case.
\end{proof}

Now Theorem \ref{thm:cont_Riem_metric} follows directly from Proposition \ref{prop:d=d_g_on_reg}.

\begin{proof}[Proof of Theorem \ref{thm:cont_Riem_metric}]
    Because the set $Y_+$ is an open and dense subset in $(Y,d)$, the metric completion of $(Y_+,d=d_g)$ is isometric to $(Y,d)$.
\end{proof}

\subsection{Improving to a \texorpdfstring{$C^0 \cap W^{1,2}_{loc}$}{continuous and Sobolev} Riemannian metric}\label{subsec:W12_Riem_metric}

As mentioned earlier at the end of Section \ref{subsec:halfplane_statements}, our proof of the rigidity statements in Section \ref{sec:rigidity} will rely on the equivalence between the $\RCD(0,N)$ condition and the non-negativity of the Bakry-Emery Ricci curvature in a distributional sense, established by Mondino and Ryborz in \cite{MondinoRyborz}. In order to apply their results, we need a Riemannian metric and measure of sufficient regularity.

First of all, we show that the restricted measure $\meas\llcorner Y_+$ satisfies the following regularity and invariance property.

\begin{lem}\label{lem:measure density}
    The restriction of the measure $\meas$ to $Y_+$ satisfies $\m{\llcorner Y_+} = \rho(r)\di r\di \theta$, where $\rho\colon(0,\infty)\to(0,\infty)$ is a $\mathrm{CD}(0,N)$ density, i.e. $\rho^{\frac{1}{N-1}}$ is concave.
\end{lem}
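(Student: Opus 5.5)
The plan is to use the $S^1$-invariance of $\meas$ together with the structure theory for $\RCD$ spaces with compact group actions. This first reduces the problem to the quotient ray. Then the one-dimensional $\CDe(0,N)$ theory identifies the quotient density.

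\textbf{Step 1: the quotient is a $\CDe(0,N)$ density.} Since $S^1$ is a compact group acting by measure-preserving isometries, the quotient $(Y/S^1,\bar d,\pi_\sharp\meas)$ is again an $\RCD(0,N)$ space by \cite{Galaz-Garcia_18}. The quotient metric space is the ray $[0,\infty)$. A one-dimensional $\RCD(0,N)$ space has a measure of the form $\rho(r)\di r$, where $\rho^{\frac1{N-1}}$ is concave, by \cite{Cavalletti-Milman_21}. Concavity on $[0,\infty)$ together with full support of $\pi_\sharp\meas$ forces $\rho>0$ on $(0,\infty)$, and $\rho$ is locally Lipschitz there. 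This also shows that $\meas(\{r=0\})=0$. Here we use $\pi_\sharp\meas(\{0\})=0$, which holds because an absolutely continuous measure gives no mass to a point.

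\textbf{Step 2: disintegrate along the orbits.} In the coordinates $Y_+\cong(0,\infty)\times S^1$ from Lemma~\ref{lem:free_at_reg}, disintegrate $\meas\llcorner Y_+$ with respect to $\pi$:
\[
\meas\llcorner Y_+=\int_0^\infty \mu_r\,\rho(r)\di r,
\]
where each $\mu_r$ is a probability measure on the orbit $\pi^{-1}(r)\cong S^1$. By $S^1$-invariance of $\meas$ and uniqueness of the disintegration, $\mu_r$ is $S^1$-invariant for a.e.\ $r$. The action on $\pi^{-1}(r)$ is the standard rotation $\theta\mapsto\theta+u$ in these coordinates, and the rotation-invariant probability measure on $S^1$ is unique. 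Hence $\mu_r=\frac{1}{2\pi}\di\theta$, and absorbing the constant $2\pi$ into $\rho$ gives $\meas\llcorner Y_+=\rho(r)\di r\di\theta$.

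\textbf{Main obstacle.} The delicate point is the correct citation of the quotient $\RCD$ result. It requires the group to be compact and to act by measure-preserving isometries, both of which hold here. A more elementary alternative avoids the cited theorem: take $\mu_0,\mu_1$ to be $S^1$-invariant measures on $Y$, and note that the Wasserstein geodesic between them can be chosen $S^1$-invariant. The entropy convexity then descends to the pushforwards on the ray, giving $\CDe(0,N)$ for $(\,[0,\infty),\pi_\sharp\meas)$ directly. After that, one still invokes the one-dimensional characterization (the density $\rho$ with $\rho^{1/(N-1)}$ concave). I would cite \cite{Galaz-Garcia_18} and mention this alternative only as a remark.
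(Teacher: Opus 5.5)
Your proposal is correct and takes essentially the paper's route: both get $\pi_\#\meas=\rho(r)\di r$ with $\rho^{\frac{1}{N-1}}$ concave from \cite{Galaz-Garcia_18}*{Theorem 1.1} and \cite{Cavalletti-Milman_21}*{Theorem A.2}, and the only difference is in the lifting step, where you derive the fiberwise Haar structure directly (disintegration plus uniqueness of the rotation-invariant probability on $S^1$) while the paper restricts to $\pi^{-1}[0,R]$ to work with finite measures and cites \cite{Galaz-Garcia_18}*{Theorem 3.2}. One detail in your Step 2: uniqueness of disintegration gives $(R_u)_\#\mu_r=\mu_r$, for $R_u$ the rotation by $u$, only off a null set that depends on $u$, so to get full $S^1$-invariance for a.e.\ $r$ you should use a countable dense set of rotations together with weak continuity, or else average the disintegration over $S^1$.
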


\begin{proof}
     Thanks to \cite{Galaz-Garcia_18}*{Theorem 1.1}, the quotient m.m.s.\@ $([0,\infty),d_E,\m^*)$ satisfies the $\RCD(0,N)$ condition, where $\m^*\coloneqq\pi_\#\m$ is the quotient measure. Thus, as a result of \cite{Cavalletti-Milman_21}*{Theorem A.2}, $\m^*=\rho(r)\di r$, where $\rho\colon[0,\infty)\to[0,\infty)$ is a $\CDe(0,N)$ density. Now, we fix $R>0$ and consider the restricted measures
     \[
    \m_R\coloneqq\m{\llcorner\pi^{-1}[0,R]},\qquad \m^*_R\coloneqq\m^*{\llcorner[0,R]}.
     \]
     It is clear that both measures are finite with equal total mass, that $\m_R$ is invariant under the $S^1$ action, and that $\pi_{\#}\m_R=\m^*_R$. Therefore, thanks to \cite{Galaz-Garcia_18}*{Theorem 3.2}, $\m_R$ is the lift of $\m^*_R$. More precisely, given any Borel subset $B\subset Y$, we have the following property
     \[
     \meas_R(B)=\int_{[0,\infty)}\nu_r(B)\di\m^*_R(r),\text{ where }\nu_r(B)=(2\pi)^{-1}\int_{S^1}\delta_{\theta\cdot\gamma(r)}(B)\di\theta,
     \]
     and where $\gamma\colon[0,\infty)\to Y$ is some fixed horizontal lift of $[0,\infty)$. Thanks to \ref{lem:free_at_reg}, we identified $Y_+$ with $(0,\infty)\times S^1$ equipped with the standard $S^1$ action. Under this identification, for every $r>0$, we have 
     \[
     \nu_r(B) = (2\pi)^{-1}\vol_{S^1}\left(\left\{\theta\in S^1, (r,\theta)\in B\right\}\right).
     \]
     As a result, $\m$ and $(2\pi)^{-1}\rho(r)\di r\di \theta$ coincide on every Borel subset $B\subset Y_+\cap\pi^{-1}[0,R]$. Replacing $\rho$ by $(2\pi)^{-1}\rho$ and letting $R$ go to $\infty$ concludes the proof.
\end{proof}
\begin{rem}
    Since the concavity of $\rho^{\frac{1}{N-1}}$ implies the concavity of $\rho^{\frac{1}{N}}$ and the latter is sufficient for our purposes. We will only use the concavity of $\rho^{\frac{1}{N}}$ in the sequel.
\end{rem}

\begin{rem}\label{rem:orbits_have_measure_zero}
    Another immediate consequence of \cite{Galaz-Garcia_18}*{Theorem 3.2} is that orbits and horizontal rays have measure zero.
\end{rem}

Next, our goal is to prove that the continuous warping function obtained in Theorem \ref{thm:cont_Riem_metric} is actually locally Sobolev on $Y_+$.

\begin{prop}\label{prop: warping function is sobolev}
    There exists a positive $W^{1,2}_{loc}$ function $f\colon(0,\infty)\to(0,\infty)$ such that $(Y_+,d_{\lvert Y_+})$ is isometric to $((0,\infty)\times S^1,\di r^2+f^2(r)\di\theta^2)$.
\end{prop}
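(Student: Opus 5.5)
Since Theorem \ref{thm:cont_Riem_metric} already gives the isometry of $(Y_+,d|_{Y_+})$ with $((0,\infty)\times S^1, \di r^2+f^2(r)\di\theta^2)$ for a continuous positive $f$, it remains to show $f\in W^{1,2}_{loc}(0,\infty)$. The plan is to read $f$ off the Sobolev calculus of $Y$ through a suitable test function, and to use the $\RCD$ second-order theory to obtain one derivative of $f$ in $L^2_{loc}$.

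The first step identifies the Sobolev structure on $Y_+$. By Lemma \ref{lem:measure density}, $\meas\llcorner Y_+=\rho(r)\di r\di\theta$ with $\rho$ locally bounded above and below on compact subsets of $(0,\infty)$ ($\rho^{1/N}$ is concave and positive). By Proposition \ref{thm:local_bilip} and Lemma \ref{lem:metric_deriv=usual_deriv}, for a locally Lipschitz $u$ supported in $Y_+$ the minimal weak upper gradient equals the Riemannian one:
\[
|Du|_w^2=(\partial_r u)^2+f(r)^{-2}(\partial_\theta u)^2 \quad \meas\text{-a.e.}
\]
This follows because the local Lipschitz constant coincides with $|\nabla u|_g$ for a continuous metric, and on $\RCD$ spaces $|Du|_w=\lip u$ a.e. for Lipschitz $u$. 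Consequently $\langle\nabla u,\nabla v\rangle=u_rv_r+f^{-2}u_\theta v_\theta$ on $Y_+$.

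The second step produces $f$ from a Laplacian. Take $u(r,\theta)=\phi(r)\cos\theta$ and $v=\phi(r)\sin\theta$ with $\phi\in C_c^\infty(0,\infty)$; these are Lipschitz on $Y$. Then $u^2+v^2=\phi^2$, and $|\nabla u|^2+|\nabla v|^2=\phi'^2+\phi^2 f^{-2}$. Thus $f^{-2}=\phi^{-2}\big(|\nabla u|^2+|\nabla v|^2-\phi'^2\big)$ wherever $\phi\neq0$, so it suffices to show $|\nabla u|^2\in W^{1,2}_{loc}$. By Bakry--\'Emery/Savar\'e theory on $\RCD(0,N)$ spaces, for $u\in D(\Delta)$ with $\Delta u\in W^{1,2}$ one has $|\nabla u|^2\in W^{1,2}$ (more precisely, $|\nabla u|\in W^{1,2}$ with $|\nabla|\nabla u||\le|\hess u|$, and $\hess u\in L^2$). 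The hard part is therefore to find test functions of this form in $D(\Delta)$ with enough regularity. I would avoid guessing $\Delta$ explicitly: a priori $\Delta(\phi\cos\theta)=\cos\theta\,(\phi''+(\rho'/\rho+f'/f)\phi'-\phi f^{-2})$ involves $f'$, which is circular. Instead I would work with a heat-flow regularization, or with the $S^1$-equivariant decomposition. For $h\in D(\Delta)$ (e.g. $h=P_t g$ for $g\in L^2\cap L^\infty$ a function of the form $a(r)\cos\theta$), the heat flow commutes with the isometric $S^1$-action, so $P_tg=b_t(r)\cos\theta+c_t(r)\sin\theta$. The self-improvement estimates then give $W^{1,2}_{loc}$ of $|\nabla P_t g|^2=b_t'^2+f^{-2}(b_t^2\cos^2\theta+\dots)+\dots$. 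Averaging over $\theta$ isolates $f^{-2}(b_t^2+c_t^2)$ modulo $b_t'^2+c_t'^2$, and the latter is handled by the analogous statement for radial functions $P_t a(r)$.

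The third step concludes. By one-dimensional reduction (a function of $r$ is in $W^{1,2}_{loc}(Y_+,\meas)$ iff it is in $W^{1,2}_{loc}((0,\infty),\rho\,\di r)$, which equals $W^{1,2}_{loc}(0,\infty)$ since $\rho$ is locally bounded away from $0$ and $\infty$), one gets $f^{-2}(b^2+c^2)\in W^{1,2}_{loc}$ near any $r_0$ at which $b^2+c^2>0$. Choosing $g$ so that this holds near $r_0$ (e.g. small $t$ with $b_t\to a$ locally uniformly), and using that $f$ is continuous and positive, we get $f=(f^{-2})^{-1/2}\in W^{1,2}_{loc}$. I expect the main obstacle to be the second step: justifying that $|\nabla P_tg|^2$ is locally Sobolev while keeping the radial part ($b_t'$) under control. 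That part itself requires knowing that $b_t\in W^{2,2}_{loc}$ in one dimension, which should follow from the radial Laplacian $b''+(\rho'/\rho+f'/f)b'$ only in a weak form. If this becomes circular, I would fall back on the Hessian bound, which gives $\partial_r(|\nabla h|^2)\in L^2_{loc}$ directly, and extract $f'$ from it by a difference-quotient argument.
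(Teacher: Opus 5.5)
Your overall plan is exactly the mechanism the paper uses. You want a Lipschitz $u\in D(\Delta)$ supported in $Y_+$ whose $|\nabla u|^2$ encodes $f^{-2}$; then $D(\Delta)\subset H^{2,2}(Y)$ together with $|\nabla u|\in L^\infty$ gives $|\nabla u|^2\in W^{1,2}(Y)$. But the proposal never actually produces such a $u$, and the reason you give for discarding the natural candidate is a miscomputation.

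Since $\meas\llcorner Y_+=\rho(r)\,\di r\,\di\theta$ is a density with respect to the coordinate measure, not with respect to $\mathrm{vol}_g=f\,\di r\,\di\theta$, you can integrate $\int(\psi_r u_r+f^{-2}\psi_\theta u_\theta)\rho\,\di r\,\di\theta$ by parts. This gives $\Delta u=\rho^{-1}(\rho u_r)_r+f^{-2}u_{\theta\theta}$. The $f'/f$ drift you wrote is not there, consistent with the potential $\ln f-\ln\rho$ in the paper's Bakry--\'Emery tensor. Hence $\Delta(\phi(r)\cos\theta)=\cos\theta\,\bigl(\rho^{-1}(\rho\phi')'-f^{-2}\phi\bigr)\in L^\infty$, because $\rho$ is locally Lipschitz and locally bounded below on $(0,\infty)$ and $f$ is continuous and positive. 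So $u=\phi\cos\theta$ and $w=\phi\sin\theta$ already lie in $D(\Delta)$, and $|\nabla u|^2+|\nabla w|^2-(\phi')^2=\phi^2f^{-2}\in W^{1,2}(Y)$. Your first idea works as stated. Also, you ask for $\Delta u\in W^{1,2}$. That is unnecessary, and here genuinely circular, since $\Delta u$ contains $f^{-2}$. What the $H^{2,2}$ calculus needs is only $\Delta u\in L^2$ plus $|\nabla u|\in L^\infty$.

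The heat-flow substitute does not close the gap as written. $b_t$ is the frequency-one coefficient of $P_t(a\cos\theta)$, and it satisfies an equation containing the potential term $-f^{-2}b_t$. So it is not the heat flow of a radial function, and nothing about $P_ta(r)$ controls $(b_t')^2$. Controlling it requires exactly the radial computation you judged circular. With the correct operator it gives $\rho^{-1}(\rho b_t')'\in L^2_{loc}$, hence $b_t\in W^{2,2}_{loc}$, but then the heat flow is superfluous.

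The paper avoids any radial computation. It takes the local angle coordinate $v$, whose gradient is $f^{-2}\partial_\theta$. This $v$ is weakly harmonic against test functions supported in its chart, because integrating $f^{-2}\rho\,\partial_\theta\psi$ in $\theta$ first gives zero. It then multiplies $v$ by a good cutoff $\chi$ with $\Delta\chi\in L^\infty$ and reads off $|\nabla(\chi v)|^2=f^{-2}$ where $\chi\equiv1$. Your corrected route trades the good cutoff for an integration by parts in $r$ that uses the Lipschitz regularity of $\rho$.

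Finally, your Step 3 claims that a function of $r$ is in $W^{1,2}_{loc}(Y_+,\meas)$ iff it is in $W^{1,2}_{loc}(0,\infty)$. This needs justification for non-Lipschitz functions. The paper obtains it from the Mondino--Ryborz identification $|D\psi|_w=|\nabla_g\psi|$ for Sobolev $\psi$ supported in $Y_+$.
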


\begin{rem}
    As an $\RCD(0,N)$ space, $(Y,y,d,\mathfrak{m})$ is doubling and supports a local Poincaré inequality. Therefore, thanks to \cite{cheeger_differentiability_1999}*{Theorem 6.1}, the minimal weak upper gradient of a Lipschitz function $\phi\in\Lip(Y,d)$ satisfies $|D\phi|_w = \lip(\phi)$ $\meas-$a.e., where:
    $$
    \lip(\phi)\colon x\in Y\mapsto \limsup_{y\to x}\frac{|\phi(y)-\phi(x)|}{d(x,y)}\in[0,\infty),
    $$
    denotes the local Lipschitz constant of $\phi$.
\end{rem}

\begin{rem}
    For $\RCD(K,N)$ spaces ($K\in\R$), the minimal weak upper gradient construction developed by Ambrosio, Gigli, and Savaré coincides with Cheeger's construction in \cite{cheeger_differentiability_1999} (see \cite{ambrosio_calculus_2014}*{Remark 4.7}).
\end{rem}

We recall that, thanks to Lemma \ref{lem:cont_warp} and Proposition \ref{prop:d=d_g_on_reg}, we identified $(Y_+,d_{\lvert Y_+})$ with $((0,\infty)\times S^1, d_g)$, where
\[
g\coloneqq\di r^2+f^2(r)\di\theta^2,
\]
and $f\colon(0,\infty)\to(0,\infty)$ is a continuous function. In order to prove Proposition \ref{prop: warping function is sobolev}, we will need a few preliminary lemmas. Our first lemma is a classical result relating the local Lipschitz constant to the gradient norm with respect to our continuous Riemannian metric $g$; we provide a proof for completeness.

\begin{lem}\label{lem: weak gradient vs lip}
    If $\phi\in\Lip(Y)$, then $\phi$ is differentiable $\meas$-a.e.\ on $Y_+$. Moreover, if $\phi$ is differentiable at $x\in Y_+$, then $\lip(\phi)(x) = |\nabla_g\phi|(x)$.
\end{lem}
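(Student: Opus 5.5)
The plan is to reduce both claims to Euclidean statements in the chart $(r,\theta)$, and then transfer the pointwise slope via the local description of $d$ given by the continuous metric $g$.

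\textbf{Step 1: a.e.\ differentiability.} By Proposition \ref{thm:local_bilip}, every point of $Y_+$ has a neighbourhood $U$ on which $d$ is biLipschitz to the Euclidean norm in $(r,\theta)$-coordinates. Hence $\phi|_U$ is Euclidean Lipschitz, and Rademacher's theorem shows it is differentiable off an $\mathcal{L}^2$-null set. By Lemma \ref{lem:measure density}, $\meas\llcorner Y_+=\rho(r)\di r\di\theta$ with $\rho$ locally bounded (it is continuous, being concave to a power). So $\meas\ll\mathcal{L}^2$ on $Y_+$, and Lebesgue-null sets are $\meas$-null. Covering $Y_+$ by countably many such $U$ gives the first claim.

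\textbf{Step 2: identify $\lip(\phi)(x)$.} Fix $x=(r_0,\theta_0)$ where $\phi$ is differentiable, and write $L=\di\phi_x$. Then $|\phi(z)-\phi(x)-L(z-x)|=o(\|z-x\|)=o(d(x,z))$, again by biLipschitz. Therefore
\[
\lip(\phi)(x)=\limsup_{z\to x}\frac{|L(z-x)|}{d(x,z)}.
\]
It remains to show this equals $\sup_{\|W\|_g=1}|L(W)|=|\nabla_g\phi|(x)$, where the norm is that of $g_x$.

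\textbf{Step 3: the two inequalities.} For the bound ``$\ge$'', take $z=x+tW$. Lemma \ref{lem:norm=limit} gives $d(x,x+tW)/|t|\to\|W\|_g$, so the ratio tends to $|L(W)|/\|W\|_g$. For the bound ``$\le$'', I need $d(x,z)\ge(1-\epsilon)\|z-x\|_{g_x}$ uniformly for $z$ near $x$. Lemma \ref{lem:norm=limit} gives this only along fixed directions, so I would upgrade it using Lemma \ref{lem:pyth_estimate} together with Corollary \ref{cor:v_curve_length} and the continuity of $f$ (Lemma \ref{lem:cont_warp}). Concretely, $d(z,x)^2$ is within a factor $1\pm\epsilon$ of $|r-r_0|^2+d((r,\theta),(r,\theta_0))^2$. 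The vertical distance satisfies $d((r,\theta),(r,\theta_0))\ge(1+\epsilon)^{-1}\mathrm{length}=(1+\epsilon)^{-1}f(r)|\theta-\theta_0|$, and $f(r)\ge(1-\epsilon)f(r_0)$ for $r$ near $r_0$. Combining these gives $d(x,z)\ge(1-C\epsilon)\|z-x\|_{g_x}$ uniformly.

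An alternative route to Step 3 uses Proposition \ref{prop:d=d_g_on_reg}. Since $d=d_g$ and $g$ is continuous, for $z$ near $x$ any short $g$-curve stays near $x$, where $g\ge(1-\epsilon)g_x$; this yields the same uniform comparison.

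The main obstacle is this uniformity in Step 3, i.e.\ passing from directional limits to a uniform two-sided comparison $d\approx\|\cdot\|_{g_x}$ on a small ball. The two routes above are designed to handle exactly this point.
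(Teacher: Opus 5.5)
Your proof is correct. Step 1 is the paper's argument, except that you get local Euclidean Lipschitz bounds from Proposition \ref{thm:local_bilip} rather than from Burtscher's comparison result; that version is more self-contained.

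For the identity $\lip(\phi)(x)=|\nabla_g\phi|(x)$, your route differs from the paper's.

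\textbf{The paper's route.} It freezes the metric at $x$ and proves a two-sided uniform comparison $(1-\epsilon)d_{g_x}\le d\le(1+\epsilon)d_{g_x}$ on a small $d$-ball. The upper bound uses continuity of $g$ and a $g_x$-strongly convex neighbourhood. The lower bound takes a $d$-geodesic, which stays in the ball and hence in $Y_+$, and applies $d=d_g$ from Proposition \ref{prop:d=d_g_on_reg}. It then transfers $\lip$ to $\lip_{g_x}$ and invokes the smooth identity $\lip_{g_x}(\phi)(x)=|\nabla_{g_x}\phi|(x)$.

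\textbf{Your route.} You linearize $\phi$ first, so only the linear map $L$ matters. The bound $\lip(\phi)(x)\ge|\nabla_g\phi|(x)$ then follows directly from the directional Lemma \ref{lem:norm=limit}. For the reverse bound you need only the one-sided uniform estimate $d(x,z)\ge(1-C\epsilon)\|z-x\|_{g_x}$. Your main route extracts this from Lemma \ref{lem:pyth_estimate}, Corollary \ref{cor:v_curve_length}, the definition of $f$ and Lemma \ref{lem:cont_warp}. Your alternative route is essentially the paper's argument.

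\textbf{What each buys.} The paper's argument is more robust: it uses only that $d$ is induced by a continuous Riemannian metric. Yours is tied to the warped, $S^1$-equivariant coordinates. In exchange, yours avoids Proposition \ref{prop:d=d_g_on_reg}, the convexity of the neighbourhood, and any appeal to geodesics, relying only on the pointwise estimates established before that proposition.
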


\begin{proof}
    Let $\phi\in\Lip(Y)$ and observe that $\phi$ is $d_g$-Lipschitz on $Y_+=(0,\infty)\times S^1$. By \cite{burtscher_length_2015}*{Theorem 4.5}, $\phi$ is locally Lipschitz w.r.t.\ the product metric $\di r^2+\di\theta^2$. Moreover, thanks to Lemma \ref{lem:measure density}, we know that $\meas\llcorner Y_+\sim\di r\di\theta$. Therefore, $\phi$ is differentiable $\meas$-a.e.\ on $Y_+$ by Rademacher's theorem.
    
    Now, assume that $\phi$ is differentiable at $x\in Y_+$ and let us show that $\lip(\phi)(x) = |\nabla_g\phi|(x)$. We are going to compare $g$ with $g_x=\di r^2+f^2(\pi(x))\di\theta^2$. Given $\epsilon>0$, we consider a compact neighborhood $K\subset Y_+$ of $x$ such that
    \begin{equation}\label{eq:g_y vs g_x}
            \forall y\in K,\ (1-\epsilon)^2g_x\le g_y\le (1+\epsilon)^2g_x.
    \end{equation}
    Without loss of generality, we may also assume that $K$ is strongly convex w.r.t.\@ $g_x$. In particular, \eqref{eq:g_y vs g_x} implies that, for all $y\in K$, we have $d_g(x,y)\le(1+\epsilon)d_{g_x}(x,y)$. For the other inequality, let $\delta>0$ such that $B_d(x,\delta)\subset K$ and assume that $y\in B_d(x,\delta)$.  Since $Y$ is a geodesic space, there exists a geodesic $\gamma$ from $x$ to $y$. In particular, since $d(x,\gamma(t))=td(x,y)$, $\gamma$ stays in $B_d(x,\delta)$ which is a subset of $Y_+$. Thanks to Proposition \ref{prop:d=d_g_on_reg}, $\gamma$ is a $g$-geodesic and, as a result, we have $(1-\epsilon)d_{g_x}(x,y)\le \mathcal{L}_g(\gamma)=d_g(x,y)$. Summarizing, we have
    \[
    \forall\, y\in B_d(x,\delta), \qquad (1-\epsilon)d_{g_x}(x,y)\le d_g(x,y)=d(x,y)\le (1+\epsilon)d_{g_x}(x,y).
    \]
    Therefore, $(1-\epsilon)\lip_{{g_x}}(\phi)(x)\le\lip(\phi)(x)\le(1+\epsilon)\lip_{{g_x}}(\phi)(x)$. However, since $g_x$ is a smooth Riemannian metric, we have $\lip_{{g_x}}(\phi)(x)=|\nabla_{g_x}\phi|(x)=|\nabla_g\phi|(x)$. Thus, letting $\epsilon$ go to $0$ concludes the proof.
\end{proof}

\begin{rem}\label{rem: v lip}
    If $\psi\in\mathcal{C}^{\infty}( Y_+)$, then $\psi$ is locally Lipschitz with respect to the product metric $\di r^2+\di\theta^2$. In particular, $\psi\in\Lip_{loc}( Y_+)$ thanks to \cite{burtscher_length_2015}*{Theorem 4.5}. Furthermore, we may compute the gradient of $\psi$ as follows
    \begin{equation}\label{eq:gradient of psi}
        \nabla_g\psi=\partial_r\psi\cdot \partial_r+f^{-2}(r)\partial_{\theta}\psi\cdot \partial_{\theta}.
    \end{equation}
\end{rem}

Given $R,\epsilon>0$, we denote
\[
\Omega_{\epsilon,R}\coloneqq(\epsilon,R)\times (-\pi/4,\pi/4), \qquad \Omega'_{\epsilon,R}\coloneqq(\epsilon/2,2R)\times (-\pi/2,\pi/2)
\]
seen as subsets of $ Y_+=(0,\infty)\times S^1$. Using \cite{ams_BE_2016}*{Lemma 6.7}, let us fix a Lipschitz cut-off function $\chi_{\epsilon,R}\colon Y\to\R$ such that:
\begin{enumerate}[label=(\roman*),ref=\roman*]
  \item \label{it:cutoff-support}
  $0\le \chi_{\epsilon,R}\le 1$, with $\chi_{\epsilon,R}\equiv 1$ on $\Omega_{\epsilon,R}$
  and $\chi_{\epsilon,R}\equiv 0$ on $Y-\Omega'_{\epsilon,R}$.

  \item \label{it:cutoff-laplacian}
  $\Delta \chi_{\epsilon,R}\in L^{\infty}(Y)$.
\end{enumerate}
Furthermore, denoting $p\colon\R\to S^1$ the universal covering projection of $S^1$, we fix a smooth function $v_{}\colon S^1\to\R$ whose restriction to $(-3\pi/4,3\pi/4)\subset S^1$ is a section of $p$. Finally, we identify $v_{}$ with the smooth function $v_{}(r,\theta)\coloneqq v_{}(\theta)$ on $ Y_+=(0,\infty)\times S^1$. Observe that, thanks to our definition, we have
\begin{equation}\label{eq:gradient of v}
   \nabla_g v_{}=f^{-2}(r)\partial_\theta \text{ on }(0,\infty)\times(-3\pi/4,3\pi/4)\subset Y_+. 
\end{equation}
Note that $\chi_{\epsilon,R}\cdot v_{}$ is well-defined on $Y$ by our assumption on $\Spt(\chi_{\epsilon,R})$. Our second lemma establishes the regularity property of $\chi_{\epsilon,R}\cdot v_{}$.

\begin{lem}\label{lem: chi v in D(Delta)}
    The functions $\chi_{\epsilon,R}\cdot v_{}$ ($R,\epsilon>0$) are in $D(\Delta)$.
\end{lem}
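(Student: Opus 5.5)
The natural route is to use that $\chi_{\epsilon,R}\in D(\Delta)$ with bounded Laplacian and that $v$ is, formally, harmonic: with respect to $\meas=\rho(r)\di r\di\theta$ and $g=\di r^2+f^2\di\theta^2$, $v$ depends only on $\theta$ and $\rho$ only on $r$. We would then verify the Leibniz rule for $\Delta(\chi v)$ by hand. The candidate is
$$h\coloneqq v\,\Delta\chi_{\epsilon,R}+2\langle\nabla\chi_{\epsilon,R},\nabla v\rangle,$$
and we must show two things: $\chi v\in W^{1,2}(Y)$, $h\in L^2$, and for all $\phi\in\Lip_c(Y)$,
$$\int\langle\nabla\phi,\nabla(\chi v)\rangle\di\meas=-\int\phi h\di\meas.$$

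\textbf{Step 1 (Sobolev regularity and $h\in L^2$).} The function $\chi v$ is Lipschitz with compact support in $\overline{\Omega'_{\epsilon,R}}\subset Y_+$. Indeed, $v$ is smooth on $Y_+$, hence locally $d$-Lipschitz by Remark \ref{rem: v lip}, and $\chi$ is Lipschitz, so $\chi v\in W^{1,2}(Y)$.

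For $h$, the term $v\Delta\chi$ is bounded. By Lemma \ref{lem: weak gradient vs lip} and \eqref{eq:gradient of v}, the pointwise a.e. inner product is
$$\langle\nabla\chi,\nabla v\rangle=f^{-2}\partial_\theta\chi.$$
This is bounded on the support because $f$ is continuous and positive on $[\epsilon/2,2R]$ and $\chi$ is Lipschitz. Here we use that $|D\cdot|_w=\lip=|\nabla_g\cdot|$ a.e., and polarization, to identify the RCD inner product $\langle\nabla\cdot,\nabla\cdot\rangle$ with $g(\nabla_g\cdot,\nabla_g\cdot)$ a.e. on $Y_+$. This identification is valid for locally Lipschitz functions, by Lemma \ref{lem: weak gradient vs lip} applied after cutoff and the locality of weak gradients.

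\textbf{Step 2 (Leibniz).} Using bilinearity and the Leibniz rule for gradients,
$$\int\langle\nabla\phi,\nabla(\chi v)\rangle\di\meas=\int v\langle\nabla\phi,\nabla\chi\rangle\di\meas+\int\chi\langle\nabla\phi,\nabla v\rangle\di\meas.$$
For the first term, write $v\nabla\phi=\nabla(v\phi)-\phi\nabla v$. Here $v\phi$ is not globally Lipschitz, so we replace it by $\tilde\chi v\phi$ with $\tilde\chi$ a slightly larger cutoff equal to $1$ on $\Spt\chi$. Then the definition of $\Delta\chi$ gives
$$\int v\langle\nabla\phi,\nabla\chi\rangle\di\meas=-\int v\phi\,\Delta\chi\di\meas-\int\phi\langle\nabla v,\nabla\chi\rangle\di\meas.$$
Consequently it remains to show
$$\int\chi\langle\nabla\phi,\nabla v\rangle\di\meas=-\int\phi\langle\nabla\chi,\nabla v\rangle\di\meas,\qquad\text{i.e.}\qquad\int\langle\nabla(\chi\phi),\nabla v\rangle\di\meas=0.$$

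\textbf{Step 3 (harmonicity of $v$, the main obstacle).} We must show $\int\langle\nabla\psi,\nabla v\rangle\di\meas=0$ for Lipschitz $\psi$ supported in $\Omega'_{\epsilon,R}$. By Step 1 and Lemma \ref{lem:measure density}, the integrand becomes
$$\int_{\epsilon/2}^{2R}\int_{-\pi/2}^{\pi/2} f(r)^{-2}\,\partial_\theta\psi(r,\theta)\,\rho(r)\di\theta\di r.$$
For a.e. $r$, the inner integral vanishes by the fundamental theorem of calculus: $\psi(r,\cdot)$ is Lipschitz in the Euclidean coordinates (by Proposition \ref{thm:local_bilip}) and vanishes at $\theta=\pm\pi/2$. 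Fubini then applies since the integrand is bounded.

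The delicate point, which is where I expect the real work, is justifying the identification of the abstract $\langle\nabla\cdot,\nabla\cdot\rangle$ with the $g$-inner product for Lipschitz functions on $Y_+$. The plan is to do this via $|D\cdot|_w=\lip$ a.e. (Cheeger), Lemma \ref{lem: weak gradient vs lip}, and polarization. A secondary issue is ensuring $\meas(\partial Y_+)$-type contributions vanish, which they do since all supports lie in $Y_+$.
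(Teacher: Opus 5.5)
Your proposal is correct and follows essentially the paper's route. You use the same candidate $v\Delta\chi_{\epsilon,R}+2g(\nabla_g v,\nabla_g\chi_{\epsilon,R})$, the same identification of $\langle\nabla\cdot,\nabla\cdot\rangle$ with $g(\nabla_g\cdot,\nabla_g\cdot)$ via $|D\cdot|_w=\lip$, Lemma \ref{lem: weak gradient vs lip} and polarization, an enlarged cutoff to integrate the $\chi_{\epsilon,R}$-term by parts, and the vanishing of $\int f^{-2}\partial_\theta(\chi_{\epsilon,R}\phi)\rho\,\di r\di\theta$ by Fubini. The only difference is cosmetic: you check the definition of $D(\Delta)$ directly with $\Lip_c(Y)$ test functions, whereas the paper first lands in the domain of the measure-valued Laplacian and then invokes Gigli's result to conclude $\chi_{\epsilon,R}v\in D(\Delta)$.
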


\begin{proof}
    For convenience, let us write $\chi\coloneqq\chi_{\epsilon,R}$, $\Omega\coloneqq\Omega_{\epsilon,R}$, $\Omega'\coloneqq\Omega'_{\epsilon,R}$. Note that, thanks to Remark \ref{rem: v lip}, and since $\chi \in \Lip_c(Y)$ with $\Spt(\chi)\subset Y_+$, we have $\chi v\in \Lip_c( Y_+)$. Let us first show that $\chi v \in D(\mathbf{\Delta})$ (see \cite{gigli_nonsmooth}*{Definition 3.1.2}). We fix a test function $\phi\in\Lip_c(Y)$ and observe that, since $\chi v\equiv 0$ outside of $\Omega'$, then $\langle \nabla\phi,\nabla\chi v\rangle\equiv 0$ outside of $\Omega'$ thanks to \cite{gigli_nonsmooth}*{Theorem 2.2.3}. On the other hand, using the chain rule for $\nabla_g$ and the polarization identity for $\langle \nabla \phi,\nabla \chi v\rangle$, the following holds $\meas$-a.e.\ on $\Omega'$ thanks to Lemma \ref{lem: weak gradient vs lip}:
    \begin{equation}\label{eq: chain rule chi v}
        \begin{aligned}
            \langle \nabla \phi,\nabla (\chi v)\rangle
            &=\frac14\Big(|D(\phi+\chi v)|_w^2-|D(\phi-\chi v)|_w^2\Big)
              =\frac14\Big(\lip(\phi+\chi v)^2-\lip(\phi-\chi v)^2\Big)\\
            &=\frac14\Big(|\nabla_g(\phi+\chi v)|^2-|\nabla_g(\phi-\chi v)|^2\Big)
              =g(\nabla_g\phi,\nabla_g(\chi v))\\
            &=g(\nabla_g(v\phi),\nabla_g\chi)+g(\nabla_g(\chi\phi),\nabla_g v)-2\phi\, g(\nabla_g v,\nabla_g\chi).
        \end{aligned}
    \end{equation}
    Now, let us fix a smooth function $\psi\in\mathcal{C}^{\infty}_c( Y_+)$ such that $\psi\equiv 1$ on $\Omega'$. In particular, we have $g(\nabla_g(v\phi),\nabla_g\chi)=g(\nabla_g(v\phi\psi),\nabla_g\chi)$ (since $\chi\equiv 0$ outside of $\Omega'$) and $v\phi\psi\in\Lip_c(Y)$. Therefore, since $\chi\in D(\Delta)$, we have:
    \begin{equation}\label{eq: integration by part chi}
        \int_Yg(\nabla_g(v\phi),\nabla_g\chi)\di \mathfrak{m}
        =\int_Yg(\nabla_g(v\phi\psi),\nabla_g\chi)\di \mathfrak{m}
        =-\int_Yv\phi\psi\Delta\chi\di \mathfrak{m}
        =-\int_Yv\phi\Delta\chi\di \mathfrak{m},
    \end{equation}
    using the locality of the Laplacian for the last equality. Furthermore, using \eqref{eq:gradient of psi} and the fact that $\nabla_gv=f^{-2}(r)\partial_\theta$ on $\Omega'$ (see \eqref{eq:gradient of v}), we have:
    \begin{align*}
        \int_Yg(\nabla_g(\chi\phi),\nabla_g v)\di \mathfrak{m}
        &= \int_{r=0}^{\infty}\int_{\theta\in S^1}f^{-2}(r)\partial_\theta(\chi\phi)\rho(r)\di r\di \theta\\
        &=\int_{r=0}^{\infty}f^{-2}(r)\rho(r)\Bigg(\int_{\theta\in S^1}\partial_\theta(\chi\phi)\di \theta\Bigg)\di r\\
        &=0,
    \end{align*}
    where we observed that $g(\nabla_g(\chi\phi),\nabla_g v)\in L^{\infty}_{c}(Y)$ is integrable so that one can apply Fubini's theorem in the second equality and Stokes theorem for the third equality. Consequently, thanks to equations \eqref{eq: chain rule chi v} and \eqref{eq: integration by part chi}, we have:
    \begin{equation*}
            \int_Y\langle \nabla \phi,\nabla (\chi v)\rangle\di\mathfrak{m}
            =-\int_Y\phi(v\Delta\chi+2g(\nabla_g v,\nabla_g \chi))\di\mathfrak{m},
    \end{equation*}
    where, thanks to item \eqref{it:cutoff-laplacian}, we have $v\Delta\chi+2g(\nabla_g v,\nabla_g \chi)\in L^{\infty}_c(Y)$. Therefore, we have $\chi v\in D(\mathbf{\Delta})$, and $\mathbf{\Delta}\chi v = (v\Delta\chi+2g(\nabla_g v,\nabla_g \chi))\meas$. In particular, thanks to \cite{gigli_diff_2015}*{Proposition 4.24}, we have $\chi v\in D(\Delta)$ and $\Delta \chi v = v\Delta\chi+2g(\nabla_g v,\nabla_g \chi)$.
\end{proof}

\begin{rem}
    We used the chain rule for $\nabla_g$ and the polarization identity for $\langle \nabla \phi,\nabla \chi v\rangle$ to obtain equation \eqref{eq: chain rule chi v} because, as stated in \cite{gigli_nonsmooth}, the chain rule for $\nabla$ a priori only holds for products of functions in $W^{1,2}(Y)$. However, the local Lipschitz constant of $v$ may a priori go to infinity near $\{r=0\}$.
\end{rem}

\begin{rem}
    We introduced a smooth function $\psi\in\mathcal{C}^{\infty}_c( Y_+)$ such that $\psi\equiv 1$ on $\Omega'$ so that we were able to use the integration by parts w.r.t.\ $\chi$ in equation \eqref{eq: integration by part chi}, which a priori only works when paired with a $W^{1,2}(Y)$ function. While we do have $v\phi\psi\in\Lip_c(Y)\subset W^{1,2}(Y)$, we may not have $v\phi\in W^{1,2}(Y)$.
\end{rem}

We now have all we need to prove Proposition \ref{prop: warping function is sobolev}.

\begin{proof}[Proof of Proposition \ref{prop: warping function is sobolev}]
    Let us fix $0<\epsilon<R$ and introduce $\chi_{\epsilon,R}$ satisfying items \eqref{it:cutoff-support} and \eqref{it:cutoff-laplacian}. As before, we denote $\chi\coloneqq\chi_{\epsilon,R}$, $\Omega\coloneqq\Omega_{\epsilon,R}$, $\Omega'\coloneqq\Omega'_{\epsilon,R}$ for conciseness. Observe that, since $v\in\Lip_{loc}( Y_+)$ and since $\chi \in\Lip_c(Y)$ with $\Spt(\chi)\subset Y_+$, we have $\chi v\in\Lip_{c}( Y_+)$. As a result, $|D\chi v|_w = \lip(\chi v)\in L^{\infty}(Y)$, i.e.\ $\chi v$ has bounded gradient. Moreover, thanks to Lemma \ref{lem: chi v in D(Delta)} and \cite{gigli_nonsmooth}*{Proposition 3.3.18}, we have $\chi v\in H^{2,2}(Y)$. Therefore, thanks to \cite{gigli_nonsmooth}*{Proposition 3.3.22}, we have $\langle\nabla \chi v,\nabla \chi v\rangle\in W^{1,2}(Y)$. Using the polarization identity for $\langle\nabla \chi v,\nabla \chi v\rangle$, the fact that $\chi\equiv 1$ on $\Omega$, and Lemma \ref{lem: weak gradient vs lip}, the following holds on $\Omega$:
    \begin{equation}\label{eq: f-2}
        \langle\nabla \chi v,\nabla \chi v\rangle =\lip(\chi v)^2= \lip(v)^2 = |\nabla_gv|^2=f^{-2},
    \end{equation}
    where the last equality follows from \eqref{eq:gradient of v}.
    Since $f$ is continuous and positive, there exists $C>0$ such that $C^{-1}\le f^{-2}\le C$ on $\Omega$. Let us fix $\phi\in\mathcal{C}_c^{\infty}(\R)$ such that $\phi(t)=t^{-1/2}$ for $t\in [C^{-1},C]$ and $\phi(0)=0$. Thanks to the chain rule \cite{gigli_nonsmooth}*{Corollary 2.2.8}, we have
    \[
    \psi\coloneqq\phi\circ \langle\nabla \chi v,\nabla \chi v\rangle\in W^{1,2}(Y)\text{ with compact support }\Spt(\psi)\subset  Y_+.
    \]
    Moreover, thanks to equation \eqref{eq: f-2}, $\psi= f$ on $\Omega$. As a result of \cite{MondinoRyborz}*{Corollary 4.27}, we have $|D\psi|_{w}=|\nabla_g\psi|\in L^2( Y_+)$, where $\nabla_g\psi$ is defined in the weak sense as the weak gradient of $\psi$. Since $g=\di r^2+f^2(r)\di\theta^2$ and $\psi(r,\theta)=f(r)$ on $\Omega$, we have $\nabla_g\psi(r,\theta)=f'(r)\in L^2(\Omega)$, i.e.
    \[
    \int_\epsilon^R\int_{\theta\in S^1}|f'(r)|^2\rho(r)\di r\di\theta=2\pi\int_\epsilon^R|f'(r)|^2\rho(r)\di r<\infty.
    \]
    In particular, since $\rho$ is continuous on $(0,\infty)$ as a $\CDe(0,N)$ density and since $\m$ has full support, there exists $c>0$ such that $\rho\ge c$ on $[\epsilon,R]$, which implies that $f'\in L^2([\epsilon,R])$. Since this holds for arbitrary $0<\epsilon<R$, we conclude that $f\in W^{1,2}_{loc}(0,\infty)$.
\end{proof}

\begin{rem}
    Strictly speaking, \cite{MondinoRyborz} deals with the case where $(M,g)$ is complete. However, their approach is local by nature and all of their arguments carry on verbatim to our situation. In particular, in the proof above, we equated $f$ with a Sobolev function $\psi\in W^{1,2}(Y)$ with compact support $\Spt(\psi)\subset Y_+$. Thanks to \cite{Ambrosio-Gigli-Savare_14}*{(2.22)}, there exists a sequence of Lipschitz functions $\psi_n$ converging to $\psi$ in $L^2(Y)$ such that $\lip(\psi_n)$ converges to $|D\psi|_w$ in $L^2(Y)$. Multiplying $\psi_n$'s by a cut-off function equal to $1$ on a neighborhood of $\Spt(\psi)$ and with compact support in $ Y_+$, we may assume without loss of generality that $\psi_n\in \mathrm{Lip}_c( Y_+)$. From there, the arguments of the proof of \cite{MondinoRyborz}*{Lemma 4.26} apply and we may conclude that $|D\psi|_w=|\nabla_g\psi|$ holds almost everywhere.
\end{rem}

\begin{rem}
    Thanks to the previous proof, $f$ can be seen as a locally Sobolev function on $(0,\infty)$ equipped with the Lebesgue measure, but also as a locally Sobolev function on $(Y_+,d_g,\meas\llcorner Y_+)$.
\end{rem}

\subsection{From \texorpdfstring{$\RCD$}{RCD} to distributional Ricci curvature lower bound}\label{subsec:RCD_to_BE}
The goal of this section is to prove the distributional Bakry--Em\'ery Ricci curvature defined on $( Y_+,d_{| Y_+},\meas{\llcorner  Y_+}) = ((0,\infty)\times S^1, d_g,\rho(r) \di r\di \theta)$ has lower bound $0$ and derive useful relations between the warping function $f$ and the density $\rho$, which is the content of Proposition \ref{prop: measure-inequality}. 

Recall that from Proposition \ref{prop: warping function is sobolev} we have $f\in C^0( Y_+)\cap W^{1,2}_{loc}( Y_+)$ is positive, and from Lemma \ref{lem:measure density} that $\rho$ is positive non-decreasing and $\frac1N$-concave. In particular,  $\rho\in C^0( Y_+)\cap W^{1,2}_{loc}( Y_+)$. We are therefore in the setting of Mondino--Ryborz \cite{MondinoRyborz}. A subtle difference is that our $\RCD(0,N)$ space $(Y,d,\meas)$ has a boundary, and \cite{MondinoRyborz} considers manifolds without boundary. Nevertheless, we only consider distributions on the interior $( Y_+,d_{| Y_+})$. We will see in the following that the arguments of \cite{MondinoRyborz} remain applicable.

Notice that the smooth structure on $ Y_+$ is the standard one, and $ Y_+$ admits the global chart $(r,\theta)$. We can define and compute distributional derivatives in the global chart. Let $\partial_r$, $\partial_\theta$ be the smooth tangent vectors of $r$, $\theta$ coordinates respectively. The Christoffel symbols of $g=\di r^2+ f(r)^2 \di \theta^2$ with respect to $\partial_r$, $\partial_\theta$ are computed as follows.
$$\Gamma^r_{\theta\theta}=-ff',\quad \Gamma^\theta_{r\theta}=\Gamma^\theta_{\theta r}=\dfrac{f'}{f}. $$
All others are zero. 
So all Christoffel symbols are in $L^2_{loc}( Y_+)$ since $f$ and $1/f$ are locally bounded and $f'\in L^2_{loc}( Y_+)$. Following \cite{MondinoRyborz}*{Section 2}, the Bakry--Emery Ricci tensor is defined as 
\[
\Ric_{\m,\infty}\coloneqq\Ric+\hess\,[\ln f-\ln\rho],
\]
where the Ricci tensor and Hessian are defined via distributional derivatives in the global chart $(r,\theta)$ as follows:
\begin{align}
     \hess\, u(\partial_j,\partial_k)&\defeq \partial_{j}\partial_k u-\Gamma^s_{jk}\partial_s u,\quad u\in W^{1,2}_{loc}( Y_+)\cap C^0( Y_+).\\
    \Ric(\partial_j,\partial_k)&\defeq \partial_p \Gamma^p_{jk}-\partial_j\Gamma^p_{pk}+\Gamma^s_{kj}\Gamma^p_{ps}-\Gamma^s_{kp}\Gamma^p_{js}, \quad j,k,p,s\in \{r,\theta \};
\end{align}
see \cite{MondinoRyborz}*{Definition 3.7}. Let $b\defeq f'/f=(\ln f)'\in L^2_{loc}( Y_+)$. The only two non-trivial components of the Ricci tensor can be expressed as 
\begin{align}
    \Ric_{\m,\infty}(\partial_r,\partial_r)&= -b'-b^2+ b'-(\rho'/\rho)'=-b^2-\left(\frac{\rho'}{\rho}\right)'\label{eq:Ricrr}\\
    \Ric_{\m,\infty}(\partial_\theta,\partial_\theta)&={(-ff')'}+2b^2f^2-\frac{bf^2\rho'}{\rho}=-ff''-(f')^2 +2b^2f^2-\frac{bf^2\rho'}{\rho}.\label{eq:Ricvv}
\end{align}

\begin{rem}
    We justify the chain rule for distributions $(ff')'=ff''+(f')^2$ as follows. Take a test function $\phi\in C_c^\infty(0,\infty)$, and a cut-off function $\chi$ such that $0\le \chi\le 1$, $\chi$ is $1$ in $\Spt(\phi)$ and $\chi$ is compactly supported. We see that $\chi f\in W^{1,2}(0,\infty)$ and $\chi'=\chi''=0$ in $\Spt(\phi)$. Then we have that $\chi f\in W^{1,2}(0,\infty)$ hence, $(\chi f)''\in W^{-1,2}(0,\infty)$. We have a well-defined paring through the duality $W^{-1,2}(0,\infty)=(W^{1,2})^*(0,\infty)$:
    \[
    \langle ff'',\phi \rangle\defeq \langle \chi f'',\chi f\phi \rangle=\langle (\chi f)'',\chi f\phi \rangle.
    \]
    It is clear that this does not depend on the choice of $\chi$. With this understanding, we can compute that 
    \begin{align*}
        \langle ff'',\phi \rangle&=  \langle f'',f \phi \rangle=\int_{0}^{\infty} - f'(f\phi)' \\
       &=   -\int_{0}^{\infty} (f')^2\phi-  \int_{0}^{\infty} ff'\phi' =-\langle (f')^2,\phi \rangle+ \langle (ff')',\phi \rangle,
    \end{align*}
    which is the claimed chain rule for $(ff')'$.
\end{rem}



Following Mondino--Ryborz \cite{MondinoRyborz}*{Section 5-6}, we show that if the entire space $(Y,d,\meas)$ satisfies the $\RCD(0,N)$ condition (hence the $\RCD(0,\infty)$ condition), then the distributional Ricci curvature on the regular set $ Y_+$ has lower bound $0$. In $\RCD$ spaces, we will need test objects with Sobolev regularity in place of the smooth test objects for distribution theory. We define the space of test functions 
\[
\mathrm{TestF}(Y)\defeq \left\{f\in D(\Delta)\cap L^\infty(Y):|\nabla f|\in L^\infty (Y), \Delta f\in W^{1,2}(Y)\right\},
\]
and the space of test vectors
\[
\mathrm{TestV}(Y)\defeq \left\{\sum_{i=1}^n h_i\nabla f_i:n\in \N, f_i,h_i\in\mathrm{TestF}(Y), i=1,2,\ldots, n\right\},
\]
as in \cite{gigli_nonsmooth}*{Chapter 3}.

\begin{rem}\label{rem:product_of_cutoff_and_testV}
    The set of test functions is an algebra \cite{gigli_nonsmooth}*{(3.1.8)} and there exist good cut-off functions which are test functions (see \cite{gigli_nonsmooth}*{3.3.25} after \cite{ams_BE_2016}*{Lemma 6.7}), so the product of good cut-off function with a test vector field is still a test vector field.
\end{rem}

The $\RCD$ condition and distributional Ricci curvature are bridged through the measure valued Ricci curvature $\mathbf{Ric}$, defined in \cite{gigli_nonsmooth}*{Theorem 3.6.7 (3.6.10)}, inspired by the Bochner formula. The full definition of $\mathbf{Ric}$ requires the introduction of functional spaces and notations that will not be used elsewhere, so we refer the reader to \cite{gigli_nonsmooth}*{Section 3.6} for the details. Next, we record that the $\RCD$ condition implies the lower bound of $\mathbf{Ric}$. 
\begin{thm}[\cite{gigli_nonsmooth}*{Theorem 3.6.7 (3.6.11)}]\label{thm:RCD-to-RicciMeasure}
       For any $U\in \mathrm{TestV}(Y)$, the measure valued Ricci tensor satisfies $\mathbf{Ric}(U,U)\ge 0$.
\end{thm}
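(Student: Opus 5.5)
This is Gigli's result, and I would reproduce his argument in three steps: a Bochner inequality for gradients, an improvement that includes the Hessian, and then tensorization to general test vector fields. For $U\in\mathrm{TestV}(Y)$ the measure-valued Ricci tensor has the form
\[
\mathbf{Ric}(U,U)=\mathbf{\Delta}\tfrac{|U|^2}{2}+\langle U,\Delta_H U^\flat\rangle\,\meas-|\nabla U|^2\,\meas,
\]
where $\Delta_H$ is the Hodge Laplacian. For a gradient $U=\nabla f$ with $f\in\mathrm{TestF}(Y)$, this reduces to
\[
\mathbf{\Delta}\tfrac{|\nabla f|^2}{2}-\langle\nabla f,\nabla\Delta f\rangle\meas-|\hess f|^2\meas .
\]
So the statement for gradients is exactly a Bochner inequality that includes the Hessian term.

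\textbf{Step 1 (Bochner for gradients).} By the Ambrosio--Gigli--Savar\'e and Erbar--Kuwada--Sturm equivalence, the $\RCD(0,N)$ condition, and in particular $\RCD(0,\infty)$, implies the Bakry--\'Emery condition $\mathrm{BE}(0,\infty)$. In weak form this says that for every $f\in\mathrm{TestF}(Y)$ the measure
\[
\Gamma_2(f)\defeq\mathbf{\Delta}\tfrac{|\nabla f|^2}{2}-\langle\nabla f,\nabla\Delta f\rangle\meas
\]
is nonnegative. I would establish the required regularity of $|\nabla f|^2$ (it lies in $D(\mathbf{\Delta})$ with a measure Laplacian) by heat-flow regularization and the $\mathrm{BE}$ gradient estimate $|\nabla P_tf|^2\le P_t|\nabla f|^2$.

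\textbf{Step 2 (self-improvement).} Following Bakry's and Savar\'e's chain-rule trick, I would apply the inequality $\Gamma_2\ge0$ to $\Phi(f_1,\dots,f_n)$, where $\Phi$ is a polynomial and $f_i\in\mathrm{TestF}(Y)$. Expanding with the chain rules for $\mathbf{\Delta}$ and $\Gamma$ expresses $\Gamma_2(\Phi(f))$ as a quadratic form in the first and second derivatives of $\Phi$. The coefficients are the measures $\Gamma_2(f_i,f_j)$, the terms $\hess f_i(\nabla f_j,\nabla f_k)$, and the products $\langle\nabla f_i,\nabla f_j\rangle\langle\nabla f_k,\nabla f_l\rangle$. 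Since the derivatives of $\Phi$ at a point can be chosen arbitrarily, positivity of this form for all choices, tested against the absolutely continuous part and then the singular part separately, yields:
\begin{itemize}
\item the measure $\Gamma_2(f)-|\hess f|^2_{HS}\meas$ is nonnegative;
\item the matrix of measures $\big(\Gamma_2(f_i,f_j)-\langle\hess f_i,\hess f_j\rangle\meas\big)_{ij}$ is positive semidefinite.
\end{itemize}
In particular $\hess f\in L^2$, which is the content of $f\in H^{2,2}(Y)$.

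\textbf{Step 3 (tensorization to test vectors).} For $U=\sum_i h_i\nabla f_i$, I would expand $\mathbf{Ric}(U,U)$ using Leibniz rules for the covariant derivative and the Hodge Laplacian. The terms involving $\nabla h_i$ cancel exactly as in the smooth computation, which is where the definition of $\mathbf{Ric}$ is designed to make the tensor $C^\infty$-linear. What remains is
\[
\mathbf{Ric}(U,U)=\sum_{i,j}h_ih_j\big(\Gamma_2(f_i,f_j)-\langle\hess f_i,\hess f_j\rangle\meas\big).
\]
This is nonnegative by the positive semidefiniteness from Step 2, after approximating the $h_i$ by simple functions so that it suffices to pair the matrix of measures with constant vectors.

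I expect Step 2 to be the main obstacle. The algebraic expansion is routine, but the justification is delicate: one needs the chain rule for measure-valued Laplacians of $\Phi(f)$, and one must separate the singular part of the measures, on which only the $\Gamma_2$ terms survive. This requires the weak regularity theory developed in Gigli's memoir.
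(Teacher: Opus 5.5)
Your outline is correct and matches the argument behind the cited result; the paper gives no proof of its own and only invokes Gigli's Theorem 3.6.7. That argument runs as follows: $\RCD(0,N)\Rightarrow\mathrm{BE}(0,\infty)$; Savar\'e's self-improvement gives $\Gamma_2(f)\ge|\hess f|^2_{HS}\meas$ and, by polarization, positive semidefiniteness of $\big(\Gamma_2(f_i,f_j)-\langle\hess f_i,\hess f_j\rangle\meas\big)_{ij}$; finally, the Leibniz-rule computation for $U=\sum_i h_i\nabla f_i$ cancels the $\nabla h_i$ terms, and one concludes by pairing with continuous (Lipschitz) $h_i$. The one step you gloss over is that the tensor $\hess f$ must first be built from the $\Gamma$-calculus expressions $H[f]$ via the bound $|H[f](A)|\le\sqrt{\gamma_2(f)}\,|A|_{HS}$, which your Step 2 optimization over the second derivatives of $\Phi$ yields; this is exactly how the cited source proceeds.
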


Furthermore, the measure valued Ricci curvature coincides with the distributional Ricci curvature when testing on test vector fields by \cite{MondinoRyborz}*{Proposition 5.21}.
\begin{prop}\label{prop:distribution-is-measure}
For any $\phi\in C_c^\infty( Y_+)$ and $U\in \mathrm{TestV}(Y)$, it holds
\[
\int_Y \phi\di \mathbf{Ric}(U,U)=\int_{ Y_+} \phi\Ric_{\m,\infty}(U,U)  \rho\di r\di v\defeq \langle\Ric_{\m,\infty}(U,U), \phi\rangle.
\]
\begin{proof}
     Let $\chi$ be a good cut-off function which is a test function on $ Y_+$ such that $0\le \chi\le 1$, $\chi=1$ in $\Spt(\phi)$ and $\chi$ compactly supported in $ Y_+$. We see that $\chi U\in \mathrm{TestV}(Y)$ thanks to Remark \ref{rem:product_of_cutoff_and_testV} and is supported in $ Y_+$. Then the result follows from the local computation in the proof of \cite{MondinoRyborz}*{Proposition 5.21}.
\end{proof}
\end{prop}

Combining Theorem \ref{thm:RCD-to-RicciMeasure} and Proposition \ref{prop:distribution-is-measure}, we obtain the following result. 

  \begin{thm}\label{lem:BERic-bound}
      For any smooth vector field $U\in \Gamma(T Y_+)$ and any smooth function $\phi\in C_c^\infty( Y_+)$, $\phi\ge 0$, we have $\langle\Ric_{\m,\infty}(U,U),\phi\rangle\ge 0$, which implies $\Ric_{\m,\infty}(U,U)\ge 0$, as a scalar distribution on $ Y_+$. In particular 
  \begin{equation}\label{eq:distributional-Ricci-nonnegative}
      -b^2-\left(\frac{\rho'}{\rho}\right)'=\Ric_{\m,\infty}(\partial_r,\partial_r)\ge 0, \quad -\frac{f^2}{\rho}(b\rho)'=\Ric_{\m,\infty}(\partial_\theta,\partial_\theta)\ge 0 
  \end{equation}
  as distributions.
  \end{thm}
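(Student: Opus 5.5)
The plan is to pass from the measure-valued statement of Theorem \ref{thm:RCD-to-RicciMeasure} to the distributional one through Proposition \ref{prop:distribution-is-measure}, and then read off the two components. The main task is to realize a given smooth vector field $U$, at least on the support of $\phi$, as the restriction of an element of $\mathrm{TestV}(Y)$.

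Fix $\phi\in C_c^\infty(Y_+)$ with $\phi\ge0$, and a smooth $U$. Choose a compact set $K\subset Y_+$ containing $\Spt(\phi)$ in its interior. In the global chart write $U=a\,\partial_r+c\,\partial_\theta$ with $a,c$ smooth. Since $\nabla_g r=\partial_r$ and, by \eqref{eq:gradient of v}, $\nabla_g v=f^{-2}\partial_\theta$ locally, we have on $K$
\[
U=a\,\nabla r+cf^{2}\,\nabla v .
\]
Two cases are needed to cover $S^1$: $v$ is a section only on $(-3\pi/4,3\pi/4)$, so we also use a rotated copy of $v$ and a partition of unity in $\theta$.

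Next we replace each ingredient by a test function. As in Lemma \ref{lem: chi v in D(Delta)}, a good cutoff $\chi$ with $\chi\equiv1$ near $K$ and support in $Y_+$, taken to be a test function (Remark \ref{rem:product_of_cutoff_and_testV}), gives that $\chi v$ and $\chi r$ lie in $D(\Delta)$, with bounded gradient. To place them in $\mathrm{TestF}(Y)$ we also need $\Delta(\chi v)\in W^{1,2}$. Here $\Delta(\chi v)=v\Delta\chi+2g(\nabla v,\nabla\chi)$ vanishes near $K$, and it is Sobolev provided $\chi$ is chosen with $\Delta\chi\in W^{1,2}$. If this fails, we smooth by the heat flow, $h_t(\chi v)\in\mathrm{TestF}$, and pass to the limit $t\to0$ at the end.

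The coefficients $a\chi$ and $cf^2\chi$ are not test functions, since $f^2$ is only $W^{1,2}_{loc}\cap C^0$. We therefore approximate them by $h_j\in\mathrm{TestF}(Y)$, for instance heat-flow regularizations multiplied by cutoffs, converging in $W^{1,2}$ and uniformly on $K$. We apply the chain
\[
\langle \Ric_{\m,\infty}(U_j,U_j),\phi\rangle=\int\phi\,\di\mathbf{Ric}(U_j,U_j)\ge0
\]
to the resulting $U_j\in\mathrm{TestV}(Y)$, and then pass to the limit $j\to\infty$. The left side is a distribution whose coefficients involve products of $L^2_{loc}$ Christoffel symbols and their first derivatives; it depends on $U_j$ only through terms of the form $\Gamma\Gamma\,U_jU_j$ and $\partial\Gamma\,U_jU_j$. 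Convergence of $U_j\to U$ in $W^{1,2}\cap L^\infty$ on $K$ makes these pairings converge, with the $\partial\Gamma$ terms handled by integrating by parts onto $\phi U_jU_j$. This limit passage, which is exactly where the weak regularity of $f$ bites, is the step I expect to be the main obstacle. The fallback is that Mondino--Ryborz show in \cite{MondinoRyborz}*{Section 5--6} that $\mathrm{TestV}$ is dense in the relevant topology for $W^{1,2}\cap C^0$ metrics, and I would cite that density argument verbatim.

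Having obtained $\Ric_{\m,\infty}(U,U)\ge0$, the choice $U=\partial_r$ gives the first inequality directly from \eqref{eq:Ricrr}. For $U=\partial_\theta$ we start from \eqref{eq:Ricvv} and expand using $f'=bf$, so that $ff''+(f')^2=(ff')'=(bf^2)'=b'f^2+2b^2f^2$ in the distributional sense justified by the chain-rule remark. This yields
\[
\Ric_{\m,\infty}(\partial_\theta,\partial_\theta)=-f^2\Bigl(b'+\tfrac{b\rho'}{\rho}\Bigr)=-\tfrac{f^2}{\rho}(b\rho)',
\]
where the product rule for $(b\rho)'$ is legitimate because $\rho$ is locally Lipschitz, being a concave power of a $\mathrm{CD}$ density.
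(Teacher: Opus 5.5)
Your proposal is correct and follows the same route as the paper. You localize with a cutoff $\chi\equiv 1$ on $\Spt(\phi)$, approximate $\chi U$ by elements of $\mathrm{TestV}(Y)$, apply Theorem~\ref{thm:RCD-to-RicciMeasure} together with Proposition~\ref{prop:distribution-is-measure}, and pass to the limit using continuity of the distributional pairing under $W^{1,2}$ convergence. The paper cites \cite{MondinoRyborz}*{Lemma 6.10 and Theorem 6.11} for the density and limit steps, which your explicit construction via $\nabla r$, $\nabla v$ and regularized coefficients simply unpacks; your $\partial_\theta$ component computation via $(bf^2)'=b'f^2+2b^2f^2$ matches the simplification the paper carries out in Proposition~\ref{prop: measure-inequality}.
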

  
  The proof goes verbatim as \cite{MondinoRyborz}*{Theorem 6.11} after standard cutoff procedure. We sketch the ideas without giving computation details and refer the readers to the original proof.
  
\begin{proof}
    Let $\chi$ be a smooth cut-off function on $ Y_+$ such that $0\le \chi\le 1$, $\chi=1$ in $\Spt(\phi)$ and $\chi$ compactly supported in $ Y_+$. Then by \cite{MondinoRyborz}*{Lemma 6.10}, $\chi U$ can be approximated by test vectors in $W^{1,2}$ norm. Let $V\in \mathrm{TestV}(Y)$, we have by Proposition \ref{prop:distribution-is-measure} and Theorem \ref{thm:RCD-to-RicciMeasure} that
    \begin{align*}
        \langle\Ric_{\m,\infty}(U,U), \phi\rangle&=\langle\Ric_{\m,\infty}(\chi U,\chi U), \phi\rangle\\
        &= \langle\Ric_{\m,\infty}(\chi U,\chi U), \phi\rangle-\langle\Ric_{\m,\infty}(V,V),\phi\rangle+ \int_Y \phi\di \mathbf{Ric}(V,V)\\
        &\ge \langle\Ric_{\m,\infty}(\chi U,\chi U), \phi\rangle-\langle\Ric_{\m,\infty}(V,V),\phi\rangle.
    \end{align*}
    It remains to show that if there is a  $V_i$ such that $V_i\to \chi U$ in $W^{1,2}(T Y_+)$ then  
    \[ \langle\Ric_{\m,\infty}(V_i,V_i),\phi\rangle\to \langle\Ric_{\m,\infty}(\chi U,\chi U), \phi\rangle.\]
     This is indeed the case as shown in the proof of \cite{MondinoRyborz}*{Theorem 6.11}. 
     
     The last assertion follows by taking $U$ to be $\partial_r$ and $\partial_v$.
\end{proof}

In general, one can only multiply a distribution by functions of the class $\C_c^\infty$. In our setting, the distributional Ricci curvature lower bound provides more regularity so that we have more freedom to rewrite \eqref{eq:distributional-Ricci-nonnegative}. 

  \begin{prop}\label{prop: measure-inequality}
       We have the following inequalities as Radon measures:
      \[-(\rho b)'\ge 0, \quad  -\rho''\rho+(\rho')^2\ge \rho^2b^2.\]
  \end{prop}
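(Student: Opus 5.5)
The plan is to start from the two distributional inequalities in \eqref{eq:distributional-Ricci-nonnegative} of Theorem \ref{lem:BERic-bound} and multiply them by suitable positive continuous functions. The multiplication has to be justified, and the way to do it is through the classical fact that a nonnegative distribution on $(0,\infty)$ is a nonnegative Radon measure. Everything is one-dimensional in $r$: the distributions depend only on $r$, so testing against $\phi(r)\eta(\theta)$ with $\int\eta=1$ reduces them to distributions on $(0,\infty)$, where $\rho\,\di r\,\di\theta$ becomes $\rho\,\di r$ up to constants.

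\textbf{First inequality.} Theorem \ref{lem:BERic-bound} gives $-\frac{f^2}{\rho}(b\rho)'\ge0$, which must be read as the distribution $T:=(b\rho)'$ multiplied by $f^2/\rho$. Since $f$ and $\rho$ are continuous and positive, $f^2/\rho$ lies in $C^0\cap W^{1,2}_{loc}$. Unwinding the definition via the pairing with $\rho\,\di r$ from Proposition \ref{prop:distribution-is-measure}, the statement is that $\langle -(b\rho)', f^2\phi\rangle\ge0$ for every $\phi\ge0$ in $C_c^\infty$. Here $b\rho\in L^2_{loc}$, so $(b\rho)'\in W^{-1,2}_{loc}$, and the pairing with the $W^{1,2}_{loc}$ function $f^2\phi$ is well defined. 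To show $-(b\rho)'\ge0$, fix $\psi\ge0$ in $C_c^\infty$. Take $\phi_k\ge 0$ in $C_c^\infty$ with $f^2\phi_k\to\psi$ in $W^{1,2}$, for instance by mollifying $\psi/f^2\in W^{1,2}_c$. Continuity of the $W^{-1,2}$ pairing then gives $\langle -(b\rho)',\psi\rangle\ge0$. A nonnegative distribution is a Radon measure, which yields the first inequality.

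\textbf{Second inequality.} The first inequality of \eqref{eq:distributional-Ricci-nonnegative} gives $-(\rho'/\rho)'\ge b^2\ge0$ in $L^1_{loc}$. Hence $\mu:=-(\rho'/\rho)'-b^2$ is a nonnegative distribution, and therefore a Radon measure. In particular $(\rho'/\rho)'$ is a signed Radon measure, so $\rho'/\rho$ is locally BV. This is consistent with the fact that $\rho^{1/N}$ is concave, so $\rho'$ is locally BV and $\rho$ is locally Lipschitz. The identity to use is
\[
\rho^2\Big(\tfrac{\rho'}{\rho}\Big)'=\rho\rho''-(\rho')^2 .
\]
It holds as measures because $\rho$ is continuous and locally Lipschitz: the product rule for the BV function $\rho'$ times the Lipschitz function $1/\rho$ involves no jump correction, since $1/\rho$ is continuous. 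Multiplying the measure inequality $-(\rho'/\rho)'\ge b^2$ by the continuous positive function $\rho^2$ then preserves the inequality of measures and gives $-\rho''\rho+(\rho')^2\ge\rho^2b^2$.

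\textbf{Main obstacle.} The main difficulty is the justification of these products. The first case is a product of $W^{-1,2}$ with $W^{1,2}$, handled by the density argument above. The second is the BV/Lipschitz Leibniz rule for $\rho\rho''$. For that I would argue by mollification: set $\rho_\varepsilon=\rho*\eta_\varepsilon$, for which the identity is classical. Then $\rho_\varepsilon\to\rho$ locally uniformly, $\rho_\varepsilon'\to\rho'$ in $L^1_{loc}$ and boundedly a.e., and $\rho_\varepsilon''\rightharpoonup\rho''$ weakly as measures. Passing to the limit against test functions gives the identity, because $\rho_\varepsilon\to\rho$ uniformly on compact sets.
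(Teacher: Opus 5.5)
Your proposal is correct and follows essentially the paper's route: nonnegativity of the distributions from Theorem \ref{lem:BERic-bound} makes them Radon measures, one multiplies by positive continuous weights, and the BV/Lipschitz product rule rewrites $\rho^2(\rho'/\rho)'$ as $\rho\rho''-(\rho')^2$. Two differences are worth noting. Your $W^{-1,2}$--$W^{1,2}$ density argument for removing the factor $f^2/\rho$ justifies a step the paper only asserts. Obtaining $\rho'/\rho\in BV_{loc}$ directly from the Ricci inequality is an equally valid substitute for the paper's use of the concavity of $\rho^{1/N}$.
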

\begin{proof}
            Notice that $b=f'/f$, we can simplify \eqref{eq:Ricvv} to be
\[
 \Ric_{\m,\infty}(\partial_\theta,\partial_\theta)={(-ff')'}+2b^2f^2-\frac{bf^2\rho'}{\rho}=-\frac{f^2}{\rho}(b\rho)'.
\]
  By Theorem \ref{lem:BERic-bound}, we have that
  $$
  -b^2-\left(\frac{\rho'}{\rho}\right)'=\Ric_{\m,\infty}(\partial_r,\partial_r)\ge 0, \quad -\frac{f^2}{\rho}(b\rho)'=\Ric_{\m,\infty}(\partial_\theta,\partial_\theta)\ge 0
  $$ 
  as distributions. It then follows from Riesz representation theorem that $\Ric_{\m,\infty}(\partial_r,\partial_r)$ and $\Ric_{\m,\infty}(\partial_v,\partial_v)$ are Radon measures. We can multiply both inequalities by positive continuous functions to get
  \[
  -(b\rho)'\ge 0, \quad -\rho^2\left(\frac{\rho'}{\rho}\right)'\ge \rho^2b^2.
  \]
  So it remains to simplify $\left(\frac{\rho'}{\rho}\right)'$. We observe that, if we can show that the distribution $\rho''$ is in fact a Radon measure, then we have the chain rule for $BV$ functions at our disposal. Applying the chain rule yields that $\left(\frac{\rho'}{\rho}\right)'=\frac{\rho''\rho-(\rho')^2}{\rho^2}$ holds as measures. 
  
  It remains to show that $\rho''$ is a Radon measure. Let $u=\rho^{1/N}$, then, by concavity, $u''\le 0$ as a distribution. By the Riesz representation theorem, $u''$ is a Radon measure. Also, $u$ is concave so it is locally Lipschitz, hence $u\in W_{loc}^{1,\infty}(0,\infty)$. Since $t \to t^N$ is in $C^1(0,\infty)$, $\rho$ is also in $W_{loc}^{1,\infty}(0,\infty)$. Then $ u' = \frac1N \rho' \rho^{\frac1N-1}$ holds pointwise with canonical representatives, i.e. the left or right derivatives of $u$ and $\rho$. Meanwhile, $u'$ is in $BV_{loc}(0,\infty)$. Moreover, we have $ \rho^{1-\frac1N}$ is in $W^{1,\infty}_{loc}(0,\infty)$. Therefore, $\rho' = N u' \rho^{1-\frac1N}$ is also in $BV_{loc}(0,\infty)$ and, by the $BV-W^{1,\infty}$ product rule, we have $\rho''=N \rho^{1-\frac1N} u '' +(N-1) u' \rho^{-\frac1N} \rho'$ as a signed measure. 
\end{proof}


\section{Rigidity results}\label{sec:rigidity}
\subsection{Statements and remarks on the rigidity results}

As mentioned in the introduction, the plane/halfplane rigidity (Theorem \ref{thm:rigid_p/h}) shall follow from two rigidity results, the sharp halfplane rigidity and the sharp ray rigidity, concerning free $\R$-action and isotropic $S^1$-action separately. We restate these two rigidity results in this section for readers' convenience and give some related examples and remarks. Then we prove Theorem \ref{thm:rigid_p/h} by assuming the sharp halfplane rigidity and the sharp ray rigidity.

\begin{thm}[Sharp Halfplane rigidity]\label{thm:hp_rigid_vol<2}
   Let $(Y,y,d,\mathfrak{m})$ be an $\RCD(0,N)$. Suppose that\\
   (1) $Y$ has an isomorphic $G$-action, where $G\simeq \mathbb{R}$ is closed in $\mathrm{Isom}(Y)$, such that the quotient metric space $(Y/G,\bar{y})$ is isometric to a ray $([0,\infty),0)$;\\
   (2) the measure $\meas$ satisfies $$\lim_{r\to\infty}\dfrac{\meas (\Omega_r)}{r^2}=0.$$
   Then the metric space $(Y,d)$ is isometric to a Euclidean halfplane.
\end{thm}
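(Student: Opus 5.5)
The plan is to reduce to the setting of Section~\ref{sec:circle ray} and then integrate the distributional inequalities of Proposition~\ref{prop: measure-inequality}.

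\textbf{Reduction to condition \eqref{cond:S1_action_and_ray_quotient}.} The $G\simeq\R$-action has quotient a ray and closed image in $\mathrm{Isom}(Y)$. Using the reflection/branching argument of Lemma~\ref{lem:free_at_reg}, I will first check that the action is free. Then $Y$ is equivariantly $[0,\infty)\times\R$. Rather than pass to an $S^1$-quotient, I will run the local arguments of Sections~\ref{sec:Proof_of_rectifiable_dimension_2}--\ref{subsec:RCD_to_BE} directly with $\R$ in place of $S^1$; all of them are local in the orbit direction. This writes $Y_+=(0,\infty)\times\R$ with metric $\di r^2+f(r)^2\di\theta^2$ and measure $\rho(r)\di r\di\theta$, where $f\in C^0\cap W^{1,2}_{loc}$ is positive and $\rho^{1/N}$ is concave. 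Proposition~\ref{prop: measure-inequality} then holds verbatim. With the normalization that $(w,h)$, $w\in[-1,1]$, corresponds to $\theta\in[-1,1]$, we get
\[
\meas(\Omega_r)=2\int_0^r\rho(t)\,\di t .
\]

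\textbf{One-dimensional ODE argument.} From $-(\rho b)'\ge0$, the function $\rho b=\rho f'/f$ is nonincreasing. From $-\rho''\rho+(\rho')^2\ge\rho^2b^2\ge0$, the function $\ln\rho$ is concave, and moreover $(\rho'/\rho)'\le -b^2$.

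Since $\rho^{1/N}$ is positive and concave on $(0,\infty)$, it is nondecreasing, so $\rho'\ge0$. Set $\mu=\rho'/\rho\ge0$. Then $\mu$ is nonincreasing with $\mu'\le-b^2$.

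\textbf{Step 1: show $\rho$ is constant.} Suppose $\mu(r_0)>0$ for some $r_0$. The growth hypothesis $\int_0^r\rho=o(r^2)$ is weak, since concavity only gives $\rho\lesssim r^N$. I would instead exploit the coupling between $\rho$ and $f$, as follows.
\begin{itemize}
\item Set $c=\rho b$, which is nonincreasing.
\item If $c(r_1)<0$ at some $r_1$, then for $r>r_1$ we have $f'/f\le c(r_1)/\rho\le c(r_1)/\rho(r_1)<0$ (using $\rho$ nondecreasing). Hence $f$ decays at least exponentially.
\item Otherwise $c\ge0$ everywhere, so $f$ is nondecreasing.
\end{itemize}
Now count how many orbit-translates are needed to cover a ball. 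Since $Y$ is geodesic, is a halfplane topologically, and is $\RCD$, Bishop--Gromov should give $\meas(B_r(y))\gtrsim r\,\meas(B_1(y))$ at the very least. Comparing $B_r(y)$ with $\Omega_r$ under the metric $\di r^2+f^2\di\theta^2$ should force $\int_0^r\rho\gtrsim r^2$ unless both $\rho$ and $f$ are asymptotically constant.

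Concretely, $B_r(y)$ contains $\{(t,\theta): t\le r/2,\ |\theta|\le r/(2\max_{[0,r]}f)\}$. Its measure is about $\frac{r}{\max f}\int_0^{r/2}\rho$. I expect to combine this with the Bishop--Gromov-type bound $\meas(B_r)/\meas(\Omega_r)\ge c\,r$, coming from the $\R$-action and the halfplane structure, to reach a contradiction with sub-quadratic growth whenever $\mu\not\equiv0$ or $f$ is nonconstant.

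\textbf{Step 2: conclude once $\rho$ is constant.} Once $\rho$ is constant, $\mu\equiv0$ gives $b^2\le -\mu'=0$, so $b=0$ and $f$ is constant. Then $Y_+$ is isometric to $(0,\infty)\times\R$ with a flat metric, and its completion $Y$ is a Euclidean halfplane.

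\textbf{Main obstacle.} I expect the main difficulty to be Step 1: turning the sub-quadratic hypothesis on $\meas(\Omega_r)$ into $\mu\equiv0$. Note that $\mu\equiv0$ already implies $b\equiv0$, so Step 1 is really the whole content.

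The first thing I would try is to use $\mu'\le-b^2$ together with $(\rho b)'\le0$. Suppose $\mu>0$ somewhere. Since $\rho$ is log-concave and nonconstant, it grows at least like $r^{\alpha}$ for some $\alpha>0$; then $\int_0^r\rho$ eventually exceeds $r^{1+\alpha}$. This alone is not enough, so it must be combined with an estimate from the $\RCD$ condition. The relevant inequality is the $N$-dimensional version of $\mu'\le-\mu^2/(N-1)-b^2$, which gives $\mu\le (N-1)/r$. Applied in the two-dimensional warped model, I expect it to force $\int_0^r\rho\gtrsim r^2$ whenever $\mu\not\equiv0$.

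If that fails, I would revert to the Bishop--Gromov comparison between balls and $\Omega_r$ described in Step 1.
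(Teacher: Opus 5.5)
Your Step 1 aims at a statement that is false, so the argument cannot be completed as planned. The hypotheses do not force $\rho$ to be constant. Take $Y=\R\times[0,\infty)$ with $\meas=\mathcal{L}^1\otimes t^{\alpha}\,\di t$ for some $\alpha\in(0,1)$. This is an $\RCD(0,N)$ space for every $N\ge 2+\alpha$, and $\R$ acts by measure-preserving translations with quotient a ray. Moreover $\meas(\Omega_r)=2r^{1+\alpha}/(1+\alpha)=o(r^2)$, yet $\rho(r)=r^{\alpha}$ and $\mu=\alpha/r\not\equiv0$; this is the phenomenon noted in Remark~\ref{rem:p/h_measure}. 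So neither a comparison between $B_r(y)$ and $\Omega_r$ nor any use of $\mu'\le-b^2$ can give $\mu\equiv0$. Your expectation that $\mu\not\equiv0$ forces $\int_0^r\rho\gtrsim r^2$ is wrong.

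The quantity to control is $b$, not $\mu$. Even so, $b\equiv0$ cannot come from integrating the two inequalities alone. The pair $f=(1+r)^{\beta}$, $\rho=(1+r)^{\alpha}$ with $\beta^2\le\alpha<1$ satisfies $-(\rho b)'\ge0$, $-\rho''\rho+(\rho')^2\ge\rho^2b^2$, concavity of $\rho^{1/N}$, and $\int_0^r\rho=o(r^2)$, while $b=\beta/(1+r)>0$. These inequalities only encode curvature on the open set $Y_+$, so a global input from the $\RCD$ condition on all of $Y$ is indispensable. Separately, the inequality in your first bullet is reversed: for $c(r_1)<0$ and $\rho\ge\rho(r_1)$ one has $c(r_1)/\rho\ge c(r_1)/\rho(r_1)$.

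The paper extracts only the sign of $b$ from the ODE and then argues geometrically.
\begin{enumerate}
\item Monotonicity of $\rho$ and $\meas(\Omega_r)=o(r^2)$ give $\rho(r)/r\to0$. Concavity of $\rho^{1/N}$ turns this into $\rho'(r)\to0$.
\item There are $r_i\to\infty$ such that the Radon measure $-\rho''\rho+(\rho')^2$ has vanishing mass on $[r_i-2,r_i+2]$. Otherwise, because $\rho=o(r)$, $\rho'$ would drop by $\gtrsim1/r$ on each of a sequence of consecutive windows, a divergent harmonic sum.
\item This measure dominates $\rho^2b^2$, and $\rho b$ is nonincreasing, so $\rho b\to0$ at infinity. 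Hence $\rho b\ge0$ and $f$ is nondecreasing (Lemma~\ref{lem:vol<2-to-monotone}).
\item For nondecreasing $f$, a minimal geodesic between $(r_0,0)$ and $(r_0,k)$ can be taken inside $\{r\le r_0\}$. Indeed, an excursion above level $r_0$ is no shorter than the corresponding arc of that orbit.
\item Letting $r_0\to0$ and recentering yields a line along $Gy$. Gigli's splitting theorem then gives $Y\cong\R\times[0,\infty)$ (Proposition~\ref{prop:monotone-to-splitting}).
\end{enumerate}

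Finally, your reduction is not purely local. The form $\meas\llcorner Y_+=\rho(r)\,\di r\,\di\theta$ with $\rho$ a $\CDe(0,N)$ density (Lemma~\ref{lem:measure density}) uses results on compact group actions. This is why the paper first passes to $Y/\Z$ with its $S^1$-action (Lemma~\ref{lem:half_plane_hypothesis_to_cylinder}).
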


\begin{exmp}\label{exmp:hp_2_sharp}
   The degree $2$ in the assumption (2) of Theorem \ref{thm:hp_rigid_vol<2} is sharp. In fact, we have the following example. Let $Y_+$ be the open halfplane $\{(r,v)\mid r>0,v\in \R\}$. We define a weighted smooth Riemannian metric on $Y_+$ by
   $$g=\di r^2 + r^{-2\alpha} \di v^2,  \quad \meas = r \di r \di v,$$
   where $\alpha>0$. The measure $\meas$ has growth $\meas(\Omega_r)=r^2$. Denoting $e_1=\partial_r$ and $e_2=r^\alpha \partial_v$, we compute that the space $(Y_+,g,\meas)$ has $N$-Barky-\'Emery Ricci curvature
   $$\Ric_\meas^N(e_1,e_1)=1-\alpha^2 -\dfrac{(1+\alpha)^2}{N-2},\quad  \Ric_\meas^N(e_2,e_2)=0.$$
   It is clear that we can choose suitable $\alpha$ and $N$ such that $\Ric_\meas^N\ge 0$; for example, $\alpha=1/3$ and $N=4$. Let $Y$ be the metric completion of $(Y_+,d)$, which is homeomorphic to a closed halfplane. We extend $\meas$ to $Y$ by setting $\meas(\{r=0\})=0$. Then $(Y,(0,0),d,\meas)$ is an $\RCD(0,N)$ space (see, for example, the argument in \cite{RS23}*{Section 3.5}) that satisfies assumption (1) of Theorem \ref{thm:hp_rigid_vol<2} and $\meas(\Omega_r)=r^2$, but $(Y,d)$ is not isometric to a Euclidean halfplane.
\end{exmp}


\begin{thm}[Sharp ray rigidity]\label{thm:ray_rigid}
   Let $(Y,y,d,\mathfrak{m})$ be an $\RCD(0,N)$ space. Suppose that\\
   (1) $Y$ has an isomorphic $S^1$-action, where $S^1$ fixes $y$, such that the quotient metric space $(Y/S^1,\bar{y})$ is isometric to a ray $([0,\infty),0)$;\\
   (2) the measure $\meas$ satisfies 
   $$\lim_{r\to 0^+}\dfrac{\meas (B_r(y))}{r^2}=+\infty.$$
   Then the metric space $(Y,d)$ is isometric to a ray and the $S^1$-action on $Y$ is trivial.
\end{thm}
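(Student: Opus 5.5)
We argue by contradiction and use the machinery of Section \ref{sec:circle ray}. Suppose the $S^1$-action is nontrivial. Quotienting by the ineffective kernel (a finite cyclic group, which fixes every point by the argument of Lemma \ref{lem:free_at_reg}), we may assume the action is effective. Since $S^1$ fixes $y$, condition \eqref{cond:S1_action_and_ray_quotient} holds, and Lemma \ref{lem:free_at_reg}(ii) shows that $Y$ is equivariantly a cone over $S^1$. Proposition \ref{prop: warping function is sobolev} and Lemma \ref{lem:measure density} then give
\[
Y_+\cong\Big((0,\infty)\times S^1,\ \di r^2+f^2(r)\di\theta^2,\ \rho(r)\di r\di\theta\Big),
\]
with $f>0$ continuous and in $W^{1,2}_{loc}$, and $\rho^{1/N}$ concave. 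Proposition \ref{prop: measure-inequality} supplies the distributional inequalities $-(\rho b)'\ge 0$ and $-\rho''\rho+(\rho')^2\ge\rho^2b^2$, where $b=f'/f$. Since orbits have measure zero (Remark \ref{rem:orbits_have_measure_zero}) and $d(y,(r,\theta))=r$, we get
\[
\meas(B_r(y))=2\pi\int_0^r\rho(s)\di s .
\]
Hypothesis (2) therefore becomes $\int_0^r\rho\gg r^2$ as $r\to0^+$.

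\textbf{Step 1: control $\rho$ near $0$.} Because $\rho^{1/N}$ is concave and positive on $(0,\infty)$, $\rho$ is locally Lipschitz and the one-sided derivatives of $\ln\rho$ exist. The second inequality says that $\ln\rho$ satisfies $(\ln\rho)''\le -b^2\le0$, so $\ln\rho$ is concave. Set $h=\rho'/\rho$; it is nonincreasing. Hypothesis (2) forces $\rho$ to be large relative to $r$ near $0$; in particular $\liminf_{r\to0}\rho(r)/r=\infty$ along a sequence. Combined with monotonicity of $h$, this should give $h(r)<1/r$ in a suitable integrated sense. This is the analogue of ``the measure is more than 2-dimensional'' at the vertex.

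\textbf{Step 2: exploit $-(\rho b)'\ge0$.} The function $\rho b=\rho f'/f$ is nonincreasing. Near the cone vertex, $f(r)\to0$ as $r\to0$, since the orbit through $(r,\theta)$ shrinks to $y$; in fact $f(r)\le C r$-type bounds come from $d((r,0),(r,\theta))\le 2r$. Hence $\ln f\to-\infty$ as $r\to0$, so $b$ cannot be integrable near $0$, and $\int_0^{r_0}b=+\infty$. On the other hand, monotonicity gives $\rho(r)b(r)\ge c:=\rho(r_0)b(r_0)$ for $r<r_0$. The key point is to show $c>0$. If $\rho b\le0$ at some $r_0$, then $b\le0$ on $(r_0,\infty)$, so $f$ is nonincreasing there. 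We will then need a contradiction with either the geometry (e.g.\ $(\ln\rho)''\le -b^2$ plus concavity of $\rho^{1/N}$ on $(0,\infty)$ forcing $b\to0$ and $\int b^2<\infty$) or with $f\to0$. The natural route is to consider both directions: $\rho b$ nonincreasing and positive near $0$ gives $b\ge c/\rho$. Then $\ln f(r_0)-\ln f(r)=\int_r^{r_0}b$, and we must use the $\int b^2\le -\int(\ln\rho)''=h(r)-h(r_0)$ bound together with Cauchy--Schwarz to relate the blow-up of $\int b$ to $\rho$.

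\textbf{Step 3: the contradiction (main obstacle).} We aim to show that hypothesis (2) is incompatible with $f\to0$. Heuristically, for a cone $f\sim r$, $b=1/r$, and $\rho b$ nonincreasing means $\rho(r)\ge cr$ near $0$, which is fine. The real constraint is $(\ln\rho)''\le-b^2$: integrating from $r$ to $r_0$ gives $h(r)\ge h(r_0)+\int_r^{r_0}b^2$. With $b\ge c/\rho$ and $\rho$ superquadratically large in mean, I will try to show that $\int_r^{r_0}b$ stays bounded, contradicting $f\to0$. Precisely, $\int b\le \left(\int b^2\right)^{1/2}r_0^{1/2}$ is not enough, so instead I will use $(\rho b)$ nonincreasing from above: $\rho b\le C$ near $0$ is not given. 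So the plan is to combine $b\le h-\text{stuff}$ via $b^2\le -(\ln\rho)''$ and the bound $\int_r^{r_0} b\le \int_r^{r_0}\sqrt{-(\ln \rho)''}$, estimated by $h$ with $h\lesssim 1/r$ up to the superquadratic hypothesis. The hypothesis should make $h(r)\le(1-\eta)/r$ impossible to upgrade, and hence make $\int_0 b<\infty$. Then $f(0^+)>0$, contradicting that orbits collapse at the fixed point $y$.

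Once $f$ is excluded, no nontrivial effective action survives, so the action is trivial, $Y=Y/S^1$, and $Y$ is isometric to the ray. I expect Step 3, the sharp integral estimate turning $\meas(B_r)\gg r^2$ into finiteness of $\int_0 b$, to be the hardest. I will first try it on the model $\rho=r^{\beta}$, $f=r^{a}$, where the inequalities read $\beta(1-a)\ge\ldots$ and $a^2\le\beta$, and then mimic the computation by comparison.
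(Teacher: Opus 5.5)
There is a genuine gap, and it sits in the target of your Step 3. You try to show that $-(\rho b)'\ge 0$, $-(\ln\rho)''\ge b^2$, concavity of $\rho^{1/N}$ and $\int_0^r\rho\gg r^2$ together force $\int_0 b<\infty$, and hence $f(0^+)>0$. No estimate can give this, because it is false. Take the smooth weighted surface
\[
g=\di r^2+r\,\di\theta^2,\qquad \rho(r)=r^{1/2}\quad(\text{so } f=r^{1/2},\ b=\tfrac{1}{2r}).
\]
It satisfies every one of your inputs:
\begin{itemize}
\item $\rho b=\tfrac12 r^{-1/2}$ is decreasing;
\item $-(\ln\rho)''=\tfrac{1}{2r^2}\ge\tfrac{1}{4r^2}=b^2$;
\item $\rho^{1/N}$ is concave;
\item $\meas(B_r(y))=\tfrac{4\pi}{3}r^{3/2}\gg r^2$.
\end{itemize}
It even has Gaussian curvature $1/(4r^2)>0$, with $\meas$ equal to the Riemannian area. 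Yet $f\to 0$ and $\int_0 b=\infty$. The model test you propose, $\rho=r^\beta$ and $f=r^a$, shows the same thing: the constraints only give $0\le a\le\sqrt{\beta}<1$. Relatedly, your claim that $d((r,0),(r,\theta))\le 2r$ yields $f(r)\le Cr$ is unjustified. The quantity $f$ measures the length of the orbit, not the distance between its points. In the example above, $d\le 2r$ holds in the completion (go through the vertex) while $f(r)/r\to\infty$.

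What the one-dimensional inequalities on $Y_+$ actually give is only $f(r)/r\to\infty$, i.e.\ an infinite cone angle at $y$. Since $\rho$ is nondecreasing, $\rho(r)/r\to\infty$. Cauchy--Schwarz, integration by parts against $-\di(\ln\rho)'\ge b^2$, and $r\rho'/\rho\le N$ then give
\[
\ln f(1)-\ln f(r)\le\sqrt{-\ln r}\,\sqrt{C-\ln\rho(r)},
\]
which is incompatible with $f(r_i)\le Lr_i$ along any sequence. The contradiction must then come from the geometry of $Y$ at the vertex, which no inequality on $Y_+$ encodes; the example above is exactly a space that is fine on $Y_+$ and fails only at $y$. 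The paper argues as follows.
\begin{itemize}
\item Since $f\ge r$ near $0$, the broken radial curve $(r,0)\to y\to(r,\pi)$ is a minimal geodesic.
\item If $y$ is singular, every other point of this geodesic is regular, which contradicts the H\"older continuity of tangent cones along the interior of a geodesic.
\item If $y$ is regular, the equivariant blow-up at $y$ is $(\R^2,0,S^1)$. Almost-minimal curves from $(r_i,0)$ to $(r_i,\pi/3)$ then stay in the annulus $r_i/3\le r\le 2r_i$, where $f\ge 10r$. This forces length $\ge\pi r_i$, against $r_i^{-1}d\to d_{\R^2}(p_\infty,q_\infty)\le 2<\pi$.
\end{itemize}
Without such a vertex argument, your Step 3 cannot close.
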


\begin{rem}\label{rem:ray_2_sharp}
   Similar to the sharp halfplane rigidity (Theorem \ref{thm:hp_rigid_vol<2}), the degree $2$ in Theorem \ref{thm:ray_rigid} is also sharp. In fact, the Euclidean plane $\R^2$ with the Lebesgue measure provides a counter-example with
   $$\lim_{r\to 0^+}\dfrac{\meas (B_r(y))}{r^2}<+\infty.$$
\end{rem}

Assuming Theorem \ref{thm:ray_rigid}, we can deduce a corresponding ray/line rigidity result for any general abelian fixed-point action as below. This generalizes \cite{NPZ}*{Proposition 4.1}, where $\meas(B_r(y))$ is assumed to be linear.

\begin{cor}\label{cor:rigid_rayline}
  Let $(Y,y,d,\mathfrak{m})$ be an $\RCD(0,N)$ space. Suppose that\\
  (1) $\mathrm{Isom}(Y)$ has an abelian Lie subgroup $K$ fixing $y$ such that the quotient metric space $(Y/K,\bar{y})$ is isometric to a ray $([0,\infty),0)$;\\
  (2) the measure $\meas$ satisfies $$\lim_{r\to 0^+}\dfrac{\meas (B_r(y))}{r^2}=+\infty.$$
   Then the metric space $(Y,d)$ is isometric to a ray or a line; as a consequence, $K=\{e\}$ or $\Z_2$.
\end{cor}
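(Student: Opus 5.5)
We reduce Corollary \ref{cor:rigid_rayline} to Theorem \ref{thm:ray_rigid} by analyzing the structure of the compact abelian group $K$. First, $K$ is compact: it is a closed subgroup of $\mathrm{Isom}(Y)$ (we may replace $K$ by its closure, which is still abelian, fixes $y$, and has the same orbits because orbits of a group fixing $y$ on a proper space are compact). Since isometries fixing $y$ form a compact Lie group, $K$ is a compact abelian Lie group, hence $K\simeq T^m\times F$ with $F$ finite abelian. Let $K_0\simeq T^m$ be the identity component.

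\textbf{Step 1: $K_0$ acts trivially.} Suppose $m\ge 1$. Since $K$ fixes $y$ and $Y/K$ is a ray, every $K$-orbit is the sphere $\partial B_r(y)$, so $Y/K_0\to Y/K$ is a finite quotient by $K/K_0$. We pick any circle subgroup $S^1\le K_0$ acting nontrivially; the claim to establish is that $Y/S^1$ is still isometric to a ray (or to a line/ray up to the finite group). Rather than this, the cleaner route is: consider the quotient $Y/K_0$. Alternatively, apply the argument of Lemma \ref{lem:free_at_reg} and Proposition \ref{prop:rect_dim_2} to $K$ directly: at a regular point $q\ne y$, the blow-up gives $(\R^k,0,H)$ with $\R^k/H=\R$ and $H$ abelian, and the proof of Proposition \ref{prop:rect_dim_2} (Claims 1–4) shows that the limit of a torus action contributes an $\R^{j}$-factor of translations with $j\le \dim K_0$ orbit dimension. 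In particular, if $K_0$ acts nontrivially, its orbits through generic points are positive-dimensional, and we argue as follows: pick a circle $S^1\le K_0$ acting nontrivially; then on the principal orbit type, the $K_0$-orbits are tori of dimension $\ge1$. I expect the main obstacle here: showing the quotient by a single circle is a ray. To bypass it, I would instead observe that a nontrivial connected abelian action forces rectifiable dimension $\ge 2$ at regular points near $\partial B_r(y)$ with positive-dimensional orbits, and then run the proof of Theorem \ref{thm:ray_rigid} with $S^1$ replaced by $K_0$ — namely, the argument of Section \ref{sec:Proof_of_rectifiable_dimension_2} shows orbits of $K_0$ are one-dimensional (the limit $H\simeq\R^{k-1}$ with $k=2$), so $K_0$ acts through an effective circle quotient $K_0/\ker\simeq S^1$, and $Y/(K_0/\ker)=Y/K_0$. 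If $F$ acts nontrivially on the orbit circles of $K_0$ it acts by rotations (abelian, commuting with the circle), so $Y/K_0\to Y/K$ is an isometry and $Y/K_0$ is a ray. Now Theorem \ref{thm:ray_rigid} applies to the circle $K_0/\ker$ and condition (2), forcing the circle action to be trivial, contradicting nontriviality. Hence $K_0$ acts trivially, and since $K$ acts effectively as a subgroup of $\mathrm{Isom}(Y)$, $K$ is finite.

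\textbf{Step 2: finite $K$.} Now $Y\to Y/K=[0,\infty)$ is a quotient by a finite group fixing $y$, so each sphere $\partial B_r(y)$ is a single finite orbit. Then $Y$ is a union of finitely many rays emanating from $y$ (lifts of the ray, using non-branching of geodesics in $\RCD$ spaces \cite{Deng20}). An $\RCD(0,N)$ space cannot have a branch point with three or more rays (non-branching, or the splitting of the line formed by two of them via Theorem \ref{thm:meas_split} would contradict a third ray). Hence $Y$ is a ray or a line. Since $K$ is abelian, acts effectively fixing $y$, and is transitive on each sphere, $K=\{e\}$ in the ray case and $K=\Z_2$ (the reflection) in the line case.
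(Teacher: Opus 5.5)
Your Step 2 is fine; it replaces the paper's ``rectifiable dimension $1$ plus Kitabeppu--Lakzian'' ending with a direct star-of-rays argument. Step 1, however, has a genuine gap.

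You claim that the argument of Section~\ref{sec:Proof_of_rectifiable_dimension_2} shows the $K_0$-orbits are one-dimensional. That is not available. Proposition~\ref{prop:rect_dim_2} is proved only for a circle action, and Claims 1--4 depend essentially on the one-parameter structure of $S^1$: the minimal angle $\theta_i$, the symmetric arcs $[-\theta_i,\theta_i]$, the rational multiples $\tfrac{a}{b}\theta_i$, the arc $[\theta_i,u_i]$, and symmetric arcs closing up to a subgroup in the limit. That argument is also purely local at the blow-up point $q$. Locally, a $T^2$-action with ray quotient can look like translations on flat $\R^3$ (e.g.\ a flat product $(0,\infty)\times T^2$), where the tangent cone is $\R^3$ with limit group $H\simeq\R^2$. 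So no such blow-up argument can force $k=2$ when $m\ge 2$. Likewise, ``running the proof of Theorem~\ref{thm:ray_rigid} with $S^1$ replaced by $K_0$'' would mean redoing all of Section~\ref{sec:circle ray} for torus actions. In effect you are assuming $m=1$, which is what has to be proved.

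The missing idea is that you reached for the wrong subgroup. Quotienting by a circle leaves a $T^{m-1}$-action whose quotient is not a ray. The paper instead quotients by a \emph{codimension-one} subtorus $H\simeq T^{m-1}$ (connected case $K=T^m$). This leaves the circle $T^m/H$ acting on the $\RCD(0,N)$ space $Y/H$, fixing $\bar y$, with $(Y/H)/(T^m/H)=Y/T^m$ a ray. Since $H$ fixes $y$, the preimage of $B_r(\bar y)$ is exactly $B_r(y)$, so condition (2) transfers verbatim. Theorem~\ref{thm:ray_rigid} then forces the circle to act trivially, i.e.\ $T^m z=Hz$ for all $z$. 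This contradicts effectiveness: the non-branching argument of Lemma~\ref{lem:free_at_reg} works for any effective abelian group with ray quotient and makes the action free off $y$; alternatively, induct on $m$.

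Your treatment of $F$ also has a hole. Each sphere $\partial B_r(y)$ is a single $K$-orbit, hence a finite union of $K_0$-orbits. Elements of $F$ may permute these $K_0$-orbits rather than rotate a single one. In that case $Y/K_0$ is several rays glued at $\bar y$, not a ray, and neither Theorem~\ref{thm:ray_rigid} nor the subtorus quotient applies to $Y$ as it stands.

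The paper avoids this by first passing to $Y/F$. That space is again $\RCD(0,N)$, satisfies condition (2) at $\bar y$ for the same reason as above, and carries the induced $T^m$-action with quotient $Y/K$, a ray. The connected case, applied to the effective quotient of this $T^m$-action on $Y/F$, shows $T^m$ acts trivially on $Y/F$. So every $T^m$-orbit in $Y$ lies in a finite $F$-orbit; being connected, it is a point. Hence $K=F$ is finite, and your Step 2 (or rectifiable dimension $1$ together with Kitabeppu--Lakzian) concludes.
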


\begin{proof}[Proof assuming Theorem \ref{thm:ray_rigid}]
   We first assume that $K$ is connected and show that $K=\{e\}$. Suppose otherwise, then $K$ is a torus $T^l$, where $l\ge 1$, and acts effectively and isomorphically on $Y$. Let $H\leq T^l$ be a closed subgroup isomorphic to $T^{l-1}$. Then $T^l/H\simeq S^1$ acts effectively and isomorphically on $Y/H$. Note that the quotient space $(Y/H,\bar{y},\bar{\meas})$ is $\RCD(0,N)$ with a $S^1$-action that fulfills the condition of Theorem \ref{thm:ray_rigid}. Hence $Y/H$ is isometric to a ray thanks to Theorem \ref{thm:ray_rigid}; a contradiction to the effective $S^1$-action on $Y/H$.

   For a general abelian Lie subgroup $K$, because $K$ fixes $y$, it is compact. We can write $K=T^l\times F$, where $F$ is a finite group. Applying the first paragraph to the quotient space $Y/F$, we obtain that $Y/F$ is isometric to a ray and $K=\{e\}$. It follows that $Y$ has rectifiable dimension $1$. By \cite{Kitabeppu-Lakzian_16}, $Y$ is isometric to a line or a ray.
\end{proof}

Now we prove plane/halfplane rigidity (Theorem \ref{thm:rigid_p/h}) from Theorem \ref{thm:hp_rigid_vol<2} and Corollary \ref{cor:rigid_rayline}.

\begin{proof}[Proof of Theorem \ref{thm:rigid_p/h} assuming Theorems \ref{thm:hp_rigid_vol<2} and \ref{thm:ray_rigid}]
   We first consider the quotient metric measure space of $Y$ by the compact group $K$:
   $$ (Y',y',d',\meas')= (Y,y,d,\meas)/K.$$
   It has an isometric and measure preserving $G'\simeq G/K\simeq \R$-action. Hence $(Y',y',d',\meas')$ is an $\RCD(0,N)$ space and satisfies condition (1) in Theorem \ref{thm:hp_rigid_vol<2}. Under the quotient map $\pi:Y\to Y'$, the strip region $\Omega'_r\subseteq Y'$ has a pre-image $\pi^{-1}(\Omega'_r)=\Omega_r\subseteq Y$. Hence
   $$\meas'(\Omega'_r)=\meas(\pi^{-1}(\Omega'_r))=\meas(\Omega_r).$$
   Together with the given $\meas$-growth condition
   $$ \lim_{r\to\infty} \dfrac{\meas'(\Omega'_r)}{r^2}=\lim_{r\to\infty} \dfrac{\meas(\Omega_r)}{r^2}=0,$$
   we see that $(Y',y',d',\meas')$ satisfies all the assumptions in Theorem \ref{thm:hp_rigid_vol<2}. Therefore, $Y'$ is isometric to the Euclidean halfplane $\R\times [0,\infty)$. 
   
   Next, we lift the line in $Y'=Y/K$ to $Y$ and apply Gigli's splitting theorem \cite{Gigli13}, then $(Y,y,d,\meas)$ is isomorphic to $(\R,0,d_E,\mathcal{L})\otimes (Z,z,d_Z,\meas_Z)$ for some $\RCD(0,N-1)$ space $(Z,z,d_Z,\meas_Z)$. $K$ acts trivially on the $\R$-factor of $Y$. Now we have $Kz=z$ and the quotient $Z/K$ isometric to a ray $[0,\infty)$. Moreover, under the splitting $Y=\R\times Z$, the region $\Omega_r\subseteq Y$ is identified with $[-1,1]\times B_r(z)$. Hence
   $$\lim_{r\to 0^+} \dfrac{\meas_Z (B_r(z))}{r^2} =\lim_{r\to 0^+} \dfrac{\meas(\Omega_r)}{2r^2} = \infty.$$
   Thanks to Corollary \ref{cor:rigid_rayline}, $Z$ is isometric to a line or a ray. We conclude that $Y=\R\times Z$ is isometric to a Euclidean halfplane or a Euclidean plane.
\end{proof}

\begin{rem}\label{rem:p/h_measure}
  In Theorem \ref{thm:rigid_p/h}, when $Y$ is isometric to a Euclidean plane, the measure $\meas$ must be (a multiple of) the Lebesgue measure thanks to Gigli's splitting theorem \cite{Gigli13}. When $Y$ is isometric to a Euclidean halfplane $\R\times [0,\infty)$, again by Gigli's splitting theorem, the measure $\meas$ splits as $\mathcal{L}\otimes \underline{\meas}$, where $\mathcal{L}$ denotes the Lebesgue measure on $\R$ and $\underline{\meas}$ is some measure on $[0,\infty)$. In general, the measure $\underline{\meas}$ is distinct from the Lebesgue measure. Nevertheless, because $$\Omega_r=[-1,1]\times [0,r]\subseteq Y=\R\times [0,\infty),$$
  it is clear that $\underline{\meas}$ satisfies
 $$\lim_{r\to 0^+} \dfrac{\underline{\meas}([0,r])}{r^2}=+\infty,\quad \lim_{r\to\infty} \dfrac{\underline{\meas}([0,r])}{r^2}=0.$$
\end{rem}

\begin{rem}\label{rem:p/h_action}
   Inspecting the proof of Theorem \ref{thm:rigid_p/h} above, we have $Z/K=[0,\infty)$ and $Z$ is either a ray or a line. Hence the isotropy subgroup $K$ has only two possibilities, either trivial or $\Z_2$. As a consequence, the $G$-space $(Y,y,d,G)$ under the assumption of Theorem \ref{thm:rigid_p/h} is isomorphic to one of the following:\\
   (1) Euclidean halfplane $(\R\times [0,\infty),0,\R)$,\\
   (2) Euclidean plane $(\R^2,0,\R\times \Z_2)$, where $\Z_2$ acts as reflection with respect to the line $\R\cdot 0$.
\end{rem}

\subsection{Proof of sharp halfplane rigidity}

In this section
\[
(Y,y,d,\meas)\text{ denotes an } \RCD(0,N) \text{ space satisfying the hypotheses of Theorem \ref{thm:hp_rigid_vol<2}}.
\]
Based on the distributional Ricci curvature lower bound derived in Proposition \ref{prop: measure-inequality}, we will show that $(Y,d)$ is isometric to a halfplane.

First of all, we show that taking the quotient of $Y$ by its $\mathbb{Z}$-action leads to an $\RCD(0,N)$ space with an $S^1$-action whose quotient is a ray.

\begin{lem}\label{lem:half_plane_hypothesis_to_cylinder}
    There exists a unique measure ${\meas}'$ on $Y'\coloneqq Y/\mathbb{Z}$ such that the quotient map $\pi\colon Y\to Y'$ induces a local isomorphism of m.m.s.\@ from $(Y,d,\meas)$ to $(Y',{d}',{\meas}')$, where ${d}'$ is the quotient distance. In particular, $(Y',{y}',{d}',{\meas}')$ satisfies the hypothesis (\ref{cond:S1_action_and_ray_quotient}) and we have:
    \[
    \lim_{r\to\infty}\frac{\meas'(\Omega'_r)}{r^2}=0,
    \]
    where ${y}'=\pi(y)$ and $\Omega'_r\coloneqq\pi(\Omega_r)$.
\end{lem}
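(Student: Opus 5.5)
We fix an isomorphism $G\simeq\R$ and let $\Z\subset G$ be the subgroup generated by the element $1$. The plan is to push the metric measure structure down to $Y'=Y/\Z$ through the covering map, and then check the hypotheses of \eqref{cond:S1_action_and_ray_quotient} one at a time.

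\textbf{Properness and local isometry.} Since $G$ is closed in $\mathrm{Isom}(Y)$ and acts freely (every orbit is a line, by condition (1)), the subgroup $\Z$ acts freely and properly discontinuously. Moreover, $d(y,n\cdot y)\to\infty$ as $|n|\to\infty$, and this holds uniformly on compact sets. So there is a $\delta(z)>0$ with $d(z,n z)>4\delta(z)$ for all $n\neq 0$, and on $B_{\delta}(z)$ the quotient map $\pi$ is an isometry onto $B_\delta(\pi(z))$ for the quotient distance $d'$.

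\textbf{The measure.} We define $\meas'$ by setting $\meas'(\pi(A))=\meas(A)$ for Borel sets $A$ contained in such small balls. This is consistent because $\meas$ is $\Z$-invariant: the $G$-action is measure preserving. Extending by a countable partition of $Y'$ gives a well-defined measure. It is unique, since any measure making $\pi$ a local isomorphism must agree with this one on small balls, and small balls generate the Borel $\sigma$-algebra. Equivalently, $\meas'$ is the pushforward of $\meas$ restricted to a fundamental domain $[0,1)\cdot \gamma([0,\infty))$.

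\textbf{The $\RCD(0,N)$ condition on $Y'$.} The space $(Y',d',\meas')$ is locally isomorphic to $\RCD(0,N)$ balls. It is proper and geodesic, since it is a quotient of a proper geodesic space by a proper group action. We then invoke the local-to-global property of $\RCD(0,N)$ (Cavalletti--Milman, together with the known fact that $\RCD$ is local) to conclude that $Y'$ is $\RCD(0,N)$. Alternatively, one could cite the results of Galaz-Garc\'ia et al.\ on quotients by discrete isometric measure-preserving actions. I expect this globalization to be the main obstacle; the other steps are routine bookkeeping.

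\textbf{The $S^1$-action.} The group $G/\Z\simeq S^1$ acts on $Y'$ effectively, isometrically and measure preservingly. Its orbit space is $Y/G=[0,\infty)$ with the same quotient metric, so $(Y'/S^1,\bar y')$ is isometric to a ray. The action of $G$ on $y$ is free, so $S^1$ acts freely at $y'$; this gives the last clause of \eqref{cond:S1_action_and_ray_quotient}.

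\textbf{Volume growth.} The region $\Omega_r$ is $[-1,1]\cdot\gamma([0,r])$. Its image $\Omega'_r=\pi(\Omega_r)$ is covered by the images of the two fundamental pieces $[-1,0]\cdot\gamma([0,r])$ and $[0,1]\cdot\gamma([0,r])$. Since $\pi$ is injective on each piece up to a measure-zero boundary (orbits have measure zero by Remark~\ref{rem:orbits_have_measure_zero}), we get
$$\meas'(\Omega'_r)\le\meas(\Omega_r).$$
Hence
$$\frac{\meas'(\Omega'_r)}{r^2}\le \frac{\meas(\Omega_r)}{r^2}\to 0.$$
In fact $\meas'(\Omega'_r)=\tfrac12\meas(\Omega_r)$, but only the upper bound is needed.
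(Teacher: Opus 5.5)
Your proposal is correct and follows essentially the same route as the paper: you push $\meas$ down through the covering map $\pi$ as a local isomorphism, invoke locality of the $\RCD(0,N)$ condition, let $\R/\Z\simeq S^1$ act with ray quotient, and compare $\meas'(\Omega'_r)$ with $\meas(\Omega_r)$ via the two fundamental pieces (the paper records the equality $\meas(\Omega_r)=2\meas'(\Omega'_r)$, and your upper bound is all that is needed). One small correction: freeness of the $G$-action is not part of condition (1); it follows because point stabilizers of a closed subgroup of $\mathrm{Isom}(Y)$ are compact and $\R$ has no nontrivial compact subgroups.
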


\begin{proof}
The quotient map $\pi$ is a covering projection, so we may define a unique nonnegative Radon measure $\meas'$ on $Y'$ such that, given any $U\subset Y$ such that $\pi_{\lvert U}\colon U\to \pi(U)$ is a homeomorphism, we have $\meas'(\pi(U))=\meas(U)$. It is then clear that $\pi$ induces a local isomorphism of m.m.s.\@ from $(Y,d,\meas)$ to $(Y',{d}',{\meas}')$; hence, by the locality of the $\RCD(0,N)$ condition (see \cite{SZ23}*{Theorem 18} after \cite{erbar_equivalence_2015}*{Theorem 3.25}), $(Y',{d}',{\meas}')$ is an $\RCD(0,N)$ space. Then, since $\R$ is abelian, $\R/\mathbb{Z}=S^1$ acts on $Y'=Y/\mathbb{Z}$ and it is clear that the action is by isomorphisms of metric measure spaces and that the quotient is a ray. Therefore, $(Y',{y}',{d}',{\meas}')$ satisfies the hypothesis (\ref{cond:S1_action_and_ray_quotient}).

We denote $Y_+$ and $Y'_+$ the pre-images of $(0,\infty)$ by the quotient maps $Y\to Y/\R=[0,\infty)$ and $Y'\to Y'/S^1=[0,\infty)$. Proceeding as for \ref{lem:free_at_reg}, we can identify $Y_+$ and $Y'_+$ as $(0,\infty)\times\R$ and $(0,\infty)\times S^1$, equipped with their respective standard $\R$-action and $S^1$-action. Thanks to Remark \ref{rem:orbits_have_measure_zero}, which asserts that horizontal rays and $S^1$-orbits have measure zero in $Y'$, we have $\m'(\Omega'_r)=\m'((0,r)\times(-\pi,\pi))$.  Since $\pi\colon Y\to Y'$ is a local isomorphism of m.m.s.\@, the same remark holds for $Y$ and we have:
\[
\m(\Omega_r)=\m((0,r)\times(-1,1))=\m((0,r)\times(-1,0))+\m((0,r)\times(0,1))=2\m((0,r)\times(0,1)).
\]
However, $\pi\colon(0,r)\times(0,1)\to(0,r)\times (-\pi,\pi)$ is a homeomorphism; thus $\meas(\Omega_r)=2\meas'(\Omega'_r)$. The conclusion follows since $\lim_{r\to\infty}r^{-2}\meas(\Omega_r)=0$.
\end{proof}

We continue to use the notation introduced in the proof of Lemma \ref{lem:half_plane_hypothesis_to_cylinder}. Recall that $Y'_+$ is the pre-image of $(0,\infty)$ by the quotient map $Y'\to Y'/S^1=[0,\infty)$. In the previous section, concluding with Proposition \ref{prop: measure-inequality}, we identified $(Y'_+,d_{\lvert Y'_+},\meas\llcorner Y'_+)$ with the open half-cylinder $(0,\infty)\times S^1$ equipped with a Riemannian metric $g=\di r^2+f^2(r)\di \theta^2$ and a weighted measure $\rho(r)\di r \di \theta$, where $f\in W^{1,2}_{loc}(0,\infty)$ is positive and continuous, $\rho$ is a positive $\CDe(0,N)$ density on $(0,\infty)$, and the following inequalities hold as Radon measures:
\[
-(\rho b)'\ge0,\quad -\rho''\rho+(\rho')^2\ge \rho^2b^2,\quad\text{where }b=\ln(f)'.
\]

We start with a simple lemma that converts our volume growth condition $\lim_{r\to\infty}\frac{\meas(\Omega'_r)}{r^2}=0$ (obtained in Lemma \ref{lem:half_plane_hypothesis_to_cylinder}) into a growth condition of the density $\rho$. 

\begin{lem}\label{rem:volumegrowth-to-rho}
   $\lim_{r\to\infty} \rho(r)/r=0$.
\end{lem}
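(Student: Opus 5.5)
We use the identification of $Y'_+$ with $(0,\infty)\times S^1$ carrying the measure $\rho(r)\di r\di\theta$ (Lemma \ref{lem:measure density}). By Remark \ref{rem:orbits_have_measure_zero}, the $S^1$-orbits and horizontal rays are $\meas'$-null. Hence, as in the proof of Lemma \ref{lem:half_plane_hypothesis_to_cylinder},
\[
\meas'(\Omega'_r)=\meas'\big((0,r)\times(-\pi,\pi)\big)=2\pi\int_0^r\rho(s)\di s
\]
(or this up to the harmless normalizing constant fixed in Lemma \ref{lem:measure density}). Lemma \ref{lem:half_plane_hypothesis_to_cylinder} therefore translates into the integral condition
\[
\lim_{r\to\infty}\frac{1}{r^2}\int_0^r\rho(s)\di s=0.
\]

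The main step is to upgrade this averaged statement to the pointwise one, $\rho(r)/r\to0$. For this we use the monotonicity of $\rho$ recorded in Section \ref{subsec:RCD_to_BE}: $\rho$ is positive and non-decreasing on $(0,\infty)$. If one prefers to derive monotonicity rather than quote it, it follows from concavity: $\rho^{1/N}$ is concave and positive on $(0,\infty)$, and a concave function that is bounded below on a half-line cannot be eventually strictly decreasing, so it is non-decreasing.

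Given monotonicity, for every $r>0$ we have
\[
\frac{r}{2}\,\rho(r/2)\le\int_{r/2}^{r}\rho(s)\di s\le\int_0^r\rho(s)\di s .
\]
Dividing by $r^2/4$ gives
\[
\frac{\rho(r/2)}{r/2}\le\frac{4}{r^2}\int_0^r\rho(s)\di s\longrightarrow0 ,
\]
and since $\rho\ge0$ this yields $\rho(t)/t\to0$ as $t\to\infty$.

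The only potentially delicate point is the first identity: we must make sure the portion of $\Omega'_r$ lying over $r=0$ does not contribute, i.e.\ that $\{r=0\}$ is $\meas'$-null. This is exactly what Remark \ref{rem:orbits_have_measure_zero} provides, since $\{r=0\}$ is a single $S^1$-orbit. The remaining steps are elementary.
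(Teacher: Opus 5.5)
Your proposal is correct and follows essentially the same route as the paper: express $\meas'(\Omega'_r)=2\pi\int_0^r\rho(s)\,\di s$, then use that $\rho$ is non-decreasing to get $\frac{\rho(r/2)}{r/2}\le \frac{4}{r^2}\int_{r/2}^r\rho(s)\,\di s\to 0$. Your extra justification of monotonicity via concavity of $\rho^{1/N}$ and positivity on the half-line, and your remark that $\{r=0\}$ is $\meas'$-null, are both sound and fill in points the paper leaves implicit.
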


\begin{proof}
   Notice that
\[
\meas'(\Omega'_r)=\int_0^r\int_{S^1} \rho(s)\di s\di \theta=2\pi\int_{0}^r \rho(s)\di s,
\]
and $\rho$ is non-decreasing since it is a positive $\CDe(0,N)$ density on $(0,\infty)$. As $r\to\infty$, we deduce that
\begin{equation*}
    \frac{\rho(r/2)}{r/2}\le 4r^{-2} {\int_{r/2}^{r}\rho(s) \di s} \le \frac{2}{\pi}\dfrac{\meas'(\Omega'_r)}{r^2}\to 0.
\end{equation*}
\end{proof}

We are going to use the next lemma, together with the inequality established in Proposition \ref{prop: measure-inequality}, to fix a monotone non-increasing representative of $\rho b$.

\begin{lem}\label{lem:MonotoneRep}
Let $\psi\in L^p_{loc}((0,\infty))$, $p\in [1,\infty)$. If the distribution $\psi'$ is nonnegative in the sense that $\langle \psi', \phi\rangle\ge 0$ for any $\phi\in C_c^{\infty}((0,\infty))$ and $\phi\ge 0$, then $\psi$ has a monotone non-decreasing representative.    
\end{lem}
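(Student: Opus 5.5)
The plan is to mollify $\psi$, use that the derivative of each mollification is pointwise nonnegative, and pass to an almost-everywhere limit. Fix a standard mollifier $\eta\in C_c^\infty(\R)$ with $\eta\ge 0$, $\int\eta=1$, $\Spt\eta\subset[-1,1]$, and set $\eta_\epsilon(t)=\epsilon^{-1}\eta(t/\epsilon)$. For $\epsilon>0$ define $\psi_\epsilon=\psi*\eta_\epsilon$ on $(\epsilon,\infty)$. This is well-defined and smooth there because $\psi\in L^1_{loc}$, which follows from $\psi\in L^p_{loc}$.

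The first step is to show $\psi_\epsilon'\ge 0$ pointwise on $(\epsilon,\infty)$. For $t>\epsilon$ we have
\[
\psi_\epsilon'(t)=\int\psi(s)\,\partial_t\eta_\epsilon(t-s)\di s=-\int\psi(s)\,\partial_s\bigl[\eta_\epsilon(t-s)\bigr]\di s=\langle\psi',\eta_\epsilon(t-\cdot)\rangle\ge 0.
\]
The test function $s\mapsto\eta_\epsilon(t-s)$ is nonnegative, smooth, and compactly supported in $(t-\epsilon,t+\epsilon)\subset(0,\infty)$, so the hypothesis applies. Hence $\psi_\epsilon$ is non-decreasing on $(\epsilon,\infty)$.

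The second step is the limit. Since $\psi_\epsilon\to\psi$ in $L^1_{loc}((0,\infty))$, we can choose $\epsilon_k\to 0$ along which $\psi_{\epsilon_k}\to\psi$ almost everywhere. Alternatively, the Lebesgue differentiation theorem gives convergence at every Lebesgue point. Let $E\subset(0,\infty)$ be the full-measure set where this convergence holds. For $s<t$ in $E$ and $k$ large enough that $\epsilon_k<s$, monotonicity gives $\psi_{\epsilon_k}(s)\le\psi_{\epsilon_k}(t)$. Letting $k\to\infty$ yields $\psi(s)\le\psi(t)$, so $\psi|_E$ is non-decreasing.

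The third step extends this to all of $(0,\infty)$. Define $\tilde\psi(t)\defeq\inf\{\psi(s): s\in E, s>t\}$. This equals $\lim_{s\downarrow t,\,s\in E}\psi(s)$ and is finite, since $E$ is dense and $\psi|_E$ is monotone, so it is bounded below by $\psi(s')$ for any $s'\in E$ with $s'<t$. The function $\tilde\psi$ is non-decreasing everywhere by construction.

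It remains to check that $\tilde\psi=\psi$ almost everywhere, and this is the only delicate point. A monotone function on $E$ has at most countably many jumps. At every $t\in E$ that is not a jump point, $\tilde\psi(t)=\psi(t)$. So the two functions differ only on a countable set together with the null set $(0,\infty)\setminus E$. A cleaner alternative is to use the right-continuous version at Lebesgue points directly, which sidesteps the jump-counting. Either way, $\tilde\psi$ is the desired monotone non-decreasing representative.
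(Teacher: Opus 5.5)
Your proof is correct, but it takes a different route from the paper's.

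\textbf{The paper's route.} The paper works with measures. By the Riesz representation theorem, the nonnegative distribution $\psi'$ is a Radon measure $\mu$ on $(0,\infty)$, and the representative is taken to be the cumulative distribution function of $\mu$.

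\textbf{Your route.} You regularize instead. The identity $\psi_\epsilon'(t)=\langle\psi',\eta_\epsilon(t-\cdot)\rangle\ge 0$ makes each mollification monotone. Monotonicity then passes to the a.e.\ limit on a full-measure set $E$. Finally, the right-limit construction extends this to an everywhere-defined non-decreasing $\tilde\psi$ that agrees with $\psi$ outside a countable set union a null set.

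\textbf{What the paper's route buys.} It is short, and it identifies $\psi'$ with a measure. The paper uses this measure viewpoint later: testing measures such as $-\rho''\rho+(\rho')^2$ against indicator functions, and Lebesgue--Stieltjes integration by parts with right-continuous representatives.

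\textbf{What the paper leaves implicit.} Its one-line proof omits two steps that your argument does not need:
\begin{enumerate}
\item One needs that a distribution with vanishing derivative on an interval is a.e.\ constant. This is what shows that $\psi$ and the distribution function of $\mu$ differ only by a constant.
\item One needs a normalization at an interior base point $r_0>0$. Since $\mu$ is only locally finite on $(0,\infty)$, $\mu((0,r])$ may be infinite, and an additive constant is needed anyway. The correct formula is $c+\mu((r_0,r])$ for $r\ge r_0$ and $c-\mu((r,r_0])$ for $r<r_0$.
\end{enumerate}

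\textbf{What your route buys.} It is elementary and self-contained. The cost is the extra step of extending from $E$ to all of $(0,\infty)$, which you handle correctly. Your $\tilde\psi$ is also automatically right-continuous. Two right-continuous functions that agree a.e.\ agree everywhere, so $\tilde\psi$ is exactly the representative that the properly normalized paper construction produces.
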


\begin{proof}
    By the Riesz representation theorem, the distribution $\psi'$ is in fact a Radon measure, denoted by $\mathbf{\psi}'$. In particular, we obtain a representative of $\psi$ by defining  $\psi(r)\coloneq \mathbf{\psi}'((0,r])$, for $r>0$.
\end{proof}
Combining $-(\rho b)'\ge 0$, deduced from Proposition \ref{prop: measure-inequality}, and Lemma \ref{lem:MonotoneRep}, we infer that $\rho b$ has a monotone non-increasing representative and  this in turn gives a pointwise representative of $b\in L^2_{loc}(0,\infty)$.

The next observation is about the warping function $f(r)$ in the subquadratic volume growth case. Recall that the density $\rho$ is $\frac1N$-concave and positive on $(0,\infty)$, hence non-decreasing. As observed in the proof of Proposition \ref{prop: measure-inequality}, the distribution $-\rho''\rho+(\rho')^2$ is in fact a Radon measure. So we can test it against characteristic functions, not only smooth functions with compact support. 

\begin{lem}\label{lem:vol<2-to-monotone}
   There exists a sequence $r_i\to\infty$ such that 
   $$-\rho''\rho+(\rho')^2 ([r_i-2,r_i+2])\to 0.$$
   Moreover, $b\ge 0$, so $f$ is non-decreasing on $(0,\infty)$.
\end{lem}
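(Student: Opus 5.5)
Write $\mu\coloneqq -\rho''\rho+(\rho')^2$, which is a nonnegative Radon measure on $(0,\infty)$ by Proposition \ref{prop: measure-inequality}. For the first assertion I will show that $\mu$ has finite mass on $[1,\infty)$, or at least small mass on suitable long intervals. The key identity is $\mu=-\rho^2(\rho'/\rho)'$. Since $\rho$ is positive and non-decreasing, for large $r$ we have $\rho\le\rho(r+2)$ on $[r-2,r+2]$. Hence
$$\mu([r-2,r+2])\le \rho(r+2)^2\left(\frac{\rho'}{\rho}(r-2)-\frac{\rho'}{\rho}(r+2)\right),$$
using left/right representatives of the $BV_{loc}$ function $\rho'/\rho$. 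This representative is non-increasing because $-(\rho'/\rho)'\ge b^2\ge 0$.

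A cleaner route is to use $u=\rho^{1/N}$, which is concave, positive and non-decreasing. Then $u'\ge 0$ is non-increasing and has a limit $L\ge 0$. Concavity together with $\rho(r)/r\to 0$ (Lemma \ref{rem:volumegrowth-to-rho}) forces $u(r)=o(r^{1/N})$, so $L=0$.

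Write $\rho'=Nu^{N-1}u'$ and
$$\mu=N\rho^{2-1/N}(-u'')+N(N-1)\rho^{2-2/N}(u')^2-\dots$$
Precisely, compute $\mu=-\rho^2(\rho'/\rho)'$ with $\rho'/\rho=Nu'/u$. This gives
$$\mu=N u^{2N-1}(-u'')+N u^{2N-2}(u')^2,$$
with both terms nonnegative.

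I will estimate each term on a window $[r-2,r+2]$.

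\textbf{Second term.} It is bounded by $4N\,u(r+2)^{2N-2}u'(r-2)^2$. Using $u'(s)\le \bigl(u(s)-u(s/2)\bigr)/(s/2)\le 2u(s)/s$ from concavity, this is at most $C\,u(r)^{2N}/r^2\approx C\rho(r)^2/r^2\to 0$.

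\textbf{First term.} It is bounded by $N u(r+2)^{2N-1}\bigl(u'(r-2)-u'(r+2)\bigr)$. Here I cannot control every window, so I will choose $r_i$ by an averaging argument. Over dyadic ranges $[R,2R]$, take a disjoint family of about $R/4$ windows. The sum of $u'(r-2)-u'(r+2)$ over these windows is at most $u'(R-2)\le Cu(R)/R$. Hence some window has increment at most $Cu(R)/R^2$. The first term on that window is then at most $C u(2R)^{2N-1}u(R)/R^2\lesssim \rho(2R)^2/R^2\to 0$, since $u(2R)\le 2u(R)$ by concavity and positivity.

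This produces the sequence $r_i$. The main obstacle is justifying the pointwise and BV manipulations with representatives. I would handle this as in the proof of Proposition \ref{prop: measure-inequality}, using the $BV$–$W^{1,\infty}$ product rule.

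For $b\ge 0$: let $\psi=\rho b$ be the non-increasing representative given by Lemma \ref{lem:MonotoneRep}. Suppose $\psi(r_0)=-c<0$. Then $\psi\le -c$ on $[r_0,\infty)$, so $b\le -c/\rho$ and $\rho^2b^2\ge c^2$ there. Since $\mu\ge\rho^2b^2$, this gives $\mu([r_i-2,r_i+2])\ge 4c^2$ for all large $i$, contradicting the first part. Hence $\psi\ge 0$ everywhere, so $b\ge 0$ almost everywhere. Since $f\in W^{1,2}_{loc}$ is positive and continuous with $(\ln f)'=b\ge 0$, it follows that $f$ is non-decreasing.
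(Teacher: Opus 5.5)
Your proof is correct, and its overall strategy matches the paper's. Both arguments find windows $[r_i-2,r_i+2]$ on which $\mu=-\rho''\rho+(\rho')^2$ has vanishing mass. Both do this by telescoping a monotone derivative across disjoint windows, using $\rho(r)/r\to 0$ and the concavity bound $u'(s)\le 2u(s)/s$; the paper's Step 1 is this same inequality written as $\rho'(2r)\le N\rho(2r)/r$. Both then get $b\ge 0$ from the non-increasing representative of $\rho b$ together with $\mu\ge\rho^2b^2$. Your version of that last step ($\psi^2\ge c^2$ on $[r_0,\infty)$) is a slightly shorter form of the paper's Step 3, which first shows $\rho b\to 0$.

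Where you differ is the first part. The paper argues by contradiction: it shows $\rho'\to 0$, so the $(\rho')^2$ contribution is negligible. It then converts $\rho\,(-\rho'')([r-2,r+2])\ge c/2$ into $(-\rho'')([r-2,r+2])\ge 1/(r+2)$, and a divergent harmonic sum contradicts boundedness of $\rho'$. You instead write $\mu=Nu^{2N-1}(-u'')+Nu^{2N-2}(u')^2$ and bound the second term on every window by $C\rho(r)^2/r^2$. You then pigeonhole the $(-u'')$-mass within each dyadic range.

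Your route has two advantages:
\begin{itemize}
\item It is direct and gives an explicit rate, $\mu([r_i-2,r_i+2])\le C\rho(r_i)^2/r_i^2$.
\item It only ever integrates continuous weights against the nonnegative measure $-u''$. The paper's division step tacitly needs $-\rho''\ge 0$ on the window. That does not follow from the contradiction hypothesis, because $\rho''$ contains the nonnegative part $N(N-1)u^{N-2}(u')^2$. The resulting error is of lower order and can be repaired, but your decomposition avoids the issue entirely.
\end{itemize}

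Your first sketched route would also work: combine $\mu([r-2,r+2])\le\rho(r+2)^2\bigl((\rho'/\rho)(r-2)-(\rho'/\rho)(r+2)\bigr)$ with the same dyadic pigeonhole and the bound $(\rho'/\rho)(s)\le 2N/s$.
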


\begin{proof}
We recall that, thanks to Lemma \ref{rem:volumegrowth-to-rho}, we have $\lim_{r\to\infty}\rho(r)/r=0$. We divide the proof into several steps.

\textbf{Step 1.} $\rho'(r)\to 0$ as $r\to\infty$. 
     
Write $u=\rho^{1/N}$, then $u$ is concave. In turn, $u$, hence $\rho$, has well-defined left and right derivatives at every point, and they coincide with at most countably many exceptions. We can take either the left or right derivative as the canonical representation of $u'$ and $\rho'$. The concavity of $u$ gives that for $0<r_1<r_2<r_3$, 
\[
\frac{u(r_2)-u(r_1)}{r_2-r_1}\ge \frac{u(r_3)-u(r_1)}{r_3-r_1}\ge \frac{u(r_3)-u(r_2)}{r_3-r_2}.
\]
Fix any $r>0$, $\epsilon\in (0,r)$. Taking $r_1=r$, $r_2=2r-\epsilon$, $r_3=2r$, we get that 
\[
\frac{\rho^{1/N}(2r)}{r}\ge\frac{u(2r)-u(r)}{r}\ge \frac{u(2r)-u(2r-\epsilon)}{\epsilon}\to u'_{-}(2r)=\frac{1}{N}\rho^{\frac{1}{N}}(2r)\frac{\rho'_{-}(2r)}{\rho(2r)}, \text{ as $\epsilon\to 0^+$}.
\]
Similarly, taking $r_1=r$, $r_2=2r$, $r_3=2r+\epsilon$, we get 
\[
\frac{\rho^{1/N}(2r)}{r}\ge\frac{u(2r)-u(r)}{r}\ge \frac{1}{N}\rho^{\frac{1}{N}}(2r)\frac{\rho'_{+}(2r)}{\rho(2r)}.
\]
Taking either the left or right derivative as the representative of $\rho'$, we have 
\[
\rho'(2r)\le N\frac{\rho(2r)}{r}\to 0, \text{ as $r\to\infty$.}
\]


 \textbf{Step 2.} There exists a sequence $r_i\to\infty$ such that $-\rho''\rho+(\rho')^2 ([r_i-2,r_i+2])\to 0$. 
 
Note that Proposition \ref{prop: measure-inequality} implies $-\rho''\rho+(\rho')^2\ge \rho^2 b^2 $ is a nonnegative measure. If this is not true, then there exist constants $c>0$, $R>2$ such that for any $r>R$, it holds $-\rho''\rho+(\rho')^2([r-2,r+2])\ge c$. Since $\rho'(r)\to 0$, as $r\to\infty$ we can take $R$ so large that 
\[
\int_{r-2}^{r+2} (\rho')^2\le \frac{c}{2},
\]
for any $r>R$. Then by the contradicting assumption,
\[
\rho(-\rho'')([r-2,r+2])\ge \frac{c}{2},
\]
in particular, $-\rho''$ is a nonnegative measure. We may increase $R$ so that $\rho(r)\le \frac{c}{2}r$ holds for any $r>R$, then 
\[
\rho'(r+2)-\rho'(r-2)= (-\rho'')([r-2,r+2])\ge \frac{2}{c(r+2)}\rho(-\rho'')([r-2,r+2])\ge \frac{1}{r+2}.
\]
Now take $r_k=R+4k$ for positive integer $k$, we have $r_{k+1}-2=r_k+2$. Sum over the above inequality taking into account that $\rho'(r)\to 0$ as $r\to \infty$ we get a contradiction 
\[
\rho'(R+4K+2)-\rho'(R+2)\ge \sum_{k=1}^K \frac{1}{R+4k+2}=O(\log K).
\]

\textbf{Step 3.} $b\ge 0$. 

It follows from the monotonicity of (the pointwise version of) $\rho b$, that $\rho b$ changes sign at most once. So when $r$ is large enough, $\rho^2(r) b^2(r)$ is also monotone. We may assume $\rho^2(r) b^2(r)$ is monotone non-decreasing when $r>R+1$, the other case can be proven by considering the interval $[r_i-1,r_i]$ instead of $[r_i,r_i+1]$ below. For a cutoff function $0\le \chi_i(r)\le 1$, $\chi_i(r)=1$ on $[r_i-1,r_i+1]$ and supported on $[r_i-2,r_i+2]$, it holds
\[
0\le\rho^2(r_i) b^2(r_i)\le \int_{r_i}^{r_i+1} \rho^2 b^2 \le \int_{\mathbb{R}}\rho^2 b^2 \chi_i\le\int_{\mathbb{R}}(-\rho''\rho+(\rho')^2)\chi_i\le \int_{r_i-2}^{r_i+2}-\rho''\rho+(\rho')^2\to 0.
\]
Then by monotonicity again $\rho(r) b(r)\to 0$ as $r\to\infty$, in particular $b\ge 0$. 
\end{proof}

Our proof of \ref{thm:hp_rigid_vol<2} will rely on the following consequence of the splitting Theorem for $\RCD(0,N)$ spaces \cite{Gigli14}.

\begin{prop}\label{prop:monotone-to-splitting}
    If the warping function $f$ is non-decreasing on $(0,\infty)$, then the metric space $(Y,y,d)$ splits isometrically as the halfplane $\R\times [0,\infty)$ with Euclidean metric.
\end{prop}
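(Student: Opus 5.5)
We will show that $Y$ contains a line; the splitting theorem then does the rest. In the context of this section, $Y$ carries the free isometric $\R$-action and $Y_+\cong(0,\infty)\times\R$ with metric $g=\di r^2+f^2(r)\di v^2$ (lifting the warped structure from $Y'=Y/\Z$, since $\pi$ is a local isometry and the $S^1$-coordinate lifts to the $\R$-coordinate). Since $f$ is non-decreasing and positive, $f_0\coloneqq\lim_{r\to0^+}f(r)\ge 0$ exists.

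\textbf{Step 1: the boundary orbit is a line.} Fix the orbit $\gamma_r(v)=(r,v)$ for small $r>0$. For any $v_1<v_2$, take a $d$-geodesic $c$ between $(r,v_1)$ and $(r,v_2)$. For the portions of $c$ lying in $Y_+$, Proposition \ref{prop:d=d_g_on_reg} and Lemma \ref{lem:metric_deriv=usual_deriv} give $|\dot c|\ge f(r(c(t)))|v'(t)|$. I will first argue that $c$ never leaves the region $\{r'\le r\}$ in a length-saving way: projecting $c$ radially onto the level $\{r'=\min(r,r(c(t)))\}$ does not increase the length, because $f$ is non-decreasing and the $\di r^2$ term only adds length. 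Doing this for $Y_+$ and passing to the limit as $r\to0$ (curves near $\{r=0\}$ are approximated by curves in $Y_+$, as $Y$ is the completion of $Y_+$), we get
$$d((0,v_1),(0,v_2))=\lim_{r\to0}\inf_c \mathrm{length}\ge \liminf_{r\to0} f(r)|v_2-v_1|\cdot(\text{const}),$$
together with the obvious upper bound. I expect to conclude that $v\mapsto(0,v)$, suitably reparametrized, is a geodesic line in $Y$, provided $f_0>0$.

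\textbf{Step 2: the case $f_0=0$ and the choice of line.} If $f_0=0$, the boundary orbit collapses to a point, contradicting the freeness of the $\R$-action on $\{r=0\}$ and the closedness of $G$ in $\mathrm{Isom}(Y)$ (the orbit $G\cdot y$ would be bounded while $G\simeq\R$ acts properly). So $f_0>0$, and since the action is free and proper the orbit $G\cdot y$ is an unbounded curve. Step 1 then shows it is a line $\ell$. As an alternative route that avoids the boundary, I would also check directly that for any $r$ the orbit $\gamma_r$ satisfies $d(\gamma_r(v_1),\gamma_r(v_2))\ge f_0|v_1-v_2|$ and use an asymptotic comparison; but the boundary orbit is the cleanest candidate.

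\textbf{Step 3: splitting.} By Theorem \ref{thm:meas_split}, $Y\cong\R\times Z$ with $Z$ an $\RCD(0,N-1)$ space, where the $\R$-factor is along $\ell$, and the $G$-action is by translations along the $\R$-factor. Then $Z\cong Y/G$, which is isometric to $[0,\infty)$, so $Y$ is isometric to $\R\times[0,\infty)$, a Euclidean halfplane. (In coordinates this also forces $f\equiv f_0$.)

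\textbf{Main obstacle.} The hardest part is Step 1: making rigorous that $d$-geodesics between boundary points cannot gain by dipping into larger $r$. This uses the monotonicity of $f$ together with the fact that $d=d_g$ only on $Y_+$, so it requires a careful limiting argument at $\{r=0\}$. As a first attempt, I would handle it by working on the level $r=\epsilon$ and using the $1$-Lipschitz retraction $(r',v)\mapsto(\min(r',\epsilon),v)$ with respect to $g$. This map is $1$-Lipschitz because $f$ is non-decreasing, so distances restricted to $\{r'\le\epsilon\}$ are realized within it. Letting $\epsilon\to0$ would then give the boundary orbit as a geodesic.
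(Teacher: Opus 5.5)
Your argument is correct in substance but takes a different route from the paper's, and it is simpler than your write-up suggests.

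The paper never computes distances along the boundary orbit. It shows, by a homotopy-shortening argument in $Y'=Y/\Z$ (the analogue of your retraction), that a minimal geodesic from $(r_0,0)$ to $(r_0,k)$ stays in $\{r\le r_0\}$. It then recenters the geodesics from $(1/i,0)$ to $(1/i,i)$ by elements of $G$ and extracts a limit line inside $\{r=0\}=Gy$. This uses the warped structure only away from the boundary, plus compactness.

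In your route the retraction is unnecessary. Monotonicity gives $f\ge f_0=\inf f$, so every curve in $Y_+$ has $g$-length at least $f_0|\Delta v|$. Vertical curves at height $r$ give the upper bound $f(r)|\Delta v|$. Letting $r\to0$ yields $d(v_1\cdot y,v_2\cdot y)=f_0|v_1-v_2|$ exactly, so your ``const'' is $1$. Since $f_0>0$ by properness, $Gy$ is a line on which $G$ acts by translations.

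The price is that you need $d=d_g$ on all of $Y_+\subseteq Y$, not only on $Y'_+$. Saying ``$\pi$ is a local isometry'' only matches lengths of curves. You must add that $d$-geodesics between points of $Y_+$ avoid $\{r=0\}$. This follows as in Proposition~\ref{prop:d=d_g_on_reg} from strong convexity of the regular set: the regular set of $Y$ is the preimage of $Y'_+$, because $G$ acts freely.

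Step~3 needs an argument. Knowing that $G$ translates $\ell$ only gives $g\cdot(t,w)=(t+c_g,\phi_g(w))$ with $\phi_g\in\mathrm{Isom}(Z)$; screw motions are the model. So ``$G$ acts by translations along the $\R$-factor, hence $Z\cong Y/G$'' is not automatic. Fill it as the paper does:
\begin{itemize}
\item $r=d(\cdot,Gy)$, and $d((t,w),\R\times\{z\})=d_Z(w,z)$.
\item Hence the line $t\mapsto(t,w)$ lies in the single orbit $\{r=d_Z(w,z)\}$, which is homeomorphic to $\R$, so the line equals that orbit.
\item Thus every orbit has the form $\R\times\{w\}$, $G$ acts trivially on $Z$, and $Z\cong Y/G\cong[0,\infty)$.
\end{itemize}
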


\begin{proof}
     We first claim that for two points $p=(r_0,0)$ and $(r_0,k)=k\cdot p$ in $Y$, where $r_0>0$ and $k\in \Z$, the minimal geodesic $c$ between them is contained in $\{r\le r_0\}$. To verify this, we project $c$ to $c'$ in $Y'=Y/\Z$. Then $c'$ is a loop based at $p'=(r_0,0)\in Y'$ representing $k\in \Z=\pi_1(Y',p')$; moreover, $c'$ has minimal length among all loops representing $k\in \pi_1(Y',p')$. Suppose that $c$ is not entirely contained in $\{r\le r_0\}$, then there is an open interval $I=[a,b]$ such that $c'(a),c'(b)\in \{r=r_0\}\subseteq Y'$ but $c'|_{(a,b)}$ is outside $\{ r\le r_0 \}$. Because the warping function $f$ is non-decreasing, we can shrink the length of $c'$  by replacing $c'|_I$ by a curve in $\{r=r_0\}$ that is path-homotopic to $c'|_I$. This leads to a contradiction and thus verifies the claim. 

     With this claim, we can use a standard argument by group actions to show that the orbit $G y$ is a line in $Y$. In fact, for each $i\in \mathbb{N}$, we draw a minimizing geodesic $c_i$ from $(i^{-1},0)$ to $(i^{-1},i)$. The claim shows that $c_i$ is contained in $\{r\le i^{-1}\}$. We use some group element $g_i$ to move the midpoint $m_i$ back in $\overline{B}_{1/i}(y)$. Then $g_i\cdot c_i$ is a minimizing geodesic in $\{r\le i^{-1}\}$ with its midpoint on $\overline{B}_{1/i}(y)$ and length $\to \infty$ as $i\to\infty$. Hence $g_i\cdot c_i$ subconverges to a line. It is clear that this line coincides with $Gy$ by construction.

     By Gigli's splitting theorem \cite{Gigli13}, $(Y,y,d)$ splits isometrically as $(\R\times Z,(0,z))$ with the line $\R\times \{z\}$ corresponding to $Gy$. Due to the product metric, for any $w\in Z$, the line $L(t)=(t,w)\in \R\times Z$ satisfies $$d(L(t),Gy)=d(z,w)=:s$$
     for all $t\in\R$. Hence $\mathrm{im}(L)$ coincides with $\{r=s\}$, which is a $G$-orbit. It follows that $G\simeq \R$ acts trivially on $Z$ and thus 
     $$[0,\infty)=Y/G=(\R\times Z)/G=Z.$$
     Therefore, $Y$ is isometric to a Euclidean halfplane $\R\times [0,\infty)$.
\end{proof}

Now we complete the proof of Theorem \ref{thm:hp_rigid_vol<2}.

\begin{proof}[Proof of Theorem \ref{thm:hp_rigid_vol<2}]
    By Lemma \ref{rem:volumegrowth-to-rho}, the volume growth assumption implies that $$\lim_{r\to\infty}\dfrac{\rho(r)}{r}=0.$$ 
    So we can apply Lemma \ref{lem:vol<2-to-monotone} to conclude that $f$ is monotone, and we complete the proof by Proposition \ref{prop:monotone-to-splitting}.
\end{proof}

\subsection{Proof of sharp ray rigidity}

In this section
\[
(Y,y,d,\meas)\text{ denotes an } \RCD(0,N) \text{ space satisfying the hypotheses of Theorem \ref{thm:ray_rigid}}.
\]
We prove Theorem \ref{thm:ray_rigid} in this section using the distributional Ricci lower bound.

By Theorem \ref{thm:cont_Riem_metric} and Lemma \ref{lem:measure density}, we have
$$g=\di r^2 + f^2(r)\di \theta^2,\quad \meas= \rho(r)\di r\di \theta$$
on the subset $Y_+=Y-\{y\}=\{r>0\}$ of $Y$. We start with some simple lemmas on the warping function $f$ and the density $\rho$.

\begin{lem}
   The warping function $f$ satisfies 
   $$\lim_{r\to 0} f(r)=0.$$
\end{lem}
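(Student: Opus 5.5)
The plan is to exploit the fact that in the setting of Theorem~\ref{thm:ray_rigid} the $S^1$-action fixes the base point $y$. Hence every orbit $S^1\cdot(r,0)$ lies in the sphere of radius $r$ about $y$. In particular, for any two points $(r,0)$ and $(r,\theta)$ on the same orbit we have the trivial bound
\[
d((r,0),(r,\theta))\le d((r,0),y)+d(y,(r,\theta))=2r .
\]
The task is to convert this bound on chordal distances into a bound on the warping function $f(r)$. Recall that $f(r)$ is defined as the metric speed of the vertical curve $\sigma_r(t)=(r,t)$, so $2\pi f(r)$ is the $d$-length of the full orbit. Orbit length is not a priori controlled by diameter, so the argument must go through the metric $g$.

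The cleanest route is to use Proposition~\ref{prop:d=d_g_on_reg}, namely $d=d_g$ on $Y_+$, together with the explicit form $g=\di r^2+f^2\di\theta^2$. Fix $r>0$ and the two points $(r,0)$ and $(r,\pi)$. Their $d$-distance is at most $2r$, and it is realized by a $d$-geodesic $\gamma$. I would then split into two cases.

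\textbf{Case 1: $\gamma$ avoids $y$.} Then $\gamma$ lies in $Y_+$, where its length equals its $g$-length by Lemma~\ref{lem:metric_deriv=usual_deriv}. Its $\theta$-component must change by at least $\pi$, so its length is at least $\pi\min_{\gamma} f$. Every point of $\gamma$ has $r$-coordinate in $[0,2r]$, since it lies within distance $2r$ of $y$. This yields
\[
\pi \min_{(0,2r]} f \le 2r .
\]

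\textbf{Case 2: $\gamma$ passes through $y$.} Then no information about $f$ is obtained this way. To handle this, I would rather argue directly with short loops. For each small $r$, choose $s_r\in(0,2r]$ where $f$ is nearly minimal on $(0,2r]$. This gives $\inf_{(0,2r]}f\le 2r/\pi\to 0$ as $r\to 0$, at least along a sequence of points $s_i\to 0$ with $f(s_i)\to 0$.

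To upgrade this to the full limit, I would use monotonicity or concavity information. Proposition~\ref{prop: measure-inequality} gives $-(\rho b)'\ge 0$, so $\rho b=\rho f'/f$ has a monotone non-increasing representative by Lemma~\ref{lem:MonotoneRep}. Hence $\rho b$ has a (possibly infinite) limit as $r\to 0^+$, and in particular $b=(\ln f)'$ has eventually constant sign near $0$, since $\rho>0$. So $f$ is monotone on some interval $(0,\epsilon)$. A monotone positive function with $\liminf_{r\to0}f=0$ has $\lim_{r\to 0}f(r)=0$.

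The main obstacle is the first step: making sure that the geodesic argument genuinely produces small values of $f$ near $0$. The concern is that geodesics between antipodal orbit points may all pass through the fixed point $y$. I would address this by a simpler estimate that bypasses geodesics entirely. By Corollary~\ref{cor:v_curve_length}-type local arguments it is hard to get global control, so instead I would use the blow-up at $y$. Rescaling by $r_i^{-1}$, the spaces converge to a tangent cone on which the limit group action and the quotient ray persist. The orbits at distance $1$ then have diameter at most $2$. Continuity of $f$ and the lower semicontinuity of length under Gromov--Hausdorff convergence only bound things in the wrong direction, so this blow-up approach is a fallback.

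The primary argument therefore remains the sign-change argument above, with Case 1 guaranteed along a sequence. To see this, suppose Case 2 happened for all small $r$. Then $y$ would be an interior point of geodesics joining $(r,0)$ to $(r,\pi)$, so the two horizontal rays through these points would combine into a line through $y$. The contrapositive of the desired conclusion would then interact with the measure hypothesis in Theorem~\ref{thm:ray_rigid}. I expect that handling Case 2 cleanly is the genuinely delicate point.
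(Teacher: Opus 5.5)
\textbf{There is a genuine gap in your first step}, the one that is supposed to produce small values of $f$ near $0$.

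Case 2 is never handled. Your ``short loops'' paragraph just restates the bound $\inf_{(0,2r]}f\le 2r/\pi$, but you derived that bound only when the geodesic avoids $y$. The claim that Case 1 occurs along a sequence $r_i\to 0$ is also unsupported. Case 2 for all small $r$ only produces short geodesic segments through $y$, not a line, and no argument via the measure hypothesis is actually given. Nor can such a claim come from the metric geometry alone. Take $\R^2$ with the rotation action, which satisfies \eqref{cond:S1_action_and_ray_quotient}. There the unique geodesic from $(r,0)$ to $(r,\pi)$ passes through the origin for every $r$, so Case 1 never occurs, yet $f(r)=r\to 0$.

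\textbf{The missing idea is to work with $d_g$ rather than with a $d$-geodesic.} By Proposition \ref{prop:d=d_g_on_reg}, $d_g((r,0),(r,\pi))=d((r,0),(r,\pi))\le 2r$. By definition, $d_g$ is an infimum of $g$-lengths of piecewise smooth curves lying in $Y_+$. So for each $\eta>0$ there is a curve in $Y_+$ from $(r,0)$ to $(r,\pi)$ with $g$-length less than $2r+\eta$. Its $r$-coordinate stays at most $2r+\eta$, and its $\theta$-component changes by at least $\pi$. Your Case 1 estimate applies verbatim to this curve and gives $\pi\inf_{(0,2r+\eta]}f\le 2r+\eta$. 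Hence $\liminf_{r\to 0}f(r)=0$, with no case distinction. This is exactly the paper's argument.

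If you want to keep geodesics, vary the angle instead of fixing $\theta=\pi$:
\begin{itemize}
\item Suppose $f\ge c>0$ on $(0,\epsilon)$, and take $2r<\min\{\epsilon,c\pi\}$.
\item For every $\theta\in(2r/c,\pi]$, your Case 1 bound forces all geodesics from $(r,0)$ to $(r,\theta)$ through $y$. Hence $d((r,0),(r,\theta))=2r$.
\item The radial concatenations $(r,0)\to y\to(r,\theta)$, for different $\theta$, are then geodesics that coincide up to $y$ and then separate. This contradicts non-branching \cite{Deng20}.
\end{itemize}
This variant needs only the local identification of $d$-lengths with $g$-lengths for curves in $Y_+$ (Lemma \ref{lem:metric_deriv=usual_deriv}). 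It does not use the fixed-point case of Proposition \ref{prop:d=d_g_on_reg}.

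\textbf{Your second step is correct.} You use the non-increasing representative of $\rho b$ (Proposition \ref{prop: measure-inequality} and Lemma \ref{lem:MonotoneRep}) to make $f$ monotone on some $(0,\epsilon)$, which upgrades $\liminf$ to $\lim$. This supplies something the paper's one-line proof leaves out. The paper's contradiction hypothesis $\liminf_{r\to 0}f(r)=\delta>0$ only excludes a positive $\liminf$, not a positive $\limsup$.
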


\begin{proof}
   We argue by contradiction and suppose that $\liminf_{r\to 0} f(r)=\delta>0$. Then 
   $$d_g((r,0),(r,\pi))\ge \delta\pi$$
   for all $r>0$. On the other hand, by Proposition \ref{prop:d=d_g_on_reg}, we have
   $$d_g((r,0),(r,\pi))=d((r,0),(r,\pi))\le 2r.$$
   Letting $r\to 0$, we obtain the desired contradiction.
\end{proof}

\begin{lem}\label{lem:RhoSuperlinear}
   If 
   \[
   \lim_{r\to 0^+}\frac{\meas(B_r(y))}{r^2}=+\infty,
   \]
   then $\lim_{r\to0^+} \rho(r)/r=+\infty$.
\end{lem}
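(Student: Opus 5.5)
We express the volume of small balls through the density $\rho$ and then use the monotonicity of $\rho$. In the setting of Theorem \ref{thm:ray_rigid}, $y$ is the unique point with $r=0$. The quotient map $\pi\colon Y\to[0,\infty)$ is $1$-Lipschitz, and $\pi(z)=d(z,y)$ because $\pi(z)$ is the distance from $z$ to the fixed orbit $\{y\}$. Hence $B_r(y)=\pi^{-1}([0,r))$. The point $y$ has measure zero: the measure is doubling and has full support, or one can use Remark \ref{rem:orbits_have_measure_zero} for the orbit $\{y\}$. Lemma \ref{lem:measure density} then gives
\[
\meas(B_r(y))=\meas\llcorner Y_+\big((0,r)\times S^1\big)=2\pi\int_0^r\rho(s)\di s .
\]

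Next we compare the integral with $\rho(r)$. As in the proof of Lemma \ref{rem:volumegrowth-to-rho}, $\rho$ is non-decreasing on $(0,\infty)$: a positive function $\rho$ with $\rho^{1/N}$ concave on $(0,\infty)$ cannot decrease, since otherwise concavity would force $\rho^{1/N}$ to become negative. It follows that
\[
\int_0^r\rho(s)\di s\le r\,\rho(r).
\]
Combining the two displays,
\[
\frac{\rho(r)}{r}\ \ge\ \frac{1}{r^2}\int_0^r\rho(s)\di s=\frac{\meas(B_r(y))}{2\pi r^2}\longrightarrow+\infty\quad\text{as } r\to0^+,
\]
which is the claim of Lemma \ref{lem:RhoSuperlinear}.

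The only point that needs real care is the identity $B_r(y)=\pi^{-1}([0,r))$, that is, $\pi(z)=d(z,y)$. This holds because $Y/S^1$ carries the quotient metric and the orbit of $y$ is the single point $y$. The rest of the argument is routine.
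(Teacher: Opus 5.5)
Your proof is correct and follows the paper's argument: both write $\meas(B_r(y))=2\pi\int_0^r\rho(s)\di s$ and use that $\rho$ is non-decreasing to get $\rho(r)/r\ge \meas(B_r(y))/(2\pi r^2)\to+\infty$. Your checks that $\pi(z)=d(z,y)$ (so $B_r(y)=\pi^{-1}([0,r))$) and that $\meas(\{y\})=0$ make explicit two points the paper uses without comment.
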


\begin{proof}
    Since $\rho$ is monotone non-decreasing, we have
    $$2\pi \cdot r\rho(r)\ge \int_0^r \rho(s)\di s \int_0^{2\pi} \di\theta =\meas(B_r(y)).$$
    Hence as $r\to 0^+$,
    $$\dfrac{\rho(r)}{r}\ge \dfrac{\meas(B_r(y))}{2\pi r^2}\to +\infty.$$
\end{proof}

Next, we use distributional Ricci lower bounds to prove that the warping function $f$ is super linear at $r=0$.

\begin{lem}\label{lem:fSuperlinear}
    In an $\RCD(0,N)$ space $(Y, y, d, \meas)$, where the distance $d$ is induced by the metric completion of  $g=\di r^2 + f^2(r)\di \theta^2$ and whose measure is $\meas=\rho(r)\di r\di \theta $, if $\lim_{r\to0^+}\rho(r)/r=\infty$, then $\lim_{r\to0^+}f(r)/r=\infty$.
\end{lem}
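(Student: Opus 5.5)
We use the first inequality of Proposition \ref{prop: measure-inequality}, namely $-(\rho b)'\ge 0$ as a Radon measure on $(0,\infty)$, where $b=(\ln f)'$. By Lemma \ref{lem:MonotoneRep}, $\rho b=\rho f'/f$ has a monotone non-increasing representative, which we fix. Hence there is $L\in(-\infty,+\infty]$ with $\rho(r)b(r)\to L$ as $r\to 0^+$. The plan is to show that $L>0$ is forced, and that this lower bound combined with the superlinear growth of $\rho$ at $0$ makes $f$ superlinear.

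\textbf{Step 1: $\rho b$ is bounded below by a positive constant near $0$.} Because $f>0$, $f\in W^{1,2}_{loc}$ and $f(r)\to0$ as $r\to0^+$ (the preceding lemma), $\ln f\to-\infty$ at $0$. For $0<s<r_0$ we have $\ln f(r_0)-\ln f(s)=\int_s^{r_0} b$, so $\int_0^{r_0} b\,dr=+\infty$. Monotonicity gives $\rho b\le \rho(r_0)b(r_0)$ on $(r_0,\infty)$.

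Suppose $\rho b\le 0$ at some small $r_1$. Then $b\le 0$ on $(r_1,\infty)$, so $f$ is non-increasing there. I expect to rule this out by a global comparison: $f$ non-increasing at infinity contradicts the fact that the distance from $y$ grows (polarity), via a short argument comparing $d((r,0),(r,\pi))$ with $\pi f(r)$. If that comparison is not quick, I will avoid it entirely and use only the region near $0$.

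Near $0$ the argument is direct. Since $\rho b$ is non-increasing, on $(0,r_0)$ we have $\rho b\ge c\defeq\rho(r_0)b(r_0)$. If $c\le0$, then $b=(\rho b)/\rho$ and $\int_0^{r_0} b=+\infty$ force $\rho b>0$ somewhere in $(0,r_0)$. Shrinking $r_0$, we may therefore assume $\rho(r_0)b(r_0)=c>0$. Consequently $b\ge c/\rho>0$ on $(0,r_0]$.

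\textbf{Step 2: integrate.} For $0<s<r\le r_0$,
\[
\ln\frac{f(r)}{f(s)}\ge \int_s^r\frac{c}{\rho(t)}\,dt .
\]
Here we use $\rho(t)/t\to\infty$ from the hypothesis. Fix $M>0$ and choose $r$ small enough that $\rho(t)\ge Mt$ on $(0,r)$. This only gives $\int_s^r c/\rho\le (c/M)\ln(r/s)$, which is an upper bound on the growth, so the estimate must be turned around. The plan is to write $f(r)/r$ and compare it with $f(s)/s$ as $s\to0$:
\[
\frac{f(r)}{r}=\frac{f(s)}{s}\exp\!\Big(\int_s^r (b-1/t)\,dt\Big).
\]
This route needs $b\le 1/t$ near $0$, i.e.\ an upper bound on $b$. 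Such a bound comes from the other inequality, $-\rho''\rho+(\rho')^2\ge\rho^2b^2$, which reads $(\rho'/\rho)'\le -b^2$. Set $a=\rho'/\rho$, so that $a'\le -b^2$. The $\frac1N$-concavity of $\rho$ gives $a(t)\le N/t$. Integrating $a'\le -b^2$ on $(t,2t)$ yields
\[
\int_t^{2t} b^2\le a(t)-a(2t)\lesssim \frac{N}{t}+|a(2t)|,
\]
which bounds $b$ in an averaged sense by a constant over $t$.

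\textbf{Main obstacle.} The step I expect to be hardest is combining these bounds to conclude $f(r)/r\to\infty$. The approach I will try first is to argue by contradiction: assume $f(r_k)\le Cr_k$ along a sequence $r_k\to0$. Then $\int_{r_k}^{r_0} b\le \ln(f(r_0)/r_k)+C'$, while Step 1 gives $b\ge c/\rho$. The case where $f$ stays linear should be exactly the flat cone, in which $\rho b\to$ const and $\rho\sim t$. So $\rho b\ge c$ together with $\rho\gg t$ should force $b\ll1/t$, while the averaged bound from $a'\le-b^2$ with $a\approx 1/t$ would force $b\approx 1/t$. Extracting the contradiction cleanly, by tracking $a=\rho'/\rho$ against $b$ through $(\rho b)'\le0$, i.e.\ $b'\le -ab$, is the delicate part of the argument.
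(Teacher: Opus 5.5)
Your proposal does not prove the lemma. The decisive step is left open, and most of what precedes it points in the wrong direction.

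The $\theta\theta$-inequality $-(\rho b)'\ge 0$ and your Step 1 give $b\ge c/\rho$ near $0$. This is a \emph{lower} bound on $b$, which only bounds $f(s)$ from \emph{above} as $s\to0^+$. Monotonicity of $\rho b$ gives no upper bound near $0$ (your limit $L$ may be $+\infty$). So the heuristic that $\rho b\ge c$ and $\rho\gg t$ force $b\ll 1/t$ is unfounded.

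Likewise, the contradiction hypothesis $f(r_k)\le Cr_k$ yields $\int_{r_k}^{r_0}b=\ln f(r_0)-\ln f(r_k)\ge \ln(1/r_k)-C'$, not $\le$ as you wrote. It therefore cannot be played against $b\ge c/\rho$.

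What is actually needed is an \emph{upper} bound $\int_r^1 b\le \ln(1/r)-\omega(r)$ with $\omega(r)\to\infty$. A pointwise bound $b\le 1/t$ would not suffice, since it only gives $f(s)/s\ge f(r)/r$. The hypothesis $\rho(r)/r\to\infty$ must enter exactly at this point.

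Your one upper-bound tool is to integrate $a'\le -b^2$ over dyadic intervals and use $a(t)\le N/t$. This discards the hypothesis on $\rho$, because $a\le N/t$ holds for every $\frac1N$-concave density. It yields only $\int_t^{2t}b\le\sqrt N$, hence $\int_r^1 b\le \frac{\sqrt N}{\ln 2}\ln(1/r)$, which does not even give $f(r)\gtrsim r$.

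The missing idea, which is the paper's argument, is to use only the $rr$-inequality $b^2\le -\di(\ln\rho)'$ in weighted form, and to integrate by parts instead of bounding $a=(\ln\rho)'$ pointwise. By Cauchy--Schwarz,
\[
\ln\frac{f(1)}{f(r)}=\int_r^1 b\le \Bigl(\ln\frac1r\Bigr)^{1/2}\Bigl(\int_r^1 s\,b^2\,\di s\Bigr)^{1/2}.
\]
Integrating by parts, with $a$ the right-continuous representative, $a\ge0$ and $ra(r)\le N$ by concavity,
\[
\int_r^1 s\,b^2\,\di s\le \int_{(r,1]}-s\,\di a(s)= ra(r)-a(1)+\ln\rho(1)-\ln\rho(r)\le C-\ln\rho(r).
\]
Now write $-\ln\rho(r)=\ln(1/r)-\ln(\rho(r)/r)$ and use $\sqrt{A(A-B)}\le A-B/2$. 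This gives
\[
\int_r^1 b\le\ln(1/r)-\tfrac12\ln(\rho(r)/r)+C',
\]
that is, $f(r)/r\ge c\sqrt{\rho(r)/r}\to\infty$. The paper runs the same estimate as a contradiction along a sequence with $f(r_i)\le Lr_i$.

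The integration by parts is what turns the curvature inequality into a statement about $\ln\rho$, which is where the hypothesis lives. The $\theta\theta$-inequality and your Step 1 play no role.
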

\begin{proof}

Let $0<r<1$. Note that $(\ln \rho)''$ is a Radon measure on $(0,\infty)$ by concavity and recall that $b:= (\ln f)'\in L^2_{loc}(\Reg)$. From the distributional Ricci lower bound in Theorem \ref{lem:BERic-bound} we have 
$$0\le b^2 \le -(\ln \rho)''=-\di (\ln \rho)'$$ 
as measures. Here, we take the representative of $(\ln \rho)'$ as the right derivative, which is right continuous. By Cauchy-Schwarz inequality we can estimate $f$ as follows. 
\begin{align*}
  \ln f(1) - \ln f(r) &= \int_r^1 b \,\di s= \int_r^1 \dfrac{1}{\sqrt{s}}\sqrt{s}b \,\di s \\
  &\le \left(\int_r^1 \dfrac{1}{s} \di s\right)^{1/2} \left(\int_r^1 sb^2(s)\, \di s \right)^{1/2}\\
  &\le  (-\ln r)^{1/2}  \left(\int_r^1 -s \,\di  (\ln \rho(s))' \right)^{1/2}.
\end{align*}
For the second integral above, note also that the right derivative $(\ln \rho)'$ is right continuous, we apply the integration by parts formula for Lebesgue--Stieltjes integrals (c.f. \cite{RealAnalysis}*{(21.67) Theorem (iv)}) to infer that  
\begin{equation}\label{eq:IntByParts}
    \int_r^1 -s\, \di  (\ln \rho(s))'=r(\ln \rho)'(r^-) - \ln \rho(r) + C_1\le r(\ln \rho)'(r) - \ln \rho(r) + C_1,
\end{equation}
where $C_1=(\ln \rho)'(1^+) -\ln \rho(1)$ is some constant independent of $r$. Also, $r^-$ denotes the left limit at $r$, and $1^+$ denotes the right limit at $1$. We have used the the monotonicity of $(\ln \rho)'$ to deduce that $(\ln \rho)'(r^-) \le (\ln \rho)'(r)$. By the concavity of $\rho^{\frac{1}{N}}$, it holds
\[
r(\ln \rho)'(r)=\frac{r\rho'(r)}{\rho(r)}\le N,
\]
see the proof of Lemma \ref{lem:vol<2-to-monotone}.


Plugging the above concavity inequality into \eqref{eq:IntByParts}, we infer that   
$$
\ln f(1) -\ln f(r) \le \sqrt{-\ln r}\cdot \sqrt{C_1-\ln \rho(r)}.
$$
We claim that $\lim_{r\to 0^+} f(r)/r = \infty$. Suppose otherwise, then there is a sequence $r_i\to 0^+$ and a constant $L>0$ such that $f(r_i)\le Lr_i$ for all $i$. Then
$$\ln f(r_i) \le \ln L +\ln r_i.$$
Write $C_2=\ln f(1) - \ln L$, then
$$\dfrac{C_2 - \ln r_i}{\sqrt{ -\ln r_i}}\le \sqrt{C_1 - \ln \rho(r_i)}.$$
We square both sides to get
$$ \ln(r_i^{-1}) +2C_2 +\dfrac{C_2^2}{\ln (r_i^{-1})} \le C_1 - \ln \rho(r_i).$$
Hence, there is some constant $C_3$ such that
$$\ln \rho(r_i) - \ln r_i \le C_3,$$
that is, $\rho(r_i)\le e^{C_3} r_i $, which is a contradiction to  $\lim_{r\to0^+}\rho(r)/r=\infty$.
\end{proof}

The last ingredient towards the sharp ray rigidity is that $\lim_{r\to 0^+} f(r)/r = \infty$ forces the geodesic joining a pair of antipodal points to pass through the cone tip. This is merely a statement about the metric. 

\begin{lem}\label{lem: radial-geodesic}
   We consider a warped product $g=\di r^2 + f^{2}(r)\di \theta^2$ on $(0,\infty)\times S^1$, where $f$ is continuous and $\lim_{r\to 0} f(r)=0$. Let $(Y,d)$ be the metric completion of $((0,\infty)\times S^1, d_g)$, which is homeomorphic to a cone over $S^1$. Suppose that 
   $$\lim_{r\to 0^+} \dfrac{f(r)}{r}=\infty.$$
   Then for $r$ sufficiently small, the radial curve is a minimal geodesic from $(r,0)$ to $(r,\pi)$.
\end{lem}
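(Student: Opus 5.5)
The plan is to compare $g$ with a flat cone metric of total angle at least $2\pi$, for which the distance formula is explicit. Throughout, write a curve in $Y_+$ as $\gamma(t)=(\varrho(t),\theta(t))$. Since the radial curve from $(r,0)$ through the tip to $(r,\pi)$ has length exactly $2r$, it suffices to show that every curve $\gamma$ from $(r,0)$ to $(r,\pi)$ has length at least $2r$. By the definition of $d_g$ (Proposition \ref{prop:d=d_g_on_reg}), it is enough to check this for piecewise smooth curves in $Y_+$ and for curves passing through the tip.

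\textbf{Step 1 (choice of scale).} By the hypothesis $\lim_{r\to0^+}f(r)/r=\infty$, there is $r_0>0$ such that $f(s)\ge s$ for all $s\in(0,2r_0]$. Fix $r\in(0,r_0]$.

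\textbf{Step 2 (easy cases).} If $\gamma$ passes through the tip, or leaves the region $\{\varrho\le 2r\}$, we use $|\gamma'|_g\ge|\varrho'|$. In the first case $\varrho$ goes from $r$ down to $0$ and back up to $r$, so $\int|\varrho'|\ge 2r$. In the second case $\varrho$ goes from $r$ up past $2r$ and back down to $r$, so again $\int|\varrho'|\ge 2r$. Either way the length is at least $2r$.

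\textbf{Step 3 (main case).} Now suppose $\gamma$ stays in $\{0<\varrho\le 2r\}$. There $f(\varrho)\ge\varrho$, so
$$|\gamma'|_g^2=\varrho'^2+f(\varrho)^2\theta'^2\ge \varrho'^2+\varrho^2\theta'^2 .$$
Hence $\mathrm{length}_g(\gamma)$ is at least the length of $\gamma$ for the Euclidean metric $\di r^2+r^2\di\theta^2$ on the punctured plane. In the plane, $(r,0)$ and $(r,\pi)$ are antipodal at distance $r$ from the origin, so their Euclidean distance is $2r$. Any curve joining them in the punctured plane has Euclidean length at least $2r$, and we conclude $\mathrm{length}_g(\gamma)\ge 2r$.

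To avoid any winding subtlety, this step can be phrased on the universal cover $(0,\infty)\times\R$. The lift of $\gamma$ changes $\tilde\theta$ by $\pi+2\pi k$ for some $k\in\Z$, so the angular separation is at least $\pi$. In the flat cone $\di r^2+r^2\di\tilde\theta^2$ with $\tilde\theta\in\R$, two points at radii $r_1,r_2$ whose angular separation is at least $\pi$ are at distance $r_1+r_2$.

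The only point needing care is making sure the pointwise metric comparison $g\ge \di r^2+r^2\di\theta^2$ is applied only where it holds. This is exactly why the curves escaping to $\{\varrho>2r\}$ are handled separately in Step 2.
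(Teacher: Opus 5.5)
Your proposal is correct and follows essentially the same route as the paper: choose a scale where $f(s)\ge s$ on $(0,2r_0]$, compare $g$ pointwise with the flat metric $\di r^2+r^2\di\theta^2$ there, and use that $(r,0)$ and $(r,\pi)$ are at Euclidean distance $2r$, which the radial curve through the tip attains. The only difference is in the localization step: the paper notes that a minimal $d$-geodesic between the two points stays in $B_{2r}(y)$, where the identity $(d\to d_E)$ is $1$-Lipschitz, whereas you split into cases and dismiss curves leaving $\{\varrho\le 2r\}$ by their radial variation alone, which also avoids needing the existence of a minimizer.
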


\begin{proof}
   Recall that $y$ is the unique point in $\{r=0\}$. Besides $d$ on $Y$, we shall also consider the Euclidean distance $d_E$, which comes from the warped product $dr^2 + r^2 d\theta^2$. We note that for every $r\ge0$, $B_r(y)$, the metric balls centered at $y$ under $d$ and $d_E$ coincide as subsets in $Y$. Let $\rho>0$ be sufficiently small so that $f(r)\ge r$ for all $r\in (0,2\rho]$. We note that for any pair $(r,0)$ and $(r,\pi)$, where $r\in(0,\rho)$, the minimal $d$-geodesic between them is contained in $B_{2r}(y)$. Then the identity map on $B_\rho(y)$
   $$\mathrm{id}: (B_\rho(y),d|_{B_\rho(y)}) \to (B_\rho(y),d_E)$$
   is $1$-Lipschitz, where $d|_{B_\rho(y)}$ denotes the (extrinsic) distance on $B_\rho(y)$ induced by $d$. Since $d_E((r,0),(r,\pi))=2r$, the $d$-distance between $(r,0)$ and $(r,\pi)$ is at least $2r$. On the other hand, the radial curve from $(r,0)$ to $(r,\pi)$ has $d$-length exactly $2r$. The result follows.
\end{proof}

Finally, we complete the proof of Theorem \ref{thm:ray_rigid}. 

\begin{proof}[Proof of Theorem \ref{thm:ray_rigid}]
Recall that $d$ is the metric completion of $g=\di r^2+ f^2(r)\di \theta^2$ with $\lim_{r\to 0}f(r)=0$. It suffices to show that $f(r)=0$ for all $r\in (0,\infty)$. If not, we deduce from the assumption on $\meas$ and Lemma \ref{lem:RhoSuperlinear} that $\rho(r)/r\to \infty$ as $r\to 0^+$, then from Lemma \ref{lem:fSuperlinear} that $f(r)/r\to \infty$ as $r\to 0^+$.

If $y$, the unique point in $\{r=0\}$, is a singular point, then by Proposition \ref{prop: reg_is_r>0}, $y$ is the only singular point in $Y$. Meanwhile, the concatenation of the radial segments from $(r,0)$ to $y$ then to $(r,\pi)$ is a geodesic by Lemma \ref{lem: radial-geodesic}. The interior of this geodesic contains only $2$-regular points except for the singular point $y$, which contradicts the H\"older continuity of the tangent cones along a geodesic \cite{Deng20}.

If $y$ is a regular point, then it is a $2$-regular point, since by \cite{Kitabeppu-Lakzian_16} it cannot be a $1$-regular point and by \cite{Kit19} it cannot be a $k$-regular point for $k\ge 3$. For any $r_i\to 0$, we consider convergence
\begin{equation}\label{eq:blow-up regular}
   \begin{CD}
    (r_i^{-1} {Y},y,S^1) @>\mathrm{GH}>> (\R^2,0,G) \\
	@VV\pi V @VV \pi V\\
	(r_i^{-1} [0,\infty),0) @>\mathrm{GH}>> ([0,\infty),0).
    \end{CD}
\end{equation}
Because the abelian group $G$ fixes $0$ and the quotient space $\R^2/G$ is isometric to $[0,\infty)$, it is clear that $G=S^1$. For each $i$,  let 
$$p_i=(r_i,0),\quad \theta_i=\pi/3\in S^1,\quad  q_i=(r_i,\pi/3)=\theta_i\cdot p_i.$$
By Theorem \ref{thm:cont_Riem_metric}, for each $i$ we can draw a piecewise smooth curve $c_i:[0,1]\to Y_+$ from $p_i$ to $q_i$ of constant speed such that
$$\mathrm{length}(c_i)\le (1+\epsilon_i)\cdot d(p_i,q_i).$$
Then associated to the convergence (\ref{eq:blow-up regular}), after passing to some subsequences, we have
$$p_i\to p_\infty,\quad \theta_i\to\theta_\infty ,\quad q_i\to q_\infty = \theta_\infty\cdot p_\infty, \quad c_i\to c_\infty,$$
where $c_\infty$ is the segment between $p_\infty$ and $q_\infty$, and
$$d(p_\infty,0)=1=d(q_\infty,0),\quad \theta_\infty^3=\mathrm{id}\in S^1.$$
By planar geometry, $c_\infty$ is contained in the closed annulus $\overline{A}_{1/2}^1 (0)\subseteq \R^2$. As a consequence, $c_i$ is contained in $\overline{A}_{r_i/3}^{2r_i}(y)$ for all $i$ large. Recall that when $r$ is sufficiently small, we have $f(r)\ge 10 r$. Thus we can estimate the length of $c_i$ by
$$(1+\epsilon_i)\cdot d(p_i,q_i)\ge \mathrm{length}(c_i) \ge \min_{r\in[r_i/3,2r_i]} f(r)\cdot \pi/3 \ge \pi r_i.$$
On the other hand,
$$r_i^{-1}d(p_i,q_i)\to d_{\R^2}(p_\infty,q_\infty)\le 2,$$
which is a desired contradiction.

This completes the proof of Theorem \ref{thm:ray_rigid}.
\end{proof}

\section{Structure of fundamental groups}\label{sec:proof}
We prove Theorems \ref{mainthm:vir_abel} and \ref{mainthm:finite} in this section. We start with some properties on the relative volume growth function $\RV(s)$.

\begin{prop}\label{prop:RV_well_defined}
  Let $M$ an open manifold with $\Ric\ge 0$. Then the function $\RV(s)$ is independent of the base point $p\in M$.
\end{prop}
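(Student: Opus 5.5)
The plan is to compare balls about two base points $p,q\in M$ by the triangle inequality, and then let $r\to\infty$, so that a fixed shift of the radius becomes negligible. Fix $p,q$ and set $D=d(p,q)$. For every $r>D$ we have the inclusions
$$B_{r-D}(p)\subseteq B_r(q)\subseteq B_{r+D}(p),$$
and likewise for radius $rs$. Hence
$$\frac{\vol B_{rs}(q)}{\vol B_r(q)}\le \frac{\vol B_{rs+D}(p)}{\vol B_{r-D}(p)}=\frac{\vol B_{rs+D}(p)}{\vol B_{rs}(p)}\cdot\frac{\vol B_{rs}(p)}{\vol B_{r}(p)}\cdot\frac{\vol B_{r}(p)}{\vol B_{r-D}(p)}.$$

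The two outer factors tend to $1$ as $r\to\infty$, by Bishop--Gromov relative volume comparison with $\Ric\ge 0$:
$$\frac{\vol B_{R+D}(p)}{\vol B_R(p)}\le\Big(\frac{R+D}{R}\Big)^n\to 1 \quad\text{as } R\to\infty.$$
We apply this with $R=rs$ and with $R=r-D$. Since $s\ge 1$ is fixed, both $rs\to\infty$ and $r-D\to\infty$.

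Taking $\limsup_{r\to\infty}$ of the displayed product, the outer factors converge to $1$. The limsup of a product in which all other factors converge to $1$ equals the limsup of the remaining factor. This gives $\RV_q(s)\le\RV_p(s)$. Exchanging the roles of $p$ and $q$ gives the reverse inequality, so $\RV_p(s)=\RV_q(s)$ for every $s\ge1$.

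There is essentially no obstacle here. The only point needing care is that the limsup is taken along the same parameter $r$ on both sides, and the uniform convergence of the outer ratios to $1$ handles this. Finiteness is also not an issue, since Bishop--Gromov bounds every ratio involved by $s^n(1+o(1))$, so no limsup is infinite.
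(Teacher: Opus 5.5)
Your proposal is correct and follows essentially the same argument as the paper. Both use the ball inclusions from the triangle inequality, split the ratio into three factors, bound the outer two by Bishop--Gromov as $\bigl(1+\tfrac{D}{rs}\bigr)^n$ and $\bigl(1-\tfrac{D}{r}\bigr)^{-n}$, let $r\to\infty$, and conclude by exchanging $p$ and $q$.
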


\begin{proof}
Let $q\in M$, and $d\defeq d(p,q)$. For fixed $r\gg d$ and $s\ge 1$, we have
\begin{align*}
    \frac{\vol(B_{rs}(q))}{\vol (B_r(q))}&\le \frac{\vol(B_{rs+d}(p))}{\vol(B_{r-d}(p))}\\
    &=\frac{\vol(B_{rs+d}(p))}{\vol(B_{rs}(p))}\frac{\vol(B_{rs}(p))}{\vol(B_{r}(p))}\frac{\vol(B_{r}(p))}{\vol(B_{r-d}(p))}\\
    &\le \left(1+\frac{d}{rs}\right)^n \frac{\vol(B_{rs}(p))}{\vol(B_{r}(p))} \left(1-\frac{d}{r}\right)^{-n}.
\end{align*}
Let $r\to \infty$ and by symmetry we get that RV is independent of the base point.
\end{proof}

\begin{lem}\label{lem:limit_measure_ray}
   Let $M$ be an open manifold with $\Ric\ge 0$. Suppose that $M$ satisfies
   $$\lim_{s\to\infty} \dfrac{\RV(s)}{s^\beta} =0,$$
   where $\beta>1$. Then for any measured asymptotic cone $(X,x,d,\meas)$ of $M$, the limit renormalized measure $\meas$ satisfies
   $$ \lim_{s\to\infty} \dfrac{\meas(B_s(x))}{s^\beta}=0,\quad  \lim_{s\to 0^+} \dfrac{\meas(B_s(x))}{s^\beta}=\infty.$$
\end{lem}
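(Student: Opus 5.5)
The plan is to express $\meas(B_s(x))$ as a limit of ratios of volumes on $M$ and then read off both limits from the definition of $\RV$. Let $r_i\to\infty$ be the sequence defining the measured asymptotic cone, so that $\meas_i=\dvol/\vol B_{r_i}(p)$ on $r_i^{-1}M$. Measured Gromov--Hausdorff convergence gives $\meas_i(B_s(p))\to\meas(B_s(x))$ for every $s>0$. This uses that $\meas(\partial B_s(x))=0$, which holds because spheres in $\RCD(0,N)$ spaces are $\meas$-null; alternatively, sandwich between radii $s\pm\epsilon$ and use monotonicity. Hence
$$\meas(B_s(x))=\lim_{i\to\infty}\frac{\vol B_{sr_i}(p)}{\vol B_{r_i}(p)}.$$

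\textbf{Behavior at infinity.} For $s\ge1$, the quantity above is at most $\limsup_{r\to\infty}\vol B_{rs}(p)/\vol B_r(p)=\RV(s)$. Therefore $\meas(B_s(x))/s^\beta\le \RV(s)/s^\beta\to0$ as $s\to\infty$, and the first limit follows.

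\textbf{Behavior at $0$.} For $s\in(0,1)$, write $t=1/s\ge1$. Then
$$\frac{\vol B_{sr_i}(p)}{\vol B_{r_i}(p)}=\left(\frac{\vol B_{t\cdot (sr_i)}(p)}{\vol B_{sr_i}(p)}\right)^{-1}.$$
Since $sr_i\to\infty$, the $\limsup$ of the inner ratio is at most $\RV(t)$. Consequently
$$\meas(B_s(x))\ge \frac{1}{\RV(1/s)}, \qquad \frac{\meas(B_s(x))}{s^\beta}\ge \frac{t^\beta}{\RV(t)}\to\infty \text{ as } s\to0^+.$$

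The only step needing any care is the convergence $\meas_i(B_s(p))\to\meas(B_s(x))$, together with the fact that the $\limsup$ in $\RV$ dominates the values along the particular subsequence $sr_i$. Both are standard: the first comes from weak convergence together with null boundaries of balls, and the second is immediate from the definition of $\limsup$. I do not expect any real obstacle. Note also that the hypothesis $\beta>1$ is not used beyond making the statement meaningful.
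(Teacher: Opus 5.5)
Your proposal is correct and matches the paper's proof essentially step for step. You bound $\meas(B_s(x))\le \RV(s)$ for $s\ge 1$, and you get $\meas(B_s(x))\ge 1/\RV(1/s)$ for $s<1$ by inverting the volume ratio and rescaling the radius. Your justification of $\meas_i(B_s(p))\to\meas(B_s(x))$ via $\meas$-null spheres, which the paper leaves implicit, is also sound.
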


\begin{proof}
   The first equality is clear. In fact, let $r_i\to\infty$ such that $\meas$ is the limit renormalized measure from the sequence $(r_i^{-1} M,p)$. Then for $s\ge 1$,   $$\meas(B_s(x))=\lim\limits_{i\to\infty} \dfrac{\vol B_{r_is}(p)}{\vol B_{r_i}(p)}\le \limsup_{r\to\infty} \dfrac{\vol B_{rs}(p)}{\vol B_{r}(p)}=\RV(s).$$
   Then the result follows from the condition $\lim_{s\to\infty} s^{-\beta}\RV(s) =0$.

   Next, we prove the second equality. This time, we fix $\lambda \in (0,1)$. Then
\begin{align*}
\meas(B_\lambda(x))&= \lim\limits_{i\to\infty} \dfrac{\vol B_{r_i\lambda}(p)}{\vol B_{r_i}(p)}\ge \liminf_{r\to \infty} \dfrac{\vol B_{r\lambda}(p)}{\vol B_r(p)}\\
& =\dfrac{1}{\limsup\limits_{r \to \infty} \dfrac{\vol B_{r}(p)}{\vol B_{r\lambda}(p)}} =\dfrac{1}{\limsup\limits_{\lambda r\to \infty} \dfrac{\vol B_{\lambda^{-1}\lambda r}(p)}{\vol B_{\lambda r}(p)}}\\
&=\dfrac{1}{\RV(\lambda^{-1})}.
\end{align*}
As $\lambda\to 0^+$, we obtain
$$\lim_{\lambda\to 0^+} \dfrac{\meas(B_\lambda(x))}{\lambda^\beta} \ge \lim_{\lambda\to 0^+} \dfrac{\lambda^{-\beta}}{\RV(\lambda^{-1})}=\infty.$$
\end{proof}

Next, we prove that a relation between relative volume growth and absolute volume growth. This will be needed later in the proof of Theorem \ref{mainthm:finite}.

\begin{lem}\label{lem:rel_vs_abs_vol}
   Let $M$ be an open manifold. Suppose that $M$ satisfies
   $$\liminf_{s\to\infty} \dfrac{\RV(s)}{s^\beta} =L<1,$$
   where $\beta>0$. Then for any $p\in M$,
   $$\lim_{r\to\infty} \dfrac{\vol B_r(p)}{r^\beta}=0.$$
\end{lem}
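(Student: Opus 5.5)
Fix $p\in M$ and write $V(r)=\vol B_r(p)$. The idea is to pick one scale factor $s$ with $\RV(s)<s^\beta$, iterate the resulting relative bound geometrically along $r, sr, s^2r,\dots$, and fill in intermediate radii by monotonicity of $V$. The statement is in the form ``$\lim =0$'', so the goal is a decay factor strictly less than one.

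\textbf{Choice of scale.} Since $\liminf_{s\to\infty}\RV(s)/s^\beta=L<1$, fix $L'\in(L,1)$ and choose $s_0>1$ with $\RV(s_0)<L' s_0^\beta$. Set $q=\RV(s_0)/s_0^{\beta}<L'<1$ and pick $q'\in(q,1)$. By the definition of $\RV$ as a $\limsup$, there is $r_0$ such that
$$V(s_0 r)\le q' s_0^\beta V(r)\quad\text{for all } r\ge r_0.$$

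\textbf{Iteration.} Induction gives $V(s_0^k r)\le (q')^k s_0^{k\beta}V(r)$ for $r\ge r_0$ and $k\ge0$. Now take any $R\ge r_0$ and write $R=s_0^k t$ with $t\in[r_0,s_0r_0)$ and $k\ge0$. Then
$$\frac{V(R)}{R^\beta}\le \frac{(q')^k s_0^{k\beta}V(t)}{s_0^{k\beta}t^\beta}= (q')^k\,\frac{V(t)}{t^\beta}\le (q')^k\,\frac{V(s_0r_0)}{r_0^\beta}.$$
As $R\to\infty$ we have $k\to\infty$, so the right-hand side tends to $0$. This proves the claim, and it uses no curvature assumption, consistent with the lemma being stated for any open manifold.

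\textbf{Main point to check.} The only delicate step is passing from the $\limsup$ in $\RV(s_0)$ to a uniform inequality for all large $r$. This is exactly what the definition of $\limsup$ gives, once we use the strict slack $q<q'$. There is no need to control $\RV$ at intermediate scales, because monotonicity of $V$ handles $t\in[r_0,s_0r_0)$.
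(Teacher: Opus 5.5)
Your proposal is correct and follows essentially the paper's argument: you fix one scale $s_0$ with $\RV(s_0)<s_0^\beta$, use a strict slack to turn the $\limsup$ into a uniform one-step bound for all $r\ge r_0$, iterate it geometrically, and control the base interval by monotonicity of $r\mapsto \vol B_r(p)$. Your upward indexing ($R=s_0^k t$ with $t\in[r_0,s_0r_0)$) is slightly cleaner than the paper's downward iteration. As written there, the last step applies the one-step inequality at a radius that may lie just below $r_0s_0$, where it has not been established; this is harmless, since one fewer iteration suffices.
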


\begin{proof}
   We choose $\epsilon>0$ small such that $L+2\epsilon<1$. We fix some $s_0>1$ large such that
   $$\limsup_{r\to\infty}\dfrac{\vol B_{rs_0}(p)}{\vol B_r(p)}=\RV(s_0)\le (L+\epsilon) s_0^\beta.$$
   For this fixed $s_0$, there is $r_0=r(s_0)$ large such that
   $$\dfrac{\vol B_{rs_0}(p)}{\vol B_r(p)} \le (L+2\epsilon) s_0^\beta$$
   for all $r\ge r_0$. Denoting $R=rs_0$, where $r\ge r_0$, then 
   \begin{equation}\label{eq:rel_vs_abs_vol}
   \dfrac{\vol B_{R}(p)}{R^\beta} \le (L+2\epsilon) \cdot \dfrac{\vol B_{R/s_0}(p)}{(R/s_0)^\beta}
   \end{equation}
   for all $R\ge r_0s_0$. For any $R\ge r_0s_0$, we choose $k\ge 1$ the smallest integer such that
   $R/s_0^k \le r_0$. Then iterating (\ref{eq:rel_vs_abs_vol}), we obtain
   $$\dfrac{\vol B_{R}(p)}{R^\beta} \le (L+2\epsilon)^k \dfrac{\vol B_{R/s_0^k}(p)}{(R/s_0^k)^\beta}\le (L+2\epsilon)^k \cdot \max_{\rho\in [r_0/s_0,r_0]} \dfrac{\vol B_\rho(p)}{\rho^\beta}.$$
   Letting $R\to\infty$, then $k\to\infty$ and the result follows.
\end{proof}

We also need a simple lemma that transfers the sublinear diameter growth and relative volume growth conditions from $M$ to a finite normal covering space over $M$.

\begin{lem}\label{lem:pass_to_finite_cover}
    Let $M$ be an open manifold with $\Ric\ge 0$ and let $\overline{M}$ be a finite normal covering space over $M$.\\
    (1) If $M$ has sublinear diameter growth, then $\overline{M}$ either splits isometrically as $\R\times K$, where $K$ is a closed manifold, or has sublinear diameter growth.\\
    (2) If $M$ satisfies $\lim_{s\to\infty} \RV(s)/s^\beta=0$, then $\overline{M}$ satisfies $\lim_{s\to\infty} \overline{\RV}(s)/s^\beta=0$, where $\overline{\RV}(s)$ denotes relative volume growth function on $\overline{M}$.
\end{lem}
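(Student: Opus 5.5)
Let $\pi\colon \overline{M}\to M$ be the covering, with finite deck group $\Gamma$ of order $m$. Fix $\bar p\in\overline{M}$ over $p$. The main tool is the comparison between balls upstairs and downstairs. Since $\Gamma$ is finite, $D\defeq\max_{\gamma\in\Gamma} d(\bar p,\gamma\bar p)<\infty$. This gives two inclusions:
$$\pi^{-1}(B_r(p))\subseteq B_{r+D}(\bar p), \qquad \pi(B_r(\bar p))\subseteq B_r(p).$$
The first holds because every point of $\pi^{-1}(B_r(p))$ lies within $r$ of some $\gamma\bar p$. Since $\pi$ is a local isometry of degree $m$, we get
$$m\,\vol B_{r-D}(p)\le \vol B_r(\bar p)\le m\,\vol B_r(p).$$
The left inequality uses $\pi^{-1}(B_{r-D}(p))\subseteq B_r(\bar p)$ and that $\vol\pi^{-1}(U)=m\vol U$.

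\textbf{Part (2).} For fixed $s\ge1$ and large $r$, the comparison gives
$$\frac{\vol B_{rs}(\bar p)}{\vol B_r(\bar p)}\le \frac{\vol B_{rs}(p)}{\vol B_{r-D}(p)}=\frac{\vol B_{rs}(p)}{\vol B_r(p)}\cdot\frac{\vol B_r(p)}{\vol B_{r-D}(p)}.$$
By Bishop--Gromov, the last factor is at most $(1-D/r)^{-n}\to1$. Taking $\limsup_{r\to\infty}$ yields $\overline{\RV}(s)\le \RV(s)$, so $\overline{\RV}(s)/s^\beta\to0$. This is the same argument as in Proposition \ref{prop:RV_well_defined}.

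\textbf{Part (1).} Suppose $\overline{M}$ does not split as $\R\times K$. We aim to show $\mathrm{diam}(\partial B_r(\bar p))/r\to0$, with the diameter taken in the extrinsic metric of $\overline{M}$.

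First, handle the number of ends. If $\overline{M}$ has two ends, Cheeger--Gromoll gives a line and hence a splitting $\R\times K$ with $K$ compact, which we have excluded. So assume $\overline{M}$ has one end.

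Now take $x,z\in\partial B_r(\bar p)$. Their projections lie in $\overline{B_r}(p)\setminus B_{r-D}(p)$, since $|d(\pi x,p)-d(x,\bar p)|\le D$. By sublinear diameter growth of $M$, applied to spheres of radii in $[r-D,r]$, we have $d(\pi x,\pi z)\le o(r)$. Lifting a minimal geodesic gives $d(x,\gamma z)=o(r)$ for some $\gamma\in\Gamma$.

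The main obstacle is the remaining deck displacement: showing $d(\gamma z, z)=o(r)$ for points at distance $\sim r$. We cannot argue that $\Gamma$ moves points a bounded amount, because displacement can grow. I would argue by blow-down. Take $r_i\to\infty$ and the equivariant limit $(r_i^{-1}\overline{M},\bar p,\Gamma)\to(Y,y,G)$. The limit group $G$ is finite, of order at most $m$, and satisfies $Y/G=X$. Sublinear diameter growth of $M$ means that every asymptotic cone $X$ is a ray, with each sphere $\partial B_t(x)$ a single point. Then each sphere $\partial B_t(y)$ in $Y$ is a single $G$-orbit, hence a finite set.

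Suppose some sphere in $Y$ had two points. Since $Y$ is a length space, the sphere structure in $Y$ would then make $Y$ a union of finitely many rays from $y$. Because $Y$ has no branching (by the Ricci-limit non-branching of \cite{Deng20} invoked earlier), $Y$ is then a ray or a line. A line in every cone at all scales would correspond to $\overline{M}$ having two ends. One-endedness of $\overline{M}$ gives connected large spheres, which pass to limits as connected spheres in $Y$, ruling out the line and forcing $Y$ to be a ray. Hence spheres in $Y$ are points, which is exactly sublinear diameter growth of $\overline{M}$ by a standard contradiction argument over subsequences.

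The step I expect to be delicate is this passage from one end to connected limit spheres. I would instead use Sormani's result \cite{Sor00a} to handle it, if applicable: applied to $\overline{M}$, which has $\RV$ controlled by $M$ and at most linear excess, it should produce the needed dichotomy directly.
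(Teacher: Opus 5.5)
\textbf{Part (2).} This is correct. Your two-sided bound $m\,\vol B_{r-D}(p)\le \vol B_r(\bar p)\le m\,\vol B_r(p)$, combined with Bishop--Gromov, gives $\overline{\RV}(s)\le \RV(s)$. That is sharper than the paper's $\overline{\RV}(s)\le \#\Gamma\cdot \RV(s)$, which comes from the cruder $\vol B_r(p)\le \vol B_r(\bar p)\le \#\Gamma\,\vol B_r(p)$. Either bound suffices.

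\textbf{Part (1): ruling out a line cone.} There is a genuine gap at the step where you exclude an asymptotic cone $Y$ isometric to a line.

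First, one-endedness of $\overline{M}$ does not give connected large spheres. It only says that $\overline{M}\setminus B_R(\bar p)$ has a single unbounded component. A path joining the two clusters of $\partial B_{r_i}(\bar p)$ inside that component may have to travel out to radii $\gg r_i$. Such a path leaves every bounded ball of $r_i^{-1}\overline{M}$, so nothing about it survives in the limit at scale $r_i$.

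Second, the fallback to \cite{Sor00a} does not apply. Sormani's dichotomy is proved under linear volume growth, while part (1) has no volume hypothesis at all.

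More fundamentally, you work with one blow-down sequence at a time. What one can actually prove is that if \emph{every} asymptotic cone of $\overline{M}$ is a line, then $\overline{M}$ has two ends. To go from one line cone to all of them you need extra input.

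\textbf{The missing ingredient.} It is the one the paper uses: the set of asymptotic cones of $\overline{M}$ is connected in the pointed Gromov--Hausdorff topology (e.g.\ \cite{Pan19}*{Proposition 2.1}). Every cone is a line or a ray, and these are at positive pointed GH distance, so $\overline{M}$ has a unique asymptotic cone.
\begin{itemize}
\item If the cone is a ray, your subsequence contradiction argument gives sublinear diameter growth.
\item If the cone is a line at every scale, a scaling argument produces a line in $\overline{M}$ (the paper cites \cite{Ye_vol<2}*{Proposition 3.3}). Alternatively, label the two clusters of $\partial B_t(\bar p)$ consistently in $t$ to show $\overline{M}$ has two ends. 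Either way, Cheeger--Gromoll then gives $\R\times K$ with $K$ closed.
\end{itemize}

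\textbf{A smaller point.} Your derivation of ``$Y$ is a ray or a line'' from finite spheres plus non-branching is only sketched. You would have to exclude configurations such as a loop through $y$ with a ray attached, which also has finite spheres and a ray quotient by a $\Z_2$. The paper's route is cleaner: $Y/G$ is a ray and $G$ is finite, so $Y$ has rectifiable dimension $1$, and \cite{Kitabeppu-Lakzian_16} applies.
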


\begin{proof}
(1) For any sequence $r_i\to\infty$, we consider the convergence
   \begin{equation*}
  \begin{CD}
   (r_i^{-1} \overline{M},\bar{q},\Gamma) @>GH>> (Y,y,\Gamma_\infty) \\
	@VV\pi V @VV \pi V\\
	(r_i^{-1} M,p) @>GH>> ([0,\infty),0)=(Y/F_\infty,\bar{y}),
   \end{CD}
   \end{equation*}
   where $\Gamma$ is the finite covering group and $\Gamma_\infty$ is a finite group fixing $y$. It follows that $Y$ has rectifiable dimension $1$. By \cite{Kitabeppu-Lakzian_16}, $(Y,y)$ is either a line $(\R,0)$ or a ray $([0,\infty),0)$. Together with the connectedness of all asymptotic cones of $\overline{M}$ under pointed Gromov-Hausdorff topology (see, for example, \cite{Pan19}*{Proposition 2.1}), $\overline{M}$ has a unique asymptotic cone. If the unique asymptotic cone of $\overline{M}$ is a line, then by a standard scaling argument, $\overline{M}$ contains a line and thus splits isometrically as $\R\times K$, where $K$ is a closed manifold; see, for example, \cite{Ye_vol<2}*{Proposition 3.3}. If the unique asymptotic cone of $\overline{M}$ is a ray $([0,\infty),0)$, then $\overline{M}$ has sublinear diameter growth.
   
(2) It is clear that we have 
$$\vol B_r(\bar{p})\ge \vol B_r(p)$$
for all $r>0$. We note that
$$B_r(\bar{p})\subseteq \Gamma \cdot (\pi^{-1}(B_r(p))\cap F),$$
where $F\subseteq \overline{M}$ is the Dirichlet domain centered at $\bar{p}$. It follows that
$$\vol B_r(\bar{p}) \le \#\Gamma \cdot \vol B_r(p).$$
Hence
$$\overline{\RV}(s)=\limsup_{r\to\infty} \dfrac{\vol B_{rs}(\bar{p})}{\vol B_r(\bar{p})}\le \# \Gamma \cdot \limsup_{r\to\infty} \dfrac{\vol B_{rs}({p})}{\vol B_r({p})}=\# \Gamma \cdot \RV(s).$$
The result immediately follows.
\end{proof}

Below we assume that $M$ is an open manifold under the assumptions of Theorem \ref{mainthm:vir_abel}. Both Theorems \ref{mainthm:vir_abel} and \ref{mainthm:finite} will follow from an induction theorem describing the asymptotic geometry of successive $\Z$-covering spaces over $M$, similar to the approach in \cite{NPZ}.

Let $\widehat{M}$ be a covering space of $M$ with a finitely generated torsion-free nilpotent covering group $\Lambda$. $\Lambda$ admits a series of normal subgroups:
\begin{equation}\label{eq:series_subgroups}
\{e\}=\Lambda_0 \triangleleft \Lambda_1 \triangleleft ... \triangleleft \Lambda_{k-1} \triangleleft \Lambda_k =\Lambda,
\end{equation}
such that each $\Lambda_{j+1}/\Lambda_j$ is isomorphic to $\mathbb{Z}$. This corresponds to a tower of successive covering spaces:
\begin{equation}\label{eq:tower_covers}
\widehat{M}=\widehat{M}_0\to \widehat{M}_{1} \to ... \to \widehat{M}_{k-1} \to \widehat{M}_k=M,
\end{equation}
where $\widehat{M}_j = \widehat{M}/\Lambda_j$. It follows from the construction that each covering map $\widehat{M}_j \to \widehat{M}_{j+1}$ has its covering group $\Lambda_{j+1}/\Lambda_j \simeq \mathbb{Z}$.

\begin{thm}[Induction Theorem]\label{thm:induction_sub2}
Let $j=0,1,...,k$ and let $r_i\to\infty$ be a sequence. After passing to a convergent subsequence if necessary, we consider the convergence
\begin{equation}\label{eq:induction}
(r_i^{-1}\widehat{M}_{k-j},\hat{p}_{k-j}) \overset{GH}\longrightarrow (Y_{k-j},y_{k-j}).
\end{equation}
for all $j$. Then, the following holds.\\
(1) $(Y_{k-j},y_{k-j})$, as a pointed metric space, is isometric to either a Euclidean space $(\mathbb{R}^{j+1},0)$ or a Euclidean halfspace $(\mathbb{R}^j \times [0,\infty),0)$.\\
(2) When $Y_{k-j}=\mathbb{R}^j\times [0,\infty)$, it has a limit renormalized measure $\meas_{k-j}=\mathcal{L}^j \otimes \underline{\meas}$, where $\mathcal{L}^j$ is the Lebesgue measure on $\R^j$ and $\underline{\meas}$ is a measure on the ray such that
$$\lim_{r\to 0^+} \dfrac{\underline{\meas}([0,r])}{r^2}=+\infty,\quad \lim_{r\to\infty} \dfrac{\underline{\meas}([0,r])}{r^2}=0.$$
(3) When $Y_{k-j}=\mathbb{R}^j\times [0,\infty)$, for every $s>0$, there is a sequence of domains $D_{i}(s)\subseteq \widehat{M}_{k-j}$ such that\\
(3A) $D_{i}(s)\overset{GH}\to [-s,s]^j\times [0,s] \subset Y_{k-j}$ associated to (\ref{eq:induction});\\
(3B) for every $u\in (-s,s)^j \times [0,s)$ and every sequence $u_i\overset{GH}\to u$ associated to (\ref{eq:induction}), it holds that $u_i\in D_i(s)$ for all $i$ large.
\end{thm}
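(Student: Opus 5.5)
We argue by induction on $j$, following the scheme of \cite{NPZ}. For $j=0$, $\widehat{M}_k=M$. By sublinear diameter growth every asymptotic cone of $M$ is the ray $([0,\infty),0)$. By Lemma \ref{lem:limit_measure_ray} with $\beta=2$, every limit renormalized measure $\underline{\meas}$ on the ray is super-quadratic at $0$ and sub-quadratic at infinity. This gives (1) and (2). For (3), we take $D_i(s)=\overline{B}_{r_is}(p)$; then (3A) and (3B) are immediate.

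For the inductive step, assume the statement for $j$ and consider the $\Z$-cover $\widehat{M}_{k-j-1}\to\widehat{M}_{k-j}$. Pass to the equivariant limit $(r_i^{-1}\widehat{M}_{k-j-1},\hat p,\Z)\to(Y_{k-j-1},y,H)$ with $Y_{k-j-1}/H=Y_{k-j}$. If $Y_{k-j}=\R^{j+1}$, the lines lift and Gigli's splitting (Theorem \ref{thm:meas_split}) gives $Y_{k-j-1}=\R^{j+1}\times Z$ with $Z/H'$ a point. The arguments of \cite{NPZ} then show $Z$ is a point or a line; more precisely, $H$ acts as a closed abelian group with compact quotient of $Z$, and an orbit argument shows $Y_{k-j-1}$ is $\R^{j+1}$ or $\R^{j+2}$.

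The main case is $Y_{k-j}=\R^j\times[0,\infty)$. We split off $\R^j$ to reduce to an $\RCD(0,N-j)$ factor $Z$ with an $H$-action whose quotient is a ray. We then need:
\begin{enumerate}[label=(\alph*)]
\item the escape rate is non-maximal, so that by Theorem \ref{thm:orbit_R} one gets $H\simeq\R\times K$ with $K$ fixing $y$;
\item the measure condition (2) of Theorem \ref{thm:rigid_p/h} holds on $\Omega_r$.
\end{enumerate}

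For (a), $\widehat{M}_{k-j}$ is polar at infinity because its cones are half-spaces or spaces, so Proposition \ref{prop:non_max_escape_rate} gives the non-maximal escape rate. The orbit conclusion of Theorem \ref{thm:orbit_R} is stated for $\Z$-covers with base point; we apply it to the cover $\widehat{M}_{k-j-1}\to\widehat{M}_{k-j}$.

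For (b), we need the domains of (3). A limit renormalized measure on $Y_{k-j-1}$, normalized with fundamental domains, pushes down locally. We use the domains $D_i(s)$ and a Dirichlet-type fundamental domain of the $\Z$-action over $D_i(s)$ to compare $\meas_{k-j-1}$ of the lifted regions $[-1,1]^j\times\Omega_r$ with $\underline{\meas}([0,r])$ up to a uniform factor. Here (3B) guarantees that limit points of the fundamental domain lie in $D_i(s)$, which gives the correct inclusions. This transfers the super/sub-quadratic behavior of $\underline{\meas}$ to $\meas(\Omega_r)$ on the factor. Theorem \ref{thm:rigid_p/h}, together with Remarks \ref{rem:p/h_measure} and \ref{rem:p/h_action}, then gives that $Y_{k-j-1}$ is $\R^{j+1}\times[0,\infty)$ or $\R^{j+2}$, and that the new ray measure satisfies (2).

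Finally, (3) for $Y_{k-j-1}$ is constructed by taking the preimage of $D_i(s)$ intersected with the set of points whose orbit coordinate lies in $[-s,s]$. Concretely, we take the union of $\gamma^m\cdot F_i$ for $|m|\le m_i(s)$, where $F_i$ is a fundamental domain over $D_i(s)$ and $m_i(s)$ is chosen so that $\gamma^{m_i}$ converges to translation by $s$ in $H$. The main obstacle is step (b): making the measure comparison rigorous, since the renormalizations on the two covers differ and the regions must be controlled uniformly in $r$ both near $0$ and near infinity. I would first try normalizing $\meas_{k-j-1}$ so that the fundamental domain over $D_i(1)$ has unit mass. I would then compare $\meas(\Omega_r)$ with $\underline{\meas}([0,r])$ using equivariance of the limit measure together with (3A) and (3B).
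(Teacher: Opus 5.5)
Your outline follows the paper's route: base case via Lemma \ref{lem:limit_measure_ray}, polarity of $\widehat M_{k-j}$ with Proposition \ref{prop:non_max_escape_rate} and Theorem \ref{thm:orbit_R}, splitting off $\R^j$, then Theorem \ref{thm:rigid_p/h}, and new domains $S(m_i s)\cdot F_i(s)$. However, two steps are not established.

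\textbf{The case $Y_{k-j}=\R^{j+1}$.} You conclude that $Y_{k-j-1}$ is $\R^{j+1}$ or $\R^{j+2}$. The first option is not permitted by (1) for $j+1$, so as written this case does not prove the statement. The fix is to use the escape-rate input here as well. Since $\widehat M_{k-j}$ is polar at infinity, Theorem \ref{thm:orbit_R} shows that $Hz$ is homeomorphic to $\R$. Because $Z/H$ is a point, $Z=Hz$ is a line, and hence $Y_{k-j-1}=\R^{j+2}$. An unspecified ``orbit argument'' does not exclude $Z$ being a point or of higher dimension.

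\textbf{Step (b), the main gap.} This is the only new ingredient of the inductive step, and you leave it open. Moreover, the normalization you propose fails. Since $m_i\ge r_i/d(\gamma\hat q,\hat q)\to\infty$, giving the fundamental domain over $D_i(1)$ unit mass assigns mass $\#S(m_i)=2m_i+1\to\infty$ to $S(m_i)\cdot F_i(1)$. That set converges to the bounded set $[-1,1]^j\times\Omega_1$, so there is no Radon limit.

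The missing idea is to use a single set $S(m_i)=\{\gamma^m:|m|\le m_i\}$, with $m_i$ giving unit displacement and independent of $s$, for every $s$. Dirichlet translates are essentially disjoint, so $\vol(S(m_i)\cdot F_i(s))=\#S(m_i)\,\vol D_i(s)$ exactly. Hence
\[
\frac{\vol(S(m_i)\cdot F_i(s))}{\vol(S(m_i)\cdot F_i(1))}=\frac{\vol D_i(s)}{\vol D_i(1)}\to \frac{s^j\,\underline{\meas}([0,s])}{\underline{\meas}([0,1])},
\]
using (3) for $j$ and Remark 3.39 of \cite{NPZ}. On the other side you need two further facts:
\begin{enumerate}
\item[(i)] the measure convergence $\vol(S(m_i)\cdot F_i(s))/\vol B_{r_i}(\hat q)\to\meas_Y([-s,s]^j\times\Omega_s)=(2s)^j\meas_Z(\Omega_s)$, which is Proposition 3.31 and Corollary 3.37 of \cite{NPZ};
\item[(ii)] the two-sided bound $\vol B_{r_i/C_1}(\hat q)\le\vol(S(m_i)\cdot F_i(1))\le\vol B_{C_2r_i}(\hat q)$, which is Lemma 3.41 of \cite{NPZ}. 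Together with Bishop--Gromov it gives $\vol(S(m_i)\cdot F_i(1))/\vol B_{r_i}(\hat q)\to\theta\in(0,\infty)$ along a subsequence.
\end{enumerate}
Combining these yields the exact proportionality $\meas_Z(\Omega_s)=c\,\underline{\meas}([0,s])$ with $c$ independent of $s$. This transfers both limits required in hypothesis (2) of Theorem \ref{thm:rigid_p/h}.

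Equivariance of the limit measure together with (3A) and (3B) does not by itself relate the two different renormalizations, $\vol B_{r_i}(q)$ downstairs and $\vol B_{r_i}(\hat q)$ upstairs. The counting with a fixed $S(m_i)$ is what makes that connection. Finally, work with $[-s,s]^j\times\Omega_s$ rather than $[-1,1]^j\times\Omega_r$, because the inductive hypothesis (3) only supplies domains over the cubes $[-s,s]^j\times[0,s]$.
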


\begin{rem}
Theorem \ref{thm:induction_sub2}(1) is sufficient for later applications to prove Theorems \ref{mainthm:vir_abel} and \ref{mainthm:finite}. Theorem \ref{thm:induction_sub2}(2,3) are needed in the induction steps to prove Theorem \ref{thm:induction_sub2}(1). 
\end{rem}

\begin{proof}
   Most of the proof in \cite{NPZ}*{Theorem 3.23} directly apply here verbatim. The main differences are a slight modification on the computation of limit renormalized measures, and applying the new plane/halfplane rigidity Theorem \ref{thm:rigid_p/h} instead of \cite{NPZ}*{Theorem 1.8} with linear measure growth. Because the proof of \cite{NPZ}*{Theorem 3.23} spans several sections, for readers' convenience, below we give an outline and write down the differences in detail.

   We start with the base case $j=0$. For (1), because $M$ has sublinear diameter growth, it has a unique asymptotic cone $([0,\infty),0)$. (2) follows immediately from Lemma \ref{lem:limit_measure_ray}. For (3), we simply choose $D_i(s)=B_{r_i s}(p)\subseteq M$.

   Next, we assume that the statements hold for $j$ and verify the inductive step $j+1$. For convenience, we write 
   \begin{equation*}
   (\widehat{N},\hat{q},\Gamma):=(\widehat{M}_{k-(j+1)},\hat{p}_{k-(j+1)},\mathbb{Z}),\quad (N,q):=(\widehat{M}_{k-j},\hat{p}_{k-j}).
   \end{equation*}
   For any sequence $r_i\to\infty$, we have convergence after passing to a subsequence
   \begin{equation}\label{eq:induction_diagram}
  \begin{CD}
   (r_i^{-1} \widehat{N},\hat{q},\Gamma\simeq \mathbb{Z}) @>GH>> (Y,y,G) \\
	@VV\pi V @VV \pi V\\
	(r_i^{-1} N,q) @>GH>> (X,x)=(Y/G,\bar{y}),
   \end{CD}
   \end{equation}
   By inductive assumption (1), the metric space $X$ is isometric to a Euclidean space $\R^{j+1}$ or a Euclidean halfspace $\R^j\times [0,\infty)$. Then we can apply the equivariant convergence techniques developed in \cite{NPZ}*{Sections 3.1-3.3, 3.5} to study (\ref{eq:induction_diagram}). We remark that the intermediate results in these sections from \cite{NPZ} do not rely on the linear volume growth but only need non-maximal escape rate of $(\widehat{N},\hat{q},\Gamma)$, which is guaranteed here thanks to Proposition \ref{prop:non_max_escape_rate}. Afterwards, we apply Theorem \ref{thm:orbit_R} and deduce that the orbit $Gy$ is homeomorphic to $\R$, thus $G=\R \times K$, where $K$ is the isotropy subgroup of $G$ fixing $y$.

   When $X$ is isometric to $\R^{j+1}$, we can lift $j+1$ many independent lines to $Y$. Thanks to Gigli's splitting theorem, $(Y,y)$ splits isomorphically as $(\R^{j+1}\times Z,(0,z))$; moreover, $G$ acts trivially on $\R^{j+1}$ and $Z/G$ is a single point. Since the orbit $Gz$ is homeomorphic to $\R$, $Z$ must be a line. It follows that $Y$ is isomorphic to $\R^{j+2}$.

   The nontrivial case is when $X$ is isometric to $\R^j\times [0,\infty)$. We first pass to a subsequence of $r_i$ such that that $X=\R^j\times [0,\infty)$ has a limit renormalized measure $\meas_X=\mathcal{L}^j\otimes \underline{\meas}$ such that
   $$\lim_{s\to 0^+} \dfrac{\underline{\meas}([0,s])}{s^2}=+\infty,\quad \lim_{s\to\infty} \dfrac{\underline{\meas}([0,s])}{s^2}=0,$$
   using inductive assumption (2). Passing to a subsequence again, $Y$ carries a limit renormalized measure $\meas_Y$. Thanks to Gigli's splitting theorem, $Y$ splits isomorphically as
   $$(Y,y,d_Y,\meas_Y)=(\R^j,0,d_E,\mathcal{L}^j)\otimes (Z,z,d_Z,\meas_Z),$$
   where $(Z,d_Z,\meas_Z)$ is an $\RCD(0,n-j)$ space. Moreover, $G$ acts trivially on $\R^j$ and $Z/G$ is isometric to a ray.

   We shall verify that $Z$ satisfies the condition in Theorem \ref{thm:rigid_p/h}; in other words, we need to estimate $\meas_Z(\Omega_s)$ as $s\to 0$ and $s\to\infty$, respectively. 
   Following the notations in \cite{NPZ}*{Section 3.5}, we set\\
   $\bullet$ $\gamma\in\Gamma$, a generator of $\Gamma\simeq \Z$;\\
   $\bullet$ $m_i$, the smallest positive integer $m$ such that $d(\gamma^m \hat{q},\hat{q})\ge r_i$;\\
  $\bullet$ $S(m_i)=\{ \gamma^m \mid m=0,\pm 1,... \pm \lfloor m_i\rfloor \}$;\\
   $\bullet$ $F\subset \widehat{N}$ the Dirichlet domain centered at $\hat{q}$;\\
$\bullet$ $F_i(s)=\overline{F\cap \pi^{-1}(D_i(s))}\subseteq \widehat{N}$, where $D_i(s)\subseteq N$ is provided in inductive assumption (3).\\
By \cite{NPZ}*{Proposition 3.31 and Corollary 3.37}, we have convergence
\begin{equation}\label{eq:vol_conv}
S(m_i)\cdot F_i(s)\overset{GH}\to [-s,s]^j\times \Omega_s,\quad \meas_i(S(m_i)\cdot F_i(s))\to \meas_Y([-s,s]^j\times \Omega_s)=(2s)^j\meas_Z(\Omega_s),
\end{equation}
associated to the top row of (\ref{eq:induction_diagram}), where $\meas_i=\vol(\cdot)/\vol(B_{r_i}(\hat{q}))$ denotes the renormalized measure on $r_i^{-1}\widehat{N}$. We also have measure convergence
$$\dfrac{\vol D_i(s)}{\vol B_{r_i}(q)}\to \meas_X([-s,s]^j\times [0,s])$$
associated to the bottom row of (\ref{eq:induction_diagram}); see \cite{NPZ}*{Remark 3.39}; it follows that
   \begin{equation}\label{eq:vol_D}
\dfrac{\vol D_i(s)}{\vol D_i(1)}=\dfrac{\vol D_i(s)}{\vol B_{r_i}(q)}\dfrac{\vol B_{r_i}(q)}{\vol D_i(1)} \to \dfrac{\meas_X([-s,s]^j\times [0,s])}{\meas_X([-1,1]^j \times [0,1]) } = s^{j}\cdot \dfrac{\underline{\meas}([0,s])}{\underline{\meas}([0,1])}
\end{equation}
To control the renormalized measure $\meas_i(S(m_i)\cdot F_i(s))$, we also need \cite{NPZ}*{Lemma 3.41}, which implies
$$ \vol B_{r_i/C_1}(\hat{q}) \le \vol \left(S(m_i)\cdot F_i(1)\right) \le \vol B_{C_2r_i}(\hat{q})$$
for some constants $C_1,C_2>0$ independent of $i$.
Together with Bishop-Gromov relative volume comparison, we can pass to a subsequence and assume
\begin{equation}\label{eq:vol_cube}
\dfrac{\vol(S(m_i)\cdot F_i(1)) }{\vol B_{r_i}(\hat{q})} \to \theta \in [1/C_1^n,C_2^n].
\end{equation}
Now we combine (\ref{eq:vol_conv}), (\ref{eq:vol_D}), and (\ref{eq:vol_cube}) to obtain
   \begin{align*}
&(2s)^j\cdot \meas_Z(\Omega_s) =\meas_Y([-s,s]^j\times \Omega_s)\\
=&\lim_{i\to\infty}\dfrac{\vol (S(m_i)\cdot F_i(s))}{\vol B_{r_i}(\hat{q})}=\theta\cdot \lim_{i\to\infty} \dfrac{\vol (S(m_i)\cdot F_i(s))}{\vol (S(m_i)\cdot F_i(1))}\\
=&\ \theta \cdot \lim_{i\to\infty} \dfrac{\# S(m_i) \cdot \vol D_i(s)}{\# S(m_i) \cdot \vol D_i(1)}= \theta \cdot s^{j}\cdot \dfrac{\underline{\meas}([0,s])}{\underline{\meas}([0,1])}.
\end{align*}
We set $c=\theta/(2^j \underline{\meas}([0,1]))$, a constant independent of $s$, then
$$\dfrac{\meas_Z(\Omega_s)}{s^2}=c\cdot \dfrac{\underline{\meas}([0,s])}{s^2}.$$
Letting $s\to 0^+$ and $\to \infty$ respectively, we obtain the desired limits
$$\lim_{s\to 0^+}\dfrac{\meas_Z(\Omega_s)}{s^2}=\infty,\quad \lim_{s\to 0^+}\dfrac{\meas_Z(\Omega_s)}{s^2}=0.$$
Therefore, $Z$ satisfies all the conditions in Theorem \ref{thm:rigid_p/h}. As a result, $Z$ is isometric to a Euclidean plane $\R^2$ or a Euclidean halfplane $\R\times [0,\infty)$. This verifies (1) for the inductive step. Then (2) follows from Remark \ref{rem:p/h_measure}. Lastly, for (3), we set $\widehat{D}_i(s)=S(m_i s)\cdot F_i(s)$, then thanks to \cite{NPZ}*{Proposition 3.31}, we have
$$\widehat{D}_i(s)\overset{GH}\to [-s,s]^{j+1}\times [0,s]\subseteq Y = \R^{j+1}\times [0,\infty)$$
associated to the top row of (\ref{eq:induction_diagram}); (3B) follows from \cite{NPZ}*{Proposition 3.36}. 

This completes the proof of the inductive step.
\end{proof}

\begin{cor}\label{cor:asym_Z_cover}
   Let $M$ be an open $n$-manifold under the assumptions of Theorem \ref{mainthm:vir_abel}. Let $\widehat{M}$ be a $\mathbb{Z}$-folding cover of $M$. Then $(\widehat{M},\mathbb{Z})$ has a unique equivariant asymptotic cone isometric to one of\\
  (1) $(\mathbb{R}^2,0,\mathbb{R}\times \mathbb{Z}_2)$,\\
  (2) $(\mathbb{R}\times [0,\infty),0,\mathbb{R} )$. 
\end{cor}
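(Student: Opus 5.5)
The argument runs through the induction theorem in its simplest instance, $k=1$. A $\mathbb{Z}$-folding cover $\widehat{M}\to M$ has covering group $\Lambda=\mathbb{Z}$. This group is finitely generated, torsion-free and nilpotent, and the series $\{e\}=\Lambda_0\triangleleft\Lambda_1=\mathbb{Z}$ gives the tower $\widehat{M}=\widehat{M}_0\to\widehat{M}_1=M$. Take any sequence $r_i\to\infty$ and pass to a subsequence so that
\[
(r_i^{-1}\widehat{M},\hat{p},\mathbb{Z})\overset{GH}\longrightarrow (Y,y,G),\qquad (r_i^{-1}M,p)\overset{GH}\longrightarrow(X,x)=(Y/G,\bar y).
\]
Because $M$ has sublinear diameter growth, $X$ is the ray $([0,\infty),0)$. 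Theorem \ref{thm:induction_sub2}(1), applied with $j=1$, shows that $Y$ is isometric to $\R^2$ or to $\R\times[0,\infty)$.

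Next I pin down the action. The proof of the inductive step of Theorem \ref{thm:induction_sub2} already supplies the relevant facts:
\begin{itemize}
\item Proposition \ref{prop:non_max_escape_rate} gives a non-maximal escape rate (polarity at infinity holds, since the asymptotic cone of $M$ is a ray).
\item Theorem \ref{thm:orbit_R} then gives that $Gy$ is homeomorphic to $\R$, so $G=\R\times K$ with $K$ fixing $y$.
\item With a limit renormalized measure, $Y$ satisfies the hypotheses of Theorem \ref{thm:rigid_p/h}.
\end{itemize}
Remark \ref{rem:p/h_action} therefore applies: $(Y,y,G)$ is isomorphic either to $(\R\times[0,\infty),0,\R)$ or to $(\R^2,0,\R\times\Z_2)$ with $\Z_2$ acting by reflection across the orbit line. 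This establishes that every equivariant asymptotic cone is one of the two listed models.

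Uniqueness is the main point. First, the two models cannot both occur as equivariant asymptotic cones of $(\widehat{M},\mathbb{Z})$. The set of all equivariant asymptotic cones is connected in the pointed equivariant Gromov--Hausdorff topology, since it is the limit set of the continuous path $r\mapsto(r^{-1}\widehat{M},\hat p,\mathbb{Z})$. The two models are at positive distance from each other: one space has boundary and the other does not, and any $\Omega$-ball comparison separates them. Hence the set is a single point, up to isometry. It must also be checked that each model itself is rigid, with no free parameter; this holds because both are homogeneous under scaling.

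A slightly subtle point is whether a limit of models of one type could be of the other type. This is excluded because the family is closed: limits of equivariant asymptotic cones are again equivariant asymptotic cones, to which the classification applies. I expect the main effort to be citing the connectedness properly, for instance via the argument of \cite{Pan19}*{Proposition 2.1}, adapted to the equivariant setting.
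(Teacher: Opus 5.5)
Your proposal is correct and follows the paper's route. The paper's proof is exactly Theorem \ref{thm:induction_sub2} with $j=k=1$ combined with Remark \ref{rem:p/h_action}. Uniqueness comes from the connectedness of the set of equivariant asymptotic cones, which the paper leaves implicit and you spell out. Your remarks about scaling homogeneity and closedness of the family are unnecessary: Remark \ref{rem:p/h_action} already pins down the two models up to equivariant isometry, and they are at positive pointed distance because the underlying spaces are not isometric.
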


\begin{proof}
   The result follows directly from Theorem \ref{thm:induction_sub2} with $j=k=1$ and Remark \ref{rem:p/h_action}.
\end{proof}

\begin{proof}[Proof of Theorem \ref{mainthm:vir_abel}]
    The proof is almost identical to the arguments in \cite{NPZ}*{Section 3.7}. The only modification is that we use the above Theorem \ref{thm:induction_sub2} instead of \cite{NPZ}*{Theorem 1.6}, then the proof goes through by verbatim.
\end{proof}

For Theorem \ref{mainthm:finite}, we will prove a splitting result under a slightly weaker condition.

\begin{thm}\label{thm:Z_cover_split}
  Let $M$ be an open manifold with $\Ric\ge 0$. Suppose that $M$ has sublinear diameter growth and
   $$\lim_{s\to\infty} \dfrac{\RV(s)}{s^{2}} =0,\quad \liminf_{r\to\infty} \dfrac{\vol B_r(p)}{r^{1+\delta}}=0 $$ 
   for some $\delta\in(0,1)$. Then the following hold.\\
   (1) For any $\Z$-folding covering space $\widehat{M}$ of $M$, $\widehat{M}$ splits isometrically as $\R\times N$.\\
   (2) If, in addition, the Ricci curvature is positive at a point, then $\pi_1(M)$ is finite.
\end{thm}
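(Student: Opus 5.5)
Let $\widehat{M}$ be a $\Z$-covering space of $M$ with covering group $\Gamma=\langle\gamma\rangle\simeq\Z$. Since $\RV(s)\ll s^2$ and $M$ has sublinear diameter growth, Corollary \ref{cor:asym_Z_cover} gives a unique equivariant asymptotic cone of $(\widehat{M},\Gamma)$: either $(\R^2,0,\R\times\Z_2)$ or $(\R\times[0,\infty),0,\R)$. The plan for (1) is to rule out the halfplane and, in the plane case, to produce an actual line in $\widehat{M}$.

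\emph{Excluding the halfplane.} Here I use the absolute growth hypothesis. In the halfplane case, by Theorem \ref{thm:induction_sub2}(3) (with $D_i(s)=B_{r_is}(p)$) and the volume computation in its proof, $\vol\big(S(m_i)\cdot F_i(1)\big)$ is comparable to $\vol B_{r_i}(\hat q)$, and equals $\#S(m_i)\cdot\vol B_{r_i}(p)\approx 2m_i\vol B_{r_i}(p)$. Because the orbit $Gy$ is a line in the limit, the displacement $d(\gamma^m\hat q,\hat q)$ grows linearly in $m$ at scale $r_i$, so $m_i$ is the number of generators needed to reach distance $r_i$. Cross-scale comparison of $m_i$ (unique cone) gives $m(r)=r^{1+o(1)}$, hence $\vol B_r(\hat q)\gtrsim m(r)\vol B_r(p)$. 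On the other hand $\vol B_r(\hat q)\le C r^n$, and in the halfplane case the limit measure $\mathcal L\otimes\underline{\meas}$ with $\underline{\meas}([0,s])\ll s^2$ yields $\vol B_{r}(\hat q)\le r^{2-\epsilon'}$-type relative growth. The intended contradiction comes from $\liminf \vol B_r(p)/r^{1+\delta}=0$: on the cover, the density of $\widehat{M}$ over the ray direction behaves like $\vol B_r(p)/r$, and $\underline{\meas}([0,s])\sim$ relative growth of $\vol B_{rs}(p)$. The key point is that in the halfplane case the $\Gamma$-orbit is ``parallel to the boundary'', so minimal representatives stay near the orbit and, by the escape-rate/size estimates, $d(\gamma^m\hat q,\hat q)$ is controlled from above by $m^{1/(1+\delta')}$-type bounds forced by small volume of $M$. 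I expect this comparison, turning $\liminf \vol B_r/r^{1+\delta}=0$ into a contradiction with the halfplane picture along a subsequence $r_i$ realizing the liminf, to be the main obstacle; I would try to compare $\vol B_{r_i}(\hat q)\approx m_i\vol B_{r_i}(p)$ with a lower bound $\vol B_{r_i}(\hat q)\gtrsim r_i^2$ coming from a volume-cone/Bishop–Gromov argument over the halfplane with linear orbit, giving $m_i\gtrsim r_i^{1-\delta}$, against an upper bound on $m_i$.

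\emph{Plane case gives a splitting.} When the cone is $(\R^2,0,\R\times\Z_2)$ at every scale, I follow the argument of \cite{NPZ} (and \cite{huang2025}): take $\hat q$, minimal geodesics $\sigma_m$ from $\gamma^{-m}\hat q$ to $\gamma^m\hat q$. Non-maximal escape rate (Proposition \ref{prop:non_max_escape_rate}) together with the uniqueness of the cone shows these geodesics stay within $o(m)$, in fact bounded, distance of $\hat q$, so a subsequence converges to a line in $\widehat{M}$, and Cheeger–Gromoll gives $\widehat{M}=\R\times N$.

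For (2): by Theorem \ref{mainthm:vir_abel}, $\pi_1(M)$ contains $\Z^k$ of finite index. Passing to the finite normal cover $\overline{M}$ (Lemma \ref{lem:pass_to_finite_cover}, which preserves the hypotheses or splits, and the $\liminf$ volume condition is preserved up to the factor $\#\Gamma$), if $k\ge1$ take a $\Z$-cover $\widehat{M}$ of $\overline{M}$; by (1) it splits as $\R\times N$, so $\Ric$ vanishes in the $\R$-direction everywhere, contradicting $\Ric>0$ at a lift of $p$. Hence $k=0$ and $\pi_1(M)$ is finite.
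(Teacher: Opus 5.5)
\textbf{Gap in part (1): the halfplane case cannot be excluded.} Your strategy for (1) depends on ruling out the halfplane cone, and that cannot succeed, because the halfplane case actually occurs under all the hypotheses. Take $M=S^1\times N$, where the circle has length $L$ and $N$ is a smooth capped half-cylinder of nonnegative curvature (a surface of revolution $\di r^2+f(r)^2\di\theta^2$ with $f$ concave and constant for large $r$). Then:
\begin{itemize}
\item $M$ has $\Ric\ge 0$;
\item $M$ has linear volume growth, so $\RV(s)=s$ and $\liminf_{r\to\infty}\vol B_r(p)/r^{1+\delta}=0$;
\item $M$ has bounded cross-sections, hence sublinear diameter growth;
\item the $\Z$-cover $\widehat M=\R\times N$ has equivariant asymptotic cone $(\R\times[0,\infty),0,\R)$.
\end{itemize}
In this example $d(\gamma^m\hat q,\hat q)=mL$, $m_i\approx r_i/L$, $\vol B_{r_i}(p)\sim r_i$ and $\vol B_{r_i}(\hat q)\sim r_i^2$. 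So none of the comparisons you sketch produces a contradiction, and the hoped-for bound $d(\gamma^m\hat q,\hat q)\lesssim m^{1/(1+\delta')}$ is simply false.

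What the halfplane case needs is a proof that $\widehat M$ still splits off a line. The paper does not derive this from the cone picture. After Corollary \ref{cor:asym_Z_cover} it invokes two results of Huang--Huang:
\begin{itemize}
\item in the halfplane case, \cite{huang2025}*{Proposition 1.8}, which shows $\widehat M$ splits off an $\R$-factor;
\item in the plane case, \cite{huang2025}*{Lemma 4.1}, which gives $\widehat M=\R^2\times K$ with $K$ closed.
\end{itemize}
Corollary \ref{cor:asym_Z_cover} uses only the hypotheses of Theorem \ref{mainthm:vir_abel}. So in the paper the absolute growth condition enters only through these cited splitting results, as an input to a splitting argument, never as a source of contradiction.

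\textbf{Gap in the plane case.} Your plane-case argument also has an unproved step. Non-maximal escape rate together with uniqueness of the cone $(\R^2,0,\R\times\Z_2)$ gives at best that the minimal geodesics $\sigma_m$ from $\gamma^{-m}\hat q$ to $\gamma^m\hat q$ have size $o(|\gamma^m|)$. To extract a line after translating by elements of $\Gamma$, you need a point of $\sigma_m$, far from both endpoints, to lie within a \emph{uniformly bounded} distance of $\Gamma\hat q$. Your ``in fact bounded'' is asserted without argument, and passing from sublinear to bounded distance is exactly the nontrivial content the paper imports from \cite{huang2025}*{Lemma 4.1}.

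\textbf{Part (2).} Your reduction is fine and essentially the paper's. You use Theorem \ref{mainthm:vir_abel} where the paper uses Sormani's finite generation plus Milnor--Gromov, and you correctly note that $\vol B_r(\bar p)\le\#\Gamma\cdot\vol B_r(p)$ preserves the $\liminf$ condition on the finite cover. It does, however, rest entirely on (1).
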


\begin{proof}
    (1) Thanks to Corollary \ref{cor:asym_Z_cover}, the covering space $(\widehat{M},\hat{p},\Z)$ has a unique equivariant asymptotic as either $(\mathbb{R}^2,0,\mathbb{R}\times \mathbb{Z}_2)$ 
    or 
    $(\mathbb{R}\times [0,\infty),0,\mathbb{R} )$.
    Then we can use the results from \cite{huang2025} to prove the splitting structure on $\widehat{M}$. More specifically, when $\widehat{M}$ has a unique asymptotic cone $\R^2$, then we can apply the same argument as in \cite{huang2025}*{Lemma 4.1} and conclude that $\widehat{M}$ splits isometrically as $\R^2$ with a closed manifold. When $\widehat{M}$ has a unique asymptotic cone $\R\times[0,\infty)$, then \cite{huang2025}*{Proposition 1.8} implies that $\widehat{M}$ splits off an $\R$-factor isometrically.

    (2) We argue by contradiction and suppose that $\pi_1(M)$ is an infinite group. Because $M$ has sublinear diameter growth, $\pi_1(M)$ is finitely generated \cite{Sor00b}. Hence $\pi_1(M)$ contains a torsion-free nilpotent subgroup of finite index \cites{Milnor68,Gromov81}. After further passing to a subgroup of finite index, we can assume this torsion-free nilpotent group, denoted by $\Lambda$, is normal in $\pi_1(M,p)$. We consider the intermediate covering space
   $$(\overline{M},\bar{p})=(\widetilde{M},\tilde{p})/\Lambda,$$
   where $\widetilde{M}$ is the universal cover of $M$. Then $\overline{M}$ is a finite normal cover over $M$. By Lemmas \ref{lem:rel_vs_abs_vol} and \ref{lem:pass_to_finite_cover}, we can assume that $\overline{M}$ satisfies the assumption in Theorem \ref{thm:Z_cover_split} without loss of generality. Since $\pi_1(\overline{M})=\Lambda$ is a torsion-free nilpotent group, we can construct a $\Z$-folding covering space $\widehat{M}$ over $\overline{M}$. It follows from the first part that $\widehat{M}$ splits off a line isometrically, a contradiction to positive Ricci curvature at a point.
\end{proof}

\begin{proof}[Proof of Theorem \ref{mainthm:finite}]
   Theorem \ref{mainthm:finite} follows directly from Theorem \ref{thm:Z_cover_split}(2) and Lemma \ref{lem:rel_vs_abs_vol}.
\end{proof}

\bibliographystyle{plain}
\bibliography{ref.bib}

\end{document}